\titleformat{\subsection}[hang]{\normalfont\bfseries}{\thesubsection}{1em}{}
\titlespacing\section{0pt}{3.5ex plus 0.5ex minus .2ex}{0.3ex plus .2ex}
\titlespacing\subsection{0pt}{2.5ex plus 0.5ex minus .2ex}{0.3ex plus .2ex}
\titlespacing\subsubsection{0pt}{2.5ex plus 0.5ex minus .2ex}{0.3ex plus .2ex}
\numberwithin{equation}{subsection}
\newtheorem{theorem}[equation]{Theorem}
\newtheorem*{theorem*}{Theorem}
\newtheorem{lemma}[equation]{Lemma}
\newtheorem*{lemma*}{Lemma}
\newtheorem{property}[equation]{Property}
\newtheorem{proposition}[equation]{Proposition}
\newtheorem*{proposition*}{Proposition}
\newtheorem{corollary}[equation]{Corollary}
\newtheorem*{corollary*}{Corollary}
\theoremstyle{definition}
\newtheorem{definition}[equation]{Definition}
\newtheorem*{definition*}{Definition}
\newtheorem{notation}[equation]{Notation}
\newtheorem*{notation*}{Notation}
\newtheorem*{choice*}{Choice}
\newtheorem{remark}[equation]{Remark}
\newtheorem*{remark*}{Remark}
\newtheorem*{axiom*}{Axiom}
\newcommand{\Roots}[2]{{\Phi(#1, #2)}}
\renewcommand{\restriction}{|}
\newcommand{\abs}[1]{\left\lvert#1\right\rvert}
\newcommand{\Pyx}{{P^y_x}}
\newcommand{\Vyx}{{V^y_x}}
\newcommand{\Vyxp}{{V^y_{x,+}}}
\newcommand{\Vxy}{{V^x_y}}
\newcommand{\Vxyp}{{V^x_{y,+}}}
\newcommand{\Nsym}{N\textup{-sym}}
\newcommand{\Nasym}{N\textup{-asym}}
\DeclareMathOperator{\End}{End}
\DeclareMathOperator{\ad}{ad}
\DeclareMathOperator{\ind}{ind}
\DeclareMathOperator{\red}{red}
\DeclareMathOperator{\Mod}{Mod-}
\DeclareMathOperator{\id}{id}
\DeclareMathOperator{\ord}{ord}
\DeclareMathOperator{\aff}{aff}
\DeclareMathOperator{\Sp}{Sp}
\DeclareMathOperator{\sgn}{sgn}
\DeclareMathOperator{\Gal}{Gal}
\DeclareMathOperator{\scn}{sc}
\DeclareMathOperator{\nt}{nt}
\DeclareMathOperator{\ort}{O}
\DeclareMathOperator{\Lie}{Lie}
\DeclareMathOperator{\ab}{ab}
\DeclareMathOperator{\der}{der}
\DeclareMathOperator{\sn}{sn}
\DeclareMathOperator{\GL}{GL}
\DeclareMathOperator{\SL}{SL}
\DeclareMathOperator{\Symp}{Sp}
\DeclareMathOperator{\HW}{HW}
\newcommand{\unr}{{\textup{unr}}}
\newcommand{\ram}{{\textup{ram}}}
\newcommand{\gen}{{\textup{gen}}}
\newcommand{\dashgen}{{\textup{-gen}}}
\newcommand{\sym}{{\textup{sym}}}
\newcommand{\asym}{{\textup{asym}}}
\newcommand{\Krel}{{\mathcal{K}\textup{-rel}}}
\newcommand{\Kzrel}{{\mathcal{K}^0\textup{-rel}}}
\newcommand{\isoarrow}{\stackrel{\sim}{\longrightarrow}}
\newcommand{\isoarrowleft}{\stackrel{\sim}{\longleftarrow}}
\newcommand{\Nheart}{N(\rho_{M})^{\heartsuit}_{[x_{0}]_{M}}}
\newcommand{\Nzeroheart}{N(\rho_{M^{0}})^{\heartsuit}_{[x_{0}]_{M^{0}}}}
\newcommand{\Wzeroheart}{W(\rho_{M^{0}})^{\heartsuit}_{[x_{0}]_{M^{0}}}}
\newcommand{\muTzero}{\mu^{\cT^0}}
\newcommand{\Waffz}{W(\rho_{M^0})_{\mathrm{aff}}}
\newcommand{\Wzeroz}{\Omega(\rho_{M^0})}
\newcommand{\Coeff}{\cC}   
\newcommand{\Isom}{\cI} 
\DeclareMathOperator{\Rep}{Rep}
\newcommand{\IEC}{\mathfrak{I}} 
\newcommand{\bC}{\mathbb{C}}
\newcommand{\bF}{\mathbb{F}}
\newcommand{\bQ}{\mathbb{Q}}
\newcommand{\bR}{\mathbb{R}}
\newcommand{\bZ}{\mathbb{Z}}
\newcommand{\cA}{\mathcal{A}}
\newcommand{\cB}{\mathcal{B}}
\newcommand{\cC}{\mathcal{C}}
\newcommand{\cH}{\mathcal{H}}
\newcommand{\cI}{\mathcal{I}}
\newcommand{\cK}{\mathcal{K}}
\newcommand{\cO}{\mathcal{O}}
\newcommand{\cT}{\mathcal{T}}
\newcommand{\cU}{\mathcal{U}}
\newcommand{\ff}{\mathfrak{f}}
\newcommand{\fp}{\mathfrak{p}}
\newcommand{\fr}{\mathfrak{r}}
\newcommand{\fs}{{\mathfrak{s}}}
\newcommand{\fsz}{{\mathfrak{s}_0}}
\newcommand{\fS}{{\mathfrak{S}}}
\newcommand{\fSz}{{\mathfrak{S}_0}}
\newcommand{\spacingatend}[1]{}
\begin{document}
\externaldocument[HAI-][nocite]{Adler--Fintzen--Mishra--Ohara_Structure_of_Hecke_algebras_arising_from_types}

\author{
	Jeffrey D. Adler,
	Jessica Fintzen,
	Manish Mishra,
	and
	Kazuma Ohara
}
\AtEndDocument{\bigskip{\footnotesize%
	\par
       \textsc{%
	Department of Mathematics and Statistics,
	American University,
	4400 Massachusetts Ave NW,
	Washington, DC 20016-8050, USA} \par
       \textit{E-mail address}: \texttt{jadler@american.edu}
}}
\AtEndDocument{\bigskip{\footnotesize%
	\par
       \textsc{
	Universit\"at Bonn,
           Mathematisches Institut,
           Endenicher Allee 60,
           53115 Bonn,
           Germany } \par
       \textit{E-mail address}: \texttt{fintzen@math.uni-bonn.de}
}}
\AtEndDocument{\bigskip{\footnotesize%
	\par
	\textsc{%
	Department of Mathematics,
        IISER Pune,
        Dr.\ Homi Bhabha Road,
        Pune, Maharashtra 411008, India}
\par
       \textit{E-mail address}: \texttt{manish@iiserpune.ac.in}  
}}
\AtEndDocument{\bigskip{\footnotesize%
	\par
	\textsc{%
	Graduate School of Mathematical Sciences, The University of Tokyo, 
	3-8-1 Komaba, Meguroku, Tokyo 153-8914, Japan}
	\par
       \textit{E-mail address}: \texttt{kohara@ms.u-tokyo.ac.jp}
}}

\title{Reduction to depth zero for tame $p$-adic groups via Hecke algebra isomorphisms}
\date{}
\maketitle
\begin{abstract}
Let $F$ be a nonarchimedean local field of residual characteristic $p$. Let $G$ denote a connected reductive group over $F$ that splits over a tamely ramified extension of $F$. Let $(K ,\rho)$ be a type as constructed by Kim and Yu. We show that there exists a twisted Levi subgroup $G^0 \subset G$ and a type $(K^0, \rho^0)$ for $G^0$ such that the corresponding Hecke algebras $\mathcal{H}(G(F), (K, \rho))$ and $\mathcal{H}(G^0(F), (K^0, \rho^0))$ are isomorphic. 
If $p$ does not divide the order of the absolute Weyl group of $G$, then every Bernstein block is equivalent to modules over such a Hecke algebra. Hence, under this assumption on $p$, our result implies that every Bernstein block is equivalent to a depth-zero Bernstein block. This allows one to reduce many problems about (the category of) smooth, complex representations of $p$-adic groups to analogous problems about (the category of) depth-zero representations.

Our isomorphism of Hecke algebras is very explicit and also includes an explicit description of the Hecke algebras as semi-direct products of an affine Hecke with a twisted group algebra.
Moreover, we work with arbitrary algebraically closed fields of characteristic different from $p$ as our coefficient field.

This paper relies on a prior axiomatic result about the structure of Hecke algebras by the same authors and a key ingredient consists of extending the quadratic character of Fintzen--Kaletha--Spice to the support of the Hecke algebra, which might be of independent interest.
\end{abstract}

{
	\renewcommand{\thefootnote}{}  
	\footnotetext{MSC2020: Primary 22E50, 22E35, 20C08, 20C20. Secondary 22E35}
	\footnotetext{Keywords: $p$-adic groups, smooth representations, Hecke algebras, types, Bernstein blocks, mod-$\ell$ coefficients}

\footnotetext{The first-named author
was partially supported by
the American University College of Arts and Sciences
Faculty Research Fund.}
	\footnotetext{The second-named author was partially supported by NSF Grants DMS-2055230 and DMS-2044643, a Royal Society University Research Fellowship, a Sloan Research Fellowship and the European Research Council (ERC) under the European Union's Horizon 2020 research and innovation programme (grant agreement no. 950326).}
\footnotetext{
The third-named author was partially supported by
a Fulbright-Nehru Academic and Professional Excellence Fellowship and
a SERB Core Research Grant (CRG/2022/000415).} 
\footnotetext{
The fourth-named author is supported by the FMSP program at Graduate School of Mathematical Sciences, the University of Tokyo and JSPS KAKENHI Grant number JP22J22712.
}
}

\setcounter{tocdepth}{2}

\newpage
\tableofcontents

\numberwithin{equation}{section}
\section{Introduction} 
The category of all smooth, complex representations of a $p$-adic group decomposes as a product of
indecomposable
full subcategories, called Bernstein blocks, each of which is equivalent to modules over a Hecke algebra under minor tameness assumptions. Therefore knowing the explicit structure of these Hecke algebras and their modules yields an understanding of the category of smooth representations. While these Hecke algebras are known for $\GL_n$ and in that case played an important role in the representation theory, comparatively little has been known for general reductive groups. An exception are the Bernstein blocks that consist only of depth-zero representations. For those blocks, a description of the attached Hecke algebras has essentially (for details see \cite{HAI}) been known for over 30 years, thanks to work of Morris (\cite{Morris}). In this paper, we show
that the above Hecke algebra attached to arbitrary Bernstein blocks is isomorphic to a depth-zero Hecke algebra under the above minor tameness assumptions, which we assume to hold for the next few sentences. Therefore we now have an explicit description of all those Hecke algebras. 
Moreover, as a direct consequence of the Hecke algebra isomorphism, we obtain 
an equivalence between arbitrary Bernstein blocks and depth-zero Bernstein blocks, which allows one to reduce a plethora of problems in the representation theory of $p$-adic groups and beyond, including in the Langlands program, to the depth-zero setting, where solutions are often easier to obtain or are already known.

We obtain the Hecke algebra isomorphism between arbitrary and depth-zero Bernstein blocks by verifying the relevant axioms of \cite{HAI} to allow us to apply the general Hecke algebras isomorphism of \cite{HAI}. This involves as a key step the extension of the quadratic character of \cite{FKS} to a group of double-coset representatives for the whole support of the Hecke algebra, a result that might be of independent interest to mathematicians in this area. It also involves the study and extension of Heisenberg--Weil representations that is expected to have applications beyond this paper.

The Hecke algebra isomorphisms as well as the extensions of the quadratic character and the Heisenberg--Weil representations are all described in an explicit way, making them suitable for a larger range of future applications.

To explain our results in more detail, we fix a non-archimedean local field $F$ with residual characteristic $p$ and denote by $G$ a connected reductive group over $F$ that splits over a tamely ramified extension of $F$. In this introduction we consider smooth representations with complex coefficients for simplicity, but all results about Hecke algebra isomorphisms are also proven with $\Coeff$-coefficients for an arbitrary algebraically closed field $\Coeff$ of characteristic different from $p$. In the complex case,
by Bernstein (\cite{MR771671}), the category $\Rep(G(F))$ of smooth representations of $G(F)$ decomposes into a product of indecomposable full subcategories, $\Rep^\fs(G(F))$, that are called \emph{Bernstein blocks} and that are indexed by the set of inertial equivalence classes $\IEC(G)$, i.e., equivalence classes $[L, \sigma]$ of pairs $(L, \sigma)$ consisting of a Levi subgroup $L$ of (a parabolic subgroup of) $G$ and an irreducible supercuspidal representation $\sigma$ of $L(F)$:
\[
\Rep(G(F))=\prod_{\fs\in\IEC(G)}\Rep^\fs(G(F)).
\]
A pair $(K, \rho)$ consisting of a compact, open subgroup $K \subset G(F)$ and an irreducible smooth representation $(\rho, V_\rho)$ of $K$ is called an $\fs$-type for $\fs \in \IEC(G)$ if for every smooth representation $\pi$ of $G(F)$, the representation $\pi$ belongs to $\Rep^\fs(G(F))$ if and only if every irreducible subquotient of $\pi$
contains $\rho$ upon restriction to $K$. If $(K, \rho)$ is an $\fs$-type, then we have an equivalence of categories between $\Rep^{\fs}(G(F))$ and the category of right unital $\cH(G(F), (K, \rho))$-modules:
\[
\Rep^{\fs}(G(F)) \simeq \Mod \cH(G(F), (K, \rho)),
\]
where  $\cH(G(F),(K, \rho))$ denotes the Hecke algebra attached to $(K, \rho)$, i.e., as a vector space the collection
of all compactly supported,
$\End_{\bC}(V_\rho)$-valued
functions on $G(F)$ that transform
on the left and right under $K$  
by $\rho$.
The algebra structure on $\cH(G(F),(K, \rho))$ is given by convolution,
see \cite[Section \ref{HAI-Hecke algebras and endomorphism algebras}]{HAI}
for details.

Building upon the construction of supercuspidal representations by Yu (\cite{Yu}) and using the theory of covers introduced by Bushnell and Kutzko (\cite{BK-types}), Kim and Yu (\cite{Kim-Yu, MR4357723}) provide a construction of types. This construction yields types for every Bernstein block if $p$ does not divide the order $\abs{W}$ of the absolute Weyl group $W$ of $G$ by \cite{Fi-exhaustion}. Thus, understanding the structure of the corresponding Hecke algebras $\cH(G(F), (K, \rho))$ and its category of modules leads to an understanding of the whole category of smooth representations of $G(F)$ if $p \nmid \abs{W}$.
 The input for the construction of Kim and Yu includes a twisted Levi subgroup $G^0 \subset G$, a compact, open subgroup $K^0 \subset G^0$ that contains a parahoric subgroup of $G^0(F)$, a depth-zero representation $\rho^0_{KY}$ of $K^0$, and a positive-depth (or trivial) character of $G^0(F)$, satisfying various conditions, see Definition \ref{definitionofGdatum} for details. The pair $(K^0, \rho^0_{KY})$ is a type for a Bernstein block of $G^0$ that consists of depth-zero representations. To a given input, Kim and Yu then attach a compact, open subgroup $K \subset G(F)$, which satisfies $K \cap G^0(F)=K^0$, and a representation $\rho = \rho^0_{KY} \otimes \kappa^{\nt}$, where $\rho^0_{KY}$ is viewed as a representation of $K$ via an appropriate inflation and $\kappa^{\nt}$ is an irreducible smooth representation constructed from the positive-depth character via the theory of Heisenberg--Weil representations. (We caution the reader that what we denote by $\rho$ and $\rho^0_{KY}$ here for simplicity is denoted by $\rho_{x_0}^{\nt}$ and $\rho_{x_0}^0$ in the main part of the paper.) 
 
 We set $\rho^0=\rho^0_{KY} \otimes \epsilon$, where $\epsilon$ (denoted by $\epsilon^{\overrightarrow{G}}_x$ in Section \ref{subsec:construction}) is a quadratic character introduced by \cite{FKS}. Then our main result, Theorem \ref{heckealgebraisomforKim-Yutype}, is the construction of an explicit, support-preserving algebra isomorphism
 \begin{equation}
 \label{eq:intromain}
 \cH(G(F), (K, \rho)) \stackrel{\sim}{\longrightarrow}  \cH(G^0(F),(K^0, \rho^0)).
 \end{equation}
We provide an example in Appendix \ref{subsec:quadratictwistisnecessary} that shows that twisting $\rho^0_{KY}$ by $\epsilon$ is necessary, i.e., the above isomorphism would not always hold if we replaced $\rho^0$ by $\rho^0_{KY}$.
As a direct corollary, we obtain that if $p \nmid \abs{W}$, then an arbitrary Bernstein block is equivalent to a depth-zero Bernstein block.
 We also remind the reader that we have an explicit description of the structure of the depth-zero Hecke algebra
  \begin{equation}
   \label{eq:introdepthzero}
  \cH(G^0(F),(K^0, \rho^0)) \simeq  \bC[\Wzeroz, \muTzero] \ltimes \cH(\Waffz, q) 
 \end{equation}
by \cite[Theorem \ref{HAI-theoremstructureofheckefordepthzero}]{HAI}, which is a generalization of prior work of Morris (\cite{Morris}), and hence we obtain an explicit description of the structure of $\cH(G(F), (K, \rho)) $ by combining \eqref{eq:intromain} and \eqref{eq:introdepthzero}. We refer the reader to the introduction of \cite{HAI} or to Theorem \ref{theoremstructureofheckeforKimYu} for the details of the notation on the right-hand side of \eqref{eq:introdepthzero}.

The isomorphism \eqref{eq:intromain} is obtained by applying the general result \cite[Theorem \ref{HAI-thm:isomorphismtodepthzero}]{HAI} about Hecke algebra isomorphisms to the setting of this paper, which requires checking all the relevant axioms of \cite{HAI}. Most of the axioms follow easily from the construction of Kim and Yu, except those that deal with the (extension of the) twisted Heisenberg--Weil part $\kappa \coloneqq \kappa^{\nt} \otimes \epsilon$ in the construction. In order to apply  \cite[Theorem \ref{HAI-thm:isomorphismtodepthzero}]{HAI} we need to construct a whole compatible family of twisted Heisenberg--Weil representations, see Corollary \ref{corollaryofpropositioninductiontwistedsequencever} for the compatibility with respect to compact induction that we require. The existence of such a compatible family relies  crucially on the quadratic twist $\epsilon$ of \cite{FKS}. Moreover, we need to show that the twisted Heisenberg--Weil representation extends to a group of double-coset representatives of the support of the Hecke algebra $\cH(G(F), (K, \rho))$. These results are expected to be of independent interest.

To provide a few more details about the last result, we fix an $[M, \sigma]$-type $(K, \rho)$ as constructed by Kim and Yu for some $[M, \sigma] \in \IEC(G)$. Then $(K^0, \rho^0)$ is an $[M^0, \sigma^0]$-type for a Levi subgroup $M^0 \subseteq G^0$  and a depth-zero supercuspidal representation $\sigma^0$ of $M^0(F)$. If the representative $M$ is chosen with care, then $M^0=M \cap G^0$. Moreover, $K^0$ is the stabilizer $G^0(F)_{x}$ in $G^0(F)$ of a point $x$ in the enlarged Bruhat--Tits building of $M^0$ that we view as a subset of the enlarged Bruhat--Tits building of $G^0$. We denote by $N_{G^{0}}(M^{0})(F)_{[x]_{M^{0}}}$ the subgroup of the $F$-points of the normalizer of $M^0$ in $G^0$ that preserves the image $[x]_{M^{0}}$ of $x$ in the reduced Bruhat--Tits building of $M^0$. This group normalizes $K_M\coloneqq K \cap M(F)$ and contains a set of double-coset representatives for the support of the Hecke algebras $\cH(G^0(F), (K^0, \rho^0))$ and $\cH(G(F), (K, \rho))$. 
A key result in this paper, see Definition \ref{definitionofkappatilde} and Proposition \ref{propproofofaxiomaboutKM0vsKM}, is the construction of a representation $\widetilde \kappa_M$ of $N_{G^{0}}(M^{0})(F)_{[x]_{M^{0}}} \cdot K_M$ that restricted to $K_M$ agrees with the twisted Heisenberg--Weil representation $\kappa\restriction_{K_M}$ (which is defined in detail on page \pageref{kappaM} in Section \ref{subsec:construction}). This construction involves, in particular, the extension of the quadratic character $\epsilon$ to a character $\widetilde \epsilon$ of $N_{G^{0}}(M^{0})(F)_{[x]_{M^{0}}} \cdot K_M$, which is achieved via Definition \ref{definitiontildeepsilon} and Theorem \ref{twistextension}. The character $\widetilde \epsilon$ has its image contained in the fourth roots of unity but cannot always be chosen to be quadratic. 

We briefly sketch some of the steps in the construction of $\widetilde \epsilon$. The character $\epsilon$ was defined as a product of three characters in \cite{FKS} and we take as our starting point their reinterpretation as the composition of a map to the orthogonal group of a quadratic space over the residue field $\ff$ of $F$ with the spinor norm. For one of the three characters we can reinterpret the quadratic space in a way that we can extend the action of  $K_M$ on it to an action of $N_{G^{0}}(M^{0})(F)_{[x]_{M^{0}}} \cdot K_M$ that preserves the quadratic form, which is done in Section \ref{subsection:character2}. For the other two factors we consider and extend only their product, combining the two underlying quadratic spaces into a new larger quadratic space $V_x$. Since the group $N_{G^{0}}(M^{0})(F)_{[x]_{M^{0}}}$ does not fix the point $x$, the construction of an action of $N_{G^{0}}(M^{0})(F)_{[x]_{M^{0}}}$ on $V_x$ involves a careful construction of compatible isomorphisms between $V_x$ and $V_y$ for $y$ in the $N_{G^{0}}(M^{0})(F)_{[x]_{M^{0}}}$-orbit of $x$, which is achieved in Section \ref{subsection:constructionofon}. Unfortunately, the resulting action of $N_{G^{0}}(M^{0})(F)_{[x]_{M^{0}}}$ on $V_x$ does not always preserve the quadratic form on $V_x$ used for the construction of the spinor norm and also only yields a morphism of $V_x$ viewed as an $\ff[\sqrt{-1}]$-vector space, not as an $\ff$-vector space in general. At the same time, the image of $K_M$ acting on $V_x$ is not the full orthogonal group. We construct a new group that contains the image of $K_M$ in the orthogonal group of $V_x$ and also the image of  $N_{G^{0}}(M^{0})(F)_{[x]_{M^{0}}}$ in $\GL(V_x)(\ff[\sqrt{-1}])$ and succeed in extending the restriction of the spinor norm to this larger group, see Section \ref{subsection:character13}. This then yields the desired extension $\widetilde \epsilon$ with values in the group of fourth roots of unity. If $\ff$ contains a square-root of $-1$, i.e. $\ff[\sqrt{-1}]=\ff$, then the extension $\widetilde \epsilon$ is a quadratic character.

We point out that the Hecke algebras associated with types have been already previously heavily studied in special cases, see \cite[\S\ref{HAI-subsec:history}]{HAI}
for more details.
The reduction to depth-zero isomorphism \eqref{eq:intromain} was already achieved for $\GL_n$ by Bushnell and Kutzko (\cite{MR1204652}) and is an important tool for a variety of applications, including its recent use in the construction of a part of a categorical local Langlands correspondence (\cite{BZCHN}). Moreover, for more general reductive groups, our isomorphism \ref{eq:intromain} was previously obtained by Roche (\cite{MR1621409}) in the case where $F$ has characteristic zero, $G$ is a split reductive group, and the above introduced Levi subgroup $M$ is a maximal split torus;
by Adler--Mishra (\cite{AdMi21})
in the more general situation where the connected part of the center of $G^0$
is split modulo the center of $G$
and the underlying supercuspidal representation of a Levi subgroup is regular,
and by Ohara (\cite{2021arXiv210101873O}) in the case of supercuspidal blocks, i.e., the case where $M=G$.  
Obtaining a general result \eqref{eq:intromain} beyond special cases has been an open problem for more than 20 years,
and indeed it can be thought of as a sharper version of
Conjectures 0.2 and 17.7 in \cite{Yu}.
However, the literature contains a discussion of a particular Hecke algebra
(see \cite[\S11.8]{GoldbergRoche-Hecke})
that appears to be too complicated to agree with the seemingly simpler Hecke algebras of depth-zero Bernstein blocks. 
This led to a large part of the mathematical community working in this area believing 
that \eqref{eq:intromain} cannot be true
in full generality (which we however now prove to be the case in this paper),
but still wishing for weaker form of it to be true, yet not knowing what form this would be.
We will address these concerns in a separate paper \cite{SL8}
explaining why the observations in \cite{GoldbergRoche-Hecke}
do not lead to a contradiction to our results.

\subsection*{Structure of the paper and guidance for the reader} 
Section \ref{sec:extensionofthetwist} concerns the extension $\widetilde \epsilon$ of the quadratic character $\epsilon$. 
We have summarized the result about the extension $\widetilde \epsilon$ of $\epsilon$ in the first page of Section \ref{sec:extensionofthetwist}. Thus, a reader mostly interested in the statement and not the details of the construction and proofs (even though they mark the core of our paper) is welcome to read this one page and skip the subsections of \ref{sec:extensionofthetwist} on a first reading. 

In Section \ref{sec:twistedweil} we introduce families of Heisenberg--Weil representations and prove various compatibility and extension results for those representations. Sections \ref{sec:extensionofthetwist} and \ref{sec:twistedweil} are independent of each other and can therefore be read in any order.

In Section \ref{subsec:construction} we recall the construction of Kim and Yu, but we allow more general coefficients and include a twist by the quadratic character of \cite{FKS} in the construction. The remainder of Section \ref{Section-KimYutypes} is then concerned with verifying all the necessary properties in order to obtain the isomorphism \eqref{eq:intromain}. While the proofs crucially rely on the results of Sections \ref{sec:extensionofthetwist} and \ref{sec:twistedweil}, we have written Section \ref{Section-KimYutypes} in way that a reader who is already a bit familiar with the construction of Yu (\cite{Yu}) and who is willing to simply believe our results from Sections \ref{sec:extensionofthetwist} and \ref{sec:twistedweil} on a first reading, can start by reading Section \ref{Section-KimYutypes}.

Appendix \ref{appendixYufortwopoints} spells out some technical details that, while too similar to work of J.-K. Yu to be original, are also too elaborate to be left to the reader.

In Appendix \ref{subsec:quadratictwistisnecessary} we provide an explicit example that shows that our main result \eqref{eq:intromain} would not be true in general if we omitted the twist by the quadratic character $\epsilon$.

\subsection*{Acknowledgments}
Various subsets of the authors benefited from the hospitality
of American University, Duke University, the University of Bonn, 
the  Hausdorff Research Institute for Mathematics in Bonn,
the Max Planck Institute for Mathematics in Bonn, the Indian Institute for Science Education and Research (Pune), and the University of Michigan.
The authors also thank Alan Roche, 
Dan Ciubotaru,
Tasho Kaletha, and Ju-Lee Kim for discussions related to this paper. 
This project was started independently by three different subgroups of the authors, and the fourth-named author thanks his supervisor Noriyuki Abe for his enormous support and helpful advice.
The fourth-named author is also grateful to Tasho Kaletha for the discussions during his stay at the University of Michigan in 2023.

\subsection*{Notation}
Let $F$ be a non-archimedean local field endowed with a discrete valuation $\ord$ on $F^{\times}$ with value group $\mathbb{Z}$.
We fix a separable closure $\overline{F}$ of $F$.
When we refer to a separable extension $E/F$, we always assume that $E \subset \overline{F}$.
For any such extension $E$,
we denote the unique extension of $\ord$ to $E^\times$ also by $\ord$.
We write
$\cO_{E}$ for the ring of integers of $E$
and $\fp_{E}$ for the maximal ideal of $\cO_E$.
We also write $\ff$ for the residue field of $F$,
and let $p$ denote the characteristic of $\ff$.

If $L$ is any field, and $L'$ is any Galois field extension of $L$,
then we
let $\Gal(L'/L)$ denote the corresponding Galois group.

Let $\bR$ and $\bC$ denote the fields of real and complex numbers,
respectively.
Let $\Coeff$
\index{notation-ky}{C@$\Coeff$ (coefficient field)}%
denote an algebraically closed field of characteristic
\index{notation-ky}{l@$\ell$ (characteristic of $\Coeff$)}%
$\ell\neq p$.
Except when otherwise indicated,
all representations below are on vector spaces over $\Coeff$.
We fix an additive character $\Psi \colon F \longrightarrow \Coeff^{\times}$
\index{notation-ky}{Psi@$\Psi$ (additive character $F\rightarrow \Coeff^\times$)}%
that is trivial on $\fp_{F}$ and non-trivial on $\cO_{F}$.

We also adopt the fairly standard notation that was
defined in
\cite[\S\ref{HAI-subsec:notation}--\ref{HAI-Hecke algebras and endomorphism algebras}]{HAI} for the following symbols:
$Z(G)$, $A_{G}$, 
\index{notation-ky}{A G@$A_{G}$}%
$N_G(M)$ and $Z_G(M)$, 
$\ind_K^H(\rho)$, 
$I_{H}(\rho)$ and $N_{H}(\rho)$ for a representation $\rho$ of a subgroup of a group $H$, 
and the Hecke algebra $\cH(G(F), (K, \rho)) = \cH(G(F), \rho)$%
\index{notation-ky}{H(GFr@$\cH(G(F), \rho)$}.
As in \cite[\S\ref{HAI-subsec:notation}]{HAI}, by $\cU(M)$\index{notation-ky}{U M@$\cU(M)$} we denote the set of unipotent radicals of all parabolic
subgroups of $G$ with Levi factor $M$,

For a linear algebraic group $G$ over $F$ and an algebraic extension $E$ of $F$, let $G_{E}$ denote the base change of $G$ to $E$.
We denote by $X^{*}(G)$ the (algebraic) character group of $G_{\overline F}$ and $X_{*}(G)$ denote the cocharacter group of $G_{\overline F}$.
We also denote by $\Lie(G)$ the Lie algebra of $G$ and by $\Lie^{*}(G)$ the dual of $\Lie(G)$.
Let $\Lie^{*}(G)^{G}$ denote the subscheme of $\Lie^{*}(G)$ fixed by the coadjoint action of $G$ on $\Lie^{*}(G)$.

Suppose that $G$ is a connected reductive group defined over $F$.
For a torus $S$ of $G$, we denote by $\Phi(G, S)$ the non-zero weights of $S_{\overline F}$ acting on
the Lie algebra $\Lie(G)$ of $G$
equipped with the action of the absolute Galois group $\Gal(\overline F/F)$ of $F$.
In particular, if $S$ is a maximal split torus of $G$, the set $\Phi(G, S)$\index{notation-ky}{Phi G S@$\Phi(G, S)$} denotes the relative root system of $G$ with respect to $S$.
In this case, we also let $\Phi_{\aff}(G, S)$\index{notation-ky}{Phi aff G S@$\Phi_{\aff}(G, S)$} denote the relative affine root system associated to $(G, S)$ by the work of Bruhat and Tits \cite{MR327923}.

We denote by $\cB(G, E)$ the enlarged Bruhat--Tits building of $G_{E}$, and for a maximal torus $S$ of $G$ that splits over $E$, we let $\cA(G, S, E)$ denote the apartment of $S_{E}$ in $\cB(G, E)$.
We also write $\cB^{\red}(G, E)$ for the reduced building of $G_{E}$ and $\cA^{\red}(G, S, E)$ for the apartment of $S_{E}$ in $\cB^{\red}(G, E)$.
If $x \in \cB(G, F)$,
then we write $[x]_G$
\index{notation-ky}{000x@$[x]_G$}%
for the image of $x$ in the reduced building
$\cB^{\red}(G,F)$, and we may omit the subscript $G$ when it is clear
from the context.
For any abstract group $H$ that acts on $G(F)$, and thus on $\cB(G,F)$ and $\cB^{\red}(G,F)$,
we let $H_{x}$ and $H_{[x]_G}$ denote the stabilizers of $x$ and $[x]_G$ under the actions of $H$, respectively.
Let $\widetilde{\mathbb{R}}$ denote the set
\(
\{ r, r+ \mid r \in \mathbb{R} \} \cup \{\infty\}
\)
with the obvious ordering and the obvious addition operation (see \cite[6.4.1]{MR327923}).
Suppose that $E/F$ is a field
extension of finite ramification degree, and 
$S$ is a maximal torus of $G$ that splits over $E$.
For $x \in \cB(G, E)$, $\alpha \in \Phi(G, S)$, and $r \in \widetilde{\mathbb{R}}\smallsetminus \{ \infty\}$,
let $U_{\alpha}(E)_{x, r}$
\index{notation-ky}{U alpha E x r@$U_{\alpha}(E)_{x, r}$}%
denote the Moy--Prasad filtration subgroup of depth $r$ associated to $x$ of the root subgroup
\index{notation-ky}{U alpha E@$U_{\alpha}(E)$}%
$U_{\alpha}(E)$,
 and set  $U_{\alpha}(E)_{x, \infty} = \{1\}$.
Here, we use the valuation $\ord$ on $E^{\times}$.
For $x \in \cB(G, E)$ and $r \in \widetilde{\mathbb{R}}\smallsetminus \{ \infty\}$ with $r \ge 0$, we also let $G(E)_{x, r}$ be the Moy--Prasad filtration subgroup of $G(E)$ of depth $r$ associated to $x$ (see \cite{MR1253198, MR1371680}), and set $G(E)_{x, \infty} = \{1\}$.
We may abbreviate $G(E)_{x, r}/G(E)_{x, r+}$ to $G(E)_{x, r: r+}$.
We use the analogous notation for the Lie algebra, where $r$ is allowed to be any element of $\widetilde{\mathbb{R}}$.

For a representation $(\rho, V_{\rho})$ of a group $H$, we identify $\rho$ with its representation space $V_{\rho}$ by abuse of notation.
For any vector space $V$, let $\id_V$
\index{notation-ky}{id@$\id$}%
denote the identity map on $V$.

Throughout the paper we let $G$ be a connected reductive group defined over $F$.
We assume that $G$ splits over a tamely ramified field extension of $F$,
and that the residual characteristic $p$ of $F$ is not two.
We require these assumptions in order to apply 
the construction of Kim and Yu, which uses tameness to be able to work over the splitting field for various arguments, and requires that $p \neq 2$ 
whenever the construction involves
the use of nontrivial Heisenberg--Weil representations,
as constructed in 
Section \ref{sec:twistedweil}.


\numberwithin{equation}{subsection}
\section{Extension of the quadratic character of Fintzen--Kaletha--Spice}
\label{sec:extensionofthetwist}
In this section, we will prove that the quadratic character $\epsilon^{G/G'}_{x_0}$ defined in \cite[Section~4]{FKS} extends to a character of a larger group that contains coset representatives for the support of the Hecke algebra that we will study in Section \ref{Section-KimYutypes}. This result will be used to prove
Axiom~\ref{HAI-axiomaboutKM0vsKM}\eqref{HAI-axiomaboutKM0vsKMaboutanextension}
of \cite{HAI}
in the setting of this paper (see Proposition~\ref{propproofofaxiomaboutKM0vsKM} below), and might also be of independent interest.

To state the main result of this section more precisely,
let $G'$ be a twisted Levi subgroup of $G$ that splits over a tamely ramified extension of $F$, i.e., $G'$ is a subgroup of $G$ that becomes a Levi subgroup of a parabolic subgroup of $G$ after base change to an appropriate tamely ramified extension over which $G$ is split.
 Let $M'$ be a Levi subgroup of $G'$ and
let $M$ denote the centralizer of $A_{M'}$ in $G$.
Then $M$ is a Levi subgroup of $G$, and $M'$ is a twisted Levi subgroup of $M$. Moreover, we note that $A_M=A_{M'}$.

Note that this notation is different from that of \cite{FKS}, where the authors denote our $G'$ by $M$, and our $M$ and $M'$ do not appear.

We fix a positive real number $r > 0$.  
Let $x_{0}$ be a point of $\mathcal{B}(M', F)$ and $\{\iota\}$ \label{pageiota}be a commutative diagram
\[
\xymatrix{
\cB(G', F) \ar@{^{(}->}[r] \ar@{}[dr]|\circlearrowleft & \cB(G, F)
\\
\cB(M', F) \ar@{^{(}->}[u] \ar@{^{(}->}[r] & \cB(M, F) \ar@{^{(}->}[u]
}
\]
of admissible embeddings of buildings in the sense of \cite[\S~14.2]{KalethaPrasad} such that the embedding $\iota \colon \cB(M, F) \longrightarrow \cB(G, F)$ is $\frac{r}{2}$-generic relative to $x_{0}$ in the sense of \cite[Definition~3.2]{Kim-Yu}. Here, and from now on, we use these embeddings to identify points in buildings of subgroups with points in the building of the larger groups.

If there exists a character $\phi$ of $G'(F)$ that is $G$-generic of depth $r$ relative to $x_{0}$ in the sense of \cite[Definition~3.5.2]{Fintzen-IHES}, 
then we obtain from \cite[Lemma~4.1.2]{FKS} (using \cite[Remark~4.1.3]{FKS}) the quadratic character 
\[
\epsilon^{G/G'}_{x_0} \colon G'(F)_{[x_0]_{G}} \rightarrow \{ \pm 1 \} .
\]
\index{notation-ky}{epsilonGG'x@$\epsilon^{G/G'}_{x}$}%
In this section (see Definition \ref{definitiontildeepsilon} and Theorem \ref{twistextension}) we will prove that
the restriction of the character $\epsilon^{G/G'}_{x_{0}}$ to $N_{G'}(M')(F)_{[x_{0}]_{G}}$ extends to a character 
\[
\widetilde{\epsilon}^{G/G'}_{x_{0}} \colon N_{G'}(M')(F)_{[x_{0}]_{M'}} \longrightarrow \mu_{4},
\]
where $\mu_4 =\{\zeta \in \Coeff^\times \mid \zeta^4 = 1\}$. Instead of requiring the existence of a $G$-generic character of depth $r$, which will be the setting in which we use the result in Section~\ref{Section-KimYutypes},
we will prove this result in the slightly more general set up of \cite{FKS} that we recall below.

\subsection{Notation}
\label{subsec:notation-twistextension}
We introduce some additional notation, closely following \cite{FKS}, that will be used throughout the remainder of this section.
Let $G_{\ad}$ denote the adjoint group of $G$ and let $G'_{\ad}$, $M'_{\ad}$, and $M_{\ad}$ denote the images of $G'$, $M'$, and $M$ in $G_{\ad}$, respectively.
Let $G_{\scn}$ denote the simply connected cover of the derived subgroup of $G$
and $G'_{\scn}$ denote the preimage of $G'$ in $G_{\scn}$.
We also write $G'_{\scn, \ab} = G'_{\scn} / G'_{\scn, \der}$, where $G'_{\scn, \der}$ denotes the derived subgroup of $G'_{\scn}$.

We write $E^{\unr}$ for the maximal unramified extension of a separable field extension $E$  of $F$ (in $\overline{F}$)
and $I_{E}$ denotes the inertia subgroup of the absolute Galois group $\Gal(\overline{F}/E)$.
Let $\overline{\ff}$ denote the residue field of $F^{\unr}$,
which is a separable closure of $\ff$.

Given a non-degenerate quadratic form $\varphi$ on a vector space $V$
(over $\ff$, say),
we have the corresponding orthogonal group $\ort (V, \varphi)$.
Let 
$\sn \colon \ort(V, \varphi)(\ff) \longrightarrow \ff^\times / (\ff^\times)^2$
denote the spinor norm
as defined in defined in \cite[Chapter~9, Definition~3.4]{MR770063}
(see also \cite[Section~5.2]{FKS}).

Let $L = M'_{\ad}$ or $G'_{\ad}$. 
Then $Z(L)$ is a torus, and we denote by
$\Roots{G_{\ad}}{Z(L)}_{\asym}$ and $\Roots{G_{\ad}}{Z(L)}_{\sym, \ram}$ the sets of nonzero weights in $\Roots{G_{\ad}}{Z(L)}$ that are asymmetric and ramified symmetric in the sense of \cite[Section~2]{FKS}, respectively.
We also write
$
\Roots{G_{\ad}}{Z(L)}^{\sym, \ram} = \Roots{G_{\ad}}{Z(L)} \smallsetminus \Roots{G_{\ad}}{Z(L)}_{\sym, \ram}
$.
We define 
$\Roots{M_{\ad}}{Z(L)}_{\asym}$,
$\Roots{M_{\ad}}{Z(L)}_{\sym, \ram}$,
and
$\Roots{M_{\ad}}{Z(L)}^{\sym, \ram}$ analogously by replacing $G_{\ad}$ with $M_{\ad}$.

For $\alpha_{L} \in \Roots{G_{\ad}}{Z(L)}$, we denote by $\Lie(G_{\ad})_{\alpha_{L}}$ the $\alpha_{L}$-weight space of $Z(L)$ acting on $\Lie(G_{\ad})$.
For $\alpha_{L} \in \Roots{G_{\ad}}{Z(L)}$, $x \in \cB(L, F)$ and $t \in \mathbb{R}$, we let $E$ be a tamely ramified field extension of $F$ such that $\alpha_{L}$ is defined over $E$ and $T$ a maximal torus of $L$ that splits over a tamely ramified field extension $E_{T}$ of $F$ containing $E$ and such that $x \in \cA(L, T, E)$.
Then we write
\[
\Lie(G_{\ad})_{\alpha_{L}}(E)_{x, t} = \left(
\bigoplus_{\alpha \in \Phi(L, T), \alpha \restriction_{Z(L)} = \alpha_{L}} \Lie(G_{\ad})_{\alpha}(E_{T})_{x, t}
\right) \cap \Lie(G_{\ad})_{\alpha_{L}}(E).
\]
By the discussions in \cite[Section~1 and~2]{Yu}, the lattice $\Lie(G_{\ad})_{\alpha_{L}}(E)_{x, t} $ is independent of the choice of the maximal torus $T$ and the field extension $E_{T}$.
We also write $\Lie(G_{\ad})_{\alpha_{L}}(E)_{x, t : t+} = \Lie(G_{\ad})_{\alpha_{L}}(E)_{x, t}/\Lie(G_{\ad})_{\alpha_{L}}(E)_{x, t+}$.

\subsection{The quadratic character when $G$ is adjoint}
\label{subsec:character-adjoint}

We will prove Theorem~\ref{twistextension}
starting with the special case where $G$ is adjoint, so from now on until \S\ref{subsec:extension-general} we assume that $G$ is adjoint. We write $s=\frac{r}{2}$.

We also assume that there exists and fix a $G$-good element $X \in \Lie^{*}(G'_{\scn, \ab})(F)_{-r}$ of depth $-r$ in the sense of \cite[Section~3]{FKS}. 
 Then \cite[Theorem~3.4 and Corollary~3.6]{FKS} provide us with a quadratic character\index{notation-ky}{epsilonGG'x@$\epsilon^{G/G'}_{x}$}%
\[
\epsilon^{G/G'}_{x_{0}} \colon G'(F)_{x_{0}}/G'(F)_{x_{0}, 0+} \rightarrow \{ \pm 1 \}
\]
that we identify with its inflation to $G'(F)_{x_{0}}$.
\index{notation-ky}{epsilonGG'x@$\epsilon^{G/G'}_{x}$}%

According to \cite[Definition~5.5.8]{FKS}, the character $\epsilon^{G/G'}_{x_{0}}$ can be defined as the product of three characters; the character $_{G'} \epsilon_{\sym, \ram}$ defined in \cite[Definition~5.5.1]{FKS}; the character $_{G'} \epsilon^{\sym, \ram}_{s}$ defined in \cite[Definition~5.5.5]{FKS}; the character $_{G'} \epsilon_{0}$ defined in \cite[Definition~5.5.7]{FKS}.
We will recall (equivalent) definitions of these characters below. 
For this, we prepare some notation.

Following \cite[Definition~5.3.1]{FKS} and using \cite[Corollary~2.3]{Yu} as in \cite[Remark~5.5.3]{FKS},
for $x \in \mathcal{B}(M', F)$ and $(O_{L}, t) \in \left(
\Roots{G}{Z(L)}/I_{F}
\right) \times \mathbb{R}$, we define the $\overline{\ff}$-vector space $V_{(x, O_{L}, t)}$ 
 by \spacingatend{}
\begin{align*}
V_{(x, O_{L}, t)}
= \Bigl(
\bigoplus_{\alpha_{L} \in O_{L}} \Lie(G)_{\alpha_{L}}(E^{\unr})_{x, t : t+}
\Bigr)
^{\Gal(E^{\unr}/F^{\unr})},
\end{align*}
where
$E$ is the tamely ramified splitting field of $Z(L)$.

\subsection{Extension of $_{G'} \epsilon^{\protect\sym, \protect\ram}_{s}$} \label{subsection:character2}
To define $_{G'} \epsilon^{\sym, \ram}_{s}$, let $\mathfrak{S} = \Roots{G}{Z(G')}^{\sym, \ram}/I_{F}$.
We fix a subset $\mathfrak{S}^{+}$ such that
$
\mathfrak{S} = \mathfrak{S}^{+} \sqcup - \mathfrak{S}^{+}
$
and write $\mathfrak{S}^{-} = - \mathfrak{S}^{+}$. 
We recall from \cite[Remark~5.5.10]{FKS} that the $\overline{\ff}$-vector spaces  $V_{s}^{+}$ and $V_{s}^{-}$ are defined by
\[
V_{s}^{+} = \bigoplus_{O_{G'} \in \mathfrak{S}^{+}} V_{(x_{0}, O_{G'}, s)}
\qquad
\text{and}
\qquad 
V_{s}^{-} = \bigoplus_{O_{G'} \in \mathfrak{S}^{-}} V_{(x_{0}, O_{G'}, s)},
\]
and the quadratic form $\varphi_{s}$ on $V_{s}^{+} \oplus V_{s}^{-}$ is given by 
$
\varphi_{s}(Y^{+} + Y^{-}) = X \left(
[\widetilde{Y}^{+}, \widetilde{Y}^{-}]
\right)
$ 
for $Y^{+} \in V_{s}^{+}$ and $Y^{-} \in V_{s}^{-}$, where $\widetilde{Y}^{+}$ and $\widetilde{Y}^{-}$ denote lifts of $Y^{+}$ and $Y^{-}$ in $\Lie G_{sc}(\overline F)$ respectively (using the canonical identifications of the weight subspaces of $\Lie G(\overline F)$ and  $\Lie (G_{sc})(\overline F)$), and $[\phantom{x},\phantom{x}]$ denotes the Lie bracket in $\Lie G_{sc}(\overline F)$.
Let $\ort ( V_{s}^{+} \oplus V_{s}^{-}, \varphi_{s} )$
denote the orthogonal group of
$( V_{s}^{+} \oplus V_{s}^{-}, \varphi_{s} )$,
which is defined over $\ff$.
According to \cite[Remark~5.5.10]{FKS}, the adjoint action of $G$ on $\Lie(G)$ induces a group homomorphism
\begin{equation}
\label{grouphomtoorthogonalforexceptsymram}
G'(F)_{x_{0}} \longrightarrow
\ort ( V_{s}^{+} \oplus V_{s}^{-}, \varphi_{s} )(\ff)
\end{equation}
and the character $_{G'} \epsilon^{\sym, \ram}_{s}$ agrees with the composition of \eqref{grouphomtoorthogonalforexceptsymram} with the homomorphism
\begin{equation}
\label{spinornormVspm}
\ort ( V_{s}^{+} \oplus V_{s}^{-}, \varphi_{s} )(\ff)
\stackrel{\sn}{\longrightarrow} \ff^{\times}/(\ff^{\times})^{2} 
\stackrel{\sgn_{\ff}}{\longrightarrow} \{ \pm 1\},
\end{equation}
where $\sn$ denotes the spinor norm,
as in Section \ref{subsec:notation-twistextension}.

We also define the quadratic space $(V_{s, M}^{+} \oplus V_{s, M}^{-}, \varphi_{s, M})$
by replacing $\mathfrak{S}^{\pm}$ with
\[
\mathfrak{S}^{\pm}_{M} = \mathfrak{S}^{\pm} \cap
\left(
\Roots{M}{Z(G')}^{\sym, \ram}/I_{F}
\right)
\]
in the construction of the quadratic space $\left(V_{s}^{+} \oplus V_{s}^{-}, \varphi_{s} \right)$.
Since the embedding
$
\iota \colon \mathcal{B}(M, F) \hookrightarrow \mathcal{B}(G, F)
$
is $s$-generic relative to $x_{0}$, the inclusion $\Lie(M) \subset \Lie(G)$ induces an isomorphism of quadratic spaces 
\[
\bigl(
(
V_{s, M}^{+} \oplus V_{s, M}^{-}
)(\ff), \varphi_{s, M}
\bigr)
\simeq 
\bigl(
(
V_{s}^{+} \oplus V_{s}^{-}
)(\ff), \varphi_{s} 
\bigr).
\]
Moreover, the adjoint action of $N_{G'}(M')(F)_{[x_{0}]_{M'}}=N_{G'}(M')(F)_{[x_{0}]_{M}}$ on $\Lie(M)$ induces an action on
$(
V_{s, M}^{+} \oplus V_{s, M}^{-}
)(\ff)$
that preserves $\varphi_{s, M}$, because the action of $G'(F)$ on $X$ is trivial, and hence yields a group 
 homomorphism 
\[
N_{G'}(M')(F)_{[x_{0}]_{M'}}
\longrightarrow
\ort ( V_{s, M}^{+} \oplus V_{s, M}^{-}, \varphi_{s, M} )(\ff)
\isoarrow
\ort ( V_{s}^{+} \oplus V_{s}^{-}, \varphi_{s} )(\ff).
\]
We define the quadratic character $_{G'} \widetilde{\epsilon}^{\sym, \ram}_{s}$\index{notation-ky}{epsilontildesymrams@$_{G'} \widetilde{\epsilon}^{\sym, \ram}_{s}$}
of $N_{G'}(M')(F)_{[x_{0}]_{M'}}$
as the composition of this homomorphism with \eqref{spinornormVspm}. 

\begin{lemma} 
\label{extensionepsilon2}
The character $_{G'} \widetilde{\epsilon}^{\sym, \ram}_{s}$ is an extension of the character $_{G'} \epsilon^{\sym, \ram}_{s}\restriction_{N_{G'}(M')(F)_{x_{0}}}$ to the group $N_{G'}(M')(F)_{[x_{0}]_{M'}}$.
\end{lemma}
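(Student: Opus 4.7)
The plan is to compare, on the subgroup $N_{G'}(M')(F)_{x_{0}}$, the two homomorphisms into $\ort(V_{s}^{+} \oplus V_{s}^{-}, \varphi_{s})(\ff)$ that enter into the definitions of the two characters; since both characters are obtained by post-composing with the same spinor-norm map \eqref{spinornormVspm}, the lemma will reduce to showing that these two homomorphisms agree on $N_{G'}(M')(F)_{x_{0}}$.

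First I would note the containment $N_{G'}(M')(F)_{x_{0}} \subseteq N_{G'}(M')(F)_{[x_{0}]_{M'}} \cap G'(F)_{x_{0}}$, so that both $_{G'}\epsilon^{\sym,\ram}_{s}$ and $_{G'}\widetilde{\epsilon}^{\sym,\ram}_{s}$ are simultaneously defined on this subgroup. For $g$ in this intersection, the element $_{G'}\epsilon^{\sym,\ram}_{s}(g)$ is built from the adjoint action of $g$ on $\Lie(G)$ restricted to $V_{s}^{+} \oplus V_{s}^{-}$, whereas $_{G'}\widetilde{\epsilon}^{\sym,\ram}_{s}(g)$ is built from the adjoint action of $g$ on $\Lie(M)$ restricted to $V_{s,M}^{+} \oplus V_{s,M}^{-}$, and then transported via the canonical isomorphism $(V_{s,M}^{+} \oplus V_{s,M}^{-}, \varphi_{s,M}) \isoarrow (V_{s}^{+} \oplus V_{s}^{-}, \varphi_{s})$ recalled in the text.

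The crux will be to verify that this canonical isomorphism is $g$-equivariant; once this is established, the two images in $\ort(V_{s}^{+} \oplus V_{s}^{-}, \varphi_{s})(\ff)$ coincide and the spinor norm yields the same output. I would deduce the equivariance as follows. Since $Z(G') \subseteq M' \subseteq M$, the canonical isomorphism arises by direct-summing the weight-space inclusions $\Lie(M)_{\alpha_{G'}} \hookrightarrow \Lie(G)_{\alpha_{G'}}$ at the relevant Moy--Prasad graded pieces. Now any $g \in G'(F)$ centralizes $Z(G')$ pointwise, so its adjoint action preserves each $Z(G')$-weight space; and $g \in N_{G'}(M')(F)$ normalizes the centralizer $M$ of $A_{M'} = A_{M}$ in $G$, so the adjoint action on $\Lie(G)$ restricts to the adjoint action on $\Lie(M)$. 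Consequently the weight-space inclusion is tautologically $g$-equivariant. The whole argument is essentially a diagram chase through the definitions; the only point requiring care is ensuring that the construction of $_{G'}\widetilde{\epsilon}^{\sym,\ram}_{s}$ uses the action on $\Lie(M)$ rather than $\Lie(M')$, so that $g$, which in general does not lie in $M$ itself but only normalizes it, can still act. Beyond this bookkeeping, no serious obstacle is anticipated.
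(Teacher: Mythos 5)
Your argument is correct and follows essentially the same route as the paper's (very terse) proof, which simply declares that the lemma follows from the definitions of the two characters. The point you isolate — that the canonical isomorphism $(V_{s,M}^{+}\oplus V_{s,M}^{-},\varphi_{s,M})\isoarrow(V_{s}^{+}\oplus V_{s}^{-},\varphi_{s})$, induced by $\Lie(M)\subset\Lie(G)$, is equivariant for $g\in N_{G'}(M')(F)_{x_0}$ because $g$ centralizes $Z(G')$ (preserving each $\alpha_{G'}$-weight space) and normalizes $M$ (preserving $\Lie(M)$) — is exactly the content implicitly invoked in the paper, just spelled out.
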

\begin{proof}
The lemma follows from the definition of $_{G'} \widetilde{\epsilon}^{\sym, \ram}_{s}$ and the above description of $_{G'} {\epsilon}^{\sym, \ram}_{s}$.
\end{proof}


\subsection{A description of the character $\epsilon^{G/G'}_{x_0}/_{G'} \epsilon^{\protect\sym, \protect\ram}_{s}$}

Recall that $\epsilon^{G/G'}_{x_0}=_{G'} \epsilon_{\sym, \ram} \cdot {_{G'} \epsilon^{\sym, \ram}_{s}}  \cdot  {_{G'} \epsilon_{0}}$. 
In this subsection, we will recall definitions of the characters $_{G'} \epsilon_{\sym, \ram}$ and $_{G'} \epsilon_{0}$ from \cite{FKS} 
and deduce a description of the product $_{G'} \epsilon_{\sym, \ram} \cdot {_{G'} \epsilon_{0}}$ in Lemma~\ref{rewritethetwopiecesofepsion} that will allow us to extend (the restriction of) $_{G'} \epsilon_{\sym, \ram} \cdot {_{G'} \epsilon_{0}}= \epsilon^{G/G'}_{x_0}/_{G'} \epsilon^{\sym, \ram}_{s}$, and hence $\epsilon^{G/G'}_{x_0}$,  in the subsequent subsections.

We introduce additional notation, closely following \cite{FKS}.
For $\alpha_{M'} \in \Roots{G}{Z(M')}$,
let $F_{\alpha_{M'}}$\index{notation-ky}{FalphaM'@$F_{\alpha_{M'}}$} and $F_{\pm \alpha_{M'}}$\index{notation-ky}{FalphaM'@$F_{\pm \alpha_{M'}}$} denote the subfields of $\overline{F}$ such that
\[
\Gal(\overline{F}/F_{\alpha_{M'}}) = \left\{
\sigma \in \Gal(\overline{F}/F) \mid \sigma (\alpha_{M'}) = \alpha_{M'}
\right\}
\]
and
\[
\Gal(\overline{F}/F_{\pm \alpha_{M'}}) = \left\{
\sigma \in \Gal(\overline{F}/F) \mid \sigma (\alpha_{M'}) \in \left\{
\pm \alpha_{M'}
\right\}
\right\}.
\]
We write $e_{\alpha_{M'}}$ for the ramification degree of the extension $F_{\alpha_{M'}}/F$, and $\ff_{\alpha_{M'}}$ for the residue field of $F_{\alpha_{M'}}$.
Similarly, we define $F_{\alpha_{G'}}$, $F_{\pm \alpha_{G'}}$, $e_{\alpha_{G'}}$, and $\ff_{\alpha_{G'}}$ for $\alpha_{G'} \in \Roots{G}{Z(G')}$.
We set 
\[
\Phi'_{M'} = \{
\alpha_{M'} \in \Roots{G}{Z(M')} \mid
\alpha_{M'}\restriction_{Z(G')} \in \Roots{G}{Z(G')}_{\sym, \ram}
\}.
\]
We also write 
\[
\Phi'_{M', \asym} = \Roots{G}{Z(M')}_{\asym} \cap \Phi'_{M'}
\qquad
\text{and}
\qquad
\Phi'_{M', \sym} = \Phi'_{M'} \smallsetminus \Phi'_{M', \asym}.
\]
Following \cite[Definition~5.5.1]{FKS}, for $O_{G'} \in \Roots{G}{Z(G')}_{\sym, \ram}/I_{F}$, we write $e_{O_{G'}}$ for the common value $e_{\alpha_{G'}}$ for every $\alpha_{G'} \in O_{G'}$.
For $O_{M'} \in \Phi'_{M'} / I_{F}$, we set\index{notation-ky}{sOM'@$s_{O_{M'}}$} 
$
s_{O_{M'}} =e_{O_{G'}}^{-1}/2,
$
where $O_{G'} = O_{M'}\restriction_{Z(G')}$.
For $x \in \mathcal{B}(M', F)$, we define the $\overline{\ff}$-vector spaces $V_{x}$ by
\begin{align*}
V_{x} 
&= \bigoplus_{O_{G'} \in \Roots{G}{Z(G')}_{\sym, \ram}/I_{F} } \bigoplus_{t \in (-e_{O_{G'}}^{-1}/2, e_{O_{G'}}^{-1}/2)} V_{(x, O_{G'}, t)} = \bigoplus_{O_{M'} \in \Phi'_{M'}/I_{F} } \bigoplus_{t \in (-s_{O_{M'}}, s_{O_{M'}})} V_{(x, O_{M'}, t)}.
\end{align*}
Applying \cite[Definition/Lemma~5.4.8]{FKS} to the finite subset 
\[
\mathfrak{S} = \left\{
(O_{G'}, t) \in \Roots{G}{Z(G')}_{\sym, \ram}/I_{F} \times \bR \mid t \in (-e_{O_{G'}}^{-1}/2, e_{O_{G'}}^{-1}/2), \, V_{(x, O_{G'}, t)} \neq \{0\}
\right\}
\]
of $\Roots{G}{Z(G')}/I_{F} \times \bR$, we obtain the non-degenerate, $\Gal(\overline{\ff}/\ff)$-invariant quadratic form $\varphi$ on $V_{x}$.
Instead of recalling the precise definition of $\varphi$, we only remark the following property of $\varphi$, which follows from \cite[Definition~5.4.5]{FKS}.
\begin{property}
\label{propertyofvarphi}
Let $O_{M'}, O'_{M'} \in \Phi'_{M'}/I_{F}$, $t \in (-s_{O_{M'}}, s_{O_{M'}})$, and $t' \in (-s_{O'_{M'}}, s_{O'_{M'}})$.
Then the subspace $V_{(x, O_{M'}, t)}$ of $V_{x}$ is orthogonal to $V_{(x, O'_{M'}, t')}$ with respect to the bilinear form on $V_{x}$ attached to the quadratic form $\varphi$ unless $(O_{M'}, t) = (-O'_{M'}, - t')$.
\end{property}
The adjoint action of $G'$ on $\Lie(G)$ induces an action of $G'(F^{\unr})_{x}$ on $V_{x}$, and this action preserves the form $\varphi$ by \cite[Definition/Lemma~5.4.8]{FKS}.
We also define the $G'(F^{\unr})_{x}$-stable subspaces $V_{x, 0}$ and $V_{x, \neq 0}$ of $V_{x}$ by 
\[
V_{x, 0} = \bigoplus_{O_{M'} \in \Phi'_{M'}/I_{F}} V_{(x, O_{M'}, 0)}
\qquad
\text{and}
\qquad
V_{x, \neq 0} =
\bigoplus_{O_{M'} \in \Phi'_{M'}/I_{F} } \quad
\bigoplus_{\substack{t \in (-s_{O_{M'}}, s_{O_{M'}}) \\ t \neq 0}}
V_{(x, O_{M'}, t)},
\]
and note that $V_{x_0, 0}$ agrees with the space denoted by $V_{0, \bar k}$ in \cite[page~2293]{FKS} for the subset $\Phi(G, Z(G'))_{\sym, \ram} \subset \Phi(G, Z(G'))$ that is used in \cite[Definition~5.5.7]{FKS} to define $_{G'}\epsilon_{0}$.
By Property~\ref{propertyofvarphi}, we have an orthogonal decomposition
\(
V_{x} = V_{x, 0} \oplus V_{x, \neq 0}.
\)
In particular, the paring $\varphi$ yields non-degenerate quadratic forms on $V_{x, 0}$ and $V_{x, \neq 0}$.
Since the pairing $\varphi$ is $\Gal(\overline{\ff}/\ff)$-invariant, the spaces $(V_{x}(\ff), \varphi)$, $(V_{x, 0}(\ff), \varphi)$, and $(V_{x, \neq 0}(\ff), \varphi)$ are non-degenerate quadratic spaces over $\ff$, and the actions of $G'(F)_{x}$ on these spaces induce rational orthogonal representations
$
G'(F)_{x} \longrightarrow \ort( W, \varphi )(\ff)
$
for $W = V_{x}$, $V_{x, 0}$, and $V_{x, \neq 0}$.
According to \cite[Definition~5.5.7]{FKS}, \cite[Definition~5.5.1]{FKS} and \cite[Remark~5.5.10]{FKS}, the character ${_{G'} \epsilon_{0}}$, resp.,\ $_{G'} \epsilon_{\sym, \ram}$ agrees with the composition
\(
G'(F)_{x_{0}} \longrightarrow \ort( W, \varphi )(\ff) 
\xrightarrow{\sn} \ff^{\times}/(\ff^{\times})^{2} 
\xrightarrow{\sgn_{\ff}} \{ \pm 1\},
\)
where $W = V_{x_{0}, 0}$, resp.,\ $V_{x_{0}, \neq 0}$. 
\begin{lemma}
\label{rewritethetwopiecesofepsion}
The product $_{G'} \epsilon_{\sym, \ram} \cdot {_{G'} \epsilon_{0}}$ agrees with the composition
\[
G'(F)_{x_{0}} \longrightarrow \ort ( V_{x_{0}}, \varphi )(\ff) 
\xrightarrow{\sn} \ff^{\times}/(\ff^{\times})^{2} 
\xrightarrow{\sgn_{\ff}} \{ \pm 1\}.
\]
\end{lemma}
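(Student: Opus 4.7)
The plan is to combine the orthogonal decomposition $V_{x_{0}} = V_{x_{0}, 0} \oplus V_{x_{0}, \neq 0}$ with the multiplicativity of the spinor norm on orthogonal direct sums. First I would record that, by Property~\ref{propertyofvarphi} applied with $(O_{M'}, 0)$ versus $(O'_{M'}, t')$ with $t' \neq 0$, the subspaces $V_{x_{0}, 0}$ and $V_{x_{0}, \neq 0}$ are mutually orthogonal with respect to $\varphi$, and that their sum exhausts $V_{x_{0}}$ by the defining direct-sum decompositions in the previous subsection. Hence $(V_{x_{0}}, \varphi)$ is the orthogonal sum of the non-degenerate quadratic subspaces $(V_{x_{0}, 0}, \varphi)$ and $(V_{x_{0}, \neq 0}, \varphi)$ over $\ff$.

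Next I would observe that each of $V_{x_{0}, 0}$ and $V_{x_{0}, \neq 0}$ is stable under the action of $G'(F)_{x_{0}} \subset G'(F^{\unr})_{x_0}$, because this action preserves each summand $V_{(x_{0}, O_{M'}, t)}$ up to $I_F$-orbits in the indexing set, which in turn respects the condition $t = 0$ versus $t \neq 0$. Therefore the rational orthogonal representation $G'(F)_{x_{0}} \to \ort(V_{x_{0}}, \varphi)(\ff)$ factors through the subgroup
\[
\ort(V_{x_{0}, 0}, \varphi)(\ff) \times \ort(V_{x_{0}, \neq 0}, \varphi)(\ff) \hookrightarrow \ort(V_{x_{0}}, \varphi)(\ff),
\]
the embedding being the block-diagonal one given by the orthogonal decomposition.

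Finally, I would invoke the standard multiplicativity of the spinor norm with respect to orthogonal direct sums (see, e.g., \cite[Chapter~9]{MR770063}): for any $(g_{0}, g_{\neq 0}) \in \ort(V_{x_{0}, 0}, \varphi)(\ff) \times \ort(V_{x_{0}, \neq 0}, \varphi)(\ff)$, we have
\[
\sn(g_{0} \oplus g_{\neq 0}) = \sn(g_{0}) \cdot \sn(g_{\neq 0}) \quad \text{in } \ff^{\times}/(\ff^{\times})^{2}.
\]
Applying $\sgn_{\ff}$, which is a group homomorphism $\ff^{\times}/(\ff^{\times})^{2} \to \{\pm 1\}$, to both sides and recalling from the discussion just above the lemma that $_{G'}\epsilon_{0}$, resp.,\ $_{G'}\epsilon_{\sym, \ram}$, is exactly the composition corresponding to $V_{x_{0}, 0}$, resp.,\ $V_{x_{0}, \neq 0}$, yields the desired identification.

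The only step with any real content is the multiplicativity of $\sn$ on orthogonal direct sums, which is classical; all other pieces are bookkeeping that follows directly from Property~\ref{propertyofvarphi} and the recollected definitions from \cite{FKS}.
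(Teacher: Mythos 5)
Your proof is correct and takes essentially the same approach as the paper's, which simply cites the orthogonal decomposition $V_{x_{0}} = V_{x_{0}, 0} \oplus V_{x_{0}, \neq 0}$ together with the multiplicativity of the spinor norm on orthogonal sums (referring to \cite[Section~55.4]{MR1754311} rather than \cite{MR770063}). You have just spelled out the intermediate bookkeeping steps that the paper leaves implicit.
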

\begin{proof}
The lemma follows from the orthogonal decomposition
$
V_{x_{0}} = V_{x_{0}, 0} \oplus V_{x_{0}, \neq 0}
$ by \cite[Section~55.4]{MR1754311}.
\end{proof}

\subsection{An action of $N_{G'}(M')(F)_{[x_{0}]_{M'}}$ on $V_{x_0}$} \label{subsection:constructionofon}
In order to extend the character $(_{G'} \epsilon_{\sym, \ram} \cdot {_{G'} \epsilon_{0}})\restriction_{N_{G'}(M')(F)_{x_{0}}}$ using Lemma \ref{rewritethetwopiecesofepsion} to $N_{G'}(M')(F)_{[x_{0}]_{M'}}$, we first construct an action of $N_{G'}(M')(F)_{[x_{0}]_{M'}}$ on $V_{x_0}$.
We define the subset $\mathcal{A}_{s\dashgen}$ of $\mathcal{B}(M', F)$ by
\[
\mathcal{A}_{s\dashgen} = \left\{
x \in x_{0} + X_{*}(A_{M'}) \otimes_{\mathbb{Z}} \mathbb{R} \mid \text{
$\iota \colon \cB(M, F) \longrightarrow \cB(G, F)$ is $s$-generic relative to $x$
}
\right\}.
\]
We note that for any $n \in N_{G'}(M')(F)_{[x_{0}]_{M'}}$, we have $n x_{0} \in \mathcal{A}_{s\dashgen}$.
More generally, let $x, y \in \cA_{s\dashgen}$.
In order to define the action of $N_{G'}(M')(F)_{[x_{0}]_{M'}}$ on $V_{x_0}$ in Definition \ref{definitionofon}, we will construct an isomorphism
$
I_{y \mid x} \colon V_{x} \longrightarrow V_{y}
$ that is compatible with the action of $ N_{G'}(M')(F)_{[x_{0}]_{M'}}$ in the sense of Lemma \ref{lemmacommutativityreplacingpoint}. To construct this isomorphism, we begin by relating the jumps of the relevant root group filtrations at $x$ and $y$ that are used to define $V_x$ and $V_y$.

Let $O_{M'} \in \Phi'_{M'}/I_{F}$ and $t \in (-s_{O_{M'}}, s_{O_{M'}})$.
Recall that we have
\[
V_{(x, O_{M'}, t)} =
\Bigl(
\bigoplus_{\alpha_{M'} \in O_{M'}} \Lie(G)_{\alpha_{M'}}(E^{\unr})_{x, t : t+}
\Bigr)
^{\Gal(E^{\unr}/F^{\unr})}.
\]
In particular, we have $V_{(x, O_{M'}, t)} = \{0\}$ unless 
$
\Lie(G)_{\alpha_{M'}}(F^{\unr}_{\alpha_{M'}})_{x, t : t+} \neq \{0\}
$
for some (hence all) $\alpha_{M'} \in O_{M'}$.
The general theory of Galois descent for vector spaces implies that
\[
\Lie(G)_{\alpha_{M'}}(F^{\unr}_{\alpha_{M'}})_{x, t : t+} = \Lie(G)_{\alpha_{M'}}(F_{\alpha_{M'}})_{x, t : t+} \otimes_{\ff_{\alpha_{M'}}} \overline{\ff}.
\]
Hence, we obtain that 
 $V_{(x, O_{M'}, t)} = \{0\}$ unless 
$
\Lie(G)_{\alpha_{M'}}(F_{\alpha_{M'}})_{x, t : t+} \neq \{0\}
$.
For $O_{M'} \in \Phi'_{M'}/I_{F}$, we define
\[
J(O_{M'}; x) =
\left\{
t \in (-s_{O_{M'}}, s_{O_{M'}}) \mid \text{$\Lie(G)_{\alpha_{M'}}(F_{\alpha_{M'}})_{x, t : t+} \neq \{0\}$ for all $\alpha_{M'} \in O_{M'}$}
\right\}.
\]
Then we have
\[
V_{x} =
\bigoplus_{O_{M'} \in \Phi'_{M'}/I_{F} } \quad
\bigoplus_{t \in J(O_{M'}; x)} V_{(x, O_{M'}, t)}.
\]
Since
$
y - x \in X_{*}(A_{M'}) \otimes_{\mathbb{Z}} \mathbb{R} \subset X_{*}(Z(M')) \otimes_{\mathbb{Z}} \mathbb{R}
$,
the pairing $\langle y - x, \alpha_{M'} \rangle$ does not depend on the choice of $\alpha_{M'} \in O_{M'}$.
We write it as $\langle y - x, O_{M'} \rangle$.
\begin{lemma}
\label{lemmat'notinoddinteger}
Let $O_{M'} \in \Phi'_{M'}/I_{F}$ and $t \in J(O_{M'}; x)$.
Then we have
\[
t + \langle y - x, O_{M'} \rangle \not \in s_{O_{M'}} \mathbb{Z} \smallsetminus 2 s_{O_{M'}} \mathbb{Z}.
\]
\end{lemma}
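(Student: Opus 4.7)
The strategy will be to reinterpret $J(O_{M'};x)$ through the language of affine roots of $G$, to exploit the uniform shift coming from $y-x\in X_*(A_{M'})\otimes_{\mathbb{Z}}\mathbb{R}$, and to invoke the $s$-genericity of $y$ together with the structure of symmetric ramified affine roots to rule out odd multiples of $s_{O_{M'}}$ as jumps at $y$.

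First, I would observe that $t\in J(O_{M'};x)$ is equivalent to the existence of an affine root $\psi$ of $G$ with vector part $d\psi\in O_{M'}$ such that $\psi(x)=t$, under the normalisation matching the indexing of the Moy--Prasad filtration. Since every element of the $I_F$-orbit $O_{M'}\subset\Roots{G}{Z(M')}$ restricts to the same character of $A_{M'}$, and $y-x\in X_*(A_{M'})\otimes_{\mathbb{Z}}\mathbb{R}$, the pairing $\langle y-x,\,\cdot\,\rangle$ is constant on $O_{M'}$, with common value $\langle y-x,O_{M'}\rangle$. Hence $\psi(y)=\psi(x)+\langle y-x,O_{M'}\rangle = t+\langle y-x,O_{M'}\rangle$, so the shifted value is itself the value at $y$ of an affine root with vector part in $O_{M'}$.

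Next, I would describe the possible values of such affine roots. Because $\alpha_{M'}|_{Z(G')}\in\Roots{G}{Z(G')}_{\sym,\ram}$ has ramification degree $e_{O_{G'}}$ and $p\ne 2$, the Bruhat--Tits classification of affine roots attached to symmetric ramified orbits shows that the values of affine roots with vector part in $O_{M'}$ at a given apartment point $z$ form a coset $c(z)+\tfrac{1}{e_{O_{G'}}}\mathbb{Z}=c(z)+2s_{O_{M'}}\mathbb{Z}$. The offset $c(z)$ lies in $s_{O_{M'}}\mathbb{Z}\smallsetminus 2s_{O_{M'}}\mathbb{Z}$ precisely when $z$ sits on the reflection hyperplane of an affine root of the ``ramified symmetric'' type, i.e.\ one arising from the $BC$-type doubling of the orbit $O_{G'}$.

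Finally, I would invoke the hypothesis $y\in\mathcal{A}_{s\dashgen}$: by definition $\iota\colon\cB(M,F)\hookrightarrow\cB(G,F)$ is $s$-generic relative to $y$ in the sense of \cite[Definition~3.2]{Kim-Yu}, and this forbids $y$ from lying on any such reflection hyperplane for an affine root whose vector part lies outside $\Roots{M}{Z(M')}$. If $\alpha_{M'}\not\in\Roots{M}{Z(M')}$, this gives $c(y)\not\in s_{O_{M'}}\mathbb{Z}\smallsetminus 2s_{O_{M'}}\mathbb{Z}$, hence every $\psi(y) = c(y)+2ks_{O_{M'}}$ also lies outside that set. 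If instead $\alpha_{M'}\in\Roots{M}{Z(M')}$, then $\alpha_{M'}$ vanishes on $A_{M'}$, so $\langle y-x,O_{M'}\rangle=0$ and the claim reduces to $t\not\in s_{O_{M'}}\mathbb{Z}\smallsetminus 2s_{O_{M'}}\mathbb{Z}$, which is immediate from $t\in(-s_{O_{M'}},s_{O_{M'}})$. The main obstacle will be the structural claim of the middle step --- identifying values in $s_{O_{M'}}\mathbb{Z}\smallsetminus 2s_{O_{M'}}\mathbb{Z}$ as arising exclusively from doubled ramified symmetric affine roots whose reflection hyperplanes $s$-generic points avoid --- together with its precise matching to the $s$-genericity condition of \cite[Definition~3.2]{Kim-Yu}.
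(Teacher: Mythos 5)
Your plan identifies the right object (transporting a Moy--Prasad jump from $x$ to $y$ via the constant shift $\langle y-x,O_{M'}\rangle$) and handles the case $\alpha_{M'}\in\Roots{M}{Z(M')}$ correctly, but the middle step --- the ``structural claim'' you yourself flag as the main obstacle --- is where all the content lies, and as stated it does not follow. The $s$-genericity of $\iota$ at $y$ in the sense of \cite[Definition~3.2]{Kim-Yu} is a statement about filtration jumps at the single value $s=r/2$ for roots outside $\Phi(M,T)$. A priori it says nothing about whether the coset of jump values $c(y)+2s_{O_{M'}}\mathbb{Z}$ can meet the set of odd multiples of $s_{O_{M'}}$: the numbers $s$ and $s_{O_{M'}}=\tfrac{1}{2}e_{O_{G'}}^{-1}$ are unrelated in general, so no ``reflection hyperplane'' avoided by $s$-generic points can be invoked until one has first identified the putative bad value $s_{O_{M'}}r'$ (with $r'$ odd) with $s$ modulo $2s_{O_{M'}}\mathbb{Z}$. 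Your proposal offers no mechanism for this identification.

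The paper supplies exactly this bridge, and it is where the ramified-symmetric hypothesis does its work. By \cite[Lemma~5.6.5]{FKS}, for $\alpha_{G'}=\alpha_{M'}\restriction_{Z(G')}$ ramified symmetric, the $G$-goodness of $X$ at depth $-r$ forces $e_{\alpha_{G'}}r$ to be an \emph{odd} integer --- an arithmetic property of the good element, not a geometric feature of the apartment. Granting this, if $t'=t+\langle y-x,O_{M'}\rangle=s_{O_{M'}}r'$ with $r'$ odd, then $s-t'=(e_{\alpha_{G'}}r-r')/(2e_{\alpha_{G'}})$ is a difference of two odd integers over $2e_{\alpha_{G'}}$, hence lies in $e_{\alpha_{G'}}^{-1}\mathbb{Z}$; a uniformiser of $F_{\alpha_{G'}}$ of that valuation then transports the nonzero jump at $t'$ to a nonzero jump at $s$, and only now does $s$-genericity at $y$ bite, forcing $\alpha_{M'}\in\Roots{M}{Z(M')}$, whence $\langle y-x,\alpha_{M'}\rangle=0$ and $t'=t\in(-s_{O_{M'}},s_{O_{M'}})$, a contradiction. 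Without this parity-and-uniformiser step, or an equivalent appeal to \cite[Lemma~5.6.5]{FKS}, the proposal has a genuine gap.
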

\begin{proof}
\addtocounter{equation}{-1}
\begin{subequations}
We write $t' = t + \langle y - x, O_{M'} \rangle$.
Fix an element $\alpha_{M'} \in O_{M'}$.
The definition of the Moy--Prasad filtration implies that
\begin{align*}
\Lie(G)_{\alpha_{M'}}(F_{\alpha_{M'}})_{x, t : t+} &= \Lie(G)_{\alpha_{M'}}(F_{\alpha_{M'}})_{y + (x-y), t : t+} \\
&= \Lie(G)_{\alpha_{M'}}(F_{\alpha_{M'}})_{y, t + \langle y - x, \alpha_{M'} \rangle : (t + \langle y - x, \alpha_{M'} \rangle )+} 
= \Lie(G)_{\alpha_{M'}}(F_{\alpha_{M'}})_{y, t' : t'+}.
\end{align*}
Hence, the assumption $t \in J(O_{M'}; x)$ implies that 
\begin{align}
\label{t'isajumpaty}
\Lie(G)_{\alpha_{M'}}(F_{\alpha_{M'}})_{y, t' : t'+} \neq \{0\}.
\end{align}
Suppose that 
$
t' \in s_{O_{M'}} \mathbb{Z} \smallsetminus 2 s_{O_{M'}} \mathbb{Z}
$.
Then there exists an odd integer $r'$ such that $t' = s_{O_{M'}} \cdot r'$.
Since $\alpha_{G'} \coloneqq  \alpha_{M'}\restriction_{Z(G')}$ is ramified symmetric, according to \cite[Lemma~5.6.5]{FKS}, the number $e_{\alpha_{G'}} r$ is an odd integer.
Hence, we have
\[
s - t' =
s - s_{O_{M'}} \cdot r' 
= r/2 - e_{\alpha_{G'}}^{-1} \cdot r'/2 
= (e_{\alpha_{G'}}r - r')/2 e_{\alpha_{G'}} 
 \in e_{\alpha_{G'}}^{-1} \mathbb{Z}.
\]
Thus, there exists an element $\varpi_{s - t'} \in F_{\alpha_{G'}} \subset F_{\alpha_{M'}}$ such that $\ord(\varpi_{s - t'}) = s - t' $.
Then, \eqref{t'isajumpaty} implies that
\begin{multline*}
\Lie(G)_{\alpha_{M'}}(F_{\alpha_{M'}})_{y, s} / \Lie(G)_{\alpha_{M'}}(F_{\alpha_{M'}})_{y, s+}
\\
=
 \varpi_{s - t'} \cdot \Lie(G)_{\alpha_{M'}}(F_{\alpha_{M'}})_{y, t'} / \varpi_{s - t'} \cdot \Lie(G)_{\alpha_{M'}}(F_{\alpha_{M'}})_{y, t'+} 
\neq \{0\}.
\end{multline*}
As the embedding $\iota \colon \cB(M, F) \longrightarrow \cB(G, F)$ is $s$-generic relative to $y$, we have $\alpha_{M'} \in \Roots{M}{Z(M')}$.
Since
$
y - x \in X_{*}(A_{M'}) \otimes_{\mathbb{Z}} \mathbb{R}
$
and since $M$ is the centralizer of $A_{M'}$,
we have $\langle y - x, \alpha_{M'} \rangle = 0$.
Thus, we obtain $t' = t \in (- s_{O_{M'}}, s_{O_{M'}})$, a contradiction.
\end{subequations}
\end{proof}
Let $O_{M'} \in \Phi'_{M'}/I_{F}$ and $t \in J(O_{M'}; x)$.
that there exists a unique $r(y-x; O_{M'}; t) \in 2 s_{O_{M'}} \mathbb{Z}$\index{notation-ky}{ry-x@$r(y-x; O_{M'}; t)$} such that
\[
t + \langle y - x, O_{M'} \rangle + r(y-x; O_{M'}; t)  \in (-s_{O_{M'}}, s_{O_{M'}}).
\]
\begin{remark}
\label{remarksymmetricvanish}
Suppose that $\alpha_{M'} \in \Roots{G}{Z(M')}$ is symmetric.
Then, there exists an element $\sigma_{\alpha_{M'}} \in \Gal(\overline{F}/F)$ such that $\sigma_{\alpha_{M'}}(\alpha_{M'}) = - \alpha_{M'}$.
In this case, we have
\[
- \langle y - x, \alpha_{M'} \rangle = \langle y - x, - \alpha_{M'} \rangle
= \langle y - x, \sigma_{\alpha_{M'}}(\alpha_{M'}) \rangle
= \langle \sigma_{\alpha_{M'}}^{-1}(y - x), \alpha_{M'} \rangle
= \langle y - x, \alpha_{M'} \rangle.
\]
Hence, we obtain that $\langle y - x, \alpha_{M'} \rangle = 0$.
Thus, we have $r(y-x; I \alpha_{M'}; t) = 0$ for all $t \in~ (-s_{O_{M'}}, s_{O_{M'}})$.
\end{remark}
\begin{lemma}
\label{lemmabijectionofJ}
Let $x, y \in \mathcal{A}_{s\dashgen}$ and $O_{M'} \in \Phi'_{M'}/I_{F}$.
Then the map
\[
t \mapsto t + \langle y - x, O_{M'} \rangle + r(y-x; O_{M'}; t)
\]
defines a bijection
$
J(O_{M'}; x) \longrightarrow J(O_{M'}; y)
$.
\end{lemma}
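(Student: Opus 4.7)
The plan is to verify that the stated map is well-defined from $J(O_{M'}; x)$ to $J(O_{M'}; y)$, and then to exhibit its inverse as the same formula with $x$ and $y$ swapped.

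For well-definedness, I would fix $t \in J(O_{M'}; x)$ and $\alpha_{M'} \in O_{M'}$, write $u := t + \langle y-x, O_{M'}\rangle$, and let $t' := u + r(y-x; O_{M'}; t) \in (-s_{O_{M'}}, s_{O_{M'}})$; the integer $r(y-x; O_{M'}; t) \in 2 s_{O_{M'}}\mathbb{Z}$ exists precisely because Lemma~\ref{lemmat'notinoddinteger} rules out $u \in s_{O_{M'}}\mathbb{Z} \smallsetminus 2 s_{O_{M'}}\mathbb{Z}$. The Moy--Prasad translation identity already used in the proof of that lemma gives
\[
\Lie(G)_{\alpha_{M'}}(F_{\alpha_{M'}})_{y, u : u+} = \Lie(G)_{\alpha_{M'}}(F_{\alpha_{M'}})_{x, t : t+} \neq \{0\}.
\]
To transport this nonvanishing from depth $u$ to depth $t'$, the key observation is that $2 s_{O_{M'}} = e_{\alpha_{G'}}^{-1}$ where $\alpha_{G'} := \alpha_{M'}\restriction_{Z(G')}$, so $r(y-x; O_{M'}; t) \in e_{\alpha_{G'}}^{-1}\mathbb{Z}$. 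Choosing $\varpi \in F_{\alpha_{G'}}^{\times} \subseteq F_{\alpha_{M'}}^{\times}$ with $\ord(\varpi) = r(y-x; O_{M'}; t)$, multiplication by $\varpi$ is an $\ff_{\alpha_{M'}}$-linear isomorphism between the filtration quotients at depth $u$ and at depth $t'$, yielding nonvanishing at $t'$. Since $\alpha_{M'} \in O_{M'}$ was arbitrary, $t' \in J(O_{M'}; y)$.

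For bijectivity, the identical argument with $x$ and $y$ interchanged produces a well-defined map $J(O_{M'}; y) \to J(O_{M'}; x)$, sending $t' \mapsto t' + \langle x-y, O_{M'}\rangle + r(x-y; O_{M'}; t')$. To check mutual inversion, if $t \in J(O_{M'}; x)$ has image $t'$, then the composite sends $t$ to $t + r(y-x; O_{M'}; t) + r(x-y; O_{M'}; t')$; this must lie in $(-s_{O_{M'}}, s_{O_{M'}})$, so the uniqueness defining $r(x-y; O_{M'}; t')$ forces $r(x-y; O_{M'}; t') = -r(y-x; O_{M'}; t)$, and one recovers $t$.

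The main obstacle, in fact the only non-formal step, is the realization of the periodic shift on filtration quotients by multiplication by a scalar drawn from the smaller field $F_{\alpha_{G'}}$ rather than from $F_{\alpha_{M'}}$ --- this is precisely what the definition of $s_{O_{M'}}$ in terms of $e_{O_{G'}}$ is engineered to permit. Everything else is bookkeeping based on Lemma~\ref{lemmat'notinoddinteger} and standard Moy--Prasad filtration manipulations.
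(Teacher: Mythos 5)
Your argument is correct and follows the same route as the paper's: verify well-definedness by combining the Moy--Prasad translation identity (relating the depth-$t$ quotient at $x$ to the depth-$(t+\langle y-x,\alpha_{M'}\rangle)$ quotient at $y$) with a scaling by an element of $F_{\alpha_{G'}}$ of valuation $r(y-x;O_{M'};t)$, using that $2s_{O_{M'}} = e_{\alpha_{G'}}^{-1}$ lies in the value group of $F_{\alpha_{G'}}$; then observe that swapping $x$ and $y$ produces the inverse map. Your unpacking of the uniqueness step --- that the composite shift $r(y-x;O_{M'};t) + r(x-y;O_{M'};t')$ must vanish because $t$ already lies in the open window --- makes explicit what the paper's proof leaves implicit when it concludes from $\langle x-y,O_{M'}\rangle = -\langle y-x,O_{M'}\rangle$ that the two maps are inverse; this is a small but welcome clarification. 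One tiny imprecision: $r(y-x;O_{M'};t)$ is not an integer but an element of $2s_{O_{M'}}\mathbb{Z} \subset \mathbb{R}$ (the genuine integer is $z(y-x;O_{M'};t) = r \cdot e_{\alpha_{G'}}$); this does not affect the substance of the argument.
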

\begin{proof}
Let $t \in J(O_{M'}; x)$.
We write
$
t' = t + \langle y - x, O_{M'} \rangle 
$
and
$
t'' = t + \langle y - x, O_{M'} \rangle + r(y-x; O_{M'}; t)
$.
We will prove that $t'' \in J(O_{M'}; y)$.
The definition of $r(y-x; O_{M'}; t)$ implies that $t'' \in (-s_{O_{M'}}, s_{O_{M'}})$.
Thus, it suffices to show that
$
\Lie(G)_{\alpha_{M'}}(F_{\alpha_{M'}})_{y, t'' : t''+} \neq \{0\} 
$
for all $\alpha_{M'} \in O_{M'}$.
As in the proof of Lemma~\ref{lemmat'notinoddinteger}, the definition of the Moy--Prasad filtration implies that
\[
\Lie(G)_{\alpha_{M'}}(F_{\alpha_{M'}})_{x, t : t+} = \Lie(G)_{\alpha_{M'}}(F_{\alpha_{M'}})_{y, t' : t'+}.
\]
Hence, the assumption $t \in J(O_{M'}; x)$ implies that
$
\Lie(G)_{\alpha_{M'}}(F_{\alpha_{M'}})_{y, t' : t'+} \neq \{0\}
$.
Since
\[
r(y-x; O_{M'}; t) \in 2 s_{O_{M'}} \mathbb{Z}
= e_{\alpha_{G'}}^{-1} \mathbb{Z},
\]
there exists an element $\varpi_{r} \in F_{\alpha_{G'}} \subset F_{\alpha_{M'}}$ such that $\ord(\varpi_{r}) = r(y-x; O_{M'}; t)$.
Then, we have
\[
\Lie(G)_{\alpha_{M'}}(F_{\alpha_{M'}})_{y, t''} / \Lie(G)_{\alpha_{M'}}(F_{\alpha_{M'}})_{y, t''+} =
 \varpi_{r} \cdot \Lie(G)_{\alpha_{M'}}(F_{\alpha_{M'}})_{y, t'} / \varpi_{r} \cdot \Lie(G)_{\alpha_{M'}}(F_{\alpha_{M'}})_{y, t'+} \neq \{0\}.
\]
Thus, $t \mapsto t + \langle y - x, O_{M'} \rangle + r(y-x; O_{M'}; t)$ defines a map
$
J(O_{M'}; x) \longrightarrow J(O_{M'}; y)
$.
Replacing $x$ with $y$, we also obtain a map
$
J(O_{M'}; y) \longrightarrow J(O_{M'}; x)
$
by
$
t'' \mapsto t'' + \langle x - y, O_{M'} \rangle + r(x-y; O_{M'}; t'')
$.
Since we have
$
\langle x - y, O_{M'} \rangle = - \langle y - x, O_{M'} \rangle
$, these maps are inverses of each other.
\end{proof}

The construction of the isomorphism $I_{y \mid x} \colon V_{x} \longrightarrow V_{y}$ involves scaling by appropriate powers of uniformizers  $\varpi_{\alpha_{M'}}$ of $F_{\alpha_{M'}\restriction_{Z(G')}}$ for $\alpha_{M'} \in \Phi'_{M'}$ based on the bijection in Lemma \ref{lemmabijectionofJ}. This requires to be able to choose the uniformizers in a compatible way, which we prove next, see Lemma~\ref{lemmafixuniformizer}. We first introduce some terminology.

\begin{definition}
Let $\alpha_{M'} \in \Phi'_{M'}$.
We say that $\alpha_{M'}$ is \emph{$N$-symmetric} if there exists $n \in~ N_{G'}(M')(F)_{[x_{0}]_{M'}}$ and $\sigma \in \Gal(\overline{F}/F)$ such that
$
\sigma(n \alpha_{M'}) = - \alpha_{M'}
$.
We also say that an element $\alpha_{M'} \in \Phi'_{M'}$ is \emph{$N$-asymmetric} if it is not $N$-symmetric.
We define the subsets $\Phi'_{M', \Nsym}$ and $\Phi'_{M', \Nasym}$ of $\Phi'_{M'}$ by \spacingatend{the centered piece could me made slightly less wide and then moved up a bit if ``by'' stays the only word in this line}
\[
\Phi'_{M', \Nsym} = \left\{
\alpha_{M'} \in \Phi'_{M'} \mid \text{$\alpha_{M'}$ is $N$-symmetric}
\right\}
\qquad
\text{and}
\qquad
\Phi'_{M', \Nasym} = \Phi'_{M'} \smallsetminus \Phi'_{M', \Nsym}.
\]
\end{definition}
\begin{lemma}
\label{lemmagooduniformizer}
Let $\alpha_{M'} \in \Phi'_{M', \Nsym}$.
We write $\alpha_{G'} = \alpha_{M'}\restriction_{Z(G')}$.
Then there exists a uniformizer $\varpi_{\alpha_{M'}}$ of $F_{\alpha_{G'}}$ such that
$
\sigma(\varpi_{\alpha_{M'}}) = - \varpi_{\alpha_{M'}}
$
for all $\sigma \in \Gal(\overline{F}/F)$ for which there exists $n \in N_{G'}(M')(F)_{[x_{0}]_{M'}} $
with $\sigma(n \alpha_{M'}) = - \alpha_{M'}$.
\end{lemma}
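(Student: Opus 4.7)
The plan is to first reduce the condition on $\sigma$ to a statement that depends only on $\sigma$ restricted to $F_{\alpha_{G'}}$, by observing that $n$ drops out, and then to invoke the classical structure of ramified quadratic extensions of local fields of odd residual characteristic to produce a uniformizer with the required transformation law.

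\emph{Step 1 (eliminating $n$).} Let $\sigma \in \Gal(\overline{F}/F)$ and $n \in N_{G'}(M')(F)_{[x_{0}]_{M'}}$ satisfy $\sigma(n\alpha_{M'}) = -\alpha_{M'}$, and set $\alpha_{G'} = \alpha_{M'}\restriction_{Z(G')}$ as in the statement. Since $Z(G')$ is central in $G'$, every element of $G'$, and in particular $n \in N_{G'}(M')(F) \subset G'(F)$, centralizes $Z(G')$, so the conjugation action of $n$ on $X^{*}(Z(G'))$ is trivial. In particular $n\alpha_{G'} = \alpha_{G'}$, and restricting the identity $\sigma(n\alpha_{M'}) = -\alpha_{M'}$ to $Z(G')$ yields
\[
\sigma(\alpha_{G'}) \;=\; \sigma(n\alpha_{G'}) \;=\; -\alpha_{G'}.
\]
Hence $\sigma \in \Gal(\overline{F}/F_{\pm\alpha_{G'}}) \smallsetminus \Gal(\overline{F}/F_{\alpha_{G'}})$, and $\sigma\restriction_{F_{\alpha_{G'}}}$ equals the unique nontrivial element $\tau$ of $\Gal(F_{\alpha_{G'}}/F_{\pm\alpha_{G'}})$, independently of the chosen $n$.

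\emph{Step 2 (constructing the uniformizer).} Because $\alpha_{G'}$ is ramified symmetric, $F_{\alpha_{G'}}/F_{\pm\alpha_{G'}}$ is a ramified quadratic extension. Since $p \neq 2$ by the running assumption of the paper, a standard argument in local field theory shows that every ramified quadratic extension of a nonarchimedean local field of odd residual characteristic has the shape $F_{\pm\alpha_{G'}}(\sqrt{d})$ for some $d \in F_{\pm\alpha_{G'}}$ of odd $\ord$-valuation. Setting $\varpi_{\alpha_{M'}} \coloneqq \sqrt{d}$ therefore produces a uniformizer of $F_{\alpha_{G'}}$ with $\tau(\varpi_{\alpha_{M'}}) = -\varpi_{\alpha_{M'}}$. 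Combined with Step~1, for any admissible $\sigma$ we obtain $\sigma(\varpi_{\alpha_{M'}}) = \tau(\varpi_{\alpha_{M'}}) = -\varpi_{\alpha_{M'}}$, which is precisely the desired property.

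\emph{Main obstacle.} The only place that requires care is Step~1: one must check that the $N_{G'}(M')(F)_{[x_{0}]_{M'}}$-action on $\alpha_{M'}$ disappears once one restricts to $Z(G')$, so that the condition on $\sigma$ really is independent of $n$. Once this is observed, existence of the uniformizer in Step~2 is a standard consequence of $p \neq 2$ (in residual characteristic $2$ the same statement would fail and one would need a more delicate argument). I would expect no further difficulty.
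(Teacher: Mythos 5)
Your proof is correct and follows essentially the same route as the paper: reduce to showing that $\sigma\restriction_{F_{\alpha_{G'}}}$ is the nontrivial element of $\Gal(F_{\alpha_{G'}}/F_{\pm\alpha_{G'}})$ by observing that $n \in G'(F)$ acts trivially on $X^*(Z(G'))$, then pick a uniformizer of the ramified quadratic extension on which that element acts by $-1$. One small slip in Step~2: if $d$ has odd valuation greater than $1$, then $\sqrt{d}$ generates $F_{\alpha_{G'}}$ and satisfies $\tau(\sqrt{d})=-\sqrt{d}$ but is not a uniformizer; you should take $d$ itself to be a uniformizer of $F_{\pm\alpha_{G'}}$ (which is always possible after multiplying by a square), a triviality the paper simply asserts.
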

\begin{proof}
\addtocounter{equation}{-1}
\begin{subequations}
Since $\alpha_{M'} \in \Phi'_{M'}$, we have $\alpha_{G'} \in \Roots{Z(G')}{G}_{\sym, \ram}$.
Hence, the field extension $F_{\alpha_{G'}} / F_{\pm \alpha_{G'}}$ is a ramified quadratic extension.
Thus, we can take a uniformizer $\varpi_{\alpha_{M'}}$ of $F_{\alpha_{G'}}$ such that 
\begin{align}
\label{iotaswitchessign}
\iota(\varpi_{\alpha_{M'}}) = - \varpi_{\alpha_{M'}},
\end{align}
where $\iota$ denotes the unique non-trivial element of $\Gal(F_{\alpha_{G'}}/F_{\pm \alpha_{G'}})$.
We will prove that this $\varpi_{\alpha_{M'}}$ satisfies the condition of the lemma.
Let $n \in N_{G'}(M')(F)_{[x_{0}]_{M'}}$ and $\sigma \in \Gal(\overline{F}/F)$ such that
$
\sigma(n \alpha_{M'}) = - \alpha_{M'}
$.
Since $n \in G'(F)$, we have
\[
\sigma(\alpha_{G'}) = \sigma(n \alpha_{G'}) 
= \sigma(n \alpha_{M'})\restriction_{Z(G')} 
= - \alpha_{M'}\restriction_{Z(G')} 
= - \alpha_{G'}.
\]
Then the definitions of $F_{\alpha_{G'}}$ and $F_{\pm \alpha_{G'}}$ imply that
$
\sigma \in \Gal(\overline{F}/F_{\pm \alpha_{G'}}) \smallsetminus \Gal(\overline{F}/F_{\alpha_{G'}})
$.
Thus, we obtain that $\sigma\restriction_{F_{\alpha_{G'}}} = \iota$.
Now, the lemma follows from \eqref{iotaswitchessign}.
\end{subequations}
\end{proof}
\begin{lemma}
\label{lemmafixuniformizer}
We can choose an element $\varpi_{\alpha_{M'}}$ for every $\alpha_{M'} \in \Phi'_{M'}$ such that  
\begin{enumerate}[(1)]
\item
$\varpi_{\alpha_{M'}}$ is a uniformizer of $F_{\alpha_{G'}}$ for all $\alpha_{M'} \in \Phi'_{M'}$, where $\alpha_{G'} = \alpha_{M'}\restriction_{Z(G')}$.
\item
$\varpi_{n \alpha_{M'}} = \varpi_{\alpha_{M'}}$
for all $n \in N_{G'}(M')(F)_{[x_{0}]_{M'}}$ and $\alpha_{M'} \in \Phi'_{M'}$.
\item
$\sigma(\varpi_{\alpha_{M'}}) = \varpi_{\sigma(\alpha_{M'})}$
for all $\sigma \in \Gal(\overline{F}/F)$ and $\alpha_{M'} \in \Phi'_{M'}$.
\item
\label{conditionfororthogonality}
$\varpi_{- \alpha_{M'}} = - \varpi_{\alpha_{M'}}$
for all $\alpha_{M'} \in \Phi'_{M'}$.
\end{enumerate}
\end{lemma}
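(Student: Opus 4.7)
The plan is to pick the $\varpi_{\alpha_{M'}}$ for a set of orbit representatives and then propagate by the required equivariance.

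The finite set $\Phi'_{M'}$ carries commuting actions of $\Gal(\overline{F}/F)$ and $N_{G'}(M')(F)_{[x_{0}]_{M'}}$ (they commute because elements of $N_{G'}(M')(F)_{[x_{0}]_{M'}} \subset G'(F)$ act on $Z(M')_{\overline{F}}$ through $F$-rational conjugation), together with the involution $\alpha_{M'} \mapsto -\alpha_{M'}$. Two useful remarks: (i) for $n \in N_{G'}(M')(F)_{[x_{0}]_{M'}} \subset G'(F)$, the element $n$ acts trivially on $Z(G')$, so $(n\alpha_{M'})\restriction_{Z(G')} = \alpha_{G'}$ and hence $F_{(n\alpha_{M'})\restriction_{Z(G')}} = F_{\alpha_{G'}}$; (ii) any $\sigma \in \Gal(\overline{F}/F)$ maps $F_{\alpha_{G'}}$ isomorphically onto $F_{\sigma\alpha_{G'}}$, sending uniformizers to uniformizers.

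I would then pick a set $\Sigma \subset \Phi'_{M'}$ of orbit representatives for the combined action of the two groups above together with $\{\pm 1\}$. For each $\alpha_{0} \in \Sigma$: if $\alpha_{0}$ is $N$-symmetric, choose $\varpi_{\alpha_{0}}$ via Lemma~\ref{lemmagooduniformizer}, which supplies a uniformizer of $F_{\alpha_{0}\restriction_{Z(G')}}$ with the sign-flipping property needed below; if $\alpha_{0}$ is $N$-asymmetric, choose an arbitrary uniformizer of $F_{\alpha_{0}\restriction_{Z(G')}}$. For any $\beta \in \Phi'_{M'}$, write $\beta = \varepsilon \cdot n\sigma(\alpha_{0})$ for some $\alpha_{0} \in \Sigma$, $n \in N_{G'}(M')(F)_{[x_{0}]_{M'}}$, $\sigma \in \Gal(\overline{F}/F)$, and $\varepsilon \in \{\pm 1\}$, and define $\varpi_{\beta} := \varepsilon \cdot \sigma(\varpi_{\alpha_{0}})$.

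The main obstacle is well-definedness of $\beta \mapsto \varpi_{\beta}$: if $\beta$ admits two such presentations, I must verify they give the same value. This reduces to showing that whenever $n\sigma(\alpha_{0}) = \delta\alpha_{0}$ with $\delta \in \{\pm 1\}$, one has $\sigma(\varpi_{\alpha_{0}}) = \delta\varpi_{\alpha_{0}}$. In the case $\delta = +1$, restricting $n\sigma(\alpha_{0}) = \alpha_{0}$ to $Z(G')$ and using remark (i) yields $\sigma\alpha_{G'} = \alpha_{G'}$, so $\sigma$ fixes $F_{\alpha_{G'}}$ pointwise and the identity is automatic. In the case $\delta = -1$, the equation $n\sigma(\alpha_{0}) = -\alpha_{0}$ is precisely the $N$-symmetry hypothesis on $\alpha_{0}$, and the sign-flipping property of $\varpi_{\alpha_{0}}$ built in by Lemma~\ref{lemmagooduniformizer} delivers the required equality. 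Once well-definedness is in place, conditions (1)--(4) are immediate: (1) follows from remarks (i)--(ii); (2) holds because the value of $\varpi_{\beta}$ does not depend on $n$; (3) is forced by the $\sigma$ factor in the definition together with the commutation of the Galois and $N$-actions; (4) holds by the $\varepsilon$ factor.
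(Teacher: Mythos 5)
Your proof is correct and takes essentially the same approach as the paper: pick representatives of the combined $N_{G'}(M')(F)_{[x_0]_{M'}}\times\Gal(\overline F/F)\times\{\pm1\}$-action on $\Phi'_{M'}$, choose uniformizers for the $N$-symmetric representatives via Lemma~\ref{lemmagooduniformizer}, and propagate by equivariance after a well-definedness check that splits into $\delta=+1$ (Galois fixes $\alpha_{G'}$) and $\delta=-1$ ($N$-symmetry plus Lemma~\ref{lemmagooduniformizer}). The only superficial difference is that you absorb the $\{\pm1\}$-factor into a single defining formula and treat the resulting stabilizer case $\delta=-1$ inside the well-definedness argument, whereas the paper first propagates along the $N\times\Gal$-orbit and then separately extends by $-1$ to the $N$-asymmetric negatives, verifying condition~(4) afterwards; these are the same proof in slightly different packaging.
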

\begin{proof}
We fix a set $C$ of representatives of 
$
\Phi'_{M'}/\left(
N_{G'}(M')(F)_{[x_{0}]_{M'}} \times \Gal(\overline{F}/F) \times \{\pm 1\}
\right)
$.
For each $\alpha_{M'} \in C$, we fix a uniformizer $\varpi_{\alpha_{M'}}$ of $F_{\alpha_{G'}}$, 
with $\varpi_{\alpha_{M'}}$ as in Lemma~\ref{lemmagooduniformizer} if $\alpha_{M'} \in \Phi'_{M', \Nsym}$.

Let $\alpha_{M'} \in C$.
Suppose that $n_{1}, n_{2} \in N_{G'}(M')(F)_{[x_{0}]_{M'}}$ and $\sigma_{1}, \sigma_{2} \in \Gal(\overline{F}/F)$ satisfy
$
\sigma_{1}(n_{1} \alpha_{M'}) = \sigma_{2}(n_{2} \alpha_{M'})
$.
Then, since $n_{1}, n_{2} \in G'(F)$, we have 
\[
\sigma_{1}(\alpha_{G'}) = \sigma_{1}(n_{1} \alpha_{G'}) 
= \sigma_{1}(n_{1} \alpha_{M'})\restriction_{Z(G')} 
= \sigma_{2}(n_{2} \alpha_{M'})\restriction_{Z(G')} 
= \sigma_{2}(n_{2} \alpha_{G'}) 
= \sigma_{2}(\alpha_{G'}).
\]
Hence, we obtain that $\sigma_{1}^{-1} \sigma_{2}$ fixes $\alpha_{G'}$.
Since $\varpi_{\alpha_{M'}} \in F_{\alpha_{G'}}$, we have $\sigma_{1}^{-1} \sigma_{2}(\varpi_{\alpha_{M'}}) = \varpi_{\alpha_{M'}}$, that is, $\sigma_{1}(\varpi_{\alpha_{M'}}) = \sigma_{2}(\varpi_{\alpha_{M'}})$.
Thus, for 
$
\beta_{M'} \in \left(
N_{G'}(M')(F)_{[x_{0}]_{M'}} \times \Gal(\overline{F}/F)
\right) \alpha_{M'}
$,
the uniformizer $\sigma(\varpi_{\alpha_{M'}})$ of $F_{\beta_{G'}} = F_{\sigma(\alpha_{G'})}$ does not depend on the choice of 
$
(n, \sigma) \in N_{G'}(M')(F)_{[x_{0}]_{M'}} \times \Gal(\overline{F}/F)
$
such that $\sigma(n \alpha_{M'}) = \beta_{M'}$, and
we set $\varpi_{\beta_{M'}} = \sigma(\varpi_{\alpha_{M'}})$.
If $\alpha_{M'} \in \Phi'_{M', \Nasym}$, we define the uniformizer $\varpi_{- \beta_{M'}}$ of $F_{- \beta_{G'}} = F_{\beta_{G'}}$ by
$
\varpi_{- \beta_{M'}} = - \varpi_{\beta_{M'}}
$
for each
$
\beta_{M'} \in \left(
N_{G'}(M')(F)_{[x_{0}]_{M'}} \times \Gal(\overline{F}/F)
\right)$  $\alpha_{M'}
$.\spacingatend{check the line break here at the end}

Now, we have defined a uniformizer $\varpi_{\alpha_{M'}}$ of $F_{\alpha_{G'}}$ for each $\alpha_{M'} \in \Phi'_{M'}$ that together
 by construction satisfy the first three conditions.
Hence it remains to show that these uniformizers satisfy Condition~\eqref{conditionfororthogonality} and by the first three conditions it suffices to do so for $\alpha_{M'} \in C$.
Let $\alpha_{M'} \in C$.
If $\alpha_{M'} \in \Phi'_{M', \Nasym}$, the choice of the uniformizers above implies that $\varpi_{- \alpha_{M'}} = - \varpi_{\alpha_{M'}}$. So we
assume that $\alpha_{M'} \in \Phi'_{M', \Nsym}$, and let $n \in N_{G'}(M')(F)_{[x_{0}]_{M'}}$ and $\sigma \in \Gal(\overline{F}/F)$ such that
$
\sigma(n \alpha_{M'}) = - \alpha_{M'}
$.
Since we chose the uniformizer $\varpi_{\alpha_{M'}}$ as in Lemma~\ref{lemmagooduniformizer}, we have
$
\sigma(\varpi_{\alpha_{M'}}) = - \varpi_{\alpha_{M'}}
$.
Thus, we obtain that
\[
\varpi_{- \alpha_{M'}} = \varpi_{\sigma(n \alpha_{M'})} 
= \sigma(\varpi_{\alpha_{M'}}) 
= - \varpi_{\alpha_{M'}}.
\qedhere
\]
\end{proof}
We fix uniformizers $\varpi_{\alpha_{M'}}$ of $F_{\alpha_{G'}}$ for $\alpha_{M'} \in \Phi'_{M'}$ as in Lemma~\ref{lemmafixuniformizer}.
We also fix a square root $\zeta$
\index{notation-ky}{zeta@$\zeta$}
of $-1$ in $\overline{F}$ and also regard $\zeta$ as an element of $\overline{\ff}$.
We write $F' = F(\zeta)$ and $\ff' = \ff(\zeta)$.

Let $O_{M'} \in \Phi'_{M'}/I_{F}$ and $t \in J(O_{M'}; x)$.
We set\index{notation-ky}{zy-x@$z(y-x; O_{M'}; t)$} 
\[
z(y-x; O_{M'}; t) \coloneqq   r(y-x; O_{M'}; t) \cdot e_{\alpha_{G'}},
\]
where $\alpha_{G'}$ is any element of $O_{M'}\restriction_{Z(G')}$.
Since $r(y-x; O_{M'}; t) \in 2 s_{O_{M'}} \mathbb{Z} = e_{\alpha_{G'}}^{-1} \bZ$, we have $z(y-x; O_{M'}; t) \in \mathbb{Z}$.
For 
 $\alpha_{M'} \in O_{M'}$,
we define the isomorphism
\begin{align*}
I_{y \mid x}( \alpha_{M'}; t) \colon
\Lie(G)_{\alpha_{M'}}(E^{\unr})_{x, t : t+} &= \Lie(G)_{\alpha_{M'}}(E^{\unr})_{y, t + \langle y - x, \alpha_{M'} \rangle : (t + \langle y - x, O_{M'} \rangle )+} \\
&\xrightarrow{\times (\zeta \varpi_{\alpha_{M'}})^{z(y-x; O_{M'}; t)}} \Lie(G)_{\alpha_{M'}}(E^{\unr})_{y, t''; t''+},
\end{align*}
where
$
t'' = t + \langle y - x, O_{M'} \rangle + r(y-x; O_{M'}; t)
$.
Our choice of the uniformizers $\varpi_{\alpha_{M'}}$ implies that the direct sum 
\[
\bigoplus_{\alpha_{M'} \in O_{M'}} \Lie(G)_{\alpha_{M'}}(E^{\unr})_{x, t : t+} \xrightarrow{\bigoplus I_{y \mid x}( \alpha_{M'}; t)} \bigoplus_{\alpha_{M'} \in O_{M'}} \Lie(G)_{\alpha_{M'}}(E^{\unr})_{y, t'' : t''+}
\]
of the isomorphisms above is defined over $F^{\unr}$.
Thus, we obtain the isomorphism
\begin{align*}
V_{(x, O_{M'}, t)}
&=
\Bigl(
\bigoplus_{\alpha_{M'} \in O_{M'}} \Lie(G)_{\alpha_{M'}}(E^{\unr})_{x, t : t+}
\Bigr)
^{\Gal(E^{\unr}/F^{\unr})} \\
&\longrightarrow
\Bigl(
\bigoplus_{\alpha_{M'} \in O_{M'}} \Lie(G)_{\alpha_{M'}}(E^{\unr})_{y, t'' : t''+}
\Bigr)
^{\Gal(E^{\unr}/F^{\unr})} 
= V_{(y, O_{M'}, t'')}.
\end{align*}
Then, using Lemma~\ref{lemmabijectionofJ}, we obtain the isomorphism
\begin{align*}
I_{y \mid x} \colon V_{x} = \bigoplus_{O_{M'} \in \Phi'_{M'}/I_{F} } \bigoplus_{t \in J(O_{M'}; x)} V_{(x, O_{M'}, t)} 
\longrightarrow \bigoplus_{O_{M'} \in \Phi'_{M'}/I_{F} } \bigoplus_{t'' \in J(O_{M'}; y)} V_{(y, O_{M'}, t'')} 
= V_{y}.
\end{align*}
The construction of $I_{y \mid x}$ implies that
$
I_{z \mid y} \circ I_{y \mid x} = I_{z \mid x}
$
for $x, y, z \in \mathcal{A}_{s\dashgen}$.
In particular, we have
$
I_{x \mid y} \circ I_{y \mid x} = \id_{V_{x}}
$. 
\begin{lemma}
\label{lemmacommutativityreplacingpoint}
Let $x, y \in \mathcal{A}_{s\dashgen}$ and $n \in N_{G'}(M')(F)_{[x_{0}]_{M'}}$.
Then the following diagram commutes:
\[
\xymatrix@R+1pc@C+2pc{
V_{x} \ar[d]_-{I_{y \mid x}} \ar[r]^-{n} \ar@{}[dr]|\circlearrowleft & V_{n x} \ar[d]^-{I_{n y \mid n x}}
\\
V_{y} \ar[r]^-{n} & V_{n y},
}
\]
where
\(
V_{x} \xrightarrow{n} V_{n x} 
\)
and
\(
V_{y} \xrightarrow{n} V_{n y} 
\)
denote the maps induced from the adjoint action of $G$ on $\Lie(G)$.
\end{lemma}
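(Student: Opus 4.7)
The plan is to chase the diagram one weight space at a time, using the decomposition $V_x = \bigoplus_{O_{M'},t} V_{(x, O_{M'}, t)}$ and the fact that both the action of $n$ and the maps $I_{y \mid x}$, $I_{n y \mid n x}$ respect this decomposition. First I would check that the adjoint action of $n \in N_{G'}(M')(F)_{[x_{0}]_{M'}}$ sends $V_{(x, O_{M'}, t)}$ isomorphically onto $V_{(nx, nO_{M'}, t)}$: since $n$ is $F$-rational, $F_{n \alpha_{M'}} = F_{\alpha_{M'}}$ and $e_{n \alpha_{G'}} = e_{\alpha_{G'}}$ (recall $\alpha_{G'} = \alpha_{M'}\restriction_{Z(G')}$ and $n$ centralizes $Z(G')$), so $s_{n O_{M'}} = s_{O_{M'}}$, $n O_{M'} \in \Phi'_{M'}/I_F$, and $\mathrm{Ad}(n)$ carries the Moy--Prasad piece at $x$ of depth $t$ onto the corresponding piece at $nx$. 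In particular this also shows $J(nO_{M'}; nx) = J(O_{M'}; x)$.

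On a fixed summand $V_{(x, O_{M'}, t)}$, the two paths of the diagram can be compared on each $\alpha_{M'}$-component for $\alpha_{M'} \in O_{M'}$. Path down-then-right sends $v \in \Lie(G)_{\alpha_{M'}}$ to $(\zeta \varpi_{\alpha_{M'}})^{z(y-x;\, O_{M'};\, t)} \cdot \mathrm{Ad}(n) v$, landing in $V_{(ny, nO_{M'}, t'')}$ with
\[
t'' = t + \langle y - x, O_{M'} \rangle + r(y-x;\, O_{M'};\, t),
\]
while path right-then-down sends $v$ to $(\zeta \varpi_{n\alpha_{M'}})^{z(ny-nx;\, nO_{M'};\, t)} \cdot \mathrm{Ad}(n) v$, landing in $V_{(ny, nO_{M'}, \widetilde t'')}$ with
\[
\widetilde t'' = t + \langle ny - nx, nO_{M'} \rangle + r(ny-nx;\, nO_{M'};\, t).
\]
Since $\mathrm{Ad}(n)$ is $\overline F$-linear and commutes with scalar multiplication, the two expressions agree as soon as (i) $\varpi_{n \alpha_{M'}} = \varpi_{\alpha_{M'}}$ and (ii) $z(ny-nx;\, nO_{M'};\, t) = z(y-x;\, O_{M'};\, t)$, and in addition the two target depths coincide, $\widetilde t'' = t''$.

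Assertion (i) is exactly Lemma~\ref{lemmafixuniformizer}(2). For the pairing appearing in $\widetilde t''$, the equivariance of the natural pairing between characters of $M'$ and the affine space $X_{*}(A_{M'}) \otimes \mathbb{R}$ (together with the observation that $y - x \in X_{*}(A_{M'}) \otimes \mathbb{R}$ and $n$ normalizes $M'$, so $ny - nx$ is the $n$-translate of $y - x$) gives $\langle ny - nx, nO_{M'} \rangle = \langle y - x, O_{M'} \rangle$. Combined with $s_{n O_{M'}} = s_{O_{M'}}$, this forces $r(ny-nx;\, nO_{M'};\, t) = r(y-x;\, O_{M'};\, t)$ by the uniqueness characterization of $r(\cdot; \cdot; \cdot)$, whence $\widetilde t'' = t''$ and also $z(ny-nx;\, nO_{M'};\, t) = r(ny-nx;\, nO_{M'};\, t)\cdot e_{n\alpha_{G'}} = r(y-x;\, O_{M'};\, t)\cdot e_{\alpha_{G'}} = z(y-x;\, O_{M'};\, t)$, proving (ii). The main technical point, and the only one that requires genuine care, is the $n$-equivariance of the pairing when $n$ acts on the building by an affine rather than linear map; everything else is a direct unwinding of the definitions of $I_{y \mid x}$ and of the adjoint action on weight spaces.
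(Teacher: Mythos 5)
Your proof is correct and follows essentially the same route as the paper: compare the two composites weight space by weight space, use the equivariance $\langle y - x, \alpha_{M'} \rangle = \langle ny - nx, n\alpha_{M'} \rangle$ together with $s_{nO_{M'}} = s_{O_{M'}}$ to deduce $r(ny-nx;\, nO_{M'};\, t) = r(y-x;\, O_{M'};\, t)$, and then invoke Lemma~\ref{lemmafixuniformizer}(2) to match the scalars. The only difference is that you make a few of the implicit checks (such as $s_{nO_{M'}} = s_{O_{M'}}$ and the uniqueness characterization of $r(\cdot;\cdot;\cdot)$) explicit, whereas the paper leaves them tacit.
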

\begin{proof}
Let $\alpha_{M'} \in \Phi'_{M'}$ and $t \in J(I_F \alpha_{M'}; x)$.
Since
$
\langle y - x, \alpha_{M'} \rangle = \langle n y - n x, n \alpha_{M'} \rangle
$,
the definition of $r(y-x; I_F \alpha_{M'}; t)$ implies that
$
r(y-x; I_F \alpha_{M'}; t) = r(n y-n x; I_F n \alpha_{M'}; t)
$.
Hence, our choice of uniformizers implies that the following diagram commutes:
\[
\xymatrix@R+1pc@C+2pc{
\Lie(G)_{\alpha_{M'}}(E^{\unr})_{x, t : t+} \ar[d]_-{I_{y \mid x}( \alpha_{M'}; t)} \ar[r]^-{n} \ar@{}[dr]|\circlearrowleft & \Lie(G)_{n \alpha_{M'}}(E^{\unr})_{nx, t : t+} \ar[d]^-{I_{n y \mid n x}( n \alpha_{M'}; t)}
\\
\Lie(G)_{\alpha_{M'}}(E^{\unr})_{y, t'' : t''+} \ar[r]^-{n} & \Lie(G)_{n \cdot \alpha_{M'}}(E^{\unr})_{n y, t'' : t''+},
}
\]
where
\[
t'' = t + \langle y - x, \alpha_{M'} \rangle + r(y-x; I \alpha_{M'}; t) 
= t + \langle n y - n x, n \alpha_{M'} \rangle + r(n y-n x; I n \alpha_{M'}; t).
\]
Now, the claim follows from the construction of $I_{y \mid x}$.
\end{proof}  
\begin{definition}
	\label{definitionofon}
For $n \in N_{G'}(M')(F)_{[x_{0}]_{M'}}$, we define 
$o_{n} \in \GL(V_{x_{0}})(\overline \ff)$ as the composition
\[
o_{n} \colon V_{x_{0}} \xrightarrow{n} V_{n x_{0}} \xrightarrow{I_{x_{0} \mid n x_{0}}} V_{x_{0}}
.\]
\end{definition}
\begin{corollary}
The map
$
n \mapsto o_{n}
$
defines a group homomorphism
$
N_{G'}(M')(F)_{[x_{0}]_{M'}} \longrightarrow \GL(
V_{x_{0}})(\overline \ff)
$.
\end{corollary}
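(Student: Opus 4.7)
The plan is to verify directly that $o_{n_1 n_2} = o_{n_1} \circ o_{n_2}$ for all $n_1, n_2 \in N_{G'}(M')(F)_{[x_{0}]_{M'}}$, using as the two key ingredients (i) the functorial compatibility of the isomorphisms $I_{\bullet\mid\bullet}$ recorded just before the lemma, namely $I_{z\mid y}\circ I_{y\mid x}=I_{z\mid x}$ for any three points of $\mathcal{A}_{s\dashgen}$, and (ii) Lemma \ref{lemmacommutativityreplacingpoint}, which allows us to commute the adjoint action of an element of $N_{G'}(M')(F)_{[x_{0}]_{M'}}$ past an isomorphism $I_{y\mid x}$. Note that $n_{2}x_{0}\in\mathcal{A}_{s\dashgen}$ and $n_{1}n_{2}x_{0}\in\mathcal{A}_{s\dashgen}$, as noted just before the construction of $I_{y\mid x}$, so all of the maps below are defined.

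By definition,
\[
o_{n_{1}}\circ o_{n_{2}} \;=\; I_{x_{0}\mid n_{1}x_{0}}\circ n_{1}\circ I_{x_{0}\mid n_{2}x_{0}}\circ n_{2},
\]
where each arrow $n_{i}$ refers to the map induced on the appropriate $V_{\bullet}$ by the adjoint action. First I would apply Lemma \ref{lemmacommutativityreplacingpoint} with $x$ replaced by $n_{2}x_{0}$, with $y$ replaced by $x_{0}$, and with $n$ replaced by $n_{1}$; this yields
\[
n_{1}\circ I_{x_{0}\mid n_{2}x_{0}} \;=\; I_{n_{1}x_{0}\mid n_{1}n_{2}x_{0}}\circ n_{1}
\]
as maps $V_{n_{2}x_{0}}\longrightarrow V_{n_{1}x_{0}}$. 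Substituting, I obtain
\[
o_{n_{1}}\circ o_{n_{2}} \;=\; I_{x_{0}\mid n_{1}x_{0}}\circ I_{n_{1}x_{0}\mid n_{1}n_{2}x_{0}}\circ n_{1}\circ n_{2}.
\]
Next I would invoke the composition property $I_{x_{0}\mid n_{1}x_{0}}\circ I_{n_{1}x_{0}\mid n_{1}n_{2}x_{0}}=I_{x_{0}\mid n_{1}n_{2}x_{0}}$ and the fact that the adjoint action satisfies $n_{1}\circ n_{2}=n_{1}n_{2}$ as maps $V_{x_{0}}\longrightarrow V_{n_{1}n_{2}x_{0}}$, which gives
\[
o_{n_{1}}\circ o_{n_{2}} \;=\; I_{x_{0}\mid n_{1}n_{2}x_{0}}\circ (n_{1}n_{2}) \;=\; o_{n_{1}n_{2}},
\]
as desired. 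The identity element maps to $I_{x_{0}\mid x_{0}}\circ\id_{V_{x_{0}}}=\id_{V_{x_{0}}}$, and each $o_{n}$ is invertible because $I_{x_{0}\mid nx_{0}}$ and the adjoint action of $n$ are both isomorphisms.

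There is essentially no obstacle here: the entire content has already been packaged into Lemma \ref{lemmacommutativityreplacingpoint} and the transitivity property of the $I_{y\mid x}$. The only point requiring minor care is bookkeeping of source and target spaces, in particular confirming that Lemma \ref{lemmacommutativityreplacingpoint} is applied with the correct substitution of variables (so that the output is an identity between maps $V_{n_{2}x_{0}}\to V_{n_{1}x_{0}}$, fitting precisely into the composition being analyzed).
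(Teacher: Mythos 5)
Your proof is correct and uses exactly the same two ingredients as the paper (the transitivity $I_{z\mid y}\circ I_{y\mid x}=I_{z\mid x}$ and Lemma~\ref{lemmacommutativityreplacingpoint}); the paper runs the computation in the other direction, starting from $o_{nm}$, inserting $I_{x_0\mid m x_0}\circ I_{m x_0\mid x_0}=\id$, and peeling off $o_m$ before invoking the lemma, while you apply the lemma directly to slide $n_1$ past $I_{x_0\mid n_2 x_0}$ and then collapse. The arguments are essentially the same; yours is slightly more direct.
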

\begin{proof} 
Let $m, n \in N_{G'}(M')(F)_{[y]_{M'}}$.
We will prove that $o_{nm} = o_{n} \circ o_{m}$.
We have
\begin{align*}
o_{nm} &=
\left(
V_{x_{0}} \xrightarrow{nm} V_{nm   x_{0}} \xrightarrow{I_{x_{0} \mid nm   x_{0}}} V_{x_{0}}
\right) 
= 
\left(
V_{x_{0}} \xrightarrow{m} V_{m   x_{0}} \xrightarrow{n} V_{nm   x_{0}} \xrightarrow{I_{x_{0} \mid nm   x_{0}}} V_{x_{0}}
\right) \\
&= 
\left(
V_{x_{0}} \xrightarrow{m} V_{m   x_{0}}  \xrightarrow{I_{x_{0} \mid m   x_{0}}} V_{x_{0}} \xrightarrow{I_{m   x_{0} \mid x_{0}}} V_{m   x_{0}}
\xrightarrow{n} V_{nm   x_{0}} \xrightarrow{I_{x_{0} \mid nm   x_{0}}} V_{x_{0}}
\right) \\
&= \left(
V_{x_{0}} \xrightarrow{I_{m   x_{0} \mid x_{0}}} V_{m   x_{0}}
\xrightarrow{n} V_{nm   x_{0}} \xrightarrow{I_{x_{0} \mid nm   x_{0}}} V_{x_{0}}
\right) \circ o_{m}.
\end{align*}
By applying Lemma~\ref{lemmacommutativityreplacingpoint} for $x = x_{0}$ and $y = m x_{0}$, we obtain
\begin{align*}
\left(
V_{x_{0}} \xrightarrow{I_{m   x_{0} \mid x_{0}}} V_{m   x_{0}}
\xrightarrow{n} V_{nm   x_{0}} \xrightarrow{I_{x_{0} \mid nm   x_{0}}} V_{x_{0}}
\right) &=
\left(
V_{x_{0}} \xrightarrow{n} V_{n   x_{0}} \xrightarrow{I_{nm   x_{0} \mid n   x_{0}}} V_{nm   x_{0}} \xrightarrow{I_{x_{0} \mid nm   x_{0}}} V_{x_{0}}
\right) \\
&= \left(
V_{x_{0}} \xrightarrow{n} V_{n   x_{0}}  \xrightarrow{I_{x_{0} \mid n   x_{0}}} V_{x_{0}} 
\right) = o_{n}.
\end{align*}
Thus, we conclude that $o_{nm} = o_{n} \circ o_{m}$.
\end{proof}

\subsection{Extension of $\epsilon^{G/G'}_{x_0}/_{G'} \epsilon^{\protect\sym, \protect\ram}_{s}$} \label{subsection:character13}

By Lemma \ref{rewritethetwopiecesofepsion}, the restriction of the character $_{G'} \epsilon_{\sym, \ram} \cdot {_{G'} \epsilon_{0}}$ to ${N_{G'}(M')(F)_{x_{0}}}$ factors through the morphism $n \mapsto o_n$. We will first define a subgroup $\widetilde{\ort}(V_{x_{0}})$ of $\GL(V_{x_0})(\overline \ff)$, see Definition \ref{definitionofOtilde}, that contains $o_n$ for all $n \in  N_{G'}(M')(F)_{[x_{0}]_{M'}}$, see Proposition \ref{propositionon}, and use this to extend $(_{G'} \epsilon_{\sym, \ram} \cdot {_{G'} \epsilon_{0}})\restriction_{N_{G'}(M')(F)_{x_{0}}}$, and hence $\epsilon_{x_0}^{G/G'}$, to $N_{G'}(M')(F)_{[x_{0}]_{M'}}$.
To do so, let 
$
[\alpha_{M'}] \in \left(
\Phi'_{M'}/ I_{F}
\right) / \Gal(\overline{\ff}/\ff) = \Phi'_{M'} / \Gal(\overline{F}/F)
$.
Since $x_{0} \in \mathcal{B}(G, F)$, the set $J(O_{M'}; x_{0})$ does not depend on the choice of $I_F$-orbit $O_{M'} \in [\alpha_{M'}] \subset \Phi'_{M'}/ I_{F}$. 
We write $J([\alpha_{M'}]; x_{0})$ for $J(O_{M'}; x_{0})$.
We define
\[
\widetilde{R}_{x_{0}} =
\left\{
([\alpha_{M'}], t) \mid [\alpha_{M'}] \in \Phi'_{M'} / \Gal(\overline{F}/F), t \in J([\alpha_{M'}]; x_{0})
\right\},
\]
\[
\widetilde{R}_{x_{0}, \asym} =
\left\{
([\alpha_{M'}], t) \mid [\alpha_{M'}] \in \Phi'_{M', \asym} / \Gal(\overline{F}/F), t \in J([\alpha_{M'}]; x_{0})
\right\},
\]
and
\[
\widetilde{R}_{x_{0}, \sym} =
\left\{
([\alpha_{M'}], t) \mid [\alpha_{M'}] \in \Phi'_{M', \sym} / \Gal(\overline{F}/F), t \in J([\alpha_{M'}]; x_{0})
\right\}.
\]
For $([\alpha_{M'}], t) \in \widetilde{R}_{x_{0}}$, we set 
$
V_{(x_{0}, [\alpha_{M'}], t)} = \bigoplus_{O_{M'} \in [\alpha_{M'}]} V_{(x_{0}, O_{M'}, t)}
$, where the sum is taken over the $I_F$-orbits in $[\alpha_{M'}]$.
According to \cite[Corollary~3.11]{MR3849622}, we can define the action of $\{\pm 1\}$ on $\widetilde{R}_{x_{0}}$ by
$
-1 \cdot ([\alpha_{M'}], t) = (- [\alpha_{M'}], - t)
$,
and this action preserves $\widetilde{R}_{x_{0}, \asym}$ and $\widetilde{R}_{x_{0}, \sym}$.
For $[([\alpha_{M'}], t)] \in \widetilde{R}_{x_{0}} / \{\pm1\}$, we define
\begin{align}
\label{plusminusdecomposition}
V_{(x_{0}, [([\alpha_{M'}], t)])} &= \bigoplus_{([\alpha'_{M'}], t') \in [([\alpha_{M'}], t)]} V_{(x_{0}, [\alpha'_{M'}], t')} \\
&= 
\begin{cases}
V_{(x_{0}, [\alpha_{M'}], t)} \oplus V_{(x_{0}, - [\alpha_{M'}], -t)} & \left(
([\alpha_{M'}], t) \neq (- [\alpha_{M'}], - t)
\right), \notag \\
V_{(x_{0}, [\alpha_{M'}], t)} & \left(
([\alpha_{M'}], t) = (- [\alpha_{M'}], - t)
\right).
\end{cases}
\end{align}
For later use, we note that if $([\alpha_{M'}], t) \in \widetilde{R}_{x_{0}, \asym}$, then $([\alpha_{M'}], t) \neq (- [\alpha_{M'}], - t)$ and $
\dim_{\ff}\left(
V_{(x_{0}, [([\alpha_{M'}], t)])}(\ff)
\right)
$
is even.
The spaces $V_{(x_{0}, [([\alpha_{M'}], t)])}$ are defined over $\ff$, and noting Property~\ref{propertyofvarphi}, we have the orthogonal decomposition
\begin{align}
\label{orthogonaldecompositionofvx0}
V_{x_{0}} &=
\bigoplus_{[([\alpha_{M'}], t)] \in \widetilde{R}_{x_{0}} / \{\pm1\}} V_{(x_{0}, [([\alpha_{M'}], t)])} 
\\
&=
\Biggl(
\bigoplus_{[([\alpha_{M'}], t)] \in \widetilde{R}_{x_{0}, \asym} / \{\pm1\}} V_{(x_{0}, [([\alpha_{M'}], t)])}
\Biggr)
\oplus
\Biggl(
\bigoplus_{[([\alpha_{M'}], t)] \in \widetilde{R}_{x_{0}, \sym} / \{\pm1\}} V_{(x_{0}, [([\alpha_{M'}], t)])}
\Biggr).
\notag
\end{align}
For $[([\alpha_{M'}], t)] \in \widetilde{R}_{x_{0}} / \{\pm1\}$, let
$
\zeta_{[([\alpha_{M'}], t)] } \colon V_{(x_{0}, [([\alpha_{M'}], t)])} \longrightarrow V_{(x_{0}, [([\alpha_{M'}], t)])}
$
denote the multiplication by $\zeta \in \ff'$.
We also write $\zeta_{[([\alpha_{M'}], t)] }$ for the element of $\GL(V_{x_{0}})(\ff')$ that acts on $V_{(x_{0}, [([\alpha_{M'}], t)])}$ by $\zeta_{[([\alpha_{M'}], t)] }$ and acts on the other direct summands of the decomposition~\eqref{orthogonaldecompositionofvx0} by the identity map.

\begin{definition} \label{definitionofOtilde}
We define the group $\GL^+(V_{x_{0}})$ to be the subgroup of $\GL(V_{x_{0}})(\overline \ff)$ that consists of all
$g \in \GL(V_{x_{0}})(\overline \ff)$ that satisfy the following property: For every element $[([\alpha_{M'}], t)]$ of $\widetilde{R}_{x_{0}, \asym} / \{\pm1\}$, resp.,\ $\widetilde{R}_{x_{0}, \sym} / \{\pm1\}$, there exists an element $[([\alpha'_{M'}], t')]$ of $\widetilde{R}_{x_{0}, \asym} / \{\pm1\}$, resp.,\ $\widetilde{R}_{x_{0}, \sym} / \{\pm1\}$ such that
\[
g V_{(x_{0}, [([\alpha_{M'}], t)])} = V_{(x_{0}, [([\alpha'_{M'}], t')])}.
\]
We define $\widetilde{\ort}(V_{x_{0}})$ to be the subgroup of $\GL(V_{x_{0}})(\ff')$ generated by $\GL^+(V_{x_{0}}) \cap \ort(V_{x_{0}}, \varphi)(\ff)$ and $\zeta_{[([\alpha_{M'}], t)] }$ for $[([\alpha_{M'}], t)]  \in \widetilde{R}_{x_{0}, \asym} / \{\pm1\}$, i.e., \spacingatend{check vertical spacing}
\[
\widetilde{\ort}(V_{x_{0}}) = \left \langle
\GL^+(V_{x_{0}}) \cap \ort(V_{x_{0}}, \varphi)(\ff), \zeta_{[([\alpha_{M'}], t)] } \mid [([\alpha_{M'}], t)]  \in \widetilde{R}_{x_{0}, \asym} / \{\pm1\}
\right \rangle \subset \GL(V_{x_{0}})(\ff').
\]
\end{definition}
\begin{proposition} \label{propositionon}
For any $n \in N_{G'}(M')(F)_{[x_{0}]_{M'}}$, the element $o_{n}$ is contained in the group $\widetilde{\ort}(V_{x_{0}})$.
\end{proposition}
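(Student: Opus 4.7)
The plan is to compute $o_{n}$ explicitly on each direct summand of~\eqref{orthogonaldecompositionofvx0} and to extract the $\zeta$-scalars as a product of the $\zeta_{[([\alpha_{M'}], t)]}$ for $[([\alpha_{M'}], t)] \in \widetilde{R}_{x_{0}, \asym}/\{\pm 1\}$. First I verify that $o_{n} \in \GL^{+}(V_{x_{0}})$: since $n \in G'(F)$ the $n$-action on $\Phi'_{M'}$ commutes with the Galois action and preserves $\Phi'_{M', \sym}$ and $\Phi'_{M', \asym}$ individually, and together with Lemma~\ref{lemmabijectionofJ} this shows that $o_{n}$ permutes the summands $V_{(x_{0}, [([\alpha_{M'}], t)])}$ and respects the asymmetric/symmetric splitting of the index set.

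For each class $\beta = [([\alpha_{M'}], t)]$ I choose a representative together with a lift $O_{M'} \in \Phi'_{M'}/I_{F}$ of $[\alpha_{M'}]$, and set $a_{\beta} = z(x_{0} - n x_{0}; n O_{M'}; t)$. By construction $o_{n}$ restricted to $V_{(x_{0}, \beta)}$ is the composition of the $n$-action with the scaling by $(\zeta \varpi_{n \alpha_{M'}})^{a_{\beta}}$. For $\beta$ symmetric, Remark~\ref{remarksymmetricvanish} gives $a_{\beta} = 0$, so $o_{n}$ is just the $n$-action on this summand; since $X$ is $G'(F)$-invariant, the $n$-action preserves the bracket pairing defining $\varphi$, and on the symmetric part $o_{n}$ already belongs to $\GL^{+}(V_{x_{0}}) \cap \ort(V_{x_{0}}, \varphi)(\ff)$. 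For the asymmetric part I set
\[
\widetilde{o}_{n} = o_{n} \circ \prod_{\beta \in \widetilde{R}_{x_{0}, \asym}/\{\pm 1\}} \zeta_{\beta}^{-a_{\beta}} ,
\]
so that the $\zeta^{a_{\beta}}$-factor of the scaling is cancelled, leaving only the scaling by $\varpi_{n \alpha_{M'}}^{a_{\beta}}$, which descends to $\ff$ thanks to the Galois-compatibility of uniformizers in Lemma~\ref{lemmafixuniformizer}.

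To check that $\widetilde{o}_{n}$ preserves $\varphi$, by Property~\ref{propertyofvarphi} it suffices to verify that on each orthogonal pair $V_{(x_{0}, O_{M'}, t)}$ vs.\ $V_{(x_{0}, -O_{M'}, -t)}$ the bilinear pairing is preserved. The $n$-action preserves the pairing by the $G'(F)$-invariance of $X$; moreover a direct computation from the defining inequality of $r(\cdot)$ yields $z(x_{0} - n x_{0}; -n O_{M'}; -t) = -a_{\beta}$, so by property~\eqref{conditionfororthogonality} of Lemma~\ref{lemmafixuniformizer} the combined $\varpi$-scaling of $o_{n}$ contributes $\varpi_{n \alpha_{M'}}^{a_{\beta}} \cdot \varpi_{-n \alpha_{M'}}^{-a_{\beta}} = (-1)^{-a_{\beta}}$ to the pairing. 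The remaining $\zeta_{\beta}^{-a_{\beta}}$-scaling multiplies both halves of the pair by $\zeta^{-a_{\beta}}$ and so contributes $\zeta^{-2 a_{\beta}} = (-1)^{-a_{\beta}}$ to the pairing. These two factors multiply to $1$, giving $\widetilde{o}_{n} \in \GL^{+}(V_{x_{0}}) \cap \ort(V_{x_{0}}, \varphi)(\ff)$, and therefore $o_{n} = \widetilde{o}_{n} \cdot \prod_{\beta} \zeta_{\beta}^{a_{\beta}} \in \widetilde{\ort}(V_{x_{0}})$. The main delicacy is the $\ff$-rationality claim for $\widetilde{o}_{n}$, for which the Galois-equivariance $\sigma(\varpi_{\alpha_{M'}}) = \varpi_{\sigma(\alpha_{M'})}$ from Lemma~\ref{lemmafixuniformizer}, combined with the constancy of $a_{\beta}$ on a single Galois orbit, is the key input.
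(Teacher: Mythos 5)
Your proof is correct and follows essentially the same strategy as the paper's: isolate $o_n$ up to a product of $\zeta_\beta$-scalings, show the modified element lands in $\GL^+(V_{x_0}) \cap \ort(V_{x_0}, \varphi)(\ff)$, and conclude membership in $\widetilde{\ort}(V_{x_0})$. The only cosmetic difference is that you multiply $o_n$ on the right by $\prod_\beta \zeta_\beta^{-a_\beta}$ where $a_\beta = z(x_0 - nx_0; nO_{M'}; t)$, whereas the paper multiplies on the left by $\prod_{\beta \in \widetilde{R}_{x_0}(n)/\{\pm 1\}} \zeta_\beta$ over the classes where $z$ is odd; the two correction factors differ by an element of $A_\zeta$ with even exponents, which already lies in $\GL^+(V_{x_0}) \cap \ort(V_{x_0}, \varphi)(\ff)$, so the arguments coincide in substance. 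One small bookkeeping note: in your $\varphi$-preservation calculation you split the effect of $\widetilde{o}_n$ on an orthogonal pair into a $\varpi$-contribution $(-1)^{-a_\beta}$ and a $\zeta_\beta^{-a_\beta}$-contribution $(-1)^{-a_\beta}$, leaving implicit that the $\zeta$-part of $o_n$ itself (exponents $a_\beta$ on one side and $-a_\beta$ on the other) contributes trivially to the pairing — this is true, but worth stating so the accounting is visibly complete.
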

\begin{proof}
\addtocounter{equation}{-1}
\begin{subequations}
Let $n \in N_{G'}(M')(F)_{[x_{0}]_{M'}}$, $O_{M'} \in \Phi'_{M'}/ I_{F}$, and $t \in J(O_{M'}; x_{0})$.
The definition of the integer $z(x_{0} - n x_{0}; O_{M'}; t)$ implies that it actually only depends on the $\Gal(\overline{\ff}/\ff)$-orbit of $O_{M'}$, and we have
\begin{align}
\label{-differencedifferenceof-}
z(x_{0} - n x_{0}; O_{M'}; t) = - z(x_{0} - n x_{0}; - O_{M'}; - t).
\end{align}
Hence we can define the subset $\widetilde{R}_{x_{0}}(n) / \{\pm1\}$ of $\widetilde{R}_{x_{0}} / \{\pm1\}$ by
\[
\widetilde{R}_{x_{0}}(n) / \{\pm1\}
=
\Bigl\{
[([\alpha_{M'}], t)]  \in \widetilde{R}_{x_{0}} / \{\pm1\} 
\Bigr. \, \Bigl| \,
z(x_{0} - n x_{0}; O_{M'}; t)\equiv 1 \bmod 2 \text{ for } O_{M'} \subset [\alpha_{M'}]
\Bigr\}.
\]
By Remark~\ref{remarksymmetricvanish} we have
$
z(x_{0} - n x_{0}; O_{M'}; t)  = 0
$
for all $O_{M'} \in \Phi'_{M', \sym} / I_{F}$.
Hence, we have
$
\widetilde{R}_{x_{0}}(n) / \{\pm1\} \subset \widetilde{R}_{x_{0}, \asym} / \{\pm1\}
$.
It follows from the definition of $o_{n}$ and our choice of uniformizers that the element 
\(
\Biggl(
\prod_{[([\alpha_{M'}], t)] \in \widetilde{R}_{x_{0}}(n) / \{\pm1\}} \zeta_{[([\alpha_{M'}], t)] }
\Biggr)
\circ o_{n}
\)
is defined over $\ff$ and contained in the group $\GL^+(V_{x_{0}})$. Moreover, this element preserves the quadratic form $\varphi$ on $V_{x}$ by Equation~\eqref{-differencedifferenceof-} and Property \ref{propertyofvarphi}.
Hence, we have
\(
\Biggl(
\prod_{[([\alpha_{M'}], t)] \in \widetilde{R}_{x_{0}}(n) / \{\pm1\}} \zeta_{[([\alpha_{M'}], t)] }
\Biggr)
\circ o_{n} \in \GL^+(V_{x_{0}}) \cap \ort(V_{x_{0}}, \varphi)(\ff)
\), and therefore
$
o_{n} \in \widetilde{\ort}(V_{x_{0}})
$.
\end{subequations}
\end{proof}
To extend our quadratic characters, we fix a square root $\sqrt{\smash[b]{\sgn_{\ff}(-1)}}$ of $\sgn_{\ff}(-1)$ in $\Coeff^{\times}$ and recall that by  Equation~\eqref{plusminusdecomposition} $ 
\dim_{\ff}\left(
V_{(x_{0}, [([\alpha_{M'}], t)])}(\ff)
\right)
$
is even when $[([\alpha_{M'}], t)]  \in \widetilde{R}_{x_{0}, \asym} $.
\begin{proposition} \label{Proposition-extension-epsilon13}
	The character 
$
( \sgn_{\ff} \circ \sn )
\restriction_{\GL^+(V_{x_{0}}) \cap \ort(V_{x_{0}}, \varphi)(\ff)}
$
extends to a unique character $\widetilde{\sn}$ of $\widetilde{\ort}(V_{x_{0}})$ that satisfies \spacingatend{}
\[
\widetilde \sn\left(
\zeta_{[([\alpha_{M'}], t)]}
\right)
= \sqrt{\smash[b]{\sgn_{\ff}}(-1)}^{\dim_{\ff}\left(
	V_{(x_{0}, [([\alpha_{M'}], t)])}(\ff)
	\right) / 2}
\]
for all $[([\alpha_{M'}], t)]  \in \widetilde{R}_{x_{0}, \asym} / \{\pm1\}$.
\end{proposition}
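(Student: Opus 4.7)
The plan is to define $\widetilde{\sn}$ separately on the subgroup $H := \GL^{+}(V_{x_{0}}) \cap \ort(V_{x_{0}}, \varphi)(\ff)$, as the pullback of $\sgn_{\ff} \circ \sn$, and on the abelian subgroup $Z \subset \widetilde{\ort}(V_{x_{0}})$ generated by the $\zeta_{A}$ for $A \in \widetilde{R}_{x_{0}, \asym} / \{\pm 1\}$, using the prescribed values, and then to glue these into a homomorphism on the product $HZ$. Uniqueness is immediate, since $\widetilde{\ort}(V_{x_{0}})$ is generated by $H$ and $Z$ by construction.

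First I will show that $HZ$ is a subgroup of $\widetilde{\ort}(V_{x_{0}})$, and hence equal to all of it. Since $h \in H \subset \GL^{+}(V_{x_{0}})$ permutes the summands $W_{B} := V_{(x_{0}, B)}$ indexed by asymmetric $B$ (say $h(W_{B}) = W_{h \cdot B}$ with $h \cdot B$ again asymmetric), a direct computation on each summand yields $h \zeta_{A} h^{-1} = \zeta_{h \cdot A}$. Therefore $H$ normalizes $Z$ and $HZ$ is a subgroup containing both generating families. The character $\widetilde{\sn}|_{Z}$ given on generators by the displayed formula is well-defined on $Z$ because the $\zeta_{A}$ commute and satisfy $\zeta_{A}^{4} = 1$, while the prescribed values satisfy $\sqrt{\smash[b]{\sgn_{\ff}(-1)}}^{4 m_{A}} = 1$, where $2 m_{A} := \dim_{\ff} W_{A}(\ff)$ is even by \eqref{plusminusdecomposition}.

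To glue $\widetilde{\sn}|_{H}$ and $\widetilde{\sn}|_{Z}$, I need (a) $H$-invariance of $\widetilde{\sn}|_{Z}$, and (b) agreement of the two definitions on $H \cap Z = \langle \zeta_{A}^{2} \mid A \rangle$. Point (a) follows from $h \zeta_{A} h^{-1} = \zeta_{h \cdot A}$ together with the fact that $h$ restricts to a $\varphi$-preserving isomorphism $W_{A} \isoarrow W_{h \cdot A}$, so $m_{A} = m_{h \cdot A}$. Point (b) is the crux of the proof: I need $(\sgn_{\ff} \circ \sn)(\zeta_{A}^{2}) = \sgn_{\ff}(-1)^{m_{A}}$ for each asymmetric $A$. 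Since $\zeta_{A}^{2}$ acts as $-\id$ on $W_{A}$ and as the identity on the complementary summands, for any orthogonal $\ff$-basis $v_{1}, \ldots, v_{2 m_{A}}$ of $W_{A}(\ff)$ it equals the product of reflections $r_{v_{1}} \cdots r_{v_{2 m_{A}}}$ in $V_{x_{0}}$, so $\sn(\zeta_{A}^{2}) = \prod_{i} \varphi(v_{i}) = \mathrm{disc}(W_{A}(\ff), \varphi)$ modulo squares. The key step is then to identify $W_{A}(\ff)$ as hyperbolic: since $[\alpha_{M'}]$ is asymmetric at the Galois level one has $-[\alpha_{M'}] \neq [\alpha_{M'}]$, so Property~\ref{propertyofvarphi} forces both $V_{(x_{0}, [\alpha_{M'}], t)}(\ff)$ and $V_{(x_{0}, -[\alpha_{M'}], -t)}(\ff)$ to be totally isotropic, of common $\ff$-dimension $m_{A}$, and in non-degenerate pairing with each other. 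Consequently $\mathrm{disc}(W_{A}(\ff), \varphi) = (-1)^{m_{A}}$ modulo squares, and the desired equality $(\sgn_{\ff} \circ \sn)(\zeta_{A}^{2}) = \sgn_{\ff}(-1)^{m_{A}} = \widetilde{\sn}(\zeta_{A})^{2}$ follows.

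With (a) and (b) established, the formula $\widetilde{\sn}(hz) := (\sgn_{\ff} \circ \sn)(h) \cdot \widetilde{\sn}(z)$ is well-defined on $HZ = \widetilde{\ort}(V_{x_{0}})$ and, using the $H$-invariance of $\widetilde{\sn}|_{Z}$ together with commutativity of $\Coeff^{\times}$, multiplicative; this yields the desired extension, visibly landing in $\mu_{4}$. The principal obstacle is the hyperbolic identification of $W_{A}(\ff)$ via Property~\ref{propertyofvarphi}, which simultaneously explains why the normalization $\sqrt{\smash[b]{\sgn_{\ff}(-1)}}^{m_{A}}$ is forced on $\widetilde{\sn}(\zeta_{A})$ and delivers the consistency check on $H \cap Z$.
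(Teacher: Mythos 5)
Your proposal is correct and takes essentially the same approach as the paper: define the character separately on $H = \GL^+(V_{x_0}) \cap \ort(V_{x_0}, \varphi)(\ff)$ and on the abelian group $Z = A_\zeta$ generated by the $\zeta_A$, check $H$-normalization of $Z$ and the prescribed character, and verify agreement on $H \cap Z = \langle \zeta_A^2 \rangle$. The one point where you spell out more detail is the key computation $(\sgn_\ff \circ \sn)(\zeta_A^2) = \sgn_\ff(-1)^{m_A}$: you derive it from scratch by writing $\zeta_A^2 = -\id_{W_A}$ as a product of $2m_A$ reflections and then identifying $W_A(\ff)$ as hyperbolic via Property~\ref{propertyofvarphi} (two complementary totally isotropic pieces in non-degenerate pairing), so the discriminant is $(-1)^{m_A}$; the paper instead cites Property~\ref{propertyofvarphi}, Equation~\eqref{plusminusdecomposition}, and \cite[Chapter~9, Example~3.5]{MR770063} for the same conclusion. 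Both versions are sound, and your expanded argument is a faithful unfolding of what the citation encapsulates.
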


\begin{proof}
By the definition of $\widetilde{\ort}(V_{x_{0}})$, if such an extension $\widetilde{\sn}$ exists, it is unique. In order to prove existence, we define the character $\sn_{\zeta}$ of the abelian group
\[A_\zeta \coloneqq  
\Bigl \langle
\zeta_{[([\alpha_{M'}], t)] }
\Bigr. \,\Bigl|\,
[([\alpha_{M'}], t)]  \in \widetilde{R}_{x_{0}, \asym} / \{\pm1\}
\Bigr\rangle
\]
to be the unique character that satisfies
\[
\sn_{\zeta}\left(
\zeta_{[([\alpha_{M'}], t)]}
\right)
= \sqrt{\smash[b]{\sgn_{\ff}}(-1)}^{\dim_{\ff}\left(
V_{(x_{0}, [([\alpha_{M'}], t)])}(\ff)
\right) / 2}
\]
for all $[([\alpha_{M'}], t)]  \in \widetilde{R}_{x_{0}, \asym} / \{\pm1\}$.
Since $\zeta^{2} = -1$, Property~\ref{propertyofvarphi}, Equation~\eqref{plusminusdecomposition}, and \cite[Chapter~9, Example~3.5]{MR770063} imply that
\begin{align*}
\sgn_{\ff} \left(
\sn (\zeta^{2}_{[([\alpha_{M'}], t)]})
\right)
&=  \sgn_{\ff} \left(
(-1)^{\dim_{\ff}\left(
V_{(x_{0}, [([\alpha_{M'}], t)])}(\ff)
\right) / 2}
\right) \\
&= \left(
\sgn_{\ff}(-1)
\right)^{\dim_{\ff}\left(
V_{(x_{0}, [([\alpha_{M'}], t)])}(\ff)
\right) / 2} 
= \sn_{\zeta}\left(
\zeta^{2}_{[([\alpha_{M'}], t)]}
\right).
\end{align*}
Hence, we obtain that the restrictions of $\sgn_{\ff} \circ \sn$ and $\sn_{\zeta}$ to the group
\begin{align*}
&\quad 
\GL^+(V_{x_{0}}) \cap \ort(V_{x_{0}}, \varphi)(\ff) \cap
A_\zeta 
= \Bigl \langle
\zeta^{2}_{[([\alpha_{M'}], t)] }
\Bigr. \,\Bigl|\,
[([\alpha_{M'}], t)]  \in \widetilde{R}_{x_{0}, \asym} / \{\pm1\}
\Bigr \rangle
\end{align*}
agree.

Let $g \in \GL^+(V_{x_{0}})$ and $[([\alpha_{M'}], t)]  \in \widetilde{R}_{x_{0}, \asym} / \{\pm1\}$.
If we write $g [([\alpha_{M'}], t)]$ for the element of $\widetilde{R}_{x_{0}, \asym} / \{\pm1\}$ such that
$
g V_{(x_{0}, [([\alpha_{M'}], t)])} = V_{(x_{0}, g [([\alpha_{M'}], t)])}
$, then the definition of $\zeta_{[([\alpha_{M'}], t)]}$ implies that
$
g \circ \zeta_{[([\alpha_{M'}], t)]} = \zeta_{g [([\alpha_{M'}], t)]} \circ g
$.
Hence, we see that the group $\GL^+(V_{x_{0}})$ normalizes the group $A_\zeta$ and the character $\sn_{\zeta}$.
Thus, we can define the character $\widetilde \sn$ of  $\widetilde{\ort}(V_{x_{0}})$ by
\(
\widetilde{\sn}(g \cdot h) = \sgn_{\ff} \left(
\sn(g)
\right) \cdot \sn_{\zeta}(h)
\)
for $g \in \GL^+(V_{x_{0}}) \cap \ort(V_{x_{0}}, \varphi)(\ff)$ and $h \in A_\zeta$.
\end{proof}
\begin{remark}
If $\sgn_{\ff}(-1) = 1$, then the character $\widetilde{\sn}$ is a quadratic character.
However, if $\sgn_{\ff}(-1) = - 1$, the character $\widetilde{\sn}$ takes values in $\mu_{4}$ and is not always a quadratic character. 
\end{remark}
\begin{definition} \label{definitionofepsilontilde}
	We define the character $_{G'}\widetilde\epsilon_{\sym, \ram, 0}$\index{notation-ky}{epsilontildesymram0@$_{G'}\widetilde\epsilon_{\sym, \ram, 0}$}
of $N_{G'}(M')(F)_{[x_{0}]_{M'}}$ as the composition
\[
N_{G'}(M')(F)_{[x_{0}]_{M'}}
\xrightarrow{n \mapsto o_{n}}
\widetilde{\ort}(V_{x_{0}}) 
\stackrel{\widetilde{\sn}}{\longrightarrow}
\mu_{4},
\]
and the character $\widetilde{\epsilon}^{G/G'}_{x_{0}} \colon N_{G'}(M')(F)_{[x_{0}]_{M'}} \longrightarrow \mu_{4}$\index{notation-ky}{epsilontildeGG'x@$\widetilde{\epsilon}^{G/G'}_{x_{0}}$}
by $\widetilde{\epsilon}^{G/G'}_{x_{0}} = {_{G'} \widetilde{\epsilon}^{\sym, \ram}_{s}} \cdot {_{G'}\widetilde\epsilon_{\sym, \ram, 0}}$.

\end{definition}
\begin{corollary}
\label{extensionepsilon13}
The characters $_{G'}\widetilde\epsilon_{\sym, \ram, 0}$ and $\widetilde{\epsilon}^{G/G'}_{x_{0}}$ are extensions of the characters
\[
\left(
_{G'} \epsilon_{\sym, \ram} \cdot {_{G'} \epsilon_{0}}
\right)\restriction_{N_{G'}(M')(F)_{x_{0}}}
\quad
\text{ and }
\quad
\epsilon^{G/G'}_{x_{0}}
\restriction_{N_{G'}(M')(F)_{x_{0}}}
\]
to the group $N_{G'}(M')(F)_{[x_{0}]_{M'}}$.
\end{corollary}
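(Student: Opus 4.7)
The plan is to verify the first claim about $_{G'}\widetilde\epsilon_{\sym, \ram, 0}$ directly, and then deduce the second claim about $\widetilde{\epsilon}^{G/G'}_{x_0}$ by combining it with Lemma~\ref{extensionepsilon2} and the factorization $\epsilon^{G/G'}_{x_0} = {_{G'} \epsilon_{\sym, \ram}} \cdot {_{G'} \epsilon^{\sym, \ram}_s} \cdot {_{G'} \epsilon_0}$.

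First I would reduce the value of $o_n$ for $n\in N_{G'}(M')(F)_{x_0}$ to the simple adjoint action. Since $n x_0 = x_0$, inspection of the construction in Section~\ref{subsection:constructionofon} shows that $I_{x_0\mid x_0}$ is the identity: for any $O_{M'}\in\Phi'_{M'}/I_F$ and $t\in J(O_{M'}; x_0)$, we have $\langle x_0-x_0,O_{M'}\rangle=0$ and $r(0; O_{M'}; t)=0$, so the scalar $(\zeta\varpi_{\alpha_{M'}})^{z(0;O_{M'};t)}$ equals $1$. Consequently, $o_n$ is simply the map $V_{x_0}\xrightarrow{n} V_{x_0}$ induced by the adjoint action.

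Next I would check that $o_n$ lands in $\GL^+(V_{x_0})\cap\ort(V_{x_0},\varphi)(\ff)$. Since $n\in G'(F)_{x_0}\subset G'(F^{\unr})_{x_0}$, the adjoint action preserves $\varphi$ by the discussion preceding Lemma~\ref{rewritethetwopiecesofepsion}, so $o_n\in\ort(V_{x_0},\varphi)(\ff)$. Moreover, because $n\in N_{G'}(M')(F)$, conjugation by $n$ permutes $\Phi'_{M'}$ in a way compatible with the $\Gal(\overline F/F)$-action and with the involution $\alpha_{M'}\mapsto -\alpha_{M'}$; it also preserves the symmetric/asymmetric partition of $\Phi'_{M'}$, which is a purely Galois-theoretic condition. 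Hence $o_n$ permutes the orthogonal summands $V_{(x_0,[([\alpha_{M'}],t)])}$ appearing in \eqref{orthogonaldecompositionofvx0} and preserves the asymmetric/symmetric parts separately, which places $o_n$ in $\GL^+(V_{x_0})$.

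Given this, Proposition~\ref{Proposition-extension-epsilon13} tells us that $\widetilde\sn(o_n)=\sgn_\ff(\sn(o_n))$, and Lemma~\ref{rewritethetwopiecesofepsion} identifies this composition with $(_{G'}\epsilon_{\sym,\ram}\cdot {_{G'}\epsilon_0})(n)$. Thus
\[
{_{G'}\widetilde\epsilon_{\sym,\ram,0}}\restriction_{N_{G'}(M')(F)_{x_0}} = ({_{G'}\epsilon_{\sym,\ram}}\cdot {_{G'}\epsilon_0})\restriction_{N_{G'}(M')(F)_{x_0}},
\]
which is the first claim. For the second claim, Definition~\ref{definitionofepsilontilde} gives $\widetilde\epsilon^{G/G'}_{x_0}={_{G'}\widetilde\epsilon^{\sym,\ram}_s}\cdot {_{G'}\widetilde\epsilon_{\sym,\ram,0}}$, so combining Lemma~\ref{extensionepsilon2} with the first claim and the factorization $\epsilon^{G/G'}_{x_0}={_{G'}\epsilon_{\sym,\ram}}\cdot {_{G'}\epsilon^{\sym,\ram}_s}\cdot {_{G'}\epsilon_0}$ yields the desired extension.

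No step here is a serious obstacle; the substance of the work was carried out in the preceding subsections. The only point requiring a little care is the verification that $o_n\in\GL^+(V_{x_0})$, i.e., that the adjoint action of $n$ respects the $\widetilde R_{x_0,\asym}$ versus $\widetilde R_{x_0,\sym}$ dichotomy, which is immediate since conjugation by $n\in N_{G'}(M')(F)$ commutes with the $\Gal(\overline F/F)$-action on $\Phi'_{M'}$.
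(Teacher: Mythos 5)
Your proposal is correct and takes essentially the same route as the paper. The paper's own proof of this corollary is a one-line citation of Lemma~\ref{rewritethetwopiecesofepsion}, Proposition~\ref{Proposition-extension-epsilon13}, Lemma~\ref{extensionepsilon2}, and the factorization $\epsilon^{G/G'}_{x_0} = {_{G'}\epsilon_{\sym,\ram}}\cdot{_{G'}\epsilon^{\sym,\ram}_s}\cdot{_{G'}\epsilon_0}$; you simply unfold that citation, in particular by verifying explicitly that $o_n$ reduces to the plain adjoint action for $n \in N_{G'}(M')(F)_{x_0}$ (since $I_{x_0\mid x_0}=\id$ because $z(0;O_{M'};t)=0$) and that $o_n$ lands in $\GL^+(V_{x_0})\cap\ort(V_{x_0},\varphi)(\ff)$ so Proposition~\ref{Proposition-extension-epsilon13} applies without the $\zeta$-correction factor.
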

\begin{proof}
	This follows from Lemma~\ref{rewritethetwopiecesofepsion}, Proposition \ref{Proposition-extension-epsilon13}, Lemma~\ref{extensionepsilon2} and the definition $\epsilon^{G/G'}_{x_{0}}={_{G'} \epsilon_{\sym, \ram}} \cdot {_{G'}\epsilon}^{\sym, \ram}_{s} \cdot {_{G'} \epsilon_{0}}$.
\end{proof}

\subsection{Extension of the quadratic character for general reductive groups}
\label{subsec:extension-general}

Now we drop the condition that $G$ is adjoint, and we assume (as before) that there exists a $G_{\ad}$-good element $X \in \Lie^{*}(G'_{\scn, \ab})(F)_{-r}$ of depth $-r$ in the sense of \cite[Section~3]{FKS}. This is, for example, implied by the existence of a $G$-generic character of $G'(F)$ of depth $r$ relative to $x_0$ by \cite[Remark~4.1.3]{FKS}.
Then we obtain from \cite[Lemma~4.1.2]{FKS} the quadratic character\index{notation-ky}{epsilonGG'x@$\epsilon^{G/G'}_{x_0}$}%
\[
\epsilon^{G/G'}_{x_0} \colon G'(F)_{[x_0]_{G}} \rightarrow \{ \pm 1 \}, 
\]
\index{notation-ky}{epsilonGG'x@$\epsilon^{G/G'}_{x}$}%
which is defined as the precomposition of $\epsilon^{G_{\ad}/G'_{\ad}}_{x_{0}}$ with the map $G'(F)_{[x_{0}]_{G}} \rightarrow G'_{\ad}(F)_{[x_{0}]_{G}}$ that is the restriction of the map $G(F) \rightarrow G_{\ad}(F)$ arising from the adjoint quotient $G \twoheadrightarrow G_{\ad}$. Here  $[x_{0}]_{G} \in \cB^{\red}(G,F) = \cB(G_{\ad}, F)$.

We note that the diagram $\{\iota\}$ from page \pageref{pageiota} induces a commutative diagram 
\[
\xymatrix{
	\cB(G'_{\ad}, F) \ar@{^{(}->}[r] \ar@{}[dr]|\circlearrowleft & \cB(G_{\ad}, F)
	\\
	\cB(M'_{\ad}, F) \ar@{^{(}->}[u] \ar@{^{(}->}[r] & \cB(M_{\ad}, F) \ar@{^{(}->}[u]
}
\]
of admissible embeddings, the point $[x_{0}]_{G}$ lies in the image of $\cB(M'_{\ad}, F)$,
and the embedding $\cB(M_{\ad}, F) \longrightarrow \cB(G_{\ad}, F)$ is $\frac{r}{2}$-generic relative to $[x_{0}]_{G}$.
Hence we obtain from Definition \ref{definitionofepsilontilde} the character 
 $\widetilde{\epsilon}^{G_{\ad}/G'_{\ad}}_{[x_{0}]_{G}} \colon N_{G'_{\ad}}(M'_{\ad})(F)_{[x_{0}]_{M'}} \longrightarrow \mu_{4}$.
 \begin{definition}\label{definitiontildeepsilon}
 	We define $\widetilde{\epsilon}^{G/G'}_{x_{0}}$ to be the composition	
 	\[
 	\widetilde{\epsilon}^{G/G'}_{x_{0}} \colon N_{G'}(M')(F)_{[x_{0}]_{M'}}  \longrightarrow N_{G'_{\ad}}(M'_{\ad})(F)_{[x_{0}]_{M'}} \longrightarrow \mu_{4},
 	\]\index{notation-ky}{epsilontildeGG'x@$\widetilde{\epsilon}^{G/G'}_{x_{0}}$}%
 	where the first map is the restriction of $G(F) \rightarrow G_{\ad}(F)$ and the second is $\widetilde{\epsilon}^{G_{\ad}/G'_{\ad}}_{[x_{0}]_{G}}$.
 \end{definition}

\begin{theorem}
	\label{twistextension}
	The restrictions of the characters $\epsilon^{G/G'}_{x_{0}}$ and $\widetilde{\epsilon}^{G/G'}_{x_{0}}$ to $N_{G'}(M')(F)_{[x_{0}]_{G}}$ agree.
\end{theorem}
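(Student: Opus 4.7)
The plan is to reduce the theorem to the adjoint case, where the desired equality follows directly from Corollary~\ref{extensionepsilon13}.

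By construction, $\epsilon^{G/G'}_{x_{0}}$ and $\widetilde{\epsilon}^{G/G'}_{x_{0}}$ are in the general setting the pullbacks of $\epsilon^{G_{\ad}/G'_{\ad}}_{y_{0}}$ and $\widetilde{\epsilon}^{G_{\ad}/G'_{\ad}}_{y_{0}}$ along the restrictions to $G'(F)$ and $N_{G'}(M')(F)_{[x_{0}]_{M'}}$ of the adjoint quotient $G(F) \to G_{\ad}(F)$ (see Subsection~\ref{subsec:extension-general} and Definition~\ref{definitiontildeepsilon}), where $y_{0} \coloneqq [x_{0}]_{G}$ is viewed as a point of $\cB(G_{\ad}, F) = \cB^{\red}(G, F)$. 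It therefore suffices to show that $\widetilde{\epsilon}^{G_{\ad}/G'_{\ad}}_{y_{0}}$ and $\epsilon^{G_{\ad}/G'_{\ad}}_{y_{0}}$ take the same value on the image in $N_{G'_{\ad}}(M'_{\ad})(F)$ of every element of $N_{G'}(M')(F)_{[x_{0}]_{G}}$.

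Next I would check that this image is contained in $N_{G'_{\ad}}(M'_{\ad})(F)_{y_{0}}$: the defining condition that $n$ fixes $[x_{0}]_{G} \in \cB^{\red}(G, F)$ says precisely that the image of $n$ in $G_{\ad}(F)$ fixes $y_{0} \in \cB(G_{\ad}, F)$, since $G(F)$ acts on $\cB^{\red}(G, F) = \cB(G_{\ad}, F)$ through the adjoint quotient. Hence it suffices to verify that $\widetilde{\epsilon}^{G_{\ad}/G'_{\ad}}_{y_{0}}$ and $\epsilon^{G_{\ad}/G'_{\ad}}_{y_{0}}$ agree on the subgroup $N_{G'_{\ad}}(M'_{\ad})(F)_{y_{0}}$ of their common overgroup $N_{G'_{\ad}}(M'_{\ad})(F)_{[y_{0}]_{M'_{\ad}}}$ (the latter being the domain of $\widetilde{\epsilon}^{G_{\ad}/G'_{\ad}}_{y_{0}}$).

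For this I would apply Corollary~\ref{extensionepsilon13} with $(G, G', M, M', x_{0})$ replaced by $(G_{\ad}, G'_{\ad}, M_{\ad}, M'_{\ad}, y_{0})$; the hypotheses transfer via the commutative diagram of admissible embeddings recorded just before Definition~\ref{definitiontildeepsilon}. That corollary asserts precisely that $\widetilde{\epsilon}^{G_{\ad}/G'_{\ad}}_{y_{0}}$ extends $\epsilon^{G_{\ad}/G'_{\ad}}_{y_{0}} \restriction_{N_{G'_{\ad}}(M'_{\ad})(F)_{y_{0}}}$, so in particular the two characters coincide on that smaller subgroup. I do not foresee any serious obstacle: the substantive technical work was already carried out in Sections~\ref{subsection:character2}--\ref{subsection:character13}, and the theorem is in essence a bookkeeping consequence of the compatibility of the general-case definitions with the adjoint-case extension already established.
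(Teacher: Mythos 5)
Your proof is correct and matches the paper's own argument, which is a one-line reduction to Corollary~\ref{extensionepsilon13} via the pullback definitions in Subsection~\ref{subsec:extension-general} and Definition~\ref{definitiontildeepsilon}. You simply spell out the bookkeeping (the image of $N_{G'}(M')(F)_{[x_0]_G}$ lands in $N_{G'_{\ad}}(M'_{\ad})(F)_{[x_0]_G}$, the domain on which the adjoint-case extension is known to restrict correctly) that the paper leaves implicit.
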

\begin{proof}
This follows from the definitions of $\epsilon^{G/G'}_{x_{0}}$ and $\widetilde{\epsilon}^{G/G'}_{x_{0}}$ and Corollary \ref{extensionepsilon13}.
\end{proof}


\section{The non-twisted and twisted Heisenberg--Weil constructions}
\label{sec:twistedweil}

In this section, we will construct a compact, open subgroup $K_{x}$ of $G(F)$ and an irreducible smooth representation $\kappa_{x}$ of $K_{x}$ from a Heisenberg--Weil datum that will be introduced in Definition~\ref{definition:Heisenberg-Weil}. Here $x$ denotes a point of the Bruhat--Tits building of (a twisted Levi subgroup of) $G$.
These representations will play the roles of $\kappa_{M}$ in
Axiom~\ref{HAI-axiomaboutKM0vsKM} of \cite{HAI}
and $\kappa_{x}$ in
Axioms~\ref{HAI-axiomaboutK0vsK}
and \ref{HAI-axiomextensionoftheinductionofkappa}
of \cite{HAI}
in the setting of this paper.
To construct $\kappa_{x}$, we will follow the construction of \cite[Section~4]{Yu} that uses the theory of Heisenberg--Weil representations, but incorporating a quadratic twist introduced by \cite[Section~4]{FKS} and allowing more general coefficient fields using \cite[Section~2.3]{MR4460255}. 
Key new results in this section include: \spacingatend{}
\begin{enumerate}[(1)]
\item 
\label{enumeratecomparisonresultofkappax}
We will prove a comparison result between $\kappa_{x}$ and $\kappa_{y}$ for $x$ and $y$ sufficiently close to each other, see Proposition~\ref{propositioninductiontwistedsequencever}.
The quadratic twist is essential for this comparison.
The result will be used to verify
\cite[Axiom~\ref{HAI-axiomextensionoftheinductionofkappa}\eqref{HAI-axiomextensionoftheinductionofkappacompatibilitywiththecompactind}]{HAI}
in the setting of this paper, see Lemma \ref{proofofaxiomextensionoftheinductionofkappa}. 
\item
\label{enumerateextensionresultofkappaM}
We will prove an extension result for the (non-twisted) Heisenberg--Weil representations, see Proposition~\ref{proposition:extensionofuntwistedWeilHeisenbergrepresentation}.  
This result combined with the extension result for the quadratic character of \cite{FKS}, Theorem \ref{twistextension}, yields an extension of $\kappa_M$, which will be used to verify
Axiom~\ref{HAI-axiomaboutKM0vsKM}\eqref{HAI-axiomaboutKM0vsKMaboutanextension}
of \cite{HAI},
in the setting of this paper, see Proposition \ref{propproofofaxiomaboutKM0vsKM}.
\end{enumerate}
We note that the parts concerning the construction of the representation $\kappa_{x}$ in Sections~\ref{sec:compact-open-subgroups}, \ref{subsec:untwistedconstruction}, and the first part of Section~\ref{subsec:repfromHeisenbergWeil} essentially replicate the procedure in \cite{Yu} in our more general setting. We have included these details for the convenience of the reader and to establish the relevant notation.

\subsection{Compact open subgroups from functions on the root system}
\label{sec:compact-open-subgroups}

Let $x \in \cB(G,F)$.
Let $T$ be a maximal torus of $G$ such that
the Galois splitting field $E$ of $T$ is tamely ramified over $F$
and such that $x \in \cA(G, T, E)$. 
According to the discussion in the beginning of \cite[Section~2]{Yu}, such a torus exists.

Given any function 
\[
f \colon \Phi(G,T) \cup \{0\} \longrightarrow \widetilde{\mathbb{R}},
\]
with $f(0) \ge 0$,
following Bruhat and Tits (\cite[6.4.42]{MR327923}) we define the compact, open subgroup $G(E)_{x,f}$ of $G(E)$ by
\[
G(E)_{x, f} =
\left\langle
U_{\alpha}(E)_{x, f(\alpha)} \mid \alpha \in \Phi(G, T) \cup \{0\}
\right\rangle, 
\]
where we use the convention that $U_0=T$ and $T(E)_{x, \infty}=\{1\}$.
If $f$ is \emph{concave} in the sense of \cite[6.4.3]{MR327923},
then according to \cite[6.4.9(ii),(iii)]{MR327923},
we have
\[
G(E)_{x,f} \cap U_\alpha(E) = U_\alpha(E)_{x,f(\alpha)}
\]
for all $\alpha \in \Phi(G,T) \cup \{0\}$.
If $x$ is fixed and $f$ is preserved by the actions of $\Gal(E/F)$
on $\cA(G, T,E)$ and on $\Phi(G,T)$,
then we will define the compact, open subgroup $G(F)_{x,f}$
of $G(F)$ by
\[
G(F)_{x,f} = G(F) \cap G(E)_{x,f}.
\]

\subsection{The non-twisted construction}
\label{subsec:untwistedconstruction}
In this subsection we recall Yu's construction of $\phi'_{i-1}$ \cite[Section~4]{Yu} but allowing more general coefficient fields using \cite[Section~2.3]{MR4460255}.
The representation $\phi'_{i-1}$ is attached to a twisted Levi subgroup $G' \subseteq G$ and a generic character thereof as we explain below. In Section \ref{A twist of the construction} we will twist this construction by a quadratic character, and in Section \ref{subsec:repfromHeisenbergWeil} we will attach a representation to a whole sequence of twisted Levi subgroups $G^0 \subseteq G^1 \subseteq \hdots \subseteq G^n$ and generic characters of those groups by inductively repeating the below construction.

Let $G'$ be a (not necessarily proper) twisted Levi subgroup of $G$
that splits over a
tamely ramified
field extension of $F$.
We fix an 
 admissible embedding
$
\cB(G', F) \hookrightarrow \cB(G, F)
$
and identify a point of $\cB(G', F)$ with its image in $\cB(G, F)$.
We fix a positive real number $r > 0$.
Let $x \in \cB(G', F)$.
Let $T$ be a maximal torus of $G'$ such that the splitting field $E$ of $T$ is tamely ramified over $F$ and such that $x \in \cA(G', T, E)$.

Following Yu (\cite[Section~9]{Yu}), but recording the point $x$ as index in the notation, we define the compact, open subgroups $J_{x}$ and $J_{x,+}$ of $G(F)$ 
by
\index{notation-ky}{Jx@$J_x$}%
\index{notation-ky}{Jx+@$J_{x,+}$}%
\[
J_x = G(F)_{x,f}
\qquad
\text{and}
\qquad
J_{x,+} = G(F)_{x,f^+}
\]
where $f$ and $f^+$ are the (concave) functions on $\Phi(G,T) \cup \{ 0\}$ given by
\[
f(\alpha) = 
\begin{cases}
r & \alpha \in \Phi(G',T) \cup \{0\}, \\
\frac{r}{2} & \text{otherwise},
\end{cases}
\qquad
\text{and}
\qquad
f^+(\alpha) = 
\begin{cases}
r & \alpha \in \Phi(G',T) \cup \{0\}, \\
\frac{r}{2}+ & \text{otherwise}.
\end{cases}
\]
As explained in \cite[Section~1, Section~2]{Yu}, the groups $J_{x}$ and $J_{x, +}$ are independent of the choice of a maximal torus $T$ of $G'$.

We also recall the following notation from \cite[Section~2]{Yu}
\[
(G', G)(F)_{x, (r+, \frac{r}{2}+)} = 
G(F)_{x,f^{++}},
\qquad
\text{where}
\qquad
f^{++}(\alpha) = 
\begin{cases}
r+ & \alpha \in \Phi(G',T) \cup \{0\}, \\
\frac{r}{2}+ & \text{otherwise}.
\end{cases}
\]
According to \cite[6.4.44]{MR327923}, we have
$
[J_{x}, J_{x}] \subset J_{x, +}
$.
Hence, $J_{x, +}$ is a normal subgroup of $J_{x}$, and the quotient group $J_{x}/J_{x, +}$ is an abelian group.
Moreover, the definition of the Moy--Prasad filtration subgroups implies that for all $j \in J_{x}$, we have $j^{p} \in J_{x, +}$.
Thus, $J_{x}/J_{x, +}$ is an abelian group of exponent $p$.
We regard $J_{x}/J_{x, +}$ as an $\mathbb{F}_{p}$-vector space. 

Let $K'_{x}$ be an open subgroup of $G'(F)_{[x]_{G}}$.
\index{notation-ky}{Kayp x@$K'_x$}%
We write $K'_{x, 0+} = K'_{x} \cap G'(F)_{x, 0+}$.
\index{notation-ky}{Kayp x 0+@$K'_{x,0+}$}%
According to \cite[Remark~3.5]{Yu}, the group $K'_{x}$ normalizes the groups $J_{x}$ and $J_{x, +}$.
Hence, we can define the open subgroup $K_{x}$ of $G(F)$ as
\index{notation-ky}{Kay x@$K_x$}%
$
K_{x} = K'_{x} \cdot J_{x}
$.

Let $\phi$ be a character of $G'(F)$ that is $G$-generic of depth $r$ relative to $x$ in the sense of \cite[Definition~3.5.2]{Fintzen-IHES}.
We will construct an irreducible smooth representation $\phi'_{x}$ of $K_{x}$ following \cite[Section~4, Section~11]{Yu} and \cite[Section~2.3]{MR4460255}.
\begin{remark}
In \cite{Yu}, the representation $\phi'_{x}$ was constructed in the case of $\Coeff = \bC$.
All the arguments there can be applied to the general cases except for the existence of the Heisenberg--Weil representation.
We will replace the complex Heisenberg--Weil representation with the mod-$\ell$ Heisenberg--Weil representation constructed in \cite[Section~2.3]{MR4460255} if the coefficient field $\Coeff$ has positive characteristic (see Notation~\ref{notationcomplexandmodellweilheisenberg} below). 
\end{remark}
Let $\widehat{\phi}_{x}$ denote the character of $\left(
G'(F)_{[x]_{G}}
\right) \cdot J_{x, +}$ defined as in \cite[Section~4]{Yu}.
This means $\widehat{\phi}_{x}$ is the character of $\left(
G'(F)_{[x]_{G}}
\right) \cdot J_{x, +}$ that is trivial on $(G', G)(F)_{x, (r+ , \frac{r}{2}+)}$ and agrees with $\phi$ on $G'(F)_{[x]_{G}}$.
We write $N_{x}$ for the kernel of $\widehat{\phi}_{x} \restriction_{J_{x, +}}$.

Since $[J_{x}, J_{x}] \subset J_{x, +}$, we can define the pairing
$
J_{x} \times J_{x} \longrightarrow \Coeff^{\times}
$
by
$
(j_{1}, j_{2}) \mapsto \widehat{\phi}_{x}(j_{1} j_{2} j_{1}^{-1} j_{2}^{-1})
$.
Moreover, according to \cite[Lemma~11.1]{Yu}, the pairing above
induces a non-degenerate symplectic pairing
\index{notation-ky}{<,>x@$\langle \phantom{x},\phantom{x} \rangle_{x}$}%
\[
\langle \phantom{x},\phantom{x} \rangle_{x} \colon J_{x}/J_{x, +} \times J_{x}/J_{x, +} \longrightarrow \mu_{p},
\]
where 
$
\mu_{p} = \{
\zeta \in \Coeff \mid \zeta^{p} = 1
\}
$.
We fix an isomorphism
\index{notation-ky}{iotap@$\iota_p$}%
$
\iota_{p} \colon \mu_{p} \isoarrow \mathbb{F}_{p}
$
and regard $\langle \phantom{x},\phantom{x} \rangle_{x}$ as an $\mathbb{F}_{p}$-valued pairing.
Thus, we can regard $J_{x}/J_{x, +}$ as a symplectic space over $\mathbb{F}_{p}$.
Let $(J_{x}/J_{x, +})^{\#}$ denote the Heisenberg group of $J_{x}/J_{x, +}$, that is, $(J_{x}/J_{x, +})^{\#}$ is the set $(J_{x}/J_{x, +}) \times \mathbb{F}_{p}$ equipped with the group law
\[
(v, a) \cdot (w, b) = \left(
v + w, a + b + \frac{\left\langle v, w \right\rangle_{x}}{2}
\right).
\]
The symplectic group $\Sp(J_{x}/J_{x, +})$ acts on $(J_{x}/J_{x, +})^{\#}$ as
$
g (v, a) = (g v, a)
$
for $g \in \Sp(J_{x}/J_{x, +})$ and $(v, a) \in (J_{x}/J_{x, +})^{\#}$.  
\begin{notation}
\label{notationcomplexandmodellweilheisenberg}
We define the irreducible $\Coeff$-representation $\omega_{x}$ of the group $
\Sp \left(J_{x}/J_{x, +}\right) \ltimes (J_{x}/J_{x, +})^{\#}
$ such that the center
$
\{
(0, a) \mid a \in \mathbb{F}_{p}
\}
$
of $(J_{x}/J_{x, +})^{\#}$ acts by the character
$
(0, a) \mapsto \iota_{p}^{-1} (a)
$ as follows.%
\index{notation-ky}{omegax@$\omega_{x}$}%
\begin{enumerate}
\item
Suppose that the coefficient field $\Coeff$ has characteristic zero. Then let $\omega_{x}$ denote the complex Heisenberg--Weil representation obtained from \cite[Section~2]{MR460477}.
We note that although the representation $\omega_{x}$ is a priori defined over $\bC$, it is already defined over $\overline{\bQ}$ since the group $
\Sp \left(J_{x}/J_{x, +}\right) \ltimes (J_{x}/J_{x, +})^{\#}
$ is finite, and hence yields a representation with $\Coeff$-coefficients.
\item
Suppose that the coefficient field $\Coeff$ has characteristic $\ell > 0$. Then let $\omega_{x}$ denote the mod-$\ell$ Heisenberg--Weil representation obtained from \cite[Section~2.3]{MR4460255}.
\end{enumerate}
\end{notation}

We let 
$
j_{x} \colon J_{x}/N_{x} \longrightarrow (J_{x}/J_{x, +})^{\#}
$
be the isomorphism from \cite[Proposition~11.4]{Yu}, which satisfies the following properties. 
\begin{itemize}
\item For $j \in J_{x, +}$, we have
\(
j_{x}(j + N_{x}) = (0, \iota_{p} \circ \widehat{\phi}_{x}(j)).
\)
\item For $j \in J_{x}$, the first factor of 
\(
j_{x}(j + N_{x}) \in (J_{x}/J_{x, +})^{\#} = (J_{x}/J_{x, +}) \times \mathbb{F}_{p}
\)
is equal to
\(
j + J_{x, +} \in J_{x}/J_{x, +}.
\)
\end{itemize}
We use the same notation $j_{x}$
\index{notation-ky}{j_x@$j_x$}%
for its precomposition with the 
surjection 
$
J_{x} \twoheadrightarrow J_{x} / N_{x}
$.
Moreover, \cite[Proposition~11.4]{Yu} also implies that the conjugation action of $K'_{x}$ on $J_{x}$ induces a homomorphism
\[
f_{x} \colon K'_{x} \longrightarrow \Sp(J_{x}/J_{x, +}),
\]
and the maps $j_{x}$ and $f_{x}$ induce a group homomorphism
\begin{align*}
f_{x} \ltimes j_{x} \colon K'_{x} \ltimes J_{x} \longrightarrow \Sp \left(J_{x}/J_{x, +}\right) \ltimes (J_{x}/J_{x, +})^{\#}.
\end{align*}
Thus, following Yu, we can define the irreducible representation $\widetilde{\phi}_{x}$ of $K'_{x} \ltimes J_{x}$ as the pull back of $\omega_{x}$ via $f_{x} \ltimes j_{x}$.
The construction of $\widetilde{\phi}_{x}$ implies that the restriction of $\widetilde{\phi}_{x}$ to $1 \ltimes J_{x, +}$ is $1 \ltimes \widehat{\phi}_{x} \restriction_{J_{x, +}}$-isotypic.
On the other hand, according to \cite[6.4.44]{MR327923}, we have
$
[G'(F)_{x, 0+}, J_{x}] \subset J_{x, +}
$.
Hence, the image of $K'_{x, 0+}$ via $f_{x}$ is trivial, and the restriction of $\widetilde{\phi}_{x}$ to $K'_{x, 0+} \ltimes 1$ is $1$-isotypic.
We define the character $\inf(\phi_{x})$ of $K'_{x} \ltimes J_{x}$ as the inflation of the character $\phi_{x} \coloneqq  \phi \restriction_{K'_{x}}$ via the map
\[
K'_{x} \ltimes J_{x} \twoheadrightarrow (K'_{x} \ltimes J_{x}) / J_{x} \isoarrow K'_{x}.
\]
Since the restriction of $\widetilde{\phi}_{x}$ to $1 \ltimes J_{x, +}$ is $1 \ltimes \widehat{\phi}_{x} \restriction_{J_{x, +}}$-isotypic, the restriction of $\widetilde{\phi}_{x}$ to $K'_{x, 0+} \ltimes 1$ is $1$-isotypic, and since we have
\[
K'_{x} \cap J_{x} = K'_{x} \cap G'(F)_{x, r} = K'_{x, 0+} \cap J_{x, +},
\]
the set
\[
\{
k \ltimes k^{-1} \mid k \in K'_{x} \cap J_{x}
\} \subset K'_{x} \ltimes J_{x}
\]
is contained in the kernel of the representation $\inf(\phi_{x}) \otimes \widetilde{\phi}_{x}$ of $K'_{x} \ltimes J_{x}$ .
Thus, the representation $\inf(\phi_{x}) \otimes \widetilde{\phi}_{x}$ factors through the surjection
\(
K'_{x} \ltimes J_{x} \twoheadrightarrow K'_{x} \cdot J_{x} = K_{x}.
\)
We define the irreducible representation $\phi'_{x}$
\index{notation-ky}{phi'x@$\phi'_{x}$}%
of $K_{x}$ to be the representation whose inflation to $K'_{x} \ltimes J_{x}$ is $\inf(\phi_{x}) \otimes \widetilde{\phi}_{x}$.
\begin{remark}
\label{remarkinthecaseG'=G}
In the case $G' = G$, we have $K_{x} = K'_{x} \cdot G(F)_{x, r}$ and $\phi'_{x} = \phi \restriction_{K_{x}}$.
\end{remark}

We record some properties of $\phi'_{x}$.
\begin{lemma}
\label{lemmaphiisotypic}
The restriction of $\phi'_{x}$ to $J_{x}$ is irreducible, and the restriction of $\phi'_{x}$ to the group $K'_{x, 0+} \cdot J_{x, +}$ is $\widehat{\phi}_{x} \restriction_{K'_{x, 0+} \cdot  J_{x, +}}$-isotypic.
In particular, the restriction of $\phi'_{x}$ to $G(F)_{x, r+}$ is trivial.
\end{lemma}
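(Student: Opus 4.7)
The plan is to reduce everything to the construction of $\phi'_x$ by inflation from $K'_x \ltimes J_x$, where the representation is the tensor product $\inf(\phi_x) \otimes \widetilde{\phi}_x$, and then exploit the known behaviour of each factor on the relevant subgroups.

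For the first claim, I would restrict the inflation to $\{1\} \ltimes J_x$. On this subgroup $\inf(\phi_x)$ is trivial by construction (it was pulled back via $K'_x \ltimes J_x \twoheadrightarrow K'_x$), so the restriction agrees with $\widetilde{\phi}_x|_{\{1\} \ltimes J_x}$. The latter is the pullback of $\omega_x|_{(J_x/J_{x,+})^\#}$ through the homomorphism $j_x\colon J_x \to (J_x/J_{x,+})^\#$. Since $j_x$ is surjective (it is the composition of the projection $J_x \twoheadrightarrow J_x/N_x$ with the isomorphism of \cite[Proposition~11.4]{Yu}), and since $\omega_x$ restricted to the Heisenberg group is irreducible in both characteristic zero and positive characteristic by Notation~\ref{notationcomplexandmodellweilheisenberg}, the restriction of $\phi'_x$ to $J_x$ is irreducible.

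For the second claim, I would pass from $K'_{x,0+} \cdot J_{x,+} \subseteq K_x$ to its preimage in $K'_x \ltimes J_x$, which is $K'_{x,0+} \ltimes J_{x,+}$ because the kernel of $K'_x \ltimes J_x \twoheadrightarrow K_x$ is contained in $K'_{x,0+} \ltimes J_{x,+}$ (using $K'_x \cap J_x = K'_{x,0+} \cap J_{x,+}$, which is noted in the excerpt). Two computations remain. First, on $K'_{x,0+} \ltimes \{1\}$ the representation $\inf(\phi_x)$ acts by $\phi$ while $\widetilde{\phi}_x$ is trivial because $f_x(K'_{x,0+}) = 1$ (the conjugation action of $K'_{x,0+} \subseteq G'(F)_{x,0+}$ on $J_x$ is trivial modulo $J_{x,+}$, by the commutator relation $[G'(F)_{x,0+}, J_x] \subseteq J_{x,+}$ recalled just before the definition of $\widetilde{\phi}_x$). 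Second, on $\{1\} \ltimes J_{x,+}$ the factor $\inf(\phi_x)$ is trivial while $\widetilde{\phi}_x$ acts through $\omega_x \circ j_x$; the property $j_x(j) = (0, \iota_p \circ \widehat{\phi}_x(j))$ for $j \in J_{x,+}$ places the image in the centre of $(J_x/J_{x,+})^\#$, on which $\omega_x$ acts by the character $(0,a) \mapsto \iota_p^{-1}(a)$, yielding $\widehat{\phi}_x(j)$. Combining the two computations gives the scalar $\phi(k)\widehat{\phi}_x(j) = \widehat{\phi}_x(kj)$ on $(k,j) \in K'_{x,0+} \ltimes J_{x,+}$, which is precisely the pullback of $\widehat{\phi}_x|_{K'_{x,0+}\cdot J_{x,+}}$.

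For the final ``in particular'' statement, I would first show $G(F)_{x,r+} \subseteq J_{x,+}$ by comparing concave functions: on roots of $G'$ (and on $0$) we have $f^+ = r \le r+$, while on the remaining roots $f^+ = \tfrac{r}{2}+ \le r+$, so every Moy--Prasad generator of $G(F)_{x,r+}$ lies in $J_{x,+}$. Then applying the isotypic statement and recalling that $\widehat{\phi}_x$ is by definition trivial on $(G',G)(F)_{x,(r+,\tfrac{r}{2}+)}$, which contains $G(F)_{x,r+}$ by the same concave-function comparison, gives triviality of $\phi'_x$ on $G(F)_{x,r+}$. No step is a genuine obstacle; the only care required is keeping track of the inflation/quotient identifications and verifying that the kernel of $K'_x \ltimes J_x \twoheadrightarrow K_x$ is absorbed into $K'_{x,0+} \ltimes J_{x,+}$ when restricting.
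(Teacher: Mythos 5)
Your proof is correct and follows essentially the same route as the paper: irreducibility on $J_x$ via the surjection $j_x$ onto the Heisenberg group and irreducibility of the Heisenberg representation, the isotypic claim by separately computing the two factors of $\inf(\phi_x)\otimes\widetilde\phi_x$ on $K'_{x,0+}$ and $J_{x,+}$, and the final claim via $G(F)_{x,r+}\subseteq J_{x,+}$ together with the triviality of $\widehat\phi_x$ on $(G',G)(F)_{x,(r+,\frac{r}{2}+)}$. You have simply spelled out the inflation/semidirect-product bookkeeping (identifying the preimage of $K'_{x,0+}\cdot J_{x,+}$ in $K'_x\ltimes J_x$ and the concave-function comparisons) that the paper leaves implicit.
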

\begin{proof}
The first claim follows from the fact that the restriction of $\phi'_{x}$ to $J_{x}$ is the pull back of the Heisenberg representation of $(J_{x}/J_{x, +})^{\#}$ via the surjection $j_{x}$ (see also \cite[Lemma~2.5]{MR4460255} for the positive characteristic case).
The construction of $\phi'_{x}$ implies that the restriction of $\phi'_{x}$ to $J_{x, +}$ is $\widehat{\phi}_{x} \restriction_{J_{x, +}}$-isotypic, and the restriction of $\phi'_{x}$ to $K'_{x, 0+}$ is $\phi \restriction_{K'_{x, 0+}} = \widehat{\phi}_{x} \restriction_{K'_{x, 0+}}$-isotypic.
Thus, we obtain the second claim.
The last claim follows from the second claim and the fact that the character $\widehat{\phi}_{x}$ is trivial on the group $G(F)_{x, r+} \subset (G', G)(F)_{x, (r+ , \frac{r}{2}+)}$.
\end{proof}

\begin{lemma}
\label{lemma:phi'xisunitary}
Suppose that $\Coeff$ admits a nontrivial involution,
with respect to which the character $\phi_{x}$ of $K'_{x}$ is unitary.
Then the representation $\phi'_{x}$ is unitary.
\end{lemma}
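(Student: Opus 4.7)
The plan is to reduce unitarity of $\phi'_x$ to unitarity of its two building blocks, $\inf(\phi_x)$ and $\widetilde{\phi}_x$, since by construction the inflation of $\phi'_x$ to $K'_x\ltimes J_x$ is $\inf(\phi_x)\otimes\widetilde{\phi}_x$, and (i) an inflation of a unitary representation is unitary (pull back the Hermitian form along the surjection), (ii) a tensor product of unitary representations is unitary (take the tensor product of the Hermitian forms), and (iii) if the inflation of a representation to a group surjecting onto $K_x$ is unitary and the original representation is irreducible, one readily checks that the invariant Hermitian form descends.

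First I would handle $\inf(\phi_x)$: by hypothesis $\phi_x$ is a unitary character of $K'_x$, so its inflation along $K'_x\ltimes J_x\twoheadrightarrow K'_x$ is trivially unitary. Next, since $\widetilde{\phi}_x$ is by definition the pullback along $f_x\ltimes j_x$ of the Heisenberg--Weil representation $\omega_x$ of the finite group $H\coloneqq \Sp(J_x/J_{x,+})\ltimes (J_x/J_{x,+})^\#$, it suffices to exhibit an $H$-invariant Hermitian form on $\omega_x$.

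The main obstacle is therefore to produce a nonzero invariant Hermitian form on $\omega_x$ with respect to the given involution on $\Coeff$. In the complex case (Notation~\ref{notationcomplexandmodellweilheisenberg}(1)) this is classical: the Heisenberg--Weil representation of Gerardin is unitary by its construction in \cite{MR460477}. In the mod-$\ell$ case (Notation~\ref{notationcomplexandmodellweilheisenberg}(2)), the point is that $H$ is a finite group of order prime to $\ell=\mathrm{char}(\Coeff)$, so its category of $\Coeff$-representations is semisimple, and $\omega_x$ is irreducible. By Schur's lemma the space of $H$-invariant sesquilinear forms on $\omega_x$ is at most one-dimensional; existence can be obtained by the standard averaging trick, namely starting from any nonzero sesquilinear form and averaging over $H$ (legal because $|H|$ is invertible in $\Coeff$), provided that at least one such form survives averaging. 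The latter is guaranteed once one knows that the $\sigma$-conjugate dual of $\omega_x$ is isomorphic to $\omega_x$, which follows from the intrinsic characterization of $\omega_x$ by the action of its center (namely $(0,a)\mapsto \iota_p^{-1}(a)$ together with a chosen square root of unity convention), together with the fact that both $\iota_p$ and its $\sigma$-conjugate send $\mu_p\hookrightarrow \Coeff^\times$ to $\mathbb{F}_p$ bijectively.

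Having established that $\omega_x$ admits an $H$-invariant Hermitian form, pulling back along $f_x\ltimes j_x$ gives a $(K'_x\ltimes J_x)$-invariant Hermitian form on $\widetilde{\phi}_x$, so $\widetilde{\phi}_x$ is unitary. Tensoring with the invariant form on $\inf(\phi_x)$ produces a $(K'_x\ltimes J_x)$-invariant Hermitian form on $\inf(\phi_x)\otimes\widetilde{\phi}_x$. Since this last representation factors through the surjection $K'_x\ltimes J_x\twoheadrightarrow K_x$ (as recorded in the construction of $\phi'_x$), the Hermitian form, being invariant under the kernel $\{k\ltimes k^{-1}\mid k\in K'_x\cap J_x\}$ of that surjection, descends to a $K_x$-invariant Hermitian form on $\phi'_x$, which is therefore unitary.
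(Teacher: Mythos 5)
Your argument follows essentially the same route as the paper's: factor $\phi'_x$ as $\inf(\phi_x)\otimes\widetilde{\phi}_x$, note that $\widetilde{\phi}_x$ is pulled back from the finite group $H:=\Sp(J_x/J_{x,+})\ltimes(J_x/J_{x,+})^\#$, and invoke unitarity of representations of finite groups (the paper just asserts this in one line and then tensors with the unitary $\phi_x$). The structural steps you add --- inflation and tensor product preserve unitarity, and the form descends along the surjection $K'_x\ltimes J_x\twoheadrightarrow K_x$ because the kernel acts trivially --- are all fine.

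However, the detail you supply for the positive-characteristic case contains two genuine errors. First, it is not true that $|H|$ is prime to $\ell$: the order of $\Sp\bigl(J_x/J_{x,+}\bigr)\simeq\Sp(2m,\mathbb F_p)$ contains factors $p^{2i}-1$ which can very well be divisible by $\ell$, so the category of $\Coeff[H]$-modules need not be semisimple and averaging over $H$ is not a priori legal. Second, the argument that $\omega_x\cong\overline{\omega_x^*}$ because ``$\iota_p$ and its $\sigma$-conjugate both give bijections $\mu_p\to\mathbb F_p$'' is not valid: distinct nontrivial central characters of the Heisenberg group give non-isomorphic Heisenberg representations (and the Weil extensions are likewise distinguished, up to rescaling the symplectic form by a square). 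What is actually needed is that the central characters literally agree, i.e.\ that $\sigma$ acts on $\mu_p$ by inversion.

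Both defects are rescued --- and the paper's terse one-line assertion is justified --- by the Artin--Schreier theorem: an algebraically closed field of positive characteristic admits no nontrivial involution, so the hypothesis is vacuous in characteristic $\ell>0$. In characteristic zero, $\Coeff^\sigma$ is necessarily real closed, $\sigma$ inverts all roots of unity (forcing the central characters to match), and every representation of a finite group is unitary because one can average a positive-definite Hermitian form over the ordered field $\Coeff^\sigma$. Your write-up would be correct once the positive-characteristic discussion is replaced by this observation.
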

\begin{proof}
Since the representation $\widetilde{\phi}_{x}$ factors through the map $f_{x} \ltimes j_{x}$, and the group $\Sp \left(J_{x}/J_{x, +}\right) \ltimes (J_{x}/J_{x, +})^{\#}$ is finite, the representation $\widetilde{\phi}_{x}$ is unitary.
Then the lemma follows from the definition of $\phi'_{x}$ and the fact that the character $\phi_{x}$ is unitary.
\end{proof}

\subsection{A relative construction}
\label{subsection: relative case}

In this subsection, we will construct an irreducible smooth representation $(\phi')^{y}_{x}$ of an open subgroup $K^{y}_{x}$ of $G(F)$ from two points $x, y \in \cB(G', F)$. We will use them to compare the representations $\phi'_x$ and $\phi'_y$ for $x$ and $y$ sufficiently close in Section \ref{Section-compatibility-with-compact-induction}, see Corollary \ref{corollaryinductionuntwisted}.
For this, we first prove that the genericity condition of a character $\phi$ of $G'(F)$ does not depend on the choice of points.  \begin{lemma}
\label{lemmaindependenceofthegenericity}
Let $r > 0$ and $\phi$ be a character of $G'(F)$ that is $G$-generic of depth $r$ relative to a point $x \in \cB(G', F)$.
Then the character $\phi$ is $G$-generic of depth $r$ relative to $y$ for any point $y \in \cB(G', F)$.
\end{lemma}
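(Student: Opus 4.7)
The plan is to unpack \cite[Definition~3.5.2]{Fintzen-IHES} and verify that each of its requirements is intrinsic to $\phi$, rather than to the auxiliary point $x$. That definition has two parts: (a) the character $\phi$ has depth $r$, and (b) there exists $X^{\ast} \in \Lie^{*}(Z(G'))(F)_{-r}$ that is $G$-generic of depth $-r$ (a condition on the valuations $\ord(X^{\ast}(H_\alpha))$ for roots $\alpha \in \Phi(G, T) \smallsetminus \Phi(G', T)$, where $T$ is a maximal torus of $Z(G')^{\circ}$) and that realizes $\phi$ via $\phi(\exp(Y)) = \Psi(X^{\ast}(Y))$ for $Y$ in the filtration quotient $\Lie(G')(F)_{x, r : r+}$. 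I will argue that (a) is classical Moy--Prasad theory, and that in (b) the realizer $X^{\ast}$ is intrinsic to $\phi$ modulo $\Lie^{*}(Z(G'))(F)_{(-r)+}$, so that the genericity of $X^{\ast}$ (a condition on $X^{\ast}$ alone) transfers from $x$ to $y$.

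For part (a), the depth of a smooth character of $G'(F)$ is a point-independent invariant: it is the infimum of those $s > 0$ such that $\phi \restriction_{G'(F)_{z, s+}}$ is trivial, and this infimum does not depend on the choice of $z \in \cB(G', F)$. For part (b), I first reduce to the case in which $x$ and $y$ lie in a common apartment $\cA(G', T, E)$ for some maximal torus $T$ of $G'$ splitting over a tame extension $E$. This reduction is legitimate because $\phi$ is $G'(F)$-conjugation-invariant (being a character) and $G'(F)$ acts transitively on such (point, containing-apartment) pairs. Within this apartment, the displacement $y - x$ lies in $X_{\ast}(A_{G'}) \otimes_{\bZ} \bR$ and pairs trivially with every character of $Z(G')$; consequently, the Moy--Prasad filtration of $Z(G')(F)$, and dually the filtration of $\Lie^{*}(Z(G'))(F)$, does not depend on the point. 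Because $\phi$ factors through the abelianization of $G'(F)$ on the quotient $G'(F)_{z, r}/G'(F)_{z, r+}$ (indeed $[G'(F)_{z, r/2}, G'(F)_{z, r/2}] \subset G'(F)_{z, r+}$ by concavity of the relevant functions), only the central projection of any realizer contributes, and hence a realizer $X^{\ast}$ at $x$ remains a realizer at $y$ modulo $\Lie^{*}(Z(G'))(F)_{(-r)+}$.

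The main obstacle is to make this last claim precise: one must verify carefully that the identity $\phi \circ \exp = \Psi \circ X^{\ast}$ at $x$ transfers, without adjusting $X^{\ast}$ outside of $\Lie^{*}(Z(G'))(F)_{(-r)+}$, to the corresponding identity at $y$. Granted this, the $G$-genericity of $X^{\ast}$ is a condition purely on its image in $\Lie^{*}(Z(G'))(F)_{-r}/\Lie^{*}(Z(G'))(F)_{(-r)+}$, involving no point of the building, and the lemma follows.
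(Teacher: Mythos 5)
Your general instinct is right — the key observations are that $\phi$, being a character of the full group $G'(F)$, is trivial on unipotent root groups, and that the realizer $X^{\ast}$ is $G'$-invariant — and this is the same spirit as the paper's argument. But the way you try to assemble these facts has a concrete error, and you explicitly leave the main step unresolved.

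The error is the assertion that, once $x$ and $y$ lie in a common apartment $\cA(G', S, F)$, the displacement $y - x$ lies in $X_{\ast}(A_{G'}) \otimes_{\bZ} \bR$. That is false in general: two points of an apartment can differ by any element of $X_{\ast}(S) \otimes_{\bZ} \bR$, and $S$ is a maximal split torus of $G'$, which is typically much larger than $A_{G'}$ (for instance $A_{G'}$ can be trivial while $S$ is not). Consequently your claim that only the ``central projection'' onto $Z(G')$ matters does not go through as stated. The group whose filtration you actually need to control is $Z_{G'}(S)$, not $Z(G')$, and the quotient $G'(F)_{y, r}/G'(F)_{y, r+}$ is built from root-group and $Z_{G'}(S)$ contributions, not from $Z(G')$ alone.

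The paper's proof makes the decomposition precise via the Iwahori factorization from \cite[4.3~Proposition~(a)]{Kim-Yu}: for $U \in \cU(Z_{G'}(S))$ one writes
\[
G'(F)_{y, r} = \bigl(G'(F)_{y,r} \cap U(F)\bigr)\cdot\bigl(G'(F)_{y,r} \cap Z_{G'}(S)(F)\bigr)\cdot\bigl(G'(F)_{y,r} \cap \overline U(F)\bigr),
\]
and similarly for $r+$. Triviality of $\phi$ on $U(F)$ and $\overline U(F)$ is immediate because $\phi$ is a character of $G'(F)$; triviality of $X^{\ast}$ on $\Lie(U)$ and $\Lie(\overline U)$ follows from $S$-invariance of $X^{\ast}$; and the remaining piece $Z_{G'}(S)(F)_{y, r}/Z_{G'}(S)(F)_{y, r+}$ is genuinely $y$-independent for $y$ in the apartment of $S$ (this is \cite[Proposition~1.9.1]{adler-thesis}, not a fact about $Z(G')$). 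Also note that your point~(a) — that depth is a point-independent invariant — is not something you can take for granted here; triviality of $\phi$ on $G'(F)_{x, r+}$ does not automatically give triviality on $G'(F)_{y, r+}$ for a general smooth representation, and indeed the Iwahori decomposition argument above is exactly what establishes it for characters of $G'(F)$. Replacing the $Z(G')$-projection heuristic by the $Z_{G'}(S)$-factorization is the missing idea that closes the gap you yourself flag as ``the main obstacle.''
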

\begin{proof}
We fix a maximal split torus $S$ of $G'$ such that $x, y \in \mathcal{A}(G', S, F)$.
Since the character $\phi$ is $G$-generic of depth $r$ relative to the point $x$, the restriction of $\phi$ to the group $G'(F)_{x, r+}$ is trivial, and there exists an element $X^{*} \in \Lie^{*}(G')^{G'}(F)$ which is $G$-generic of depth $-r$ in the sense of \cite[Definition~3.5.2]{Fintzen-IHES} such that the restriction of $\phi$ to 
\[
G'(F)_{x, r}/G'(F)_{x, r+} \simeq \Lie(G')(F)_{x, r} / \Lie(G')(F)_{x, r+} 
\]
is given by $\Psi \circ X^{*}$.
To prove the lemma, it suffices to show that the character $\phi$ is trivial on $G'(F)_{y, r+}$ and the restriction of $\phi$ to 
\[
G'(F)_{y, r}/G'(F)_{y, r+} \simeq \Lie(G')(F)_{y, r} / \Lie(G')(F)_{y, r+} 
\]
is also given by $\Psi \circ X^{*}$.
We fix $U \in \mathcal{U}(Z_{G'}(S))$.
According to \cite[4.3~Proposition (a)]{Kim-Yu}, we have
\[
G'(F)_{y, r+} = \left(
G'(F)_{y, r+} \cap U(F)
\right) \cdot (G'(F)_{y, r+} \cap Z_{G'}(S)(F)) \cdot \left(
G'(F)_{y, r+} \cap \overline{U}(F)
\right).
\]
Since $\phi$ is a character of $G'(F)$, it is trivial on the groups $U(F)$ and $\overline{U}(F)$.
Moreover, since $x, y \in \mathcal{A}(G', S, F)$,
from \cite[Proposition 1.9.1]{adler-thesis},
we have
\[
G'(F)_{y, r+} \cap Z_{G'}(S)(F) = Z_{G'}(S)(F)_{y, r+} = Z_{G'}(S)(F)_{x, r+} \subset G'(F)_{x, r+}.
\]
Hence, the character $\phi$ is also trivial on the group $G'(F)_{y, r+} \cap Z_{G'}(S)(F)$.
Thus, we have proved that the character $\phi$ is trivial on the group $G'(F)_{y, r+}$.
Next, we will prove that the restriction of $\phi$ to the group $G'(F)_{y, r}/G'(F)_{y, r+}$ is given by $\Psi \circ X^{*}$.
By using \cite[4.3~Proposition (a)]{Kim-Yu} again, we obtain that
\[
G'(F)_{y, r} = \left(
G'(F)_{y, r} \cap U(F)
\right) \cdot (G'(F)_{y, r} \cap Z_{G'}(S)(F)) \cdot \left(
G'(F)_{y, r} \cap \overline{U}(F)
\right).
\]
Since $X^{*}$ is fixed by the coadjoint action of $G'$ (hence of $S$) on $\Lie^{*}(G')$, we obtain that $X^{*}$ is trivial on the Lie algebras of $U$ and $\overline{U}$.
Since the character $\phi$ is trivial on the groups $U(F)$ and $\overline{U}(F)$, it suffices to prove that the restriction of $\phi$ to the group $Z_{G'}(S)(F)_{y, r} / Z_{G'}(S)(F)_{y, r+}$ is given by $\Psi \circ X^{*}$.
Now, the claim follows from the facts that 
\[
Z_{G'}(S)(F)_{y, r} / Z_{G'}(S)(F)_{y, r+} = Z_{G'}(S)(F)_{x, r} / Z_{G'}(S)(F)_{x, r+} 
\]
and the restriction of $\phi$ to the group $G'(F)_{x, r}/G'(F)_{x, r+}$ is given by $\Psi \circ X^{*}$.
\end{proof}
Let $x, y \in \nobreak \cB(G', F)$. We use the notation from the previous section, i.e., we fix a positive real number $r > 0$, and  $\phi$ is a character of $G'(F)$ that is $G$-generic of depth $r$ relative to $x$, and hence also relative to $y$.
\begin{lemma}
\label{lemmacompatibilityofcharacters}
We have
\[
\widehat{\phi}_{x}\restriction_{J_{x, +} \cap J_{y, +}} = \widehat{\phi}_{y}\restriction_{J_{x, +} \cap J_{y, +}}.
\]
\end{lemma}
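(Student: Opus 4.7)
My strategy is to give $\widehat{\phi}_{z}$, for every $z \in \cB(G', F)$, a canonical formula on $J_{z,+}$ that depends only on $\phi$ and on an intrinsic $\Lie(G)(F)$-logarithm of the argument; equality of $\widehat{\phi}_{x}$ and $\widehat{\phi}_{y}$ on $J_{x,+} \cap J_{y,+}$ will then be immediate. The ingredients are: the generic element $X^{\ast} \in \Lie^{\ast}(G')^{G'}(F)$ of depth $-r$ underlying $\phi$, which by Lemma~\ref{lemmaindependenceofthegenericity} can be taken uniformly in the reference point; the canonical $G'$-equivariant projection $\pi \colon \Lie(G) \to \Lie(G')$ onto the trivial isotypic component of $Z(G')^{\circ}$ acting on $\Lie(G)$, which descends to $F$ and preserves Moy--Prasad filtrations thanks to the tame splitting of $G'$; and a fixed logarithm $\log$ on $G(F)$ obtained via a faithful representation and the Campbell--Hausdorff series (well-defined at positive depth because $p \neq 2$).

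The main step is the \emph{local formula}: for every $z \in \cB(G', F)$ and every $j \in J_{z,+}$,
\[
\widehat{\phi}_{z}(j) \;=\; \Psi\bigl(X^{\ast}(\pi(\log j))\bigr).
\]
To prove it, I would first observe the decomposition $J_{z,+} = G'(F)_{z, r} \cdot H_{z}$ with $H_{z} := (G', G)(F)_{z, (r+, r/2+)}$ and $G'(F)_{z, r} \cap H_{z} = G'(F)_{z, r+}$. This holds because the commutator inclusion $[G'(F)_{z, r}, H_{z}] \subset G(F)_{z, r+} \subset H_{z}$ makes $G'(F)_{z, r} \cdot H_{z}$ already a subgroup, and this subgroup contains each of the root-group generators of $J_{z,+}$ listed in Section~\ref{subsec:untwistedconstruction}. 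Writing $j = g \cdot h$ accordingly, the definition of $\widehat{\phi}_{z}$ gives $\widehat{\phi}_{z}(j) = \phi(g) = \Psi(X^{\ast}(\log g))$ by $G$-genericity. Since $G(F)_{z, r/2+}/G(F)_{z, r+}$ is abelian (commutators land in $G(F)_{z, r+}$), the Moy--Prasad isomorphism is additive on this quotient, yielding $\log j \equiv \log g + \log h$ modulo $\Lie(G)(F)_{z, r+}$. Using the $G'$-equivariant splitting $\Lie(G) = \Lie(G') \oplus \fm$ and the shape of $H_{z}$, one finds $\pi(\log h) \in \Lie(G')(F)_{z, r+}$, so $\pi(\log j) \equiv \log g$ modulo $\Lie(G')(F)_{z, r+}$. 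Since $X^{\ast}$ has depth $-r$, it sends $\Lie(G')(F)_{z, r+}$ into $\fp_{F} \subset \ker \Psi$, and the local formula follows.

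With the local formula in hand the lemma is immediate: for $j \in J_{x, +} \cap J_{y, +}$, the element $\pi(\log j) \in \Lie(G')(F)$ is intrinsic to $j$ and does not depend on the reference point, so
\[
\widehat{\phi}_{x}(j) \;=\; \Psi\bigl(X^{\ast}(\pi(\log j))\bigr) \;=\; \widehat{\phi}_{y}(j).
\]
The main technical obstacle is the local formula itself, and within it the careful bookkeeping of Moy--Prasad filtration levels on the two pieces of $j = gh$; the $G'$-equivariant splitting of $\Lie(G)$ over $F$ and the additivity of $\log$ on the relevant quotient are both standard but rely on the tameness of $G'$ and on $p \neq 2$.
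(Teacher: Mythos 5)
Your proposal is correct and follows essentially the same route as the paper's proof. Both arguments make $\widehat{\phi}_{z}\restriction_{J_{z,+}}$ manifestly independent of the base point $z$ by exhibiting it as the composition of an intrinsic Lie-algebra parametrization of $J_{z,+}/G(F)_{z,r+}$ with the $G'$-equivariant projection $\pi$ onto $\Lie(G')(F)$ (the paper's $\fr(F)$ is exactly the kernel of your $\pi$) and then $\Psi \circ X^{*}$; the paper phrases this as a chain of Moy--Prasad isomorphisms rather than via a CBH/faithful-representation logarithm, but that is a notational rather than a conceptual difference. One small remark: rather than invoking a hand-rolled $\log$ from a faithful representation — whose compatibility with filtrations requires some care — you could simply use the Moy--Prasad isomorphism $J_{z,+}/G(F)_{z,r+} \simeq \bigl(\Lie(G')(F)_{z,r}\oplus \fr(F)_{z,\frac{r}{2}+}\bigr)/\Lie(G)(F)_{z,r+}$ directly, as the paper does; this is already additive on the relevant quotient and intrinsic to $j$, which is all your ``local formula'' needs, and sidesteps the convergence issues you flag at the end. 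With that substitution your decomposition $J_{z,+} = G'(F)_{z,r}\cdot H_{z}$, the computation $\widehat{\phi}_{z}(gh)=\phi(g)=\Psi\bigl(X^{*}(\pi(\log j))\bigr)$, and the uniform choice of $X^{*}$ across reference points (which Lemma~\ref{lemmaindependenceofthegenericity} does indeed provide) assemble into a complete proof.
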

\begin{proof}
Let $z \in \{x, y\}$. Then
$\widehat{\phi}_{z}\restriction_{J_{z, +}}$ is given by composing the map	
	\begin{eqnarray*} 
	J_{z,+} \twoheadrightarrow  J_{z,+}/G(F)_{z,r+} & \simeq & (\Lie(G')(F)_{z,r}\oplus \fr(F)_{z,\frac{r}{2}+})/(\Lie(G)(F)_{z,r+}) \\
	& \twoheadrightarrow & \Lie(G')(F)_{z,r}/\Lie(G')(F)_{z,r+} \simeq 
	G'(F)_{z,r}/G'(F)_{z,r+}
	\end{eqnarray*}
with the restriction $\phi\restriction_{G'(F)_{z,r}}$ (that factors through $G'(F)_{z,r}/G'(F)_{z,r+}$), where the subspace $\fr(F)\subset \Lie(G)(F)$ is defined by
$\fr(F)= \Lie(G)(F) \cap \bigoplus_{\alpha \in \Phi(G,T) \smallsetminus \Phi(G', T)} \Lie(G)(E)_\alpha$ for some maximal torus $T$ of $G'$ that splits over a tamely ramified extension $E$ and for which $x, y \in \cA(G', T, E)$. 
Now the claim follows from the equation $\widehat{\phi}_{x}\restriction_{G'(F)_{x,r} \cap G'(F)_{y,r}} = \widehat{\phi}_{y}\restriction_{G'(F)_{x,r} \cap G'(F)_{y,r}} = \phi\restriction_{G'(F)_{x,r} \cap G'(F)_{y,r}}$.
\end{proof}

As before, let $K'_{x}$, resp.\ $K'_{y}$, be an open subgroup of $G'(F)_{[x]_{G}}$, resp.\ $G'(F)_{[y]_{G}}$.
\index{notation-ky}{Kayp x@$K'_x$}%
We recall our notation $K_{x} = K'_{x} \cdot J_{x}$ and $K_{y} = K'_{y} \cdot J_{y}$.
We also write%
\index{notation-ky}{Kayp x y@$K'_{x, y}$}%
\index{notation-ky}{Jxy@$J^y_{x}$}%
\index{notation-ky}{Kayy x@$K^y_x$}%
\[
\begin{cases}
K'_{x, y} &= K'_{x} \cap K'_{y}, \\
J^{y}_{x} &= \left(
J_{x} \cap J_{y}
\right) \cdot J_{y, +}, \\
K^{y}_{x} &= K'_{x, y} \cdot J^{y}_{x}.
\end{cases}
\]
We will construct an irreducible representation $(\phi')^{y}_{x}$ of $K^{y}_{x}$ below.
We define $\mathbb{F}_{p}$-subspaces
$\Vyx$ and $\Vyxp$ of $J_{y} / J_{y, +}$ as
\index{notation-ky}{Vyx@$\Vyx$}%
\index{notation-ky}{Vyxp@$\Vyxp$}%
$
\Vyx
=J^y_x/J_{y,+}
 = \left(
J_{x} \cap J_{y}
\right) \cdot J_{y, +}/J_{y, +}
$
and
$
\Vyxp  =
\left(
J_{x, +} \cap J_{y}
\right) \cdot J_{y, +}/J_{y, +}
$.
(Note that our group $\Vyxp$ is denoted $U^{y}_{x}$ in \cite{FKS}.) The following lemma is a generalization of a result of Yu to our more general setting.
\begin{lemma}[\cite{Yu}]
\label{lemmatotallyisotropic}
The $\mathbb{F}_{p}$-subspace $\Vyxp $ is totally isotropic, and the space $\Vyx$ is the orthogonal complement of $\Vyxp $ in $J_{y}/J_{y, +}$ with respect to $\langle \phantom{x},\phantom{x} \rangle_{y}$.
\end{lemma}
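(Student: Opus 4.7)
The plan is to work in a common root-space decomposition at $y$. Since $x, y \in \cB(G',F)$ lie in a common apartment, I can choose a maximal torus $T$ of $G'$ that splits over a tamely ramified extension $E/F$ such that $x, y \in \cA(G', T, E)$. By the Moy--Prasad isomorphism, after extending scalars to $\overline{\ff}$ and taking appropriate $\Gal(E^{\unr}/F^{\unr})$-invariants, the space $J_y/J_{y,+}$ decomposes canonically into root-space pieces $U_\alpha(E^{\unr})_{y, r/2 : r/2+}$ indexed by $\alpha \in \Phi(G,T) \setminus \Phi(G',T)$ (the $G'$-part contributes to the radical of the pairing and can be handled separately).

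Using the standard shift $U_\alpha(E^{\unr})_{x, t} = U_\alpha(E^{\unr})_{y, t + \langle \alpha, y-x \rangle}$, I would check that the image of $U_\alpha(E^{\unr})_{x, r/2} \cap U_\alpha(E^{\unr})_{y, r/2}$ in $U_\alpha(E^{\unr})_{y, r/2:r/2+}$ equals the whole quotient when $\langle \alpha, x-y\rangle \ge 0$ and is zero otherwise, with the strict analogue for $U_\alpha(E^{\unr})_{x, r/2+} \cap U_\alpha(E^{\unr})_{y, r/2}$. Hence, after extension of scalars,
\[
\Vyx = \bigoplus_{\langle \alpha, x-y\rangle \ge 0} U_\alpha(E^{\unr})_{y, r/2:r/2+},
\qquad
\Vyxp = \bigoplus_{\langle \alpha, x-y\rangle > 0} U_\alpha(E^{\unr})_{y, r/2:r/2+}.
\]

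At the same time, unraveling the construction of $\langle \cdot, \cdot\rangle_y$ via the commutator $(j_1, j_2) \mapsto \widehat{\phi}_y([j_1, j_2])$, the pairing between the $\alpha$- and $\beta$-root-space contributions factors through the Lie bracket paired with a $G$-generic element $X^* \in \Lie^*(G')^{G'}(F)_{-r}$ (using Lemma~\ref{lemmaindependenceofthegenericity} and the description of $\phi\restriction_{G'(F)_{y,r}/G'(F)_{y,r+}}$ as $\Psi \circ X^*$). Since $X^*$ is $G'$-invariant, it annihilates every $T$-weight space on $\Lie(G)$ other than the Cartan, so the pairing vanishes unless $\alpha+\beta = 0$, in which case it is non-degenerate by genericity of $X^*$. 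Combining this with the previous description, $\Vyxp$ is totally isotropic because $\langle \alpha, x-y\rangle > 0$ forces $\langle -\alpha, x-y\rangle < 0$, so $U_{-\alpha}(E^{\unr})_{y, r/2:r/2+}$ is not a summand of $\Vyxp$; and $(\Vyxp)^\perp$ equals the sum of those root-space contributions with $\langle \beta, x-y\rangle \ge 0$, namely $\Vyx$. The main technical obstacle will be ensuring that all identifications descend from $\overline{\ff}$ back to $\mathbb{F}_p$, but this is routine since $\Vyx$, $\Vyxp$, and the pairing are all defined over $\mathbb{F}_p$ and the Galois group merely permutes the root-space summands.
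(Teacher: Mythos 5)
Your argument is correct and follows the same route as the paper's (Appendix~\ref{appendixYufortwopoints}), which itself just runs the generalization of Yu's Lemmas~12.8 and~13.6: you partition the non-$G'$ root spaces of $J_y/J_{y,+}$ according to the sign of $\alpha(y-x)$, observe that the Moy--Prasad shift identifies $\Vyxp$ and $\Vyx$ with the sum of the strictly-positive, resp.\ non-negative, blocks, and then exploit the fact that the commutator pairing composed with the generic $X^*$ is supported on opposite root pairs. The one place where you diverge slightly from the paper is the final identification of $(\Vyxp)^\perp$: you compute it directly, using that the pairing between $U_\alpha$ and $U_{-\alpha}$ is perfect (by genericity), so $(\Vyxp)^\perp$ is exactly the span of those $U_\beta$ with $-\beta$ not indexing $\Vyxp$. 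The paper instead obtains one inclusion soft and then closes the gap with a dimension count à la Yu's Lemma~12.8 ($\dim V_1 = \dim V_3$ because both are totally isotropic inside the non-degenerate $V_2^\perp = V_1\oplus V_3$); that version avoids having to invoke perfectness of the $V_1\times V_3$ pairing explicitly, but both are really parts of the same computation in Yu. Two tiny inaccuracies in your write-up: the $G'$-root spaces contribute \emph{nothing} to $J_y/J_{y,+}$ (since $f$ and $f^+$ agree on $\Phi(G',T)\cup\{0\}$), so there is no ``$G'$-part in the radical'' to handle separately; and when you say the pairing ``vanishes unless $\alpha+\beta=0$'' you should also note the case $\alpha+\beta\in\Phi(G,T)\smallsetminus(\Phi(G',T)\cup\{0\})$, where vanishing comes from $\widehat{\phi}_y$ being trivial on $(G',G)(F)_{y,(r+,\frac r2+)}$ rather than from $T$-invariance of $X^*$.
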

\begin{proof}
	The proof is analogous to Yu's considerations in \cite[Section~12 and 13]{Yu} where he treats the case that $y$ is in the $G'(F)$-orbit of $x$. For the convenience of the interested reader we have spelt out more details in Appendix \ref{appendixYufortwopoints}.
\end{proof}

Since $\Vyx$ is the orthogonal complement of the totally isotropic subspace $\Vyxp $ in $J_{y}/J_{y, +}$, the symplectic pairing $\langle \phantom{x},\phantom{x} \rangle_{y}$ induces a non-degenerate symplectic pairing
on $\Vyx/\Vyxp $.
We use the same notation $\langle \phantom{x},\phantom{x} \rangle_{y}$ for this pairing on $\Vyx/\Vyxp $.
Hence, we can define the Heisenberg--Weil representation $\omega^{y}_{x}$ of 
$
\Sp\left(
\Vyx/\Vyxp 
\right) \ltimes \left(
\Vyx/\Vyxp 
\right)^{\#}
$
associated with the central character
 $
 (0, a) \mapsto \iota_{p}^{-1}(a)
 $
 for 
 \(
 (0, a) \in \{0\} \times \mathbb{F}_{p} \subset \left(
\Vyx/\Vyxp 
\right)^{\#}
 \) as in Notation \ref{notationcomplexandmodellweilheisenberg}.
 We define the subgroup $(\Vyx)^{\#}$ of $(J_{y}/J_{y, +})^{\#}$
\index{notation-ky}{Vxysharp@$(\Vyx)^{\#}$}%
by
\spacingatend{}
 \[
 (\Vyx)^{\#} = \left\{
 (v, a) \in (J_{y}/J_{y, +})^{\#} \mid v \in \Vyx
 \right\}.
 \]
 Then we can define the surjective homomorphism
 \begin{align}
 \label{surjectionheisenberg}
 (\Vyx)^{\#} \twoheadrightarrow \left(
\Vyx/\Vyxp 
\right)^{\#}
 \end{align}
 by
 $
 (v, a) \mapsto (v+ \Vyxp , a)
 $.
 We define the subgroup
\index{notation-ky}{Pyx@$\Pyx$}%
$\Pyx$ of $\Sp(J_{y}/J_{y, +})$ as
 \[
 \Pyx = \{
 g \in \Sp(J_{y}/J_{y, +}) \mid g \Vyxp  \subset \Vyxp 
 \}.
 \]
 Since every element $g \in \Pyx$ preserves $\Vyxp $ and hence also its orthogonal complement $\Vyx$, restriction of $g$ to $\Vyx \subset J_y/J_{y,+}$ yields the surjection
 \begin{align}
 \label{surjectionsymplectic}
 \Pyx \twoheadrightarrow \Sp\left(
\Vyx/\Vyxp 
\right).
\end{align}
 Combining \eqref{surjectionheisenberg} with \eqref{surjectionsymplectic}, we obtain a surjective homomorphism
 \begin{align}
 \label{pullbackxy}
 \Pyx \ltimes (\Vyx)^{\#} \twoheadrightarrow \Sp\left(
\Vyx/\Vyxp 
\right) \ltimes \left(
\Vyx/\Vyxp 
\right)^{\#},
 \end{align}
 and we can pull back $\omega^{y}_{x}$ to $\Pyx \ltimes (\Vyx)^{\#}$ via \eqref{pullbackxy} and denote the resulting representation of $\Pyx \ltimes (\Vyx)^{\#}$ also by $\omega^{y}_{x}$.
\index{notation-ky}{omegayx@$\omega^y_x$}%
 Moreover, since the image of $K'_{x, y} \ltimes J^{y}_{x}$ via $f_{y} \ltimes j_{y}$ is contained in the group $\Pyx \ltimes (\Vyx)^{\#}$, we can pull back $\omega^{y}_{x}$ to a representation $\widetilde{\phi}^{y}_{x}$ of $K'_{x, y} \ltimes J^{y}_{x}$.
 Let $\inf(\phi_{x, y})$ denote the character of $K'_{x, y} \ltimes J^{y}_{x}$ obtained by the inflation of the character $\phi_{x, y} \coloneqq  \phi \restriction_{K'_{x, y}}$ via the map
 \[
K'_{x, y} \ltimes J^{y}_{x} \longrightarrow (K'_{x, y} \ltimes J^{y}_{x}) / J^{y}_{x} \simeq K'_{x, y}.
 \]
Then by using the same argument as in the construction of $\phi'_{x}$ in Section~\ref{subsec:untwistedconstruction}, we see that the representation $\inf(\phi_{x, y}) \otimes \widetilde{\phi}^{y}_{x}$ factors through the surjection
\[
K'_{x, y} \ltimes J^{y}_{x} \twoheadrightarrow K'_{x, y} \cdot J^{y}_{x} = K^{y}_{x}.
\]
We define $(\phi')^{y}_{x}$ as the representation of $K^{y}_{x}$ whose inflation to $K'_{x, y} \ltimes J^{y}_{x} $ is $\inf(\phi_{x, y}) \otimes \widetilde{\phi}^{y}_{x}$.
\subsection{Compatibility with respect to compact induction} \label{Section-compatibility-with-compact-induction}
We keep the notation from the previous subsections.
In this subsection, we will study the relationship between the constructions of representations in Section~\ref{subsec:untwistedconstruction} and Section~\ref{subsection: relative case} with the goal to to compare the representations $\phi'_x$ and $\phi'_y$ for $x$ and $y$ sufficiently close to each other, see Corollary \ref{corollaryinductionuntwisted}.

We define the character $\chi^{\Vyxp }$ of $\Pyx$ by
\[
\chi^{\Vyxp }(g) = \sgn_{\mathbb{F}_{p}}\left(
\det\nolimits_{\mathbb{F}_{p}}(g\restriction_{\Vyxp })
\right) \quad \text{ for } g \in \Pyx ,
\]
where $\det\nolimits_{\mathbb{F}_{p}}(g\restriction_{\Vyxp })$ denotes the determinant of the the $\mathbb{F}_{p}$-linear map 
\(
g\restriction_{\Vyxp } \colon \Vyxp  \longrightarrow \Vyxp ,
\)
and
$
\sgn_{\mathbb{F}_{p}} \colon \mathbb{F}_{p}^{\times} \longrightarrow \{ \pm 1 \}
$
denotes the unique non-trivial quadratic character of $\mathbb{F}_{p}^{\times}$.

Note that
$
K'_{x, y} \cap J^{y}_{x}
\subset K'_{y} \cap J_{y} \subset K'_{y, 0+}
$
is contained in the kernel of the map
\(
f_{y}\restriction_{K'_{x, y}} \colon 
K'_{x, y} \longrightarrow \Pyx \subset \Sp(J_{y}/J_{y, +})
\),
 hence we can introduce the following notation.
\begin{notation}
Following \cite[Definition~4.1.1]{FKS} we denote by
$\delta^{y}_{x}$ the character of $K'_{x, y}$ that is the precomposition of the character $\chi^{\Vyxp }$ with the map
\( f_{y} \).
We also denote by $\delta^{y}_{x}$
\index{notation-ky}{deltaxy@$\delta_y^x$}%
the inflation of this character to  $K^{y}_{x}$ via the surjection
\(
K^{y}_{x} = K'_{x, y} \cdot J^{y}_{x} \twoheadrightarrow K'_{x, y} \cdot J^{y}_{x}/J^{y}_{x} \isoarrow K'_{x, y}/(K'_{x, y} \cap J^{y}_{x}).
\)
\end{notation}
\begin{proposition}
\label{propuntwistedinduction}
We have an isomorphism
\[
\phi'_{y}\restriction_{K'_{x, y} \cdot J_{y}}
\isoarrow
\ind_{K^{y}_{x}}^{K'_{x, y} \cdot J_{y}}
\left(
(\phi')^{y}_{x} \otimes \delta^{y}_{x}
\right).
\]
\end{proposition}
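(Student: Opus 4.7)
The plan is to reduce the proposition to an abstract induction formula for the Heisenberg--Weil representation $\omega_{y}$ of $\Sp(V) \ltimes V^{\#}$, where $V = J_{y}/J_{y,+}$, and then identify the resulting twist with $\delta^{y}_{x}$. First, the ``character parts'' of the two sides match: $\phi_{y}\restriction_{K'_{x,y}} = \phi\restriction_{K'_{x, y}} = \phi_{x, y}$. Second, the inclusion $K'_{x, y} \cap J_{y} \subset G'(F)_{y, r} \subset J_{y, +} \subset J^{y}_{x}$ implies that inflation along the surjections $K'_{y} \ltimes J_{y} \twoheadrightarrow K_{y}$ and $K'_{x, y} \ltimes J^{y}_{x} \twoheadrightarrow K^{y}_{x}$ is compatible with the induction in the proposition. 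Third, $K'_{x, y} \subseteq K'_{x}$ normalizes $J_{x, +}$, so $f_{y}(K'_{x, y}) \subseteq \Pyx$, and thus $f_{y} \ltimes j_{y}$ factors through $\Pyx \ltimes V^{\#}$; moreover the preimage of $\Pyx \ltimes (\Vyx)^{\#}$ under this map is $K'_{x, y} \ltimes J^{y}_{x}$. Putting these observations together reduces the proposition to the abstract isomorphism
\begin{equation*}
\omega_{y}\restriction_{\Pyx \ltimes V^{\#}} \isoarrow \ind_{\Pyx \ltimes (\Vyx)^{\#}}^{\Pyx \ltimes V^{\#}}\!\bigl(\omega^{y}_{x} \otimes \chi^{\Vyxp}\bigr),
\end{equation*}
since the pullback of $\chi^{\Vyxp}$ through $f_{y}$ is $\delta^{y}_{x}$ by definition.

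Next, I would establish this abstract isomorphism, writing $W = \Vyxp$ so that $W^{\perp} = \Vyx$ by Lemma \ref{lemmatotallyisotropic}. Consider the abelian subgroup $W^{\#} \subset V^{\#}$ and its distinguished character $\psi_{0}$ that extends the central character $(0, a) \mapsto \iota_{p}^{-1}(a)$ of $V^{\#}$ and is trivial on $W$; let $\omega_{y}^{\psi_{0}} \subset \omega_{y}$ be the $\psi_{0}$-isotypic subspace. Since $W^{\perp}$ is the centralizer of $W$ in $V$, the subgroup $(W^{\perp})^{\#}$ stabilizes $\omega_{y}^{\psi_{0}}$; likewise $\Pyx$ preserves $W$ and $\psi_{0}$, hence stabilizes $\omega_{y}^{\psi_{0}}$. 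A standard dimension count using that all $|W|$ characters of $W^{\#}$ extending the central character occur with equal multiplicity in $\omega_{y}\restriction_{W^{\#}}$ yields $\dim \omega_{y}^{\psi_{0}} = p^{\dim(W^{\perp}/W)/2}$. Translating by coset representatives of $V/W^{\perp}$ sends $\omega_{y}^{\psi_{0}}$ to pairwise distinct $W^{\#}$-isotypic subspaces whose direct sum has dimension $p^{\dim V/2} = \dim \omega_{y}$, and hence exhausts $\omega_{y}$. This yields a $\Pyx \ltimes V^{\#}$-isomorphism $\omega_{y} \isoarrow \ind_{\Pyx \ltimes (W^{\perp})^{\#}}^{\Pyx \ltimes V^{\#}}(\omega_{y}^{\psi_{0}})$. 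Since $\omega_{y}^{\psi_{0}}$ is irreducible over $(W^{\perp}/W)^{\#}$ with central character $\iota_{p}^{-1}$, uniqueness of the Heisenberg representation identifies it with $\omega^{y}_{x}$ as a $(W^{\perp}/W)^{\#}$-representation, and Schur's lemma then shows that its $\Pyx$-action differs from that of $\omega^{y}_{x}$ by a character $\epsilon$ of $\Pyx$.

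The remaining task, and the main obstacle, is to identify $\epsilon$ with $\chi^{\Vyxp}$. I would verify this in the Schr\"odinger model: choose a Lagrangian $L$ of $V$ containing $W$ (which exists since $W$ is totally isotropic), realize $\omega_{y}$ on functions on $V/L$ with the standard action, and check that $\omega_{y}^{\psi_{0}}$ is the subspace of functions supported on $W^{\perp}/L \subset V/L$. The action of $g \in \Pyx$ in this model involves a Jacobian factor which, over $\mathbb{F}_{p}$ with $p \neq 2$, is pinned down by the Schur-type uniqueness to be $\sgn_{\mathbb{F}_{p}}\bigl(\det\nolimits_{\mathbb{F}_{p}}(g\restriction_{W})\bigr)$; this is the classical Weil constant in finite characteristic. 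The essential content of this computation, in the special case where $y$ lies in the $G'(F)$-orbit of $x$, already appears in \cite[Sections~12--14]{Yu}, and the argument there depends only on the isotropic structure of $W$ and the structure of its orthogonal complement, so applies verbatim in our setting. A subsidiary check is that all of the preceding is valid uniformly in characteristic $\ell \neq p$, which is immediate from the defining irreducibility and Schur-type properties of the positive-characteristic Heisenberg--Weil representation recorded in \cite[Section~2.3]{MR4460255}.
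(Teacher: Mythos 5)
Your overall strategy matches the paper's: reduce the proposition to the Heisenberg--Weil induction formula for $\omega_y$ restricted to $\Pyx\ltimes(J_y/J_{y,+})^{\#}$, pull that formula back through $f_y\ltimes j_y$, and then descend from semidirect products to the actual subgroups of $G(F)$ using the identity $J_y\cap K'_{x,y}J^y_x=J^y_x$. Where you depart is that you try to \emph{reprove} the key induction formula
\[
\omega_{y}\restriction_{\Pyx\ltimes V^{\#}}
\simeq
\ind_{\Pyx\ltimes(\Vyx)^{\#}}^{\Pyx\ltimes V^{\#}}\!\bigl(\omega^{y}_{x}\otimes\chi^{\Vyxp}\bigr)
\]
from scratch via the $\psi_0$-isotypic decomposition, Mackey/Stone--von~Neumann, and a Schr\"odinger-model Jacobian, whereas the paper simply cites \cite[Theorem~2.4(b)]{MR460477} (with the explicit warning that the character $\chi^{\Vyxp}\ltimes 1$ is missing from the published statement, a correction tracked to footnote~1 of \cite{MR4357723}) together with \cite[Lemma~3.2]{MR4460255} for the positive-characteristic Heisenberg--Weil representation. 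Your route is more self-contained, and the isotypic-decomposition skeleton (the dimension count for $\omega_y^{\psi_0}$, the coset-translation argument, Schur's lemma producing a character $\epsilon$ of $\Pyx$) is sound.

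The genuine gap is in the final step, precisely the step you yourself flag as ``the main obstacle'': you assert that the Jacobian in the Schr\"odinger model is $\sgn_{\mathbb{F}_p}\bigl(\det_{\mathbb{F}_p}(g\restriction_W)\bigr)$ and that this is ``pinned down by the Schur-type uniqueness,'' but Schur's lemma only tells you $\epsilon$ is \emph{some} character of $\Pyx$; identifying \emph{which} character requires an actual computation in the model, and none is given. This is exactly the computation where the literature has been unreliable (Gérardin's published statement omits the twist entirely, and Yu's original paper contained related errors discussed in \cite{MR4357723}), so asserting it without proof, or claiming Yu's Sections~12--14 ``apply verbatim,'' is not adequate. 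Yu's argument there is about specific subgroups of $p$-adic groups and does not isolate the abstract Weil-constant identification you need. To close the gap you would need to carry out the Schr\"odinger-model calculation: choose a Lagrangian $L\supseteq W$, realize $\omega_y$ on functions on $V/L$, track the normalization of the intertwiner sending $\omega_y^{\psi_0}$ to the standard model of $\omega^y_x$, and verify that the resulting scalar is $\sgn_{\mathbb{F}_p}(\det(g\restriction_W))$, uniformly in the characteristic of the coefficient field. Absent this, the proof is incomplete at the one place the paper was most careful about.

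One smaller remark: your ``second'' reduction step (compatibility of inflation with induction) is stated too loosely — you refer to the surjection $K'_y\ltimes J_y\twoheadrightarrow K_y$, but the relevant surjection for the induction target is $K'_{x,y}\ltimes J_y\twoheadrightarrow K'_{x,y}\cdot J_y$. The paper packages this as an abstract lemma (its Lemma~\ref{lemmainduction}) requiring $N_1\cap KN_2=N_2$, which in your situation becomes $J_y\cap K'_{x,y}J^y_x=J^y_x$; you essentially record the inclusions needed to verify that equality, so this part is fine in spirit, but it should be stated as that equality rather than as a vague compatibility.
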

To prove Proposition~\ref{propuntwistedinduction}, we prepare a general lemma.
\begin{lemma}
\label{lemmainduction}
Let $N$ be a locally profinite group and $N_{2} \subset N_{1}$ be open subgroups of $N$.
Let $K$ be a closed subgroup of $N$ that normalizes $N_{1}$ and $N_{2}$.
We suppose that
\[
N_{1} \cap KN_{2} = N_{2}.
\]
Let $\tau$ be a smooth representation of $K N_{2}$, and we write $\inf(\tau)$ for the inflation of $\tau$ to $K \ltimes N_{2}$ via the natural map
$
K \ltimes N_{2} \longrightarrow K N_{2}
$.
Then the representation $\ind_{K \ltimes N_{2}}^{K \ltimes N_{1}}(\inf(\tau))$ is isomorphic to the inflation of $\ind_{K N_{2}}^{K N_{1}} (\tau)$ to $K \ltimes N_{1}$ via the natural map
$
K \ltimes N_{1} \longrightarrow K N_{1}
$.
\end{lemma}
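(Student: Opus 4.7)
The approach is to realize the desired isomorphism via pullback along the natural surjective group homomorphism $p \colon K \ltimes N_1 \twoheadrightarrow KN_1$, $(k,n) \mapsto kn$ (this is a homomorphism because $K$ normalizes $N_1$, and straightforward to verify directly). The map $p$ restricts to a surjection $K \ltimes N_2 \twoheadrightarrow KN_2$, and by definition of inflation, $\inf(\tau) = \tau \circ p$. The candidate isomorphism $\Phi$ will send $f \in \ind_{KN_2}^{KN_1}(\tau)$ to the function $\Phi(f) \colon K \ltimes N_1 \to V_{\tau}$ defined by $\Phi(f)(k,n) = f(p(k,n)) = f(kn)$.

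The key preliminary observation is that $p$ induces a bijection of coset spaces
\[
\overline{p} \colon (K \ltimes N_1)/(K \ltimes N_2) \isoarrow KN_1/KN_2.
\]
Surjectivity is immediate. For injectivity it suffices to show that $p^{-1}(KN_2) = K \ltimes N_2$: if $(k,n) \in K \ltimes N_1$ satisfies $kn \in KN_2$, then $n = k^{-1}(kn) \in KN_2$, and since $n \in N_1$, the hypothesis $N_1 \cap KN_2 = N_2$ forces $n \in N_2$, so $(k,n) \in K \ltimes N_2$. This is precisely where the hypothesis is used. Moreover, $\overline{p}$ is a homeomorphism for the locally profinite quotient topologies, since $p$ is continuous and open.

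With the coset bijection in hand, the verification that $\Phi$ is the desired isomorphism is routine. For $(k_0, n_0) \in K \ltimes N_2$, using that $p$ is a homomorphism,
\[
\Phi(f)\bigl((k_0, n_0)(k, n)\bigr) = f\bigl(p(k_0, n_0)\, p(k, n)\bigr) = \tau(k_0 n_0)\, f(kn) = \inf(\tau)(k_0, n_0)\cdot \Phi(f)(k, n),
\]
establishing $(K \ltimes N_2)$-equivariance. Compatibility with the right $(K \ltimes N_1)$-action on both sides (where on the target this action is defined as the inflation along $p$ of the right $KN_1$-action) is immediate from $p$ being a homomorphism: $\Phi(p(g)\cdot f) = g \cdot \Phi(f)$ for $g \in K \ltimes N_1$. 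Via $\overline{p}$, the map $\Phi$ identifies functions on $KN_1$ supported on finitely many $KN_2$-cosets with functions on $K \ltimes N_1$ supported on finitely many $(K \ltimes N_2)$-cosets, giving bijectivity on the compactly induced subspaces.

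The main obstacle is conceptual rather than computational: one must isolate where the hypothesis $N_1 \cap KN_2 = N_2$ intervenes, namely in the identity $p^{-1}(KN_2) = K \ltimes N_2$, which in turn ensures injectivity of $\overline{p}$. Once this is pinned down, the rest of the argument is formal pullback along a homomorphism combined with the functoriality of compact induction with respect to homeomorphisms of coset spaces.
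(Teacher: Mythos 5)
Your proof is correct, but it takes a genuinely different route from the paper. The paper restricts both representations to $N_1 \cong 1 \ltimes N_1$, invokes Mackey decomposition together with $N_1 \cap KN_2 = N_2$ to identify each restriction with $\ind_{N_2}^{N_1}(\tau\restriction_{N_2})$, and then verifies separately that the two $K$-actions on this common space coincide. You instead construct the isomorphism explicitly as pullback along the multiplication homomorphism $p\colon K \ltimes N_1 \twoheadrightarrow KN_1$, isolating the single structural fact $p^{-1}(KN_2) = K \ltimes N_2$ (equivalent to the hypothesis) that makes $p$ induce a bijection of coset spaces $(K\ltimes N_1)/(K\ltimes N_2) \isoarrow KN_1/KN_2$. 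Your approach is somewhat more transparent: the $(K\ltimes N_1)$-equivariance of the isomorphism is automatic from $p$ being a homomorphism, so there is no separate ``the $K$-actions agree'' step to check; and it avoids any appeal to Mackey theory, instead reducing to the elementary functoriality of compact induction under homeomorphisms of coset spaces. The paper's argument is terser on the page but outsources more to Mackey decomposition and an unverified compatibility check; yours is more self-contained and makes the role of the hypothesis visible at exactly one spot.
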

\begin{proof}
Using Mackey decomposition and $N_{1} \cap KN_{2} = N_{2}$, we have 
\[
\ind_{K \ltimes N_{2}}^{K \ltimes N_{1}}(\inf(\tau))\restriction_{N_{1}} \simeq \ind_{N_{2}}^{N_{1}} (\tau\restriction_{N_{2}})
= \ind_{N_{1} \cap K N_{2}}^{N_{1}} (\tau\restriction_{N_{1} \cap K N_{2}})
\simeq \ind_{K N_{2}}^{K N_{1}} (\tau)\restriction_{N_{1}} .
\] 
Moreover, a straightforward calculation implies that the actions of $K$ on the representation spaces of these two representations agree.
\end{proof}
\begin{proof}[Proof of Proposition~\ref{propuntwistedinduction}] 
According to \cite[Theorem~2.4(b)]{MR460477}
and \cite[Lemma~3.2]{MR4460255} for the positive characteristic case, we have
\[
\omega_{y} \restriction_{\Pyx \ltimes (J_{y}/J_{y, +})^{\#}} \simeq \ind_{\Pyx \ltimes (\Vyx)^{\#}}^{\Pyx \ltimes (J_{y}/J_{y, +})^{\#}} \left(
\omega^{y}_{x} \otimes (\chi^{\Vyxp } \ltimes 1)
\right).
\]
(Note that the statement of \cite[Theorem~2.4(b)]{MR460477}
omits the character $\chi^{\Vyxp } \ltimes 1$,
but this is only a typographical error.
For more details, see Footnote 1 of \cite{MR4357723}).
Hence, the definitions of $\widetilde{\phi}_{y}$ and $\widetilde{\phi}^{y}_{x}$ and the observation that $j_y: J_y \longrightarrow (J_y/J_{y,+})^{\#}$ and $j_y: J_x^y \longrightarrow (\Vyx)^{\#}$ are surjective imply that
\[
\widetilde{\phi}_{y} \restriction_{K'_{x, y} \ltimes J_{y}} \simeq \ind_{K'_{x, y} \ltimes J^{y}_{x}}^{K'_{x, y} \ltimes J_{y}} \left(
\widetilde{\phi}^{y}_{x} \otimes (\delta^{y}_{x} \ltimes 1)
\right).
\]
Moreover, the definitions of $\inf(\phi_{y})$ and $\inf(\phi_{x, y})$ imply that 
$
\inf(\phi_{y})\restriction{
K'_{x, y} \ltimes J^{y}_{x}}= \inf(\phi_{x, y})
$.
Hence, we have
\[
\left(
\inf(\phi_{y}) \otimes
\widetilde{\phi}_{y}
\right)
\restriction_{K'_{x, y} \ltimes J_{y}} \simeq \ind_{K'_{x, y} \ltimes J^{y}_{x}}^{K'_{x, y} \ltimes J_{y}} \left(
\inf(\phi_{x, y}) \otimes
\widetilde{\phi}^{y}_{x} \otimes (\delta^{y}_{x} \ltimes 1)
\right).
\]
Since \spacingatend{also remove vertical space at end}
\begin{align*}
J_{y} \cap K'_{x, y} \cdot J^{y}_{x} &= J_{y} \cap \left(
(K'_{x} \cap K'_{y}) \cdot (J_{x} \cap J_{y}) \cdot J_{y, +}
\right)
= \left(
J_{y} \cap K'_{x} \cap K'_{y}
\right) \cdot (J_{x} \cap J_{y}) \cdot J_{y, +}\\
&= \left(
G'(F)_{y, r} \cap K'_{x} \cap K'_{y}
\right) \cdot (J_{x} \cap J_{y}) \cdot J_{y, +}
= \left(
J_{y, +} \cap K'_{x} \cap K'_{y}
\right) \cdot (J_{x} \cap J_{y}) \cdot J_{y, +}\\
&= (J_{x} \cap J_{y}) \cdot J_{y, +}
= J^{y}_{x},
\end{align*}
Lemma~\ref{lemmainduction} and the definitions of $\phi'_{y}$ and $(\phi')^{y}_{x}$ imply that
\[
\phi'_{y}\restriction_{K'_{x, y} \cdot J_{y}} \simeq 
\ind_{K^{y}_{x}}^{K'_{x, y} \cdot J_{y}} \left(
(\phi')^{y}_{x} \otimes \delta^{y}_{x}
\right).
\qedhere
\]
\end{proof}

Next, we consider the following special case that will be used in Corollary~\ref{corollaryinductionuntwisted}, in which we can compare  $\phi'_x$ and $\phi'_y$, and which, via Corollary~\ref{corollaryofpropositioninductiontwistedsequencever}, will be used in Lemma~\ref{proofofaxiomextensionoftheinductionofkappa} below to verify Axiom~\ref{HAI-axiomextensionoftheinductionofkappa} of \cite{HAI}
in the setting of this paper.
We suppose for the rest of this subsection that  
\begin{align}
\label{conditionsonxandXaboutK'}
G'(F)_{x, r}, G'(F)_{y, r} & \subseteq K'_{x} \subseteq K'_{y}
\end{align}
and
\begin{align}
\label{conditionsonxandXaboutJ}
G(F)_{y, \frac{r}{2}+} \subseteq G(F)_{x, \frac{r}{2}+} \subseteq G(F)_{x, \frac{r}{2}} \subseteq G(F)_{y, \frac{r}{2}}.
\end{align}
According to Condition \eqref{conditionsonxandXaboutK'}, we have $K'_{x} = K'_{x, y}$.
We can also prove $K_{x} = K^{y}_{x}$ as follows. 
\begin{lemma}
\label{lemmaKx=KXx}
We have \spacingatend{}
\[
K'_{x} \cdot J_{x} = K'_{x} \cdot \left(
J_{x} \cap J_{y}
\right) = K'_{x} \cdot \left(
J_{x} \cap J_{y}
\right) \cdot J_{y, +}.
\]
In particular, we have $K_{x} = K^{y}_{x}$.
\end{lemma}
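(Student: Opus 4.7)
The plan is to first establish that $J_x \subseteq K'_x \cdot (J_x \cap J_y)$ and then handle the $J_{y,+}$ factor by a parallel argument, both based on the Moy--Prasad description of $J_x$ and $J_{y,+}$ as groups generated by filtration pieces of root groups (together with a $G'$-part).

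First I would fix a maximal torus $T$ of $G'$ with tame splitting field $E$ and $x, y \in \cA(G', T, E)$. By the definition of $J_x = G(F)_{x,f}$, the group $J_x$ is generated over $F$ by $G'(F)_{x,r}$ and $U_\alpha(F)_{x,r/2}$ for $\alpha \in \Phi(G,T) \smallsetminus \Phi(G',T)$ (using $\Gal(E/F)$-invariance of $f$ and concavity to pass between $F$ and $E$). By hypothesis $G'(F)_{x,r} \subseteq K'_x$. For $\alpha \notin \Phi(G',T)$, the identity $G(F)_{x,r/2} \cap U_\alpha(F) = U_\alpha(F)_{x,r/2}$ together with the inclusion $G(F)_{x,r/2} \subseteq G(F)_{y,r/2}$ gives
\[
U_\alpha(F)_{x,r/2} \subseteq U_\alpha(F)_{y,r/2} \subseteq J_y,
\]
so $U_\alpha(F)_{x,r/2} \subseteq J_x \cap J_y$.

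Next, I would observe that $K'_x$ normalizes $J_x \cap J_y$: indeed $K'_x$ normalizes $J_x$ by \cite[Remark~3.5]{Yu}, and since $K'_x \subseteq K'_y$ by hypothesis, $K'_x$ also normalizes $J_y$. Hence $K'_x \cdot (J_x \cap J_y)$ is a \emph{subgroup} of $G(F)$. Since it contains all the generators of $J_x$ exhibited above, it contains $J_x$. Combining with the trivial inclusion $K'_x \cdot (J_x \cap J_y) \subseteq K'_x \cdot J_x$ proves the first equality $K'_x \cdot J_x = K'_x \cdot (J_x \cap J_y)$.

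For the second equality, the same strategy applies to $J_{y,+}$, whose generators are $G'(F)_{y,r}$ and $U_\alpha(F)_{y,r/2+}$ for $\alpha \notin \Phi(G',T)$. By hypothesis $G'(F)_{y,r} \subseteq K'_x$; and for $\alpha \notin \Phi(G',T)$, the inclusion $G(F)_{y,r/2+} \subseteq G(F)_{x,r/2+}$ combined with the analogous intersection identity yields $U_\alpha(F)_{y,r/2+} \subseteq U_\alpha(F)_{x,r/2+} \subseteq J_x$, and $U_\alpha(F)_{y,r/2+} \subseteq J_{y,+} \subseteq J_y$, so these generators also lie in $J_x \cap J_y$. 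Thus $J_{y,+} \subseteq K'_x \cdot (J_x \cap J_y)$, whence
\[
K'_x \cdot (J_x \cap J_y) \cdot J_{y,+} \;=\; K'_x \cdot (J_x \cap J_y)
\]
because the right-hand side is a group. Finally, the identification $K_x = K_x^y$ follows from $K'_{x,y} = K'_x \cap K'_y = K'_x$ (again by $K'_x \subseteq K'_y$) and the definitions of $K_x$ and $K_x^y$.

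The only delicate point, and the step I expect to require the most care, is the justification that $J_x$ is generated (over $F$) by $G'(F)_{x,r}$ together with the $U_\alpha(F)_{x,r/2}$ for $\alpha \notin \Phi(G',T)$; this relies on the tame splitting hypothesis and the concavity of $f$ to descend the product decomposition of $G(E)_{x,f}$ down to $F$. Everything else is a bookkeeping exercise with the Moy--Prasad filtrations and the normalization properties of $K'_x$.
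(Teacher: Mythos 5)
Your proof is correct and follows essentially the same route as the paper's. The paper packages the key step as the pair of inclusions $J_x \subseteq G'(F)_{x,r}\cdot J_y$ and $J_{y,+}\subseteq G'(F)_{y,r}\cdot J_{x,+}$, both read off from Condition \eqref{conditionsonxandXaboutJ} and the concave-function definitions of $J_x$, $J_y$, $J_{y,+}$; from these it gets $J_x = G'(F)_{x,r}\cdot(J_x\cap J_y)$ and $J_{y,+} = G'(F)_{y,r}\cdot(J_{x,+}\cap J_{y,+})$, and then concludes by Condition \eqref{conditionsonxandXaboutK'} and the fact that $K'_x$ normalizes all the relevant $J$-groups. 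Your generator-by-generator verification (showing $G'(F)_{x,r}\subseteq K'_x$ and $U_\alpha(F)_{x,r/2}\subseteq J_x\cap J_y$ for $\alpha\notin\Phi(G',T)$, and the analogue for $J_{y,+}$) is a re-derivation of the same two inclusions, with the groupiness of $K'_x\cdot(J_x\cap J_y)$ playing the role that the paper's normality observations play; the "delicate point" you flag about $F$-rational generation is likewise implicit in the paper's appeal to "the definition of $J_x$ and $J_y$". No genuine difference in method.
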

\begin{proof}
\addtocounter{equation}{-1}
\begin{subequations}
According to Condition \eqref{conditionsonxandXaboutJ} and the definition of $J_{x}$ and $J_{y}$, we have
\begin{align}
\label{JxsubsetGxrJXandJy+subsetGyrJx+}
J_{x} \subseteq G'(F)_{x, r} \cdot J_{y}
\qquad 
\text{ and }
\qquad
J_{y, +} \subseteq G'(F)_{y, r} \cdot J_{x, +}.
\end{align}
According to the first inclusion in \eqref{JxsubsetGxrJXandJy+subsetGyrJx+}, we have
$
J_{x} = G'(F)_{x, r} \cdot \left(
J_{x} \cap J_{y}
\right)
$.
Hence, the first equality of the lemma follows from Condition \eqref{conditionsonxandXaboutK'}.

We will prove the second equality.
According to the second inclusion in \eqref{JxsubsetGxrJXandJy+subsetGyrJx+}, we have
$
J_{y, +} = G'(F)_{y, r} \cdot \left(
J_{x, +} \cap J_{y, +}
\right)
$.
Hence, using Condition \eqref{conditionsonxandXaboutK'}, we have
\[
K'_{x} \cdot \left(
J_{x} \cap J_{y}
\right) \cdot J_{y, +} =
K'_{x} \cdot J_{y, +} \cdot \left(
J_{x} \cap J_{y}
\right) 
= K'_{x} 
\cdot \left(
J_{x, +} \cap J_{y, +}
\right) \cdot
\left(
J_{x} \cap J_{y}
\right) 
= K'_{x} \cdot \left(
J_{x} \cap J_{y}
\right).
\]
The last claim follows from the calculation
\[
K_{x} = K'_{x} \cdot J_{x} 
= K'_{x} \cdot \left(
J_{x} \cap J_{y}
\right) \cdot J_{y, +} 
= K'_{x, y} \cdot \left(
J_{x} \cap J_{y}
\right) \cdot J_{y, +} 
= K^{y}_{x}.
\qedhere
\]
\end{subequations}
\end{proof}

Now, we have two representations $\phi'_{x}$ and $(\phi')^{y}_{x}$ of the group $K_{x} = K^{y}_{x}$.
We will prove in Proposition~\ref{propositionisomofPhixandPhiXx} below that these two representations are isomorphic.
To do this, we first prepare the following lemma.
\begin{lemma}
\label{lemmaaboutxandX}
\mbox{}

\begin{enumerate}[(1)]
\item
\label{lemmaaboutxandXJxandJXcapJx}
The inclusion $J_{x} \cap J_{y} \subseteq J_{x}$ induces an isomorphism
\[
\left(
J_{x} \cap J_{y}
\right) / \left(
J_{x, +} \cap J_{y}
\right) \isoarrow J_{x} / J_{x, +}.
\]
\item
\label{lemmaaboutxandXJX+smallerthanJx+}
The inclusion $J_{x} \cap J_{y} \subseteq \left(
J_{x} \cap J_{y}
\right) \cdot J_{y, +}$ induces an isomorphism
\[
\left(
J_{x} \cap J_{y}
\right) / \left(
J_{x, +} \cap J_{y}
\right) \isoarrow \left(
J_{x} \cap J_{y}
\right) \cdot J_{y, +} / \left(
J_{x, +} \cap J_{y}
\right) \cdot J_{y, +}.
\]
\end{enumerate}
\end{lemma}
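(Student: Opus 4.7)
The plan is to handle the two statements in turn. In each case the map in question is induced by an inclusion, and either injectivity or surjectivity is formal, so only one containment needs real work.

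For part~(1), injectivity of $(J_x \cap J_y)/(J_{x,+} \cap J_y) \to J_x/J_{x,+}$ is automatic because the kernel is $\bigl((J_x \cap J_y) \cap J_{x,+}\bigr)/(J_{x,+} \cap J_y) = 1$. Surjectivity is the equality $J_x = (J_x \cap J_y) \cdot J_{x,+}$, which I would obtain by reusing the first inclusion in \eqref{JxsubsetGxrJXandJy+subsetGyrJx+} from the proof of Lemma~\ref{lemmaKx=KXx}, namely $J_x \subseteq G'(F)_{x,r} \cdot J_y$. Since $f^+(\alpha) = r$ for every $\alpha \in \Phi(G',T) \cup \{0\}$, we have $G'(F)_{x,r} \subseteq J_{x,+}$, and combined with the normality of $J_{x,+}$ in $J_x$ this yields the required factorization.

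For part~(2), surjectivity is again immediate. For injectivity, suppose $g \in J_x \cap J_y$ lies in $(J_{x,+} \cap J_y) \cdot J_{y,+}$, and write $g = ab$ with $a \in J_{x,+} \cap J_y$ and $b \in J_{y,+}$. Then $b = a^{-1}g \in J_x \cap J_{y,+}$, and everything reduces to the single inclusion $J_x \cap J_{y,+} \subseteq J_{x,+}$ (we automatically have $b \in J_y$ since $J_{y,+} \subseteq J_y$).

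This last inclusion is the main and only mildly technical step. Because $f^+(\alpha) \geq \frac{r}{2}{+}$ for every $\alpha \in \Phi(G,T) \cup \{0\}$, one has $J_{y,+} \subseteq G(F)_{y,\frac{r}{2}+}$, and condition~\eqref{conditionsonxandXaboutJ} then gives $J_{y,+} \subseteq G(F)_{x,\frac{r}{2}+}$. It therefore suffices to prove $J_x \cap G(F)_{x,\frac{r}{2}+} = J_{x,+}$. Fixing a maximal torus $T$ of $G'$ such that $x \in \cA(G',T,E)$, an element of $J_x \cap G(F)_{x,\frac{r}{2}+}$, written in the unique Iwahori decomposition of $G(E)_{x,\frac{r}{2}+}$, must have its $\alpha$-component in $U_\alpha(E)_{x,f(\alpha)} \cap U_\alpha(E)_{x,\frac{r}{2}+} = U_\alpha(E)_{x,f^+(\alpha)}$, where the equality is checked case by case on $\Phi(G',T)\cup\{0\}$ (where both sides equal $U_\alpha(E)_{x,r}$) and its complement (where both sides equal $U_\alpha(E)_{x,\frac{r}{2}+}$); the torus piece likewise lies in $T(E)_{x,r}$, so the element already lies in $J_{x,+}$. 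Routine descent to $F$-points via $\Gal(E/F)$-invariants completes the argument.
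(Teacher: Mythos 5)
Your proof is correct. Part~(1) is essentially identical to the paper's argument: both reduce surjectivity to $J_x = J_{x,+}\cdot(J_x\cap J_y)$ and deduce this from the inclusion $J_x\subseteq G'(F)_{x,r}\cdot J_y$ together with $G'(F)_{x,r}\subseteq J_{x,+}$.

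For part~(2), you and the paper agree that everything reduces to showing $J_x\cap J_{y,+}\subseteq J_{x,+}$ (the paper in fact proves the equality $J_x\cap J_{y,+}=J_{x,+}\cap J_{y,+}$, which is the same thing), but the route differs. The paper stays entirely inside the group-theoretic calculus already running through this section: it invokes the factorization $J_{y,+}=G'(F)_{y,r}\cdot(J_{x,+}\cap J_{y,+})$ (obtained, as in the proof of Lemma~\ref{lemmaKx=KXx}, from the second inclusion of~\eqref{JxsubsetGxrJXandJy+subsetGyrJx+} via the modular law), then intersects with $J_x$ using the modular law again and the observation that $J_x\cap G'(F)_{y,r}=G'(F)_{x,r}\cap G'(F)_{y,r}\subseteq J_{x,+}\cap J_{y,+}$. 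Your argument instead passes through $J_{y,+}\subseteq G(F)_{y,\frac{r}{2}+}\subseteq G(F)_{x,\frac{r}{2}+}$ (the first inclusion of~\eqref{conditionsonxandXaboutJ}) and then proves $J_x\cap G(F)_{x,\frac{r}{2}+}=J_{x,+}$ directly, which amounts to the Bruhat--Tits identity $G(E)_{x,f}\cap G(E)_{x,g}=G(E)_{x,\max(f,g)}$ for concave $f,g$ with concave $\max$ (here $\max(f,\tfrac{r}{2}{+})=f^+$), followed by Galois descent. Your version is perhaps more transparent about \emph{why} the inclusion holds — it makes the root-by-root comparison explicit and is essentially self-contained given standard BT facts — while the paper's version is more economical in that it reuses the factorizations already established and avoids reinvoking the Iwahori decomposition. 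Both are sound; just note that your appeal to ``normality of $J_{x,+}$ in $J_x$'' at the end of part~(1) is not actually needed — the identity $J_x=J_{x,+}\cdot(J_x\cap J_y)$ already gives what you want after the standard rewriting $g=ab\Rightarrow b=a^{-1}g\in J_x\cap J_y$.
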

\begin{proof}
According to Condition \eqref{conditionsonxandXaboutJ} and the definition of $J_{x}$ and $J_{y}$, we have
\[
J_{x} =G'(F)_{x, r} \cdot \left(
J_{x} \cap J_{y}
\right) 
\subseteq J_{x, +} \cdot \left(
J_{x} \cap J_{y}
\right)
\subseteq J_{x}.
\]
Thus, we obtain that $J_{x} = J_{x, +} \cdot \left(
J_{x} \cap J_{y}
\right)$, which implies the first claim of the lemma.

We will prove the second claim.
It suffices to show that
$
\left(
J_{x} \cap J_{y}
\right)
\cap
\left(\left(
J_{x, +} \cap J_{y}
\right) \cdot J_{y, +}\right) = 
J_{x, +} \cap J_{y}
$.
According to Condition \eqref{conditionsonxandXaboutJ} and the definition of $J_{x}$ and $J_{y}$, we have
$
J_{y, +} =
G'(F)_{y, r} \cdot \nobreak \left(
J_{x, +} \cap J_{y, +}
\right)
$.
Hence, we have 
\begin{align*}
J_{x} \cap J_{y, +} = 
J_{x} \cap \left(
G'(F)_{y, r} \cdot \left(
J_{x, +} \cap J_{y, +}
\right)\right) 
&= \left(
J_{x} \cap G'(F)_{y, r}
\right) \cdot \left(
J_{x, +} \cap J_{y, +}
\right) \\
&= 
\left(
G'(F)_{x, r}
\cap G'(F)_{y, r}
\right) \cdot \left(
J_{x, +} \cap J_{y, +}
\right) 
= J_{x, +} \cap J_{y, +}.
\end{align*}
Thus, we obtain that
\[
\left(
J_{x} \cap J_{y}
\right)
\cap
\left(\left(
J_{x, +} \cap J_{y}
\right) \cdot J_{y, +} \right)
= 
\left(
J_{x, +} \cap J_{y}
\right) \cdot \left(
J_{x} \cap J_{y, +}
\right) 
= 
\left(
J_{x, +} \cap J_{y}
\right) \cdot \left(
J_{x, +} \cap J_{y, +}
\right) 
=
J_{x, +} \cap J_{y}.
\qedhere
\]
\end{proof}
\begin{proposition}
\label{propositionisomofPhixandPhiXx}
We have an isomorphism  
$\phi'_{x} \isoarrow (\phi')^{y}_{x}$
as representations of $K_{x} = K^{y}_{x}$.
\end{proposition}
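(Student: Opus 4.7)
The plan is to show that both $\phi'_x$ and $(\phi')^y_x$ arise from the same Heisenberg--Weil datum via the natural identifications provided by Lemma~\ref{lemmaaboutxandX}, so that the two constructions produce isomorphic representations of the common group $K_x = K^y_x$ (which coincides by Lemma~\ref{lemmaKx=KXx}).

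Concretely, combining the two parts of Lemma~\ref{lemmaaboutxandX} yields a canonical $\mathbb{F}_p$-linear isomorphism
\[
\Theta \colon J_x/J_{x,+} \isoarrowleft (J_x \cap J_y)/(J_{x,+} \cap J_y) \isoarrow \Vyx/\Vyxp.
\]
First I would verify that $\Theta$ is an isomorphism of symplectic spaces for the forms $\langle\phantom{x},\phantom{x}\rangle_x$ and $\langle\phantom{x},\phantom{x}\rangle_y$: for $v_1,v_2 \in J_x\cap J_y$ one has $[v_1,v_2] \in J_{x,+} \cap J_{y,+}$ by standard commutator estimates for Moy--Prasad filtrations, and $\widehat{\phi}_x$ and $\widehat{\phi}_y$ agree on this intersection by Lemma~\ref{lemmacompatibilityofcharacters}. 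Next I would check that $\Theta$ is $K'_x$-equivariant, where $K'_x = K'_{x,y}$ acts on the left-hand side through $f_x$ and on the right-hand side through $K'_x \to \Pyx \twoheadrightarrow \Sp(\Vyx/\Vyxp)$: this is immediate since in both cases the action is induced by conjugation inside $G(F)$.

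The third and main step is to lift $\Theta$ to an isomorphism $\Theta^\#$ between the Heisenberg groups $(J_x/J_{x,+})^\#$ and $(\Vyx/\Vyxp)^\#$ such that the two maps
\[
J_x \xrightarrow{\, j_x\,} (J_x/J_{x,+})^\#
\qquad \text{and} \qquad
J^y_x \xrightarrow{\, j_y\,} (\Vyx)^\# \twoheadrightarrow (\Vyx/\Vyxp)^\#
\]
coincide on the common subgroup $J_x \cap J_y$ after pre-composition with the natural surjections coming from Lemma~\ref{lemmaaboutxandX}. Using the characterizing properties of $j_x$ and $j_y$ recalled after Notation~\ref{notationcomplexandmodellweilheisenberg} --- namely that the first coordinate is the reduction modulo the derived part and the second coordinate is $\iota_p \circ \widehat{\phi}_\bullet$ on the respective $J_{\bullet,+}$ --- this compatibility again reduces to $\widehat{\phi}_x = \widehat{\phi}_y$ on $J_{x,+} \cap J_{y,+}$, which is Lemma~\ref{lemmacompatibilityofcharacters}.

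Once these compatibilities are established, $\Theta^\#$ together with the $K'_x$-equivariance transports the Heisenberg--Weil representation $\omega_x$ to $\omega^y_x$, and hence identifies $\widetilde{\phi}_x$ with $\widetilde{\phi}^y_x$ after pulling back by $f_x \ltimes j_x$ and $f_y \ltimes j_y$ respectively. Since $\inf(\phi_x) = \inf(\phi_{x,y})$ on $K'_x = K'_{x,y}$, tensoring yields an isomorphism of the inflated representations on the semidirect products $K'_x \ltimes J_x$ and $K'_x \ltimes J^y_x$ that descends to the common quotient $K_x = K^y_x$, giving the desired $\phi'_x \isoarrow (\phi')^y_x$. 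I expect the main technical obstacle to be a bookkeeping step~(iii): carefully matching the Heisenberg cocycles realized by $j_x$ and $j_y$ on the overlap, since $J_x \cap J_y$ sits inside both $J_x$ and $J^y_x$ but the two ambient polarizations/filtrations differ; Lemma~\ref{lemmacompatibilityofcharacters} is exactly what makes this bookkeeping go through.
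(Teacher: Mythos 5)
Your proposal is correct and takes essentially the same route as the paper: both use Lemma~\ref{lemmaaboutxandX} to identify $J_x/J_{x,+}$ with $\Vyx/\Vyxp$, Lemma~\ref{lemmacompatibilityofcharacters} to see that the symplectic pairings (and the Heisenberg cocycles via $j_x$, $j_y$) match on the overlap, $K'_x$-equivariance of these identifications, and Lemma~\ref{lemmaKx=KXx} to conclude. You just spell out the Heisenberg-group lift more explicitly than the paper, which simply absorbs it into the statement that the restricted Heisenberg--Weil representations are isomorphic.
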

\begin{proof}
Combining the isomorphisms in Lemma~\ref{lemmaaboutxandX}, we have isomorphisms
\[
J_{x} / J_{x, +}  \isoarrowleft
\left(
J_{x} \cap J_{y}
\right) / \left(
J_{x, +} \cap J_{y}
\right) 
\isoarrow \left(
J_{x} \cap J_{y}
\right) \cdot J_{y, +} / \left(
J_{x, +} \cap J_{y}
\right) \cdot J_{y, +} 
 \isoarrow \Vyx / \Vyxp .
\]
We identify the space $J_{x} / J_{x, +}$ with the space $\Vyx / \Vyxp $ via this isomorphism.
According to Lemma~\ref{lemmacompatibilityofcharacters}, the symplectic pairing $\langle \phantom{x},\phantom{x} \rangle_{x}$ on $J_{x} / J_{x, +}$ agrees with the symplectic pairing $\langle \phantom{x},\phantom{x} \rangle_{y}$ on $\Vyx / \Vyxp $.
Hence, we obtain an isomorphism
$
\omega_{x} \restriction_{(J_{x} / J_{x, +})^{\#}} \isoarrow \omega^{y}_{x} \restriction_{(\Vyx / \Vyxp )^{\#}}
$.
Then the definition of $\phi'_{x}$ and $(\phi')^{y}_{x}$ implies that
$
\phi'_{x} \restriction_{J_{x} \cap J_{y}} \simeq (\phi')^{y}_{x} \restriction_{J_{x} \cap J_{y}}
$.
Moreover, since the isomorphisms in Lemma~\ref{lemmaaboutxandX} are compatible with the conjugate actions of $K'_{x}$, we also have
$
\phi'_{x} \restriction_{K'_{x} \cdot \left(
J_{x} \cap J_{y}
\right)} \simeq (\phi')^{y}_{x} \restriction_{K'_{x} \cdot \left(
J_{x} \cap J_{y}
\right)}
$.
Now, the lemma follows from Lemma~\ref{lemmaKx=KXx}.
\end{proof}
\begin{corollary}
\label{corollaryinductionuntwisted}
Let $x, y \in \cB(G', F)$ and $\phi$ be a character of $G'(F)$ that is $G$-generic of depth $r$ relative to $x$.
Let $K'_{x}$ (resp.\ $K'_{y}$) be an open subgroup of $G'(F)_{[x]_{G}}$ (resp.\ $G'(F) \cap G(F)_{[y]_{G}}$).
We suppose Conditions \eqref{conditionsonxandXaboutK'} and \eqref{conditionsonxandXaboutJ}.
Then we have an isomorphism
\[
\phi'_{y} \restriction_{K'_{x} \cdot J_{y}}
\isoarrow
\ind_{K_{x}}^{K'_{x} \cdot J_{y}}
\left(
\phi'_{x} \otimes \delta^{y}_{x}
\right).
\]
\end{corollary}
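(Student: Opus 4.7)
The plan is to obtain this corollary by a routine assembly of the three preceding results; essentially no new work is required.

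First I would use Condition~\eqref{conditionsonxandXaboutK'}, which gives $K'_x \subseteq K'_y$ and hence $K'_{x,y} = K'_x \cap K'_y = K'_x$. This rewrites the inducing subgroup $K'_{x,y} \cdot J_y$ appearing in Proposition~\ref{propuntwistedinduction} as $K'_x \cdot J_y$, matching the group in the target statement. Next, I would invoke Lemma~\ref{lemmaKx=KXx}, whose hypotheses are precisely Conditions~\eqref{conditionsonxandXaboutK'} and~\eqref{conditionsonxandXaboutJ}, to conclude $K_x = K^y_x$. So both inductions take place from the same subgroup.

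With these identifications in hand, Proposition~\ref{propuntwistedinduction} immediately yields
\[
\phi'_y \restriction_{K'_x \cdot J_y}
\isoarrow
\ind_{K_x}^{K'_x \cdot J_y}\left((\phi')^y_x \otimes \delta^y_x\right).
\]
The last step is to replace $(\phi')^y_x$ by $\phi'_x$ using Proposition~\ref{propositionisomofPhixandPhiXx}, which supplies an isomorphism $\phi'_x \isoarrow (\phi')^y_x$ of representations of $K_x = K^y_x$ under exactly the same hypotheses. Tensoring with $\delta^y_x$ and applying $\ind_{K_x}^{K'_x \cdot J_y}$ gives the desired isomorphism.

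There is no real obstacle in this step; the substantive content, namely the Mackey-style decomposition behind Proposition~\ref{propuntwistedinduction} and the identification of the two Heisenberg--Weil realisations in Proposition~\ref{propositionisomofPhixandPhiXx}, has already been established. I would merely verify that the hypotheses of each invoked result align with the conditions of the corollary, which they do by construction, so the proof reduces to a one-line chain of isomorphisms.
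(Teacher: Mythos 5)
Your proposal is correct and follows exactly the same route as the paper's proof, which simply cites Proposition~\ref{propuntwistedinduction} and Proposition~\ref{propositionisomofPhixandPhiXx}; you merely spell out the intermediate identifications $K'_{x,y}=K'_x$ (from Condition~\eqref{conditionsonxandXaboutK'}) and $K_x=K^y_x$ (from Lemma~\ref{lemmaKx=KXx}), which the paper leaves implicit since they are established in the surrounding text.
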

\begin{proof}
The corollary follows from Proposition~\ref{propuntwistedinduction} and Proposition~\ref{propositionisomofPhixandPhiXx}.
\end{proof}

\subsection{A twist of the construction} 
\label{A twist of the construction}

In this subsection,
we will twist the construction of $\phi'_{x}$ in Section~\ref{subsec:untwistedconstruction} by a quadratic character to define the irreducible smooth representation $\phi^{+}_{x}$ of $K_{x}$, see Definition \ref{definitionphiplus}.
Then the relation between the representations attached to two nearby points,
i.e.\ the analogue of Corollary~\ref{corollaryinductionuntwisted}, will be simpler, more precisely, it will no longer require an auxiliary character, see Corollary~\ref{corollaryinductiontwisted} below. 
For $x \in \cB(G', F)$, recall that  
\[
\epsilon^{G/G'}_{x} \colon G'(F)_{[x]_{G}} \longrightarrow \{ \pm 1 \}
\]
denotes the quadratic character of $G'(F)_{[x]_{G}}$ defined in \cite[Lemma~4.1.2]{FKS}, which is
trivial on the group $G'(F)_{x, 0+}$.
Since
$
K'_{x} \cap J_{x} \subseteq G'(F)_{x, 0+}
$,
we can inflate the restriction of $\epsilon^{G/G'}_{x}$
to $K_{x}$ via the map
\(
K_{x} = K'_{x} \cdot J_{x}
\longrightarrow K'_{x} \cdot J_{x}/J_{x}
\simeq K'_{x}/\left(
K'_{x} \cap J_{x}
\right), 
\) and we denote the resulting character again by $\epsilon^{G/G'}_{x}$.
\index{notation-ky}{epsilonGG'x@$\epsilon^{G/G'}_{x}$}%
The following lemma follows from \cite[Lemma~4.1.2]{FKS}.
\begin{lemma}[\cite{FKS}]
\label{lemmathatfollowsfromFKS412}
For all $x, y \in \cB(G', F)$, we have
\[
\left(
\epsilon^{G/G'}_{x} \cdot \delta^{x}_{y}
\right)\restriction_{K^{y}_{x} \cap K^{x}_{y}} = 
\left(
\epsilon^{G/G'}_{y} \cdot \delta^{y}_{x}
\right)\restriction_{K^{y}_{x} \cap K^{x}_{y}} .
\]
\end{lemma}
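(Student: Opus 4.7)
The identity is intended to be read as a direct consequence of \cite[Lemma~4.1.2]{FKS}, which is essentially a cocycle condition for the quadratic character $\epsilon^{G/G'}$ as the base point varies. My approach is to (i) set up the dictionary between our notation and that of FKS, (ii) verify that $K^{y}_{x}\cap K^{x}_{y}$ lies in the common domain of definition of both sides, and then (iii) invoke the FKS compatibility to finish.

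The first step is notational. The characters $\delta^{y}_{x}$ and $\delta^{x}_{y}$ are, by definition, the pullbacks via $f_y$ and $f_x$ of the characters $g\mapsto\sgn_{\bF_p}(\det g\restriction_{\Vyxp })$ and $g\mapsto\sgn_{\bF_p}(\det g\restriction_{\Vxyp})$ on $\Pyx$ and $P^x_y$ respectively. Under the identification of $\Vyx/\Vyxp$ with $\Vxy/\Vxyp$ that arises from Lemma~\ref{lemmaaboutxandX} together with Lemma~\ref{lemmacompatibilityofcharacters} (the symplectic spaces $J_x/J_{x,+}$ and $\Vyx/\Vyxp$, and likewise with $x$ and $y$ swapped, all carry the same symplectic form on their common incarnation), these determinants are exactly the objects whose ratio is computed in \cite[Lemma~4.1.2]{FKS}.

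The second step is to check that $K^{y}_{x}\cap K^{x}_{y}$ is a legitimate domain. On $K^{y}_{x}=K'_{x,y}\cdot J^{y}_{x}$ the character $\delta^{y}_{x}$ is the inflation from $K'_{x,y}$, and similarly for $K^{x}_{y}$. The character $\epsilon^{G/G'}_{x}$ is originally a character of $G'(F)_{[x]_G}$; its inflation to $K^{y}_{x}$ is well-defined because $\epsilon^{G/G'}_{x}$ vanishes on $G'(F)_{x,0+}\supset K'_{x,y}\cap J^{y}_{x}$, and likewise for $\epsilon^{G/G'}_{y}$. Hence both $\epsilon^{G/G'}_{x}\cdot\delta^{x}_{y}$ and $\epsilon^{G/G'}_{y}\cdot\delta^{y}_{x}$ are characters of $K^{y}_{x}\cap K^{x}_{y}$, and both factor through $K'_{x,y} = K'_x\cap K'_y$ (since the $J$-parts are already in the kernels).

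The third step is to invoke \cite[Lemma~4.1.2]{FKS} to the element $k\in K'_{x,y}\subseteq G'(F)_{[x]_G}\cap G'(F)_{[y]_G}$: that lemma asserts precisely that the ratio $\epsilon^{G/G'}_{x}(k)/\epsilon^{G/G'}_{y}(k)$ equals $\delta^{y}_{x}(k)/\delta^{x}_{y}(k)$, i.e.\ that the combination $\epsilon^{G/G'}_{x}\cdot\delta^{x}_{y}$ is symmetric in $x$ and $y$. The main obstacle is not mathematical but rather notational: in \cite{FKS} the auxiliary character $\delta^{y}_{x}$ is bundled into the construction of $\epsilon^{G/G'}$ in a slightly different way (and is defined there as the determinant character directly, see \cite[Definition~4.1.1]{FKS}), so the bulk of the work consists in matching their conventions for the totally isotropic subspace $U^{y}_{x}=\Vyxp$, the map $f_y$, and the normalization of the quadratic character to ours. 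Once this dictionary is in place, the equation is literally the content of \cite[Lemma~4.1.2]{FKS}.
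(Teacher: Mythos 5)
Your proof is correct and takes essentially the same approach as the paper's: reduce the identity on $K^{y}_{x}\cap K^{x}_{y}$ to an identity on $K'_x\cap K'_y$ (using that all four characters $\epsilon^{G/G'}_{x}$, $\epsilon^{G/G'}_{y}$, $\delta^{y}_{x}$, $\delta^{x}_{y}$ are trivial on the $J$-part $J^{y}_{x}\cap J^{x}_{y}$), then invoke \cite[Lemma~4.1.2]{FKS} directly. Two small imprecisions that do not affect the argument: the parenthetical appeal to Lemma~\ref{lemmaaboutxandX} in your first step overreaches, since that lemma is established under Conditions~\eqref{conditionsonxandXaboutK'} and~\eqref{conditionsonxandXaboutJ} which are not assumed here, and the claimed inclusion $K'_{x,y}\cap J^{y}_{x}\subset G'(F)_{x,0+}$ is not the most natural justification (what is immediate is $K'_{x,y}\cap J^{y}_{x}\subset G'(F)_{y,r}$, and the triviality of $\epsilon^{G/G'}_{x}$ on this pro-$p$ subgroup of its domain follows from it being a quadratic character).
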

\begin{proof}
Since
$
K^{y}_{x} \cap K^{x}_{y} = \left(
K'_{x} \cap K'_{y}
\right) \cdot \left(
J^{y}_{x} \cap J^{x}_{y}
\right)
$,
and the characters $\epsilon^{G/G'}_{x}$, $\delta^{y}_{x}$, $\epsilon^{G/G'}_{y}$, and $\delta^{x}_{y}$ are trivial on the group $J^{y}_{x} \cap J^{x}_{y}$, it suffices to show that
\[
\left(
\epsilon^{G/G'}_{x} \cdot \delta^{x}_{y}
\right)\restriction_{K'_{x} \cap K'_{y}} = 
\left(
\epsilon^{G/G'}_{y} \cdot \delta^{y}_{x}
\right)\restriction_{K'_{x} \cap K'_{y}},
\]
which holds by \cite[Lemma~4.1.2]{FKS}.
\end{proof}

\begin{corollary}
\label{corollaryoflemmathatfollowsfromFKS412}
Let $M'$ be a Levi subgroup of $G'$ and $x, y \in \cB(M', F)$.
Let $M$ be the centralizer of $A_{M'}$ in $G$.
Let $\{\iota\}$ be a commutative diagram
\[
\xymatrix{
\cB(G', F) \ar@{^{(}->}[r] \ar@{}[dr]|\circlearrowleft & \cB(G, F)
\\
\cB(M', F) \ar@{^{(}->}[u] \ar@{^{(}->}[r] & \cB(M, F) \ar@{^{(}->}[u]
}
\]
of admissible embeddings of buildings, which we use to identify $\cB(M', F)$, $\cB(M, F)$, and $\cB(G', F)$ with their images in $\cB(G, F)$.
We assume that the images of the points $x$ and $y$ under the projection to
$\cB^{\red}(M', F)$ agree and that the embedding $\iota \colon \cB(M, F) \longrightarrow \cB(G, F)$ is $r/2$-generic relative to $x$ and $y$ in the sense of \cite[Definition~3.2]{Kim-Yu}.
Then we have $\epsilon^{G/G'}_{x} \restriction_{K^{y}_{x} \cap K^{x}_{y}}  = \epsilon^{G/G'}_{y} \restriction_{K^{y}_{x} \cap K^{x}_{y}} $.
\end{corollary}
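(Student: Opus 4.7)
The plan is to apply Lemma~\ref{lemmathatfollowsfromFKS412} to reduce the corollary to showing that the $\{\pm 1\}$-valued characters $\delta^{y}_{x}$ and $\delta^{x}_{y}$ agree on $K^{y}_{x} \cap K^{x}_{y}$, and then to establish this equality through an explicit comparison of the $\bF_{p}$-vector spaces $\Vyxp$ and $\Vxyp$ built from the root data.

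Lemma~\ref{lemmathatfollowsfromFKS412} gives
\[
\left(\epsilon^{G/G'}_{x} \cdot \delta^{x}_{y}\right)\restriction_{K^{y}_{x} \cap K^{x}_{y}}
= \left(\epsilon^{G/G'}_{y} \cdot \delta^{y}_{x}\right)\restriction_{K^{y}_{x} \cap K^{x}_{y}},
\]
and since all four characters take values in $\{\pm 1\}$, the desired conclusion is equivalent to
$\delta^{y}_{x} \restriction_{K^{y}_{x} \cap K^{x}_{y}} = \delta^{x}_{y} \restriction_{K^{y}_{x} \cap K^{x}_{y}}$. To prove this, I first fix a maximal torus $T$ of $M'$ splitting over a tame extension $E/F$ with $x, y \in \cA(M', T, E)$; such a $T$ exists because the common image of $x$ and $y$ in $\cB^{\red}(M', F)$ forces $y-x \in X_{*}(A_{M'}) \otimes \bR$, and $A_{M'} = A_{M}$ was noted in Section~\ref{sec:extensionofthetwist}. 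Decomposing $J_y/J_{y,+}$ and $J_x/J_{x,+}$ into $T$-root spaces and using the Moy--Prasad translation $\Lie(G)_{\alpha}(E)_{y, t} = \Lie(G)_{\alpha}(E)_{x, t - \langle y-x, \alpha\rangle}$, a direct computation shows that the $\alpha$-component of $\Vyxp$ (respectively $\Vxyp$) equals the full quotient $\Lie(G)_{\alpha}(E)_{y, r/2 : r/2+}$ (respectively $\Lie(G)_{\alpha}(E)_{x, r/2 : r/2+}$) exactly when $\langle y-x, \alpha\rangle < 0$ (respectively $> 0$) and vanishes otherwise. The hypothesis $y-x \in X_{*}(A_{M}) \otimes \bR$ forces the contributing roots to satisfy $\alpha \notin \Phi(M, T)$, and the $r/2$-genericity of $\iota$ ensures that the relevant Moy--Prasad filtration jumps of such root spaces at depth $r/2$ align at $x$ and $y$.

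The involution $\alpha \mapsto -\alpha$ exchanges the support conditions of $\Vyxp$ and $\Vxyp$, so the two spaces have matching root data. Mimicking the construction of the isomorphisms $I_{y\mid x}$ from Section~\ref{subsection:constructionofon} (Galois-equivariant scalings by powers of coherently chosen uniformizers on each root space), and using the bracket pairing $\Lie(G)_{\alpha} \times \Lie(G)_{-\alpha} \to \Lie(T)$ to identify $\Lie(G)_{\alpha}(E)_{y, r/2 : r/2+}$ with $\Lie(G)_{-\alpha}(E)_{x, r/2 : r/2+}$, I would construct an $\bF_p$-linear isomorphism $\psi \colon \Vyxp \isoarrow \Vxyp$ that intertwines the conjugation action of $K'_{x} \cap K'_{y}$. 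This yields $\det_{\bF_p}\bigl(\mathrm{Ad}(k)\restriction_{\Vyxp}\bigr) = \det_{\bF_p}\bigl(\mathrm{Ad}(k)\restriction_{\Vxyp}\bigr)$ for every $k \in K'_{x} \cap K'_{y}$, and applying $\sgn_{\bF_p}$ and inflating to $K^{y}_{x} \cap K^{x}_{y}$ gives the equality $\delta^{y}_{x} = \delta^{x}_{y}$ on this intersection, as required.

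The principal difficulty I expect is securing the $K'_{x} \cap K'_{y}$-equivariance of $\psi$. Elements of $K'_{x} \cap K'_{y} \subseteq G'(F)$ preserve the $Z(G')$-weight decomposition of $\Lie(G)$ but can mix different $T$-weight spaces inside each $Z(G')$-weight, so $\psi$ cannot be defined as a literal root-by-root identification; instead one must organize it at the level of $Z(G')$-weight summands, pairing the $\alpha_{G'}$-part of $\Vyxp$ with the $(-\alpha_{G'})$-part of $\Vxyp$ via the $G'$-equivariant duality furnished by the Killing form. This is precisely the organizing principle already exploited in Section~\ref{subsection:constructionofon}, so I would expect the same mechanism to carry through here.
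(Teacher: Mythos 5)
Your reduction via Lemma~\ref{lemmathatfollowsfromFKS412} to the statement $\delta^{y}_{x}\restriction_{K^{y}_{x}\cap K^{x}_{y}}=\delta^{x}_{y}\restriction_{K^{y}_{x}\cap K^{x}_{y}}$ is exactly the right first move, and you correctly identify the two relevant facts: $y-x\in X_{*}(A_{M})\otimes\bR$ forces any $T$-root that could contribute to $\Vyxp$ to lie outside $\Phi(M,T)$, while the $r/2$-genericity of $\iota$ controls the Moy--Prasad filtration jumps at depth $r/2$. However, you then head down a needlessly complicated path — trying to build a $K'_{x}\cap K'_{y}$-equivariant isomorphism $\psi\colon\Vyxp\isoarrow\Vxyp$ and worrying (correctly, and without resolving it) about whether such a $\psi$ can be made equivariant across $T$-root spaces within a fixed $Z(G')$-weight. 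The gap is that you do not combine the two facts you already have: $r/2$-genericity relative to $y$ means precisely that $J_{y}/J_{y,+}=(J_{y}\cap M(F))/(J_{y,+}\cap M(F))$, i.e., the only $T$-roots contributing to $J_y/J_{y,+}$ lie in $\Phi(M,T)$; and your own observation says the roots contributing to $\Vyxp$ lie \emph{outside} $\Phi(M,T)$. Together these say there are \emph{no} contributing roots, so $\Vyxp=\{0\}$ (and symmetrically $\Vxyp=\{0\}$).

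The paper's proof is exactly this short observation: since the images of $x$ and $y$ in $\cB^{\red}(M',F)$ agree we get $J_x\cap M(F)=J_y\cap M(F)$, and $r/2$-genericity gives $J_y/J_{y,+}=(J_y\cap M(F))/(J_{y,+}\cap M(F))$, whence $\Vyx=J_y/J_{y,+}$ and therefore $\Vyxp=(\Vyx)^{\perp}=\{0\}$; by definition $\delta^{y}_{x}=1$, and likewise $\delta^{x}_{y}=1$, after which Lemma~\ref{lemmathatfollowsfromFKS412} finishes the job. So the intended argument is strictly simpler than yours: both spaces whose determinants you want to compare are zero-dimensional, and the equivariance difficulty you flag never arises. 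Your proposal is not wrong in spirit — the facts you assembled do imply the conclusion — but as written it leaves a real gap (the unresolved equivariance of $\psi$) that disappears entirely once you notice both spaces vanish.
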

\begin{proof}
Since the embedding $\iota \colon \cB(M, F) \longrightarrow \cB(G, F)$ is $r/2$-generic relative to $y$, we have $J_{y} / J_{y, +} = \left(
J_{y} \cap M(F)
\right) / \left(
J_{y, +} \cap M(F)
\right)$.
On the other hand, since the images of the points $x$ and $y$ under the projection to
$\cB^{\red}(M', F)$ agree, we have $J_{x} \cap M(F) = J_{y} \cap M(F)$.
Thus, we have 
\[
J_{y} / J_{y, +} \supseteq \Vyx = \left(
J_{x} \cap J_{y}
\right) \cdot J_{y, +}/J_{y, +} 
\supseteq \left(
J_{y} \cap M(F)
\right) / \left(
J_{y, +} \cap M(F)
\right) =
J_{y} / J_{y, +},
\]
hence $\Vyx=J_{y} / J_{y, +}$
and $
\Vyxp  = \left(
\Vyx 
\right)^{\perp} = \{0\}
$.
Thus, the definition of $\delta^{y}_{x}$ implies that $\delta^{y}_{x} = 1$.
Similarly, we have $\delta^{x}_{y} = 1$.
Now, the claim follows from Lemma~\ref{lemmathatfollowsfromFKS412}.
\end{proof}

\begin{definition} \label{definitionphiplus}
For $x \in \cB(G', F)$, we define the irreducible smooth representation $\phi_{x}^{+}$
\index{notation-ky}{phi+x@$\phi^{+}_{x}$}%
of $K_{x}$ as
\[
\phi_{x}^{+} = \phi'_{x} \otimes \epsilon^{G/G'}_{x}.
\]
\end{definition}
\begin{remark}
\label{remarkinthecaseG'=Gtwistedver}
In the case $G' = G$, using  Remark~\ref{remarkinthecaseG'=G}, we have $\epsilon^{G/G'}_{x} = 1$ and $\phi_{x}^{+} = \phi \restriction_{K'_{x} \cdot G(F)_{x, r}}$.
\end{remark}
Now we can rewrite Lemma~\ref{lemmaphiisotypic}, Lemma~\ref{lemma:phi'xisunitary} and Corollary~\ref{corollaryinductionuntwisted} in terms of $\phi^{+}_{x}$.
\begin{lemma}
\label{lemmaphiisotypictwisted}
The restriction of $\phi^{+}_{x}$ to $J_{x}$ is irreducible, and the restriction of $\phi^{+}_{x}$ to the group $K'_{x, 0+} \cdot J_{x, +}$ is $\widehat{\phi}_{x} \restriction_{K'_{x, 0+} \cdot  J_{x, +}}$-isotypic.
In particular, the restriction of $\phi^{+}_{x}$ to $G(F)_{x, r+}$ is trivial.
\end{lemma}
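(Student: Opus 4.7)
The plan is to reduce Lemma \ref{lemmaphiisotypictwisted} directly to its untwisted analogue Lemma \ref{lemmaphiisotypic}, using only that the twist $\epsilon^{G/G'}_{x}$ is trivial on the relevant subgroups. Since $\phi^{+}_{x} = \phi'_{x} \otimes \epsilon^{G/G'}_{x}$ by Definition \ref{definitionphiplus} and $\epsilon^{G/G'}_{x}$ is a character, restriction of $\phi^{+}_{x}$ to any subgroup $H \subseteq K_{x}$ on which $\epsilon^{G/G'}_{x}$ is trivial is isomorphic to the restriction of $\phi'_{x}$ to $H$.

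The key step I would carry out is to observe that $\epsilon^{G/G'}_{x}$ is trivial on both $J_{x}$ and $K'_{x, 0+}$. Triviality on $J_{x}$ is by construction: the character $\epsilon^{G/G'}_{x}$ was inflated to $K_{x}$ through the surjection $K_{x} = K'_{x} \cdot J_{x} \twoheadrightarrow K'_{x}/(K'_{x} \cap J_{x})$. Triviality on $K'_{x, 0+}$ follows from the fact that $\epsilon^{G/G'}_{x}$ descends from $G'(F)_{[x]_{G}}/G'(F)_{x, 0+}$ combined with $K'_{x, 0+} = K'_{x} \cap G'(F)_{x, 0+}$. Together these give $\epsilon^{G/G'}_{x}\restriction_{K'_{x, 0+} \cdot J_{x, +}} = 1$.

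With this in hand, the first two assertions are immediate from the corresponding statements in Lemma \ref{lemmaphiisotypic}, and the final assertion follows from the second via the inclusion $G(F)_{x, r+} \subseteq J_{x, +} \subseteq K'_{x, 0+} \cdot J_{x, +}$, which is visible from the concave function $f^{+}$ defining $J_{x, +}$ (the root-group filtration $U_{\alpha}(F)_{x, r+}$ embeds into $U_{\alpha}(F)_{x, f^{+}(\alpha)}$ for every $\alpha$ because $f^{+}(\alpha) \in \{r, \tfrac{r}{2}+\}$). I expect no genuine obstacle: the content is entirely absorbed into the careful setup of $\epsilon^{G/G'}_{x}$, which was chosen precisely so as to vanish on the subgroups on which $\phi'_{x}$ already exhibits the desired behavior.
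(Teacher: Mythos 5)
Your proof is correct and takes essentially the same route as the paper: the paper's own proof is a one-liner deducing the lemma from Lemma~\ref{lemmaphiisotypic} together with the observation that $\epsilon^{G/G'}_{x}$ is trivial on $K'_{x, 0+} \cdot J_{x}$. You supply slightly more detail in justifying that triviality, but the idea is identical.
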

\begin{proof}
	This follows from Lemma~\ref{lemmaphiisotypic} and $\epsilon^{G/G'}_{x}$ being trivial on the group $K'_{x, 0+} \cdot J_{x}$
\end{proof}
\begin{lemma}
\label{lemma:phi+xisunitary}
Suppose that $\Coeff$ admits a nontrivial involution,
with respect to which the character $\phi_{x}$ of $K'_{x}$ is unitary.
Then the representation $\phi_{x}^{+}$ is unitary.
\end{lemma}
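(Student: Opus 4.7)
The plan is to reduce this statement immediately to Lemma~\ref{lemma:phi'xisunitary}. Since $\phi_x^+ = \phi'_x \otimes \epsilon^{G/G'}_x$ by Definition~\ref{definitionphiplus}, and the tensor product of two unitary representations (with respect to a fixed involution on $\Coeff$) is again unitary, it suffices to prove that each of the two factors is unitary.

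For the factor $\phi'_x$, this is exactly the content of Lemma~\ref{lemma:phi'xisunitary}, which applies under the present hypothesis on $\phi_x$. For the factor $\epsilon^{G/G'}_x$, the point is that this is a character with values in $\{\pm 1\} \subset \Coeff^\times$. Any involution of $\Coeff$ fixes $\pm 1$, so for $k \in K_x$ we have $\overline{\epsilon^{G/G'}_x(k)} \cdot \epsilon^{G/G'}_x(k) = \epsilon^{G/G'}_x(k)^2 = 1$. Hence $\epsilon^{G/G'}_x$ is unitary with respect to any nontrivial involution of $\Coeff$, and in particular with respect to the one fixed in the hypothesis.

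There is no real obstacle here; the lemma is essentially a formal corollary of Lemma~\ref{lemma:phi'xisunitary} together with the fact that $\epsilon^{G/G'}_x$ is a quadratic character. The only thing to keep in mind is that the representation space of $\phi_x^+$ is the same as that of $\phi'_x$ (since we tensor by a character), and the invariant Hermitian form on $\phi'_x$ provided by Lemma~\ref{lemma:phi'xisunitary} remains invariant under the twist by $\epsilon^{G/G'}_x$ precisely because the values of $\epsilon^{G/G'}_x$ satisfy $|\zeta|^2 = 1$ for the given involution.
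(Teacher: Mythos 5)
Your proof is correct and takes the same route as the paper: the paper's proof is the one-line observation that the claim follows from Lemma~\ref{lemma:phi'xisunitary} together with $\epsilon^{G/G'}_x$ being a quadratic character, and you have simply spelled out why a quadratic character is automatically unitary.
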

\begin{proof}
	This follows from Lemma~\ref{lemma:phi'xisunitary} and $\epsilon^{G/G'}_{x}$ being a quadratic character.
\end{proof}
\begin{corollary}
\label{corollaryinductiontwisted}
Let $x, y \in \cB(G', F)$ and $\phi$ be a character of $G'(F)$ that is $G$-generic of depth $r$ relative to $x$.
Let $K'_{x}$ (resp.\ $K'_{y}$) be an open subgroup of $G'(F)_{[x]_{G}}$
(resp.\ $G'(F)_{[y]_{G}}$).
We suppose Conditions \eqref{conditionsonxandXaboutK'} and \eqref{conditionsonxandXaboutJ}.
Then we have an isomorphism
\[
\phi_{y}^{+}\restriction_{K'_{x} \cdot J_{y}} \isoarrow \ind_{K_{x}}^{K'_{x} \cdot J_{y}} \left(
\phi^{+}_{x} 
\right).
\]
\end{corollary}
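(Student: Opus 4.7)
The plan is to deduce this from the non-twisted version (Corollary~\ref{corollaryinductionuntwisted}) by tensoring with $\epsilon^{G/G'}_y$, using the projection formula to move the character into the induction, and then identifying $\delta^y_x \cdot \epsilon^{G/G'}_y = \epsilon^{G/G'}_x$ on $K_x$ via Lemma~\ref{lemmathatfollowsfromFKS412}.

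More concretely, writing $H = K'_x \cdot J_y$, I would start from
\[
\phi^+_y\restriction_H \;=\; \phi'_y\restriction_H \otimes \epsilon^{G/G'}_y\restriction_H
\;\simeq\; \ind_{K_x}^{H}(\phi'_x \otimes \delta^y_x) \otimes \epsilon^{G/G'}_y\restriction_H,
\]
where the last isomorphism is Corollary~\ref{corollaryinductionuntwisted}. Since $\epsilon^{G/G'}_y$ is a $1$-dimensional character of $H$, the projection formula yields
\[
\ind_{K_x}^{H}(\phi'_x \otimes \delta^y_x) \otimes \epsilon^{G/G'}_y\restriction_H \;\simeq\; \ind_{K_x}^{H}\bigl(\phi'_x \otimes \delta^y_x \otimes \epsilon^{G/G'}_y\restriction_{K_x}\bigr).
\]
So the proof reduces to the identity of characters on $K_x$:
\[
\delta^y_x \cdot \epsilon^{G/G'}_y\restriction_{K_x} \;=\; \epsilon^{G/G'}_x\restriction_{K_x}.
\]

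To obtain this, I would apply Lemma~\ref{lemmathatfollowsfromFKS412}, which gives the relation $\epsilon^{G/G'}_x \cdot \delta^x_y = \epsilon^{G/G'}_y \cdot \delta^y_x$ on $K^y_x \cap K^x_y$. Lemma~\ref{lemmaKx=KXx} already provides $K_x = K^y_x$; by a symmetric manipulation (using that $J_{x,+}$ is normal in $J_x$, together with the equality $J_x = G'(F)_{x,r} \cdot (J_x \cap J_y)$ from the proof of Lemma~\ref{lemmaKx=KXx}), one similarly sees that $J_x = (J_x \cap J_y) \cdot J_{x,+} = J^x_y$ and hence $K_x = K^x_y$. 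So $K_x \subseteq K^y_x \cap K^x_y$ and Lemma~\ref{lemmathatfollowsfromFKS412} applies on $K_x$.

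The remaining step, and the only mildly technical point, is to show that $\delta^x_y\restriction_{K_x}$ is trivial, equivalently that $V^x_{y,+} = (J_{y,+} \cap J_x) \cdot J_{x,+}/J_{x,+} = 0$, i.e., $J_{y,+} \cap J_x \subseteq J_{x,+}$. Here condition~\eqref{conditionsonxandXaboutJ} gives $J_{y,+} \subseteq G(F)_{y,\frac{r}{2}+} \subseteq G(F)_{x,\frac{r}{2}+}$, so it suffices to check $J_x \cap G(F)_{x,\frac{r}{2}+} = J_{x,+}$. Both sides are defined by concave functions on $\Phi(G,T) \cup \{0\}$, and the intersection corresponds to the pointwise maximum; a direct comparison of $f^+$ with the maximum of $f$ and the constant function $\frac{r}{2}+$ yields exactly $f^+$, which gives the equality by \cite[6.4.9]{MR327923}. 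Combining these ingredients produces the required character identity and hence the claimed isomorphism. I do not expect any serious obstacle; the only subtlety is the Moy--Prasad filtration bookkeeping in the triviality of $\delta^x_y\restriction_{K_x}$.
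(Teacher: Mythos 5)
Your argument is correct and follows essentially the same route as the paper's: start from Corollary~\ref{corollaryinductionuntwisted}, move $\epsilon^{G/G'}_y$ inside the induction by the projection formula, invoke Lemma~\ref{lemmathatfollowsfromFKS412} on $K_x = K^y_x = K^x_y$ (the first equality being Lemma~\ref{lemmaKx=KXx}), and conclude by showing $\delta^x_y\restriction_{K_x} = 1$. The one small divergence is in the last step: you verify $\Vxyp = 0$ by a direct Moy--Prasad/root-subgroup intersection computation showing $J_{y,+}\cap J_x \subseteq J_{x,+}$, whereas the paper reuses the already-proved Lemma~\ref{lemmaaboutxandX}\eqref{lemmaaboutxandXJxandJXcapJx} to get $\Vxy = J_x/J_{x,+}$ and then invokes the non-degeneracy statement of Lemma~\ref{lemmatotallyisotropic} to deduce $\Vxyp = (\Vxy)^\perp = \{0\}$ without redoing the filtration bookkeeping; both are valid, and the paper's version is merely a touch more economical.
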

\begin{proof}
According to Corollary~\ref{corollaryinductionuntwisted}, we have
\(
\phi'_{y} \restriction_{K'_{x} \cdot J_{y}} \simeq \ind_{K_{x}}^{K'_{x} \cdot J_{y}} \left(
\phi'_{x} \otimes \delta^{y}_{x}
\right).
\)
Hence, Lemmas~\ref{lemmaKx=KXx} and \ref{lemmathatfollowsfromFKS412} and the definition of $\phi_{x}^{+}$ and $\phi_{y}^{+}$ imply that
\begin{align*}
\phi_{y}^{+}\restriction_{K'_{x} \cdot J_{y}}
 = \left(
\phi'_{y} \otimes \epsilon^{G/G'}_{y}
\right)\restriction_{K'_{x} \cdot J_{y}}
& \simeq \ind_{K_{x}}^{K'_{x} \cdot J_{y}} \left(
\phi'_{x} \otimes \delta^{y}_{x}
\right) \otimes \left(
\epsilon^{G/G'}_{y}\restriction_{K'_{x} \cdot J_{y}}
\right)\\
& = \ind_{K_{x}}^{K'_{x} \cdot J_{y}} \left(
\phi^{+}_{x} \otimes (\epsilon^{G/G'}_{x})^{-1}
\otimes \delta^{y}_{x}
\right) \otimes \left(
\epsilon^{G/G'}_{y}\restriction_{K'_{x} \cdot J_{y}}
\right)\\
& \simeq \ind_{K_{x}}^{K'_{x} \cdot J_{y}} \left(
\phi^{+}_{x} \otimes (\epsilon^{G/G'}_{x})^{-1}
\otimes \left(
\epsilon^{G/G'}_{y}\restriction_{K_{x}}
\right)
\otimes \delta^{y}_{x}
\right)\\
& \simeq \ind_{K_{x}}^{K'_{x} \cdot J_{y}} \left(
\phi^{+}_{x} \otimes (\delta^{x}_{y})^{-1}
\right).
\end{align*}
To prove the corollary, it suffices to show that $\delta^{x}_{y} = 1$.
According to Lemma~\ref{lemmaaboutxandX}(\ref{lemmaaboutxandXJxandJXcapJx}),
we have
\[
\Vxy = \left(
J_{x} \cap J_{y}
\right) \cdot J_{x, +}/J_{x, +} 
= J_{x} / J_{x, +}.
\]
Hence, we obtain that
$
\Vxyp = \left(
\Vxy 
\right)^{\perp} = \{0\}
$.
Thus, the definition of $\delta^{x}_{y}$ implies that $\delta^{x}_{y} = 1$.
\end{proof}

\subsection{Representations from Heisenberg--Weil data}
\label{subsec:repfromHeisenbergWeil}

In this subsection, we will generalize the construction of $\phi_{x}^{+}$ and the comparison result Corollary~\ref{corollaryinductiontwisted}, to start with the following more general input. 
\begin{definition}
\label{definition:Heisenberg-Weil}
A Heisenberg--Weil datum is a $5$-tuple
$\bigl((\overrightarrow{G}), \overrightarrow{r}, (x, \{\iota\}), K^{0}_{x}, \overrightarrow{\phi}\bigr)$
where
\begin{enumerate}[(1)]
\item
$\overrightarrow{G}=\left(G^0 \subseteq G^1 \subseteq\ldots \subseteq G^{n}=G\right)$ with $G^i$ a tamely ramified twisted Levi subgroup of $G$ for $0 \leq i \leq n-1$ and some $n \in \bZ_{\geq 1}$. 
\item
$\overrightarrow{r}=\left(r_0, \ldots , r_{n-1} \right)$ is a sequence of real numbers satisfying
$
0<r_0<r_1<\cdots <r_{n-1}
$.
\item
$x$ is a point of $\cB(G^{0}, F)$, and $\{\iota\}$ is a collection of compatible admissible embeddings of buildings
\[
\cB(G^{0}, F) \hookrightarrow \cB(G^{1}, F) \hookrightarrow \cdots \hookrightarrow \cB(G^{n}, F).
\]
We identify points in $\cB(G^{0}, F)$ with their images via the embeddings $\{ \iota \}$.
\item
$K^{0}_{x}$ is an open subgroup of $G^{0}(F)_{[x]_{G}}$.
\index{notation-ky}{Kay0 x@$K^{0}_{x}$}%
\item
$\overrightarrow{\phi}=\left(\phi_0, \ldots , \phi_{n-1} \right)$ is a sequence of characters, where $\phi_i$ is a character of $G^i(F)$.
We suppose that $\phi_i$ is $G^{i+1}$-generic of depth $r_i$ relative to $x$ in the sense of \cite[Definition~3.5.2]{Fintzen-IHES} for all $0\le i\le n-1$. 
\end{enumerate}
\end{definition}

A Heisenberg--Weil datum is a generalization of the part of the datum that Yu (\cite[Section~3]{Yu}) uses to construct the positive-depth factor
of his supercuspidal types via the theory of Heisenberg--Weil representations of finite groups. More precisely, we allow that some of the twisted Levi subgroups $G^i$ are equal (using the notion of generic characters as in \cite[Definition~3.5.2.]{Fintzen-IHES} in the case of equal twisted Levi subgroups), that $Z(G^{0})/Z(G)$ is isotropic, that $x$ is any point in $\cB(G^0,F)$, and we work with the more general group $K_x^0 \subseteq G^{0}(F)_{[x]_{G}}$ and the more general coefficients $\Coeff$. 
Following the construction in \cite[Section~4]{Yu}, which we will recall due to our more general set-up and for the convenience of the reader, we will construct from a Heisenberg--Weil datum an irreducible representation $\kappa_{x}$ of a compact, open subgroup $K_{x}$ of $G(F)$. We will then prove a comparison result for the representations $\kappa_{x}$ attached to different appropriate points $x$, see Proposition~\ref{propositioninductiontwistedsequencever} below.
This result will then be used in Section~\ref{Section-KimYutypes} as follows.
In Section~\ref{Section-KimYutypes}, we will construct an irreducible representation $\rho_{x}$ of $K_{x}$ following the construction in \cite[Section~4]{Yu} and \cite[Section~7]{Kim-Yu} from a $G$-datum $\Sigma$ (see \cite[7.2]{Kim-Yu} and Definition~\ref{definitionofGdatum} below for the definition of $G$-datum).
To define $\rho_{x}$, we will use the representation $\kappa_{x}$ constructed from the Heisenberg--Weil datum obtained as a part of $\Sigma$, and Proposition~\ref{propositioninductiontwistedsequencever} is the key to proving that the representations $\rho_{x}$ satisfy
Axiom~\ref{HAI-axiomextensionoftheinductionofkappa}\eqref{HAI-axiomextensionoftheinductionofkappacompatibilitywiththecompactind}
of \cite{HAI},
see Lemma~\ref{proofofaxiomextensionoftheinductionofkappa}.

Let $\left((\overrightarrow{G}), \overrightarrow{r}, (x, \{\iota\}), K^{0}_{x}, \overrightarrow{\phi}\right)$ be a Heisenberg--Weil datum.
We fix a maximal torus $T$ of $G^{0}$ such that the splitting field $E$ of $T$ is tamely ramified over $F$ and such that $x \in \cA(G^{0}, T, E)$. We will now roughly repeat the construction of Section \ref{A twist of the construction} for each pair $G^{i-1} \subseteq G^{i}$  ($1 \le i \le n$). More precisely, for $1 \le i \le n$,
we define the concave functions $f_i$ and $f_i^+$
on $\Phi(G^i,T) \cup \{0\}$ by 
\[
f_i(\alpha) = 
\begin{cases}
r_{i-1} & \alpha \in \Phi(G^{i-1},T) \cup \{0\}, \\
\frac{r_{i-1}}{2} & \text{otherwise},
\end{cases}
\qquad
\text{and}
\qquad
f_i^+(\alpha) = 
\begin{cases}
r_{i-1} & \alpha \in \Phi(G^{i-1},T) \cup \{0\}, \\
\frac{r_{i-1}}{2} + & \text{otherwise}.
\end{cases}
\]
We define the compact, open subgroups $J^{i}_{x}$ and $J^{i}_{x, +}$ of $G^{i}(F)$ as
\[
J^i_x \coloneqq  G^{i}(F)_{x,f_i}
\qquad
\text{and}
\qquad
J^i_{x,+} \coloneqq  G^{i}(F)_{x,f_i^+}.
\]
As explained in \cite[Section~1, Section~2]{Yu}, the groups $J^{i}_{x}$ and $J^{i}_{x, +}$ are independent of the choice of a maximal torus $T$ of $G^{0}$.
For $1 \le i \le n$, we write
$
J^{\le i}_{x} = J^{1}_{x} J^{2}_{x} \cdots J^{i}_{x}
$
and 
$
J^{\le i}_{x, +} = J^{1}_{x, +} J^{2}_{x, +} \cdots J^{i}_{x, +}
$.
We also write $J^{\le 0}_{x} = J^{\le 0}_{x, +} = \{1\}$.
For $0 \le i \le n$, we define the open subgroups $K^{i}_{x}$ and $K^{i}_{x, +}$ of $G^{i}(F)$ as
\[
K^{i}_{x} = K^{0}_{x} \cdot J^{\le i}_{x}
\quad 
\text{ and }
\quad
K^{i}_{x, +} = K^{0}_{x, +} \cdot J^{\le i}_{x, +},
\]
where $K^{0}_{x, +} = K^{0}_{x} \cap G^{0}(F)_{x, 0+}$.
\index{notation-ky}{Kay0 x +@$K^{0}_{x, +}$}%
We note that
$
K^{i}_{x} = K^{i-1}_{x} \cdot J^{i}_{x}
$
and
$
K^{i}_{x, +} = K^{i-1}_{x, +} \cdot J^{i}_{x, +}
$
for all $1 \le i \le n$.
We write $K_{x} \coloneqq  K^{n}_{x}$, $K_{x, +} \coloneqq K^{n}_{x, +}$,
and $J_{x} \coloneqq  J^{\le n}_{x}$.
\index{notation-ky}{Kay x +@$K_{x, +}$}%
\index{notation-ky}{Kay x@$K_x$}%
\index{notation-ky}{Kay x 0+@$K_{x, 0+}$}%
We also define the compact, open, pro-$p$ subgroup $K_{x, 0+}$ of $K_{x}$ by
\[
K_{x, 0+} 
= K_{x} \cap G(F)_{x, 0+}
= K^{0}_{x, +} \cdot J_{x}.
\]
We note that we have $K_{x} = K^{0}_{x} \cdot K_{x, 0+}$.

Applying the construction in Section~\ref{A twist of the construction} to 
\begin{equation}
	\label{applyingtwostepcaseforGi}
		G = G^{i}, \quad
		G' = G^{i-1}, \quad
		\phi = \phi_{i-1}, \quad
		K'_{x} = K^{i-1}_{x}, 
	\end{equation}
we obtain an irreducible representation $\phi^{+}_{i-1, x}$ of $K^{i}_{x}$ for all $1 \le i \le n$.
According to Lemma~\ref{lemmaphiisotypictwisted}, the restriction of $\phi^{+}_{i-1, x}$ to the group $G^{i}(F)_{x, r_{i-1}+}$ is trivial.
Since we have
\begin{align*}
K^{i}_{x} \cap J^{i+1}_{x} J^{i+2}_{x} \cdots J^{n}_{x} \subseteq G^{i}(F)_{x, r_{i}} \subseteq G^{i}(F)_{x, r_{i-1}+}
\end{align*}
for $1 \le i \le n-1$,
we can inflate the representation $\phi^{+}_{i-1, x}$ to an irreducible representation $\inf(\phi^{+}_{i-1, x})$ of the group $K_{x}$ via the surjection
\[
K_{x} = K^{i}_{x} \cdot J^{i+1}_{x} J^{i+2}_{x} \cdots J^{n}_{x} 
\longrightarrow K^{i}_{x} \cdot J^{i+1}_{x} J^{i+2}_{x} \cdots J^{n}_{x} / 
J^{i+1}_{x} J^{i+2}_{x} \cdots J^{n}_{x} 
\simeq K^{i}_{x} / \left(
K^{i}_{x} \cap J^{i+1}_{x} J^{i+2}_{x} \cdots J^{n}_{x}
\right).
\]
We define the irreducible representation
\index{notation-ky}{kappax@$\kappa_{x}$}%
$\kappa_{x}$
\index{notation-ky}{phi+x@$\phi^{+}_{x}$}%
of $K_{x}$ as
\[
\kappa_{x} = \inf(\phi^{+}_{0, x}) \otimes \inf(\phi^{+}_{1, x}) \otimes \cdots \otimes \inf(\phi^{+}_{n-2, x}) \otimes \phi^{+}_{n-1, x}.
\]
We note that the representation $\phi^{+}_{x}$ in Section~\ref{A twist of the construction} agrees with the representation $\kappa_{x}$ constructed from the Heisenberg--Weil datum
$
\bigl(
(G' \subseteq G), (r), (x, \{\iota\}), K'_{x}, (\phi)
\bigr)
$, i.e. the construction of $\kappa_x$ is a generalization of the construction of $\phi^{+}_{x}$ in Section~\ref{A twist of the construction}.
If $n>1$, then we also write $\kappa_x^{n-1}= \inf(\phi^{+}_{0, x}) \otimes \inf(\phi^{+}_{1, x}) \otimes \cdots \otimes \inf(\phi^{+}_{n-2, x})$.
We note that the representation $\kappa^{n-1}$ is trivial on $J^{n}_{x}$, and the restriction of $\kappa_x^{n-1}$ to $K^{n-1}_{x}$ agrees with the representation obtained by applying the above construction to the Heisenberg--Weil datum 
\[
\left(
\left(
G^0\subseteq G^1 \subseteq\ldots \subseteq G^{n-1}
\right),
\left(r_0, \ldots , r_{n-2} \right), (x, \{\iota\}), K^{0}_{x}, \left(\phi_0, \ldots , \phi_{n-2} \right)
\right).
\]

We define another representation, $\kappa^{\nt}_{x}$, of $K_{x}$ as follows:
\begin{notation}
\label{notationuntwistedconstruction}
For $1 \le i \le n$, let $\phi'_{i-1, x}$ denote the irreducible representation of $K^{i}_{x}$ obtained by applying the construction in Section~\ref{subsec:untwistedconstruction} to \eqref{applyingtwostepcaseforGi}.
Replacing $\phi^{+}_{i-1, x}$ by $\phi'_{i-1, x}$ in the definition of $\kappa_{x}$, we can define another representation $\kappa^{\nt}_{x}$
\index{notation-ky}{kappantx@$\kappa^{\nt}_{x}$}%
of $K_{x}$.
We call the construction of the representation $\kappa_{x}$ (resp.\ $\kappa^{\nt}_{x}$) from the Heisenberg--Weil datum $\left((\overrightarrow{G}), \overrightarrow{r}, (x, \{\iota\}), K^{0}_{x}, \overrightarrow{\phi}\right)$ the \emph{twisted Heisenberg--Weil construction}
\index{definition}{Heisenberg--Weil construction,twisted@twisted Heisenberg--Weil construction}%
\index{definition}{Heisenberg--Weil construction,non-twisted@non-twisted Heisenberg--Weil construction}%
(resp.\ \emph{non-twisted Heisenberg--Weil construction}).\label{pageuntwistedHeisenbergWeilconstruction}
\end{notation}

The difference between $\kappa_{x}$ and $\kappa^{\nt}_{x}$ can be described by using the characters $\epsilon^{G^{i}/G^{i-1}}_{x}$ defined in Section~\ref{A twist of the construction} as follows.
\begin{notation}
\label{notationepsilonoverrightarrowG}
Let $\epsilon^{\overrightarrow{G}}_{x}$
\index{notation-ky}{epsilonGarrowx@$\epsilon^{\overrightarrow{G}}_{x}$}%
be the character of $K_{x}$ that is trivial on $J_{x}$ and satisfies 
\[
\epsilon^{\overrightarrow{G}}_{x} \restriction_{K^{0}_{x}} = \prod_{i=1}^{n} \epsilon^{G^{i}/G^{i-1}}_{x} \restriction_{K^{0}_{x}}.
\]
\end{notation}
The definitions of $\kappa_{x}$ and $\kappa^{\nt}_{x}$ imply that we have
\[
\kappa_{x} = \kappa^{\nt}_{x} \otimes \epsilon^{\overrightarrow{G}}_{x}.
\]

We record some properties of $\kappa_{x}$.
\begin{lemma}
\label{irreducibilityofkapparestrictiontoJ}
The restriction of the representation $\kappa_{x}$ to the groups $J_{x}$ and $K_{x, 0+}$ are irreducible.
\end{lemma}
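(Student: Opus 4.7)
The plan is to reduce the lemma to irreducibility of $\kappa_x^{\nt}|_{J_x}$, and then establish this by induction on $n$, modelled on~\cite[Section~4]{Yu}.

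For the reduction: since $K_{x,0+} = K_{x,+}^0 \cdot J_x$ we have $J_x \subseteq K_{x,0+}$, so every $K_{x,0+}$-stable subspace of $\kappa_x$ is automatically $J_x$-stable, and irreducibility on $J_x$ forces irreducibility on $K_{x,0+}$. Furthermore, $\kappa_x = \kappa_x^{\nt} \otimes \epsilon_x^{\overrightarrow G}$ differs from $\kappa_x^{\nt}$ only by a one-dimensional character, so it suffices to prove that $\kappa_x^{\nt}|_{J_x}$ is irreducible.

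For the induction on $n$, the base case $n=1$ is precisely the first assertion of Lemma~\ref{lemmaphiisotypic}, since $\kappa_x^{\nt}|_{J_x} = \phi'_{0,x}|_{J_x^1}$. For $n \geq 2$, write $\kappa_x^{\nt} = \inf(\kappa_x^{\nt, n-1}) \otimes \phi'_{n-1, x}$, where $\kappa_x^{\nt, n-1}$ denotes the analog of $\kappa_x^{\nt}$ for the truncated Heisenberg--Weil datum obtained by deleting the last entries of $\overrightarrow G$, $\overrightarrow r$, and $\overrightarrow \phi$. By the discussion on page~\pageref{pageuntwistedHeisenbergWeilconstruction}, $\inf(\kappa_x^{\nt, n-1})$ is trivial on $J_x^n$ and its restriction to $K_x^{n-1}$ agrees with the truncated construction; so by the inductive hypothesis, $\inf(\kappa_x^{\nt, n-1})|_{J_x^{\le n-1}}$ is irreducible. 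On the other hand, by Lemma~\ref{lemmaphiisotypic} applied to the specialization~\eqref{applyingtwostepcaseforGi}, the restriction $\phi'_{n-1, x}|_{J_x^n}$ is the pullback of the irreducible Heisenberg representation of $(J_x^n/J_{x,+}^n)^\#$.

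The main obstacle is to combine these two ingredients into irreducibility of the tensor product on $J_x = J_x^{\le n-1} \cdot J_x^n$. The idea is to identify $\kappa_x^{\nt}|_{J_x}$ as the unique irreducible representation of a Heisenberg-type group built on the quotient $J_x / J_{x,+}$ (with $J_{x,+} \coloneqq J_{x,+}^1 \cdots J_{x,+}^n$), whose central character on $J_{x,+}$ is determined by $\prod_{i=0}^{n-1} \widehat{\phi_i}_x$, and whose symplectic form is assembled from the individual non-degenerate symplectic forms $\langle \cdot, \cdot \rangle_x$ on $J_x^i/J_{x,+}^i$ via the commutator relations coming from the Moy--Prasad filtration and the concavity of the functions $f_i$ (cf.~\cite[6.4.44]{MR327923}). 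A dimension comparison against $\dim \kappa_x^{\nt} = \prod_{i=1}^n p^{\dim_{\mathbb{F}_p}(J_x^i/J_{x,+}^i)/2}$ confirms the identification. This portion of the argument parallels~\cite[Section~4]{Yu} and extends verbatim to our setting thanks to the mod-$\ell$ Heisenberg--Weil formalism of~\cite[Section~2.3]{MR4460255} recorded in Notation~\ref{notationcomplexandmodellweilheisenberg}.
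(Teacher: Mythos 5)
Your set-up matches the paper's: you reduce to $\kappa_x^{\nt}\restriction_{J_x}$ (valid, since $\epsilon_x^{\overrightarrow G}$ is trivial on $J_x$, so the restrictions coincide), pass from $J_x$ to $K_{x,0+}$ by containment, and induct on $n$ via the decomposition $\kappa_x^{\nt} = \inf(\kappa_x^{\nt,n-1}) \otimes \phi'_{n-1,x}$. The gap is precisely in the inductive step, which you flag as the ``main obstacle.'' The paper closes the induction with a short endomorphism-algebra computation: because $\kappa_x^{n-1}$ is trivial on $J_x^n$ and $\phi^{+}_{n-1,x}\restriction_{J_x^n}$ is irreducible, Schur's lemma applied to the $J_x^n$-action shows that $\Phi \mapsto \Phi \otimes \id$ is an isomorphism $\End_{J_x^{\le n-1}}\bigl(\kappa_x^{n-1}\restriction_{J_x^{\le n-1}}\bigr) \isoarrow \End_{J_x}\bigl(\kappa_x\restriction_{J_x}\bigr)$, and the left side is one-dimensional by the inductive hypothesis. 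That single observation does all the work.

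Your proposed alternative---realize $\kappa_x^{\nt}\restriction_{J_x}$ as the irreducible Heisenberg representation of a Heisenberg group built on the quotient $J_x/J_{x,+}$, then finish by a dimension count---is in the spirit of Yu's original viewpoint, but as written it leaves the substantive content unproved. You would need to show that $J_{x,+} = J^1_{x,+}\cdots J^n_{x,+}$ is normal in $J_x$ with $[J_x,J_x]\subseteq J_{x,+}$, that $\theta_x$ induces a well-defined nondegenerate symplectic pairing on $J_x/J_{x,+}$ decomposing orthogonally as $\bigoplus_i J^i_x/J^i_{x,+}$ (which requires controlling $[J^i_x,J^j_x]$ modulo $J_{x,+}$ for $i\neq j$; this is not automatic from the concavity reference you give), and that $\kappa_x^{\nt}\restriction_{J_x}$ actually factors through the resulting Heisenberg quotient with the prescribed central character. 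Only then does a dimension count finish, and even then the logic is ``a multiple of the Heisenberg representation, hence exactly one copy,'' not a bare comparison. Deferring all of this to ``parallels Yu'' leaves the heart of the lemma untouched; the paper's $\End$-argument sidesteps the construction of a big Heisenberg group entirely.
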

\begin{proof}
We will prove that the restriction of $\kappa_{x}$ to $J_{x}$ is irreducible by induction on $n$.
When $n = 1$, the claim follows from Lemma~\ref{lemmaphiisotypictwisted}.
Suppose that $n > 1$.
Since the group $J_{x}$ is compact, it suffices to show that
\[
\dim_{\Coeff}\left(
\End_{J_{x}} \left(
\kappa_{x} \restriction_{J_{x}}
\right)
\right) = 1.
\]
We note that $J_{x} = J^{\le n-1}_{x} \cdot J^{n}_{x}$.
Since the representation
$\kappa_x^{n-1}$
is trivial on $J^{n}_{x}$, and the restriction of the representation $\phi^{+}_{n-1, x}$ to $J^{n}_{x}$ is irreducible by Lemma~\ref{lemmaphiisotypictwisted},
we have the isomorphism
\[
\End_{J^{\le n-1}_{x}} \left(
\kappa^{n-1}_{x} \restriction_{J^{\le n-1}_{x}}
\right) \isoarrow \End_{J_{x}} \left(
\left(
\kappa^{n-1}_{x} \otimes \phi^{+}_{n-1, x}
\right) \restriction_{J_{x}}
\right) =
\End_{J_{x}} \left(
\kappa_{x} \restriction_{J_{x}}
\right)
\]
defined by $\Phi \mapsto \Phi \otimes \id_{\phi^{+}_{n-1, x}}$.
By combining this isomorphism with the induction hypothesis $\dim_{\Coeff}\left(
\End_{J^{\le n-1}_{x}} \left(
\kappa^{n-1}_{x} \restriction_{J^{\le n-1}_{x}}
\right)
\right) = 1$, we obtain the claim.
Since $J_{x} \subseteq K_{x, 0+}$, the restriction of $\kappa_{x}$ to $K_{x, 0+}$ is also irreducible.
\end{proof}

\begin{lemma}
\label{lemma:kappaxisunitary}
Suppose that $\Coeff$ admits a nontrivial involution,
with respect to which the restrictions of the characters $\phi_{i}$
to $K^{i}_{x}$ are unitary
for all $0 \le i \le n-1$.
Then the representations $\kappa_{x}$ and $\kappa^{\nt}_{x}$ are unitary.
\end{lemma}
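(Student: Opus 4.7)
The plan is to observe that, by their very definitions, $\kappa_{x}$ and $\kappa^{\nt}_{x}$ are tensor products of inflations of the representations $\phi^{+}_{i-1, x}$ and $\phi'_{i-1, x}$ (for $1 \le i \le n$), and both inflation and tensor product preserve unitarity. So the proof reduces to establishing unitarity of each individual factor, which has already been handled in the two-step case.

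More concretely, for each $1 \le i \le n$ I would apply Lemma~\ref{lemma:phi+xisunitary} (respectively Lemma~\ref{lemma:phi'xisunitary}) to the data~\eqref{applyingtwostepcaseforGi}, i.e.\ with $G = G^{i}$, $G' = G^{i-1}$, $\phi = \phi_{i-1}$, and $K'_{x} = K^{i-1}_{x}$. The input hypothesis of that lemma is precisely that $\phi\restriction_{K'_{x}} = \phi_{i-1}\restriction_{K^{i-1}_{x}}$ is unitary, which is exactly what the present lemma assumes (as $i-1$ ranges over $0,\ldots,n-1$). This gives the unitarity of every $\phi^{+}_{i-1, x}$ and every $\phi'_{i-1, x}$.

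To finish, I would invoke two elementary facts: inflation of a unitary representation along a surjective group homomorphism is unitary (an invariant Hermitian form on the quotient pulls back to an invariant form), and the tensor product of finitely many unitary representations over $\Coeff$ is unitary with respect to the tensor of the invariant forms. Applying these to the defining formulas $\kappa_{x} = \inf(\phi^{+}_{0, x}) \otimes \cdots \otimes \inf(\phi^{+}_{n-2, x}) \otimes \phi^{+}_{n-1, x}$ and its non-twisted analogue yields the claim. There is no real obstacle here; this is a bookkeeping lemma that simply chains together the unitarity results already established for the two-step construction.
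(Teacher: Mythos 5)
Your proof is correct and matches the paper's own argument: both reduce to Lemmas~\ref{lemma:phi'xisunitary} and \ref{lemma:phi+xisunitary} applied to the two-step data \eqref{applyingtwostepcaseforGi}, then use that inflation and tensor product preserve unitarity. You've merely spelled out the bookkeeping that the paper leaves implicit.
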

\begin{proof}
The lemma follows from Lemma~\ref{lemma:phi'xisunitary}, Lemma~\ref{lemma:phi+xisunitary}, and the definition of $\kappa_{x}$ and $\kappa^{\nt}_{x}$.
\end{proof}

\begin{notation}
\label{notation:theta_x}
For $0 \le i \le n-1$, let $\widehat{\phi}_{i, x}$ denote the character of $K^{0}_{x} \cdot G^{i}(F)_{x, 0} \cdot G(F)_{x, \frac{r_{i}}{2} +}$ defined as in \cite[Section~4]{Yu}.
We define the character $\theta_{x}$
\index{notation-ky}{thetax@$\theta_x$}%
of $K_{x, +}$ as
\[
\theta_{x} = \prod_{i=0}^{n-1} \widehat{\phi}_{i, x} \restriction_{K_{x, +}}.
\]
\end{notation}
\begin{lemma}
\label{lemmakapparestrictiontoK+}
The restriction of the representation $\kappa_{x}$ to the group $K_{x, +}$ is $\theta_{x}$-isotypic.
\end{lemma}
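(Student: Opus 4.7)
The plan is to reduce the statement to the two-step case already handled in Lemma \ref{lemmaphiisotypictwisted} and then track root group filtrations to verify triviality of the extended character $\widehat{\phi}_{i-1,x}$ on the ``excess'' factors $J^{\ell}_{x,+}$ for $\ell > i$.

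First, I write $\kappa_x = \bigotimes_{i=1}^n \inf(\phi^+_{i-1,x})$. A tensor product of scalar-isotypic representations of $K_{x,+}$ is isotypic for the product of those scalar characters, so it suffices to show that for each $1\le i\le n$, the restriction $\inf(\phi^+_{i-1,x})\restriction_{K_{x,+}}$ is $\widehat{\phi}_{i-1,x}\restriction_{K_{x,+}}$-isotypic; the conclusion will then be $\theta_x$-isotypy. For the case $i=n$ there is no inflation to worry about and the claim is immediate from Lemma \ref{lemmaphiisotypictwisted} applied to the pair $(G^{n-1}, G^n)$, since $K^{n-1}_{x,+}\cdot J^n_{x,+} = K^n_{x,+} = K_{x,+}$.

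For $1\le i < n$, apply Lemma \ref{lemmaphiisotypictwisted} to the two-step inclusion $G^{i-1}\subseteq G^i$ with $K'_x = K^{i-1}_x$ to conclude that $\phi^+_{i-1,x}\restriction_{K^i_{x,+}}$ is $\widehat{\phi}_{i-1,x}\restriction_{K^i_{x,+}}$-isotypic. Since $K_{x,+} = K^i_{x,+}\cdot J^{i+1}_{x,+}\cdots J^n_{x,+}$, any $k\in K_{x,+}$ admits a decomposition $k = k^i\cdot j^{i+1}\cdots j^n$ with $k^i\in K^i_{x,+}$ and $j^\ell\in J^\ell_{x,+}$. Under the surjection $K_x \twoheadrightarrow K^i_x/(K^i_x\cap J^{i+1}_x\cdots J^n_x)$ defining the inflation, the image of $k$ is $k^i$, so $\inf(\phi^+_{i-1,x})(k)$ acts as the scalar $\widehat{\phi}_{i-1,x}(k^i)\cdot\operatorname{Id}$.

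It therefore remains to prove that $\widehat{\phi}_{i-1,x}(k) = \widehat{\phi}_{i-1,x}(k^i)$ for every such $k$, or equivalently that $\widehat{\phi}_{i-1,x}$ is trivial on $J^\ell_{x,+}$ for all $\ell > i$. I claim that $J^\ell_{x,+}\subseteq (G^{i-1},G)(F)_{x,(r_{i-1}+,\, r_{i-1}/2+)}$, which is in the kernel of the extended character by its very construction in \cite{Yu}. To verify the containment, fix a maximal torus $T$ of $G^0$ as in Section \ref{subsec:repfromHeisenbergWeil} and examine each generator $U_\alpha(F)_{x,t}$ of $J^\ell_{x,+}$: if $\alpha\in\Phi(G^{i-1},T)\cup\{0\}$ the filtration depth is $r_{\ell-1}>r_{i-1}$ so it lands in $G^{i-1}(F)_{x,r_{i-1}+}$; if $\alpha\in\Phi(G^{\ell-1},T)\smallsetminus\Phi(G^{i-1},T)$ the depth $r_{\ell-1}$ exceeds $r_{i-1}/2$ and the root sits outside $\Phi(G^{i-1},T)$; finally if $\alpha\in\Phi(G^\ell,T)\smallsetminus\Phi(G^{\ell-1},T)$ the depth $r_{\ell-1}/2+$ exceeds $r_{i-1}/2$ and again $\alpha\notin\Phi(G^{i-1},T)$. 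In each case the generator lies in $(G^{i-1},G)(F)_{x,(r_{i-1}+,\, r_{i-1}/2+)}$.

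The only delicate point — and hence the main obstacle — is the last verification: one must be precise about how $\widehat{\phi}_{i-1,x}$ is extended from $(G^{i-1}(F)_{[x]_{G^i}})\cdot J^i_{x,+}$ to $K^0_x\cdot G^{i-1}(F)_{x,0}\cdot G(F)_{x,r_{i-1}/2+}$ in Notation~\ref{notation:theta_x}, in order to guarantee that it remains trivial on the full subgroup $(G^{i-1},G)(F)_{x,(r_{i-1}+,\, r_{i-1}/2+)}$. This is where one uses that $\phi_{i-1}$ has depth $r_{i-1}$ on its $G^{i-1}$-part and is described by the $G^{i-1}$-invariant dual Lie algebra element $X^*_{i-1}$ on the complementary root directions, so that pieces coming from $U_\alpha$ with $\alpha\notin\Phi(G^{i-1},T)$ vanish automatically.
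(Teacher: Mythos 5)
Your argument is correct and is essentially an unfolding of the paper's one-line proof, which simply cites Lemma~\ref{lemmaphiisotypictwisted}, the definitions of $\kappa_x$ and $\theta_x$, and Yu's Proposition~4.4: you reduce factor-by-factor, apply the two-step isotypy result, and check that the excess factors $J^\ell_{x,+}$ ($\ell>i$) lie in $(G^{i-1},G)(F)_{x,(r_{i-1}+,\,r_{i-1}/2+)}$, which is indeed the right bookkeeping. The one overstatement is in your final paragraph: the triviality of $\widehat{\phi}_{i-1,x}$ on $(G^{i-1},G)(F)_{x,(r_{i-1}+,\,r_{i-1}/2+)}$ is not a ``delicate point'' requiring an argument about $X^*_{i-1}$ --- it is part of Yu's \emph{definition} of the extended character $\widehat{\phi}_{i-1,x}$ on $K^0_x\cdot G^{i-1}(F)_{x,0}\cdot G(F)_{x,r_{i-1}/2+}$ (the genericity of $\phi_{i-1}$ is what guarantees this extension exists and is well-defined, but once it exists, the vanishing you need holds by fiat).
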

\begin{proof}
The lemma follows from Lemma~\ref{lemmaphiisotypictwisted} and the definition of $\kappa_{x}$ and $\theta_{x}$ (see also \cite[Proposition~4.4]{Yu}).
\end{proof}

According to Lemma~\ref{lemmaphiisotypictwisted} and the construction of $\kappa_{x}$, the representation $\kappa_{x}$ is trivial on the group $G(F)_{x, r_{n-1}+}$.
For later use, we will prove a stronger version of this claim.
\begin{lemma}
\label{lemmakappaxistrivialonGXr}
Let $y \in \cB(G^{0}, F)$ and $r_{n-1} < r$ such that $G(F)_{y, r} \subseteq K_{x, +}$.
Then the representation $\kappa_{x}$ is trivial on the group $G(F)_{y, r}$.
\end{lemma}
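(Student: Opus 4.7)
The plan is to combine Lemma \ref{lemmakapparestrictiontoK+} with a Moy--Prasad Lie-algebra computation exploiting that each $\phi_i$ has a fixed depth independent of the base point. By Lemma \ref{lemmakapparestrictiontoK+}, the restriction of $\kappa_{x}$ to $K_{x,+}$ is $\theta_{x}$-isotypic; since $G(F)_{y,r}\subseteq K_{x,+}$ by hypothesis, it suffices to show that $\theta_{x}$ is trivial on $G(F)_{y,r}$. Recalling $\theta_{x}=\prod_{i=0}^{n-1}\widehat{\phi}_{i,x}\restriction_{K_{x,+}}$, we are reduced to proving that each $\widehat{\phi}_{i,x}$ is trivial on $G(F)_{y,r}$.

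Fix $0\le i\le n-1$. First one verifies, using the factorization $K_{x,+}=K^{0}_{x,+}\cdot J^{1}_{x,+}\cdots J^{n}_{x,+}$, that $K_{x,+}$ (and hence $G(F)_{y,r}$) lies in the domain $K^{0}_{x}\cdot G^{i}(F)_{x,0}\cdot G(F)_{x,\frac{r_i}{2}+}$ of $\widehat{\phi}_{i,x}$: each generating root group $U_{\alpha}(F)_{x,f^{+}_{j}(\alpha)}\subseteq J^{j}_{x,+}$ either lies in $G^{i}(F)_{x,0}$ (if $\alpha\in\Phi(G^{i},T)$) or, if $\alpha\notin\Phi(G^{i},T)$, forces $j>i$ and so $f^{+}_{j}(\alpha)\ge\frac{r_{j-1}}{2}+\ge\frac{r_i}{2}+$, placing the generator in $G(F)_{x,\frac{r_i}{2}+}$.

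Next, the genericity of $\phi_{i}$ furnishes a $G^{i+1}$-generic element $X_{i}^{*}\in\Lie^{*}(G^{i})^{G^{i}}(F)_{-r_i}$ which, after extension by zero along the $G^{i}$-invariant complement of $\Lie(G^{i})$ in $\Lie(G)$, yields a functional $\widetilde{X_{i}^{*}}\in\Lie^{*}(G)(F)$ such that, through the Moy--Prasad isomorphism at $x$, the character $\widehat{\phi}_{i,x}$ on appropriate graded pieces is given by $\Psi\circ\widetilde{X_{i}^{*}}$. Because $X_{i}^{*}$ is $G^{i}$-invariant, its depth $-r_{i}$ does not depend on the chosen base point in $\cB(G^{i},F)$; this is precisely the ingredient underlying Lemmas \ref{lemmaindependenceofthegenericity} and \ref{lemmacompatibilityofcharacters}, and it allows one to identify the restriction of $\widehat{\phi}_{i,x}$ to $G(F)_{y,r}$ with the character obtained from $\Psi\circ\widetilde{X_{i}^{*}}$ via the Moy--Prasad isomorphism at $y$.

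Since $r>r_{n-1}\ge r_{i}$, the functional $\widetilde{X_{i}^{*}}$ sends $\mathfrak{g}(F)_{y,r}$ into $\mathfrak{p}_{F}^{\,r-r_i}\subseteq\mathfrak{p}_{F}$, and $\Psi$ is trivial on $\mathfrak{p}_{F}$. Hence $\widehat{\phi}_{i,x}$ is trivial on each graded piece $G(F)_{y,s:s+}$ for $s\ge r$; since $G(F)_{y,r}$ is pro-$p$ and the continuous character $\widehat{\phi}_{i,x}\restriction_{G(F)_{y,r}}$ factors through a finite quotient, this implies triviality on all of $G(F)_{y,r}$. The main technical obstacle is precisely the base-point change: $\widehat{\phi}_{i,x}$ is constructed from data at $x$, yet we need to recognize it as $\Psi\circ\widetilde{X_{i}^{*}}$ through the Moy--Prasad isomorphism at $y$. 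As indicated above, this rests on the $G^{i}$-invariance of $X_{i}^{*}$, which guarantees that its depth (and the associated character it defines) is independent of the base point, in exactly the manner exploited by Lemmas \ref{lemmaindependenceofthegenericity} and \ref{lemmacompatibilityofcharacters}.
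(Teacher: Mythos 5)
Your first reduction is correct and matches the paper's: by Lemma~\ref{lemmakapparestrictiontoK+}, $\kappa_x\restriction_{K_{x,+}}$ is $\theta_x$-isotypic, so since $G(F)_{y,r}\subseteq K_{x,+}$ it suffices to prove $\theta_x\restriction_{G(F)_{y,r}}=1$.

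The gap is in the next step. You assert that the restriction of $\widehat\phi_{i,x}$ to $G(F)_{y,r}$ can be identified, via the Moy--Prasad isomorphism \emph{at $y$}, with $\Psi\circ\widetilde{X_i^*}$, where $\widetilde{X_i^*}$ is the extension of $X_i^*$ by zero; but this is exactly the statement that needs proof, and neither the $G^i$-invariance of $X_i^*$ nor Lemmas~\ref{lemmaindependenceofthegenericity} and \ref{lemmacompatibilityofcharacters} delivers it. The character $\widehat\phi_{i,x}$ is built from the Moy--Prasad filtration at $x$: it is $\phi_i$ on $G^i(F)_{x,0}$ and trivial on the ``non-$G^i$'' generators of $G(F)_{x,r_i/2+}$, with the splitting between those two pieces depending on $x$. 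An element of $G(F)_{y,r}$ with $y$ far from $x$ can have small $x$-depth and is in general a product of contributions from both pieces, so there is no a priori reason why $\widehat\phi_{i,x}$ should factor through the graded pieces $G(F)_{y,s}/G(F)_{y,s+}$ at $y$, let alone be computed there by $\Psi\circ\widetilde{X_i^*}$. (The statement is in fact true --- both sides end up being the trivial character --- but establishing it already requires the decomposition argument you are trying to avoid.) You acknowledge this as ``the main technical obstacle,'' but the appeal to base-point independence is a gesture rather than an argument; Lemma~\ref{lemmaindependenceofthegenericity} is itself proven via an Iwahori factorization and concerns $\phi$ on $G'(F)$, not $\widehat\phi_{i,x}$ on $G(F)$.

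The paper closes this gap by choosing a maximal split torus $S^0$ of $G^0$ whose apartment contains both $x$ and $y$, and applying the Iwahori factorization of $G(F)_{y,r}$ with respect to a parabolic with Levi $Z_G(S^0)$. The Levi factor satisfies $Z_G(S^0)(F)_{y,r}=Z_G(S^0)(F)_{x,r}\subset G(F)_{x,r_{n-1}+}$ because $x$ and $y$ lie in the same apartment for $S^0$, and $\kappa_x$ is trivial on $G(F)_{x,r_{n-1}+}$ (Lemma~\ref{lemmaphiisotypictwisted}) --- no $\theta_x$ computation needed. The unipotent factors are handled precisely by the observation you would also need: $\phi_i$ is a character of $G^i(F)$ and hence trivial on $G^i(F)\cap U(F)$, while $\widehat\phi_{i,x}$ is trivial on the complementary root groups by construction. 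If you supply that Iwahori decomposition, your argument becomes essentially the paper's; without it, the identification with $\Psi\circ\widetilde{X_i^*}$ is unjustified.
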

\begin{proof}
Let $S^{0}$ be a maximal split torus of $G^{0}$ such that $x, y \in \cA(G^{0}, S^{0}, F)$.
We fix $U \in \cU(Z_{G}(S^{0}))$.
According to \cite[4.3~Proposition (a)]{Kim-Yu}, we have
\[
G(F)_{y, r} = \left(
G(F)_{y, r} \cap U(F)
\right) \cdot \left(
G(F)_{y, r} \cap Z_{G}(S^{0})(F)
\right) \cdot 
\left(
G(F)_{y, r} \cap \overline{U}(F)
\right).
\]
Hence, to prove the lemma, it suffices to show that the representation $\kappa_{x}$ is trivial on the groups $G(F)_{y, r} \cap U(F)$, $G(F)_{y, r} \cap Z_{G}(S^{0})(F)$, and $G(F)_{y, r} \cap \overline{U}(F)$.
Since $x, y \in \cA(G^{0}, S^{0}, F)$, we obtain that
\[
G(F)_{y, r} \cap Z_{G}(S^{0})(F) = Z_{G}(S^{0})(F)_{y, r} = Z_{G}(S^{0})(F)_{x, r} \subset G(F)_{x, r_{n-1}+}.
\]
Hence, the representation $\kappa_{x}$ is trivial on the group $G(F)_{y, r} \cap Z_{G}(S^{0})(F)$.

We will prove that the representation $\kappa_{x}$ is trivial on the group $G(F)_{y, r} \cap U(F)$ for all $U \in \cU(Z_{G}(S^{0}))$.
Since $G(F)_{y, r} \subseteq K_{x, +}$ and
 the restriction of $\kappa_{x}$ to $K_{x, +}$ is $\theta_{x}$-isotypic, it suffices to show that the character $\theta_{x}$ is trivial on the group $K_{x, +} \cap U(F)$.
Since $\phi_{i}$ is a character of $G^{i}(F)$, we obtain that $\phi_{i}$ is trivial on the group $G^{i}(F) \cap U(F)$ for each $0 \le i \le n-1$.
Then the claim that $\theta_{x}$ is trivial on the group $K_{x, +} \cap U(F)$ follows from the definition of $\widehat{\phi}_{i, x}$ and $\theta_{x} = \prod_{i=0}^{n-1} \widehat{\phi}_{i, x} \restriction_{K_{x, +}}$.
\end{proof}

Let $y \in \cB(G^{0}, F)$ 
 and let $K^{0}_{y}$ be an open subgroup of $G^{0}(F) \cap G(F)_{[y]_{G}}$
such that
\begin{align} 
\label{conditionsonxandysequenceveraboutK0}
G^{0}(F)_{y, r_{0}}, G^{0}(F)_{x, r_{0}} 
 \subseteq K^{0}_{x, +}
 \subseteq K^{0}_{x} \subseteq K^{0}_{y}
\end{align}
and \spacingatend{} 
\begin{align}
\label{conditionsonxandysequenceveraboutJ}
G^{i}(F)_{y, \frac{r_{i-1}}{2} +} \subseteq G^{i}(F)_{x, \frac{r_{i-1}}{2} +} \subseteq G^{i}(F)_{x, \frac{r_{i-1}}{2}} \subseteq G^{i}(F)_{y, \frac{r_{i-1}}{2}}
\end{align}
for all $1 \le i \le n$.
According to Lemma~\ref{lemmaindependenceofthegenericity}, the $5$-tuple $\left((\overrightarrow{G}), \overrightarrow{r}, (y, \{\iota\}), K^{0}_{y}, \overrightarrow{\phi}\right)$ is a Heisenberg--Weil datum.
Condition \eqref{conditionsonxandysequenceveraboutJ} ensures that Condition \eqref{conditionsonxandXaboutJ} is satisfied for the setting of \eqref{applyingtwostepcaseforGi} for all $1 \le i \le n$.
We will prove that Condition \eqref{conditionsonxandXaboutK'} is also satisfied in these settings. 
For later use, we will prove the following stronger statement.   
\begin{lemma}
\label{lemmaconditionsforcorollaryinductiontwisted}
We have 
\[
G^{i}(F)_{x, r_{i}}, G^{i}(F)_{y, r_{i}} \subseteq K^{i}_{x, +}
\]
for all $0 \le i \le n-1$
and
\[
K^{i}_{x} \subseteq K^{0}_{x} \cdot J^{\le i}_{y} \subseteq K^{i}_{y}
\]
for all $0 \le i \le n$.
In particular, Condition \eqref{conditionsonxandXaboutK'} is satisfied for the setting of \eqref{applyingtwostepcaseforGi} for all $1 \le i \le n$.
\end{lemma}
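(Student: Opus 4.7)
The approach is to reduce every inclusion to statements about root-group pieces of concave Moy--Prasad groups and then to exploit a normalization phenomenon to recombine them. Three preliminary observations will drive the argument. \emph{(a)} Every root $\alpha$ of $G^{i}$ vanishes on $Z(G)^{\circ} \subseteq Z(G^{i})^{\circ}$, so any concave Moy--Prasad group $G^{i}(F)_{z,h} \subseteq G(F)$ depends only on the image $[z]_{G} \in \cB^{\red}(G,F)$. \emph{(b)} Condition~\eqref{conditionsonxandysequenceveraboutK0} gives $K^{0}_{x,+} \subseteq K^{0}_{x} \subseteq K^{0}_{y} \subseteq G(F)_{[y]_{G}}$ and $K^{0}_{x} \subseteq G^{0}(F)_{[x]_{G}}$, so together with~(a) the group $K^{0}_{x}$ normalizes $J^{\le i}_{y}$ and $K^{0}_{x,+}$ normalizes $J^{\le i}_{x,+}$; in particular $K^{0}_{x} \cdot J^{\le i}_{y}$ and $K^{0}_{x,+} \cdot J^{\le i}_{x,+}$ are subgroups of $G(F)$. \emph{(c)} Comparing generating root groups shows $J^{\le i}_{x} = G^{i}(F)_{x, g^{\le i}}$, where $g^{\le i}$ is the concave function taking the value $r_{0}$ on $\Phi(G^{0},T) \cup \{0\}$ and $r_{j-1}/2$ on $\Phi(G^{j},T) \smallsetminus \Phi(G^{j-1},T)$ for $1 \le j \le i$; an analogous identification holds for $J^{\le i}_{x,+}$ with $r_{j-1}/2$ replaced by $r_{j-1}/2+$.

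For part~(1), I would apply an Iwahori-type decomposition of $G^{i}(F)_{z,r_{i}}$ for $z \in \{x,y\}$ relative to the twisted Levi $G^{0} \subseteq G^{i}$. The Levi piece sits inside $G^{0}(F)_{z,r_{0}} \subseteq K^{0}_{x,+}$ by condition~\eqref{conditionsonxandysequenceveraboutK0} (using $r_{i} \ge r_{0}$). Each unipotent factor $U_{\alpha}(F)_{z,r_{i}}$ with $\alpha \in \Phi(G^{j},T) \smallsetminus \Phi(G^{j-1},T)$ is contained in $U_{\alpha}(F)_{z, r_{j-1}/2+}$, because $r_{i} > r_{j-1} > r_{j-1}/2$, and intersecting condition~\eqref{conditionsonxandysequenceveraboutJ} for $G^{j}$ with $U_{\alpha}(F)$ yields $U_{\alpha}(F)_{y, r_{j-1}/2+} \subseteq U_{\alpha}(F)_{x, r_{j-1}/2+} \subseteq J^{j}_{x,+}$. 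Combining the factors via the decomposition and collapsing $J^{\le i}_{x,+} \cdot K^{0}_{x,+} \cdot J^{\le i}_{x,+} = K^{0}_{x,+} \cdot J^{\le i}_{x,+} = K^{i}_{x,+}$ using~(b) then finishes part~(1).

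For part~(2), I would apply the analogous decomposition to $J^{\le i}_{x} = G^{i}(F)_{x, g^{\le i}}$. The Levi piece equals $G^{0}(F)_{x,r_{0}} \subseteq K^{0}_{x,+} \subseteq K^{0}_{x}$, and each unipotent piece $U_{\alpha}(F)_{x, r_{j-1}/2}$ with $\alpha \in \Phi(G^{j},T) \smallsetminus \Phi(G^{j-1},T)$ lies in $U_{\alpha}(F)_{y, r_{j-1}/2} \subseteq J^{j}_{y} \subseteq J^{\le i}_{y}$ by condition~\eqref{conditionsonxandysequenceveraboutJ}. This gives $J^{\le i}_{x} \subseteq J^{\le i}_{y} \cdot K^{0}_{x} \cdot J^{\le i}_{y}$, which equals $K^{0}_{x} \cdot J^{\le i}_{y}$ by~(b); consequently $K^{i}_{x} = K^{0}_{x} \cdot J^{\le i}_{x} \subseteq K^{0}_{x} \cdot J^{\le i}_{y}$, and the second inclusion $K^{0}_{x} \cdot J^{\le i}_{y} \subseteq K^{0}_{y} \cdot J^{\le i}_{y} = K^{i}_{y}$ is immediate from~\eqref{conditionsonxandysequenceveraboutK0}. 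The stated verification of Condition~\eqref{conditionsonxandXaboutK'} for the setting of~\eqref{applyingtwostepcaseforGi} is then a formal combination of parts~(1) and~(2).

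The main technical point is making rigorous the $F$-rational Iwahori-type decomposition of $G^{i}(F)_{z,h}$ relative to the \emph{twisted} (rather than honest) Levi $G^{0}$, since \cite[4.3~Proposition~(a)]{Kim-Yu} is naturally phrased for honest Levis. I plan to bridge this by choosing a maximal $F$-split torus $S \subseteq Z(G^{0})^{\circ}$, applying Kim--Yu's result with respect to the honest Levi $Z_{G^{i}}(S) \supseteq G^{0}$, and then refining the $Z_{G^{i}}(S)$-piece using the tame splitting field $E$ of $T$ inside $Z_{G^{i}}(S)$. Alternatively, one can formulate all the root-group statements over $E$, where $G^{0}_{E}$ is a genuine Levi of $G^{i}_{E}$, and descend via $\Gal(E/F)$-invariance of $g^{\le i}$ and of the Moy--Prasad filtrations involved.
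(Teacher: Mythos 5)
Your proposal is correct, but it is organized quite differently from the paper's own proof. The paper argues by induction on $i$: the base case $i=0$ is condition~\eqref{conditionsonxandysequenceveraboutK0}, and the inductive step uses only one-step factorizations such as $J^{i}_{y,+} \subseteq G^{i-1}(F)_{y,r_{i-1}} \cdot J^{i}_{x,+}$ and $J^{i}_{x} \subseteq G^{i-1}(F)_{x,r_{i-1}} \cdot J^{i}_{y}$, which come from comparing root groups across the single pair $G^{i-1} \subseteq G^{i}$ using condition~\eqref{conditionsonxandysequenceveraboutJ}, and then absorbs the resulting Levi factors into the inductively known group. You instead replace the induction by a single global Iwahori-type decomposition of $J^{\le i}_{x}$ (resp.\ of $G^{i}(F)_{z,r_i}$) relative to the bottom twisted Levi $G^{0} \subseteq G^{i}$, after observing that $J^{\le i}_{x} = G^{i}(F)_{x,g^{\le i}}$ for a suitable concave function $g^{\le i}$, and then compare the individual root-group factors between $x$ and $y$ in one shot. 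Both routes work. The paper's induction is lighter: it never needs to exhibit $J^{\le i}_{x}$ as a single concave-function group nor to verify concavity of $g^{\le i}$, and it stays close to the one-step building blocks $J^{i}_{x}$ that are used throughout the construction. Your version is more global and makes the root-group comparisons completely explicit, at the cost of some bookkeeping: you should spell out the (easy but non-vacuous) check that $g^{\le i}$ is concave, and, as you correctly flag at the end, the decomposition relative to a twisted Levi must be run either over the tame splitting field $E$ or after grouping roots into $\Gal(E/F)$-orbits; the paper implicitly relies on the same facts by quoting Yu's framework (in particular \cite[Remark~3.5]{Yu} for the needed normalization statements such as your observation~(b)).
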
 
\begin{proof} 
We will prove the lemma by induction on $i$.
When $i = 0$, the claims of the lemma follow from \eqref{conditionsonxandysequenceveraboutK0}. 
Suppose that $i > 0$.  
The definitions of $J^{i}_{x}$ and $K^{i}_{x}$ imply that
\(
G^{i}(F)_{x, r_{i}} \subseteq G^{i}(F)_{x, r_{i-1}} \subseteq J^{i}_{x, +} \subseteq K^{i}_{x, +}.
\)
We will prove that $G^{i}(F)_{y, r_{i}} \subseteq K^{i}_{x, +}$.
According to \eqref{conditionsonxandysequenceveraboutJ}, we have $J^{i}_{y, +} \subseteq G^{i-1}(F)_{y, r_{i-1}} \cdot J^{i}_{x, +}$.
Hence, we have
\[
G^{i}(F)_{y, r_{i}} \subseteq G^{i}(F)_{y, r_{i-1}} 
\subseteq J^{i}_{y, +} 
\subseteq G^{i-1}(F)_{y, r_{i-1}} \cdot J^{i}_{x, +}.
\]
Moreover, the induction hypothesis implies that $G^{i-1}(F)_{y, r_{i-1}} \subseteq K^{i-1}_{x, +}$.
Thus, we conclude that
\[
G^{i}(F)_{y, r_{i}} \subseteq K^{i-1}_{x, +} \cdot J^{i}_{x, +} = K^{i}_{x, +}.
\]

Next, we will prove that $K^{i}_{x} \subseteq K^{0}_{x} \cdot J^{\le i}_{y}$.
According to \eqref{conditionsonxandysequenceveraboutJ}, we have
$
J^{i}_{x} \subseteq G^{i-1}(F)_{x, r_{i-1}} \cdot J^{i}_{y}
$.
Moreover, the induction hypothesis implies that 
\(
G^{i-1}(F)_{x, r_{i-1}} \subseteq K^{i-1}_{x} \subseteq K^{0}_{x} \cdot J^{\le i-1}_{y}.
\)
Thus, we obtain that
\[
K^{i}_{x} = K^{i-1}_{x} \cdot J^{i}_{x}
\subseteq K^{i-1}_{x} \cdot G^{i-1}(F)_{x, r_{i-1}} \cdot J^{i}_{y} 
\subseteq K^{0}_{x} \cdot J^{\le i-1}_{y} \cdot J^{i}_{y} 
= K^{0}_{x} \cdot J^{\le i}_{y}. 
\]

Finally, since $K^{0}_{x} \subseteq K^{0}_{y}$, we have $K^{0}_{x} \cdot J^{\le i}_{y} \subseteq K^{0}_{y} \cdot J^{\le i}_{y} = K^{i}_{y}$.
\end{proof} 

The following proposition is a generalization of Corollary~\ref{corollaryinductiontwisted} to the general twisted Heisenberg--Weil construction.
\begin{proposition}
\label{propositioninductiontwistedsequencever}
Let $\left((\overrightarrow{G}), \overrightarrow{r}, (x, \{\iota\}), K^{0}_{x}, \overrightarrow{\phi}\right)$ be a Heisenberg--Weil datum.
Let $y$ and $K^0_y$ be as above, i.e. $y \in \cB(G^{0}, F)$ and $K^{0}_{y}$ is an open subgroup of $G^{0}(F) \cap G(F)_{[y]_{G}}$ such that \eqref{conditionsonxandysequenceveraboutK0} and \eqref{conditionsonxandysequenceveraboutJ} hold.
Then we have an isomorphism
\[
\kappa_{y} \restriction_{K^{0}_{x} \cdot J_{y}}
\isoarrow
\ind_{K_{x}}^{K^{0}_{x} \cdot J_{y}} (\kappa_{x}).
\]
\end{proposition}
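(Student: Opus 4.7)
The plan is to induct on the length $n$ of the chain $\overrightarrow{G}$. The base case $n=1$ is exactly Corollary~\ref{corollaryinductiontwisted}, since in that case $\kappa_z = \phi^{+}_{0,z}$ for $z \in \{x,y\}$ and the Heisenberg--Weil datum reduces to the input of that corollary.

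For the inductive step, I write $\kappa_z = \kappa^{n-1}_z \otimes \phi^{+}_{n-1,z}$ for $z \in \{x,y\}$, using that $\kappa^{n-1}_z$ is trivial on $J^{n}_z$ and restricts on $K^{n-1}_z$ to the representation attached to the truncated Heisenberg--Weil datum of length $n-1$. I introduce the intermediate subgroup $H \coloneqq K^{n-1}_x \cdot J^{n}_y$. One has $K_x \subseteq H \subseteq K^0_x \cdot J_y$: the right-hand inclusion is Lemma~\ref{lemmaconditionsforcorollaryinductiontwisted}, while the left-hand one follows from the same lemma combined with the root-group decomposition of $J^{n}_x$ and Condition~\eqref{conditionsonxandysequenceveraboutJ}, which together give $J^{n}_x \subseteq G^{n-1}(F)_{x,r_{n-1}} \cdot J^{n}_y \subseteq K^{n-1}_{x,+} \cdot J^{n}_y$. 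Since Lemma~\ref{lemmaconditionsforcorollaryinductiontwisted} also verifies Conditions~\eqref{conditionsonxandXaboutK'} and \eqref{conditionsonxandXaboutJ} for the outer pair $G^{n-1} \subseteq G^{n}$ with $K'_x = K^{n-1}_x$ and $K'_y = K^{n-1}_y$ (the genericity of $\phi_{n-1}$ at $y$ coming from Lemma~\ref{lemmaindependenceofthegenericity}), Corollary~\ref{corollaryinductiontwisted} yields
\[
\phi^{+}_{n-1,y}\restriction_{H} \cong \ind_{K_x}^{H}\bigl(\phi^{+}_{n-1,x}\bigr).
\]

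Tensoring both sides with $\kappa^{n-1}_y\restriction_{H}$ and applying the projection formula $\tau \otimes \ind_{K}^{H}(\sigma) \cong \ind_{K}^{H}\bigl(\tau\restriction_{K} \otimes \sigma\bigr)$ gives
\[
\kappa_y\restriction_{H} \cong \ind_{K_x}^{H}\bigl(\kappa^{n-1}_y\restriction_{K_x} \otimes \phi^{+}_{n-1,x}\bigr).
\]
To match this with $\ind_{K_x}^{H}(\kappa_x)$, one identifies $\kappa^{n-1}_y\restriction_{K_x}$ with $\kappa^{n-1}_x\restriction_{K_x}$: both are trivial on $J^{n}_x$ (for $\kappa^{n-1}_y\restriction_{K_x}$, combine triviality of $\kappa^{n-1}_y$ on $J^{n}_y$ with the containment $J^{n}_x \subseteq K^{n-1}_{x,+} \cdot J^{n}_y$ above), so it suffices to match the restrictions to $K^{n-1}_x$, which is a consequence of the induction hypothesis applied to the truncated datum $\bigl((G^0 \subseteq \cdots \subseteq G^{n-1}), (r_0,\ldots,r_{n-2}), (x,\{\iota\}), K^{0}_x, (\phi_0,\ldots,\phi_{n-2})\bigr)$ via a Frobenius reciprocity argument in the spirit of Proposition~\ref{propositionisomofPhixandPhiXx}. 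To transfer from $H$ to $K^0_x \cdot J_y$, I use induction in stages: since $J^{n}_y$ is normal in $K^0_x \cdot J_y$ and $\kappa^{n-1}_y$ is trivial on $J^{n}_y$, the statement $\kappa_y\restriction_{K^0_x \cdot J_y} \cong \ind_{H}^{K^0_x \cdot J_y}\bigl(\kappa_y\restriction_{H}\bigr)$ reduces, after quotienting by $J^{n}_y$ and another application of the projection formula, to the induction hypothesis for the truncated datum applied to $x$ and $y$ (whose hypotheses hold because Conditions~\eqref{conditionsonxandysequenceveraboutK0} and \eqref{conditionsonxandysequenceveraboutJ} pass to the truncated chain); transitivity of induction then yields the desired isomorphism.

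The main technical obstacle is the identification $\kappa^{n-1}_y\restriction_{K_x} \cong \kappa^{n-1}_x\restriction_{K_x}$ together with the coherent bookkeeping of the groups $J^{i}_x$ versus $J^{i}_y$, which are in general incomparable. The fact that the quadratic twist $\epsilon^{\overrightarrow{G}}_{z}$ is built into $\kappa_z$ is essential: exactly as in the passage from Corollary~\ref{corollaryinductionuntwisted} to Corollary~\ref{corollaryinductiontwisted}, it removes the auxiliary characters $\delta^{y}_{x}$ that would otherwise accumulate along the chain and prevent the inductive isomorphism from closing up without spurious factors.
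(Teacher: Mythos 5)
There is a genuine gap, concentrated in the sentence ``one identifies $\kappa^{n-1}_y\restriction_{K_x}$ with $\kappa^{n-1}_x\restriction_{K_x}$,'' and it propagates through the rest of the argument. This identification cannot hold in general for dimension reasons: Condition~\eqref{conditionsonxandysequenceveraboutJ} forces $J^i_{y,+} \subseteq J^i_{x,+}$ and $J^i_x \subseteq J^i_y$, so the symplectic spaces $J^i_y/J^i_{y,+}$ are strictly larger than $J^i_x/J^i_{x,+}$ whenever $x$ and $y$ are separated by a relevant hyperplane, and hence $\dim \kappa^{n-1}_y > \dim \kappa^{n-1}_x$. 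A restriction of $\kappa^{n-1}_y$ to any subgroup still has dimension $\dim \kappa^{n-1}_y$, so it cannot be isomorphic to $\kappa^{n-1}_x$. (And there is no Mackey or Frobenius reciprocity escape: the induction hypothesis compares $\kappa^{n-1}_y$ to an \emph{induced} representation $\ind_{K^{n-1}_x}^{K^0_x \cdot J^{\le n-1}_y}(\kappa^{n-1}_x)$, and restricting such an induction back to $K^{n-1}_x$ produces a direct sum, not a single copy of $\kappa^{n-1}_x$.) For the same reason, your intermediate claim $\kappa_y\restriction_{H} \cong \ind_{K_x}^{H}(\kappa_x)$ with $H = K^{n-1}_x \cdot J^{n}_y$ fails: the right-hand side has dimension $[H:K_x]\cdot \dim\kappa_x$, which accounts only for the $J^{n}$-level mismatch, not the mismatch in $\kappa^{n-1}$. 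The follow-up step $\kappa_y\restriction_{K^0_x \cdot J_y} \cong \ind_{H}^{K^0_x \cdot J_y}(\kappa_y\restriction_H)$ is false on its face, again by a dimension count, since $H$ is a proper subgroup of $K^0_x \cdot J_y$.

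The fix, which is what the paper does, is to \emph{never} try to close the isomorphism at the intermediate group $H$. Instead, one keeps the outer induction $\ind_{H}^{K^0_x \cdot J_y}(\,\cdot\,)$ in play throughout: write $\ind_{K_x}^{K^0_x \cdot J_y}(\kappa_x) \cong \ind_{H}^{K^0_x \cdot J_y}\bigl(\ind_{K_x}^{H}(\kappa_x)\bigr)$ by transitivity, pull $\kappa^{n-1}_x$ out of the inner induction via its inflation $\inf(\kappa^{n-1}_x)$ to $H$ and the projection formula (which requires the nontrivial observation, via Lemma~\ref{lemmakappaxistrivialonGXr}, that $\kappa^{n-1}_x$ is trivial on $K^{n-1}_x \cap J^n_y$), replace the inner induction $\ind_{K_x}^{H}(\phi^{+}_{n-1,x})$ by $\phi^{+}_{n-1,y}\restriction_H$ via Corollary~\ref{corollaryinductiontwisted}, then pull $\phi^{+}_{n-1,y}$ through the outer induction, and only at this final stage apply the induction hypothesis for the truncated datum to turn $\ind_{K^{n-1}_x}^{K^0_x \cdot J^{\le n-1}_y}(\kappa^{n-1}_x)$ into $\kappa^{n-1}_y\restriction_{K^0_x \cdot J^{\le n-1}_y}$. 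In other words, $\kappa^{n-1}_y$ is produced by inducing $\kappa^{n-1}_x$ \emph{up}, never by restricting $\kappa^{n-1}_y$ \emph{down} to $K_x$.
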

\begin{proof}
\addtocounter{equation}{-1}
\begin{subequations}
We will prove the proposition by the induction on $n$.
When $n = 1$, the claim follows from \eqref{conditionsonxandysequenceveraboutK0}, \eqref{conditionsonxandysequenceveraboutJ}, Lemma~\ref{lemmaconditionsforcorollaryinductiontwisted}, and Corollary~\ref{corollaryinductiontwisted}.
Suppose that $n > 1$.
Then for $z \in \{x, y\}$, we have by definition and the induction hypothesis that
\begin{align}
\label{bydefinitionofkappaxandyandinductionhypo}
\kappa_{z} \isoarrow \kappa^{n-1}_{z} \otimes \phi^{+}_{n-1, z},
\quad \text{ and } \quad
\kappa^{n-1}_{y} \restriction_{K^{0}_{x} \cdot J^{\le n-1}_{y}} \, \simeq \, \ind_{K^{n-1}_{x}}^{K^{0}_{x} \cdot J^{\le n-1}_{y}} \left(
\kappa^{n-1}_{x} \restriction_{K^{n-1}_{x}}
\right).
\end{align}
Moreover, according to \eqref{conditionsonxandysequenceveraboutK0}, \eqref{conditionsonxandysequenceveraboutJ}, Lemma~\ref{lemmaconditionsforcorollaryinductiontwisted}, and Corollary~\ref{corollaryinductiontwisted}, we obtain that
\begin{align}
\label{applyingcorollaryinductiontwistedtoGd}
\phi_{n-1, y}^{+}\restriction_{K^{n-1}_{x} \cdot J^{n}_{y}} \simeq \ind_{K_{x}}^{K^{n-1}_{x} \cdot J^{n}_{y}} \left(
\phi^{+}_{n-1, x} 
\right).
\end{align} 

To prove the proposition, we will inflate the representation $\kappa^{n-1}_{x}$ to $K_{x} \cdot J^{n}_{y}$ as follows.
According to Lemma~\ref{lemmaconditionsforcorollaryinductiontwisted}, we have
\(
K^{n-1}_{x} \cap J^{n}_{y} \subseteq G^{n-1}(F) \cap J^{n}_{y} = G^{n-1}(F)_{y, r_{n-1}} \subseteq K^{n-1}_{x, +}.
\)
Then applying Lemma~\ref{lemmakappaxistrivialonGXr} to $\kappa^{n-1}_{x} \restriction_{K^{n-1}_{x}}$, we obtain that the representation $\kappa^{n-1}_{x}$ is trivial on the group $K^{n-1}_{x} \cap J^{n}_{y} \subseteq G^{n-1}(F)_{y, r_{n-1}}$.
Hence, we can inflate the representation $\kappa^{n-1}_{x}$ to the group $K_{x} \cdot J^{n}_{y}$ via the map
\(
K_{x} \cdot J^{n}_{y} \twoheadrightarrow K_{x} \cdot J^{n}_{y} / J^{n}_{y} 
\simeq K_{x} / \left(
K_{x} \cap J^{n}_{y}
\right) 
=  K_{x} / \left(
K^{n-1}_{x} \cap J^{n}_{y}
\right) \cdot (J^{n}_{x} \cap J^{n}_{y}),
\)
where the last equality follows from Lemma~\ref{lemmaKx=KXx}.
We write $\inf(\kappa^{n-1}_{x})$ for this representation of $K_{x} \cdot J^{n}_{y} = K^{n-1}_{x} \cdot J^{n}_{y}$.

Now, combining \eqref{bydefinitionofkappaxandyandinductionhypo} 
 with \eqref{applyingcorollaryinductiontwistedtoGd}, we obtain that
\begin{align*}
\ind_{K_{x}}^{K^{0}_{x} \cdot J_{y}} \kappa_{x} & \simeq 
\ind_{K^{n-1}_{x} \cdot J^{n}_{y}}^{K^{0}_{x} \cdot J_{y}}
\left(
\ind_{K_{x}}^{K^{n-1}_{x} \cdot J^{n}_{y}} \kappa_{x}
\right) 
\simeq \ind_{K^{n-1}_{x} \cdot J^{n}_{y}}^{K^{0}_{x} \cdot J_{y}}
\left(
\ind_{K_{x}}^{K^{n-1}_{x} \cdot J^{n}_{y}} \left(
\kappa^{n-1}_{x} \otimes \phi^{+}_{n-1, x}
\right)
\right) \\
& \simeq \ind_{K^{n-1}_{x} \cdot J^{n}_{y}}^{K^{0}_{x} \cdot J_{y}}
\left(
\inf(\kappa^{n-1}_{x}) \otimes
\ind_{K_{x}}^{K^{n-1}_{x} \cdot J^{n}_{y}} 
\phi^{+}_{n-1, x}
\right) 
\simeq \ind_{K^{n-1}_{x} \cdot J^{n}_{y}}^{K^{0}_{x} \cdot J_{y}}
\left(
\inf(\kappa^{n-1}_{x}) \otimes
\phi^{+}_{n-1, y} \restriction_{K^{n-1}_{x} \cdot J^{n}_{y}}
\right) \\
& \simeq 
\ind_{K^{n-1}_{x} \cdot J^{n}_{y}}^{K^{0}_{x} \cdot J_{y}}
\left(
\inf(\kappa^{n-1}_{x}) 
\right) \otimes
\phi^{+}_{n-1, y} \restriction_{K^{0}_{x} \cdot J_{y}} 
\simeq 
\inf \left(
\ind_{K^{n-1}_{x}}^{K^{0}_{x} \cdot J^{\le n-1}_{y}} \left(
\kappa^{n-1}_{x} \restriction_{K^{n-1}_{x}}
\right)
\right) \otimes
\phi^{+}_{n-1, y} \restriction_{K^{0}_{x} \cdot J_{y}} \\
& \simeq 
\inf \left(
\kappa^{n-1}_{y} \restriction_{K^{0}_{x} \cdot J^{\le n-1}_{y}}
\right) \otimes
\phi^{+}_{n-1, y} \restriction_{K^{0}_{x} \cdot J_{y}} 
\simeq 
\kappa^{n-1}_{y} \restriction_{K^{0}_{x} \cdot J_{y}} \otimes
\phi^{+}_{n-1, y} \restriction_{K^{0}_{x} \cdot J_{y}} \\
&= \left(
\kappa^{n-1}_{y} \otimes \phi^{+}_{n-1, y}
\right)\restriction_{K^{0}_{x} \cdot J_{y}} 
= \kappa_{y} \restriction_{K^{0}_{x} \cdot J_{y}},
\end{align*}
where $\inf \left(
\ind_{K^{n-1}_{x}}^{K^{0}_{x} \cdot J^{\le n-1}_{y}} \left(
\kappa^{n-1}_{x} \restriction_{K^{n-1}_{x}}
\right)
\right)$ and $\inf \left(
\kappa^{n-1}_{y} \restriction_{K^{0}_{x} \cdot J^{\le n-1}_{y}}
\right)$ denote the inflations of the representations 
\(
\ind_{K^{n-1}_{x}}^{K^{0}_{x} \cdot J^{\le n-1}_{y}} \left(
\kappa^{n-1}_{x} \restriction_{K^{n-1}_{x}}
\right) \simeq \kappa^{n-1}_{y} \restriction_{K^{0}_{x} \cdot J^{\le n-1}_{y}}
\)
of $K^{0}_{x} \cdot J^{\le n-1}_{y}$ to the group $K^{0}_{x} \cdot J_{y}$ via the map
\[
K^{0}_{x} \cdot J_{y}  = K^{0}_{x} \cdot J^{\le n-1}_{y} \cdot J^{n}_{y} 
\twoheadrightarrow K^{0}_{x} \cdot J^{\le n-1}_{y} \cdot J^{n}_{y} / J^{n}_{y} 
\simeq K^{0}_{x} \cdot J^{\le n-1}_{y} / \left(
K^{0}_{x} \cdot J^{\le n-1}_{y} \cap J^{n}_{y}
\right) 
= K^{0}_{x} \cdot J^{\le n-1}_{y} / G^{n-1}(F)_{y, r_{n-1}}.
\]
\end{subequations}
\end{proof}  

We record an immediate corollary of Proposition~\ref{propositioninductiontwistedsequencever} that will be used to prove Lemma~\ref{proofofaxiomextensionoftheinductionofkappa} below, which in turn is used to verify that the types constructed by Kim and Yu satisfy the axioms of \cite{HAI}.
\begin{corollary}
\label{corollaryofpropositioninductiontwistedsequencever}
Let $\left((\overrightarrow{G}), \overrightarrow{r}, (x, \{\iota\}), K^{0}_{x}, \overrightarrow{\phi}\right)$ be a Heisenberg--Weil datum.
Let $y \in \cB(G^{0}, F)$ and $K^{0}_{y}$ be an open subgroup of $G^{0}(F) \cap G(F)_{[y]_{G}}$.
Suppose that $G^{0}(F)_{y, 0+} \subseteq G^{0}(F)_{x, 0+} \subseteq K^{0}_{x} \subseteq K^{0}_{y}$ and
\(
G^{i}(F)_{y, \frac{r_{i-1}}{2} +} \subseteq G^{i}(F)_{x, \frac{r_{i-1}}{2} +} \subseteq G^{i}(F)_{x, \frac{r_{i-1}}{2}} \subseteq G^{i}(F)_{y, \frac{r_{i-1}}{2}}
\)
for all $1 \le i \le n$.
Then we have an isomorphism
\[
\kappa_{y} \restriction_{K^{0}_{x} \cdot K_{y, 0+}}
\isoarrow
\ind_{K_{x}}^{K^{0}_{x} \cdot K_{y, 0+}} (\kappa_{x}).
\]
\end{corollary}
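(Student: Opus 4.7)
The plan is to derive the corollary directly from Proposition~\ref{propositioninductiontwistedsequencever} once we verify two things: (i) that the hypotheses of that proposition are satisfied under the weaker-looking assumptions of the corollary, and (ii) that the group $K^{0}_{x} \cdot K_{y, 0+}$ coincides with $K^{0}_{x} \cdot J_{y}$, so that the isomorphism supplied by the proposition is already the desired one.

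For (ii), recall from the definition that $K_{y,0+} = K^{0}_{y,+} \cdot J_{y}$ with $K^{0}_{y,+} = K^{0}_{y} \cap G^{0}(F)_{y,0+}$. I would use the chain
\[
K^{0}_{y,+} \subseteq G^{0}(F)_{y,0+} \subseteq G^{0}(F)_{x,0+} \subseteq K^{0}_{x}
\]
given by the corollary's hypotheses, from which it follows immediately that $K^{0}_{x} \cdot K_{y,0+} = K^{0}_{x} \cdot K^{0}_{y,+} \cdot J_{y} = K^{0}_{x} \cdot J_{y}$.

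For (i), the non-trivial condition to check is \eqref{conditionsonxandysequenceveraboutK0}, i.e.\ $G^{0}(F)_{y, r_{0}}, G^{0}(F)_{x, r_{0}} \subseteq K^{0}_{x,+} \subseteq K^{0}_{x} \subseteq K^{0}_{y}$. The outer two containments are given; for the inner one, I would observe that $r_{0} > 0$, so $G^{0}(F)_{x,r_{0}} \subseteq G^{0}(F)_{x,0+}$ and $G^{0}(F)_{y,r_{0}} \subseteq G^{0}(F)_{y,0+} \subseteq G^{0}(F)_{x,0+}$ (by hypothesis); combined with $G^{0}(F)_{x,0+} \subseteq K^{0}_{x}$ and the same containment chain giving $G^{0}(F)_{y,r_{0}} \subseteq K^{0}_{x}$, both sit inside $K^{0}_{x} \cap G^{0}(F)_{x, 0+} = K^{0}_{x,+}$. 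Condition \eqref{conditionsonxandysequenceveraboutJ} is exactly the second hypothesis of the corollary.

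With (i) and (ii) in hand, Proposition~\ref{propositioninductiontwistedsequencever} gives $\kappa_{y}\restriction_{K^{0}_{x} \cdot J_{y}} \isoarrow \ind_{K_{x}}^{K^{0}_{x} \cdot J_{y}}(\kappa_{x})$, which by (ii) is the claimed isomorphism $\kappa_{y}\restriction_{K^{0}_{x} \cdot K_{y,0+}} \isoarrow \ind_{K_{x}}^{K^{0}_{x} \cdot K_{y,0+}}(\kappa_{x})$. There is no real obstacle here; the content of the corollary lies entirely in recognizing that the enlargement from $J_{y}$ to $K_{y,0+}$ is absorbed by $K^{0}_{x}$ under these hypotheses, so it is purely a bookkeeping deduction from the proposition.
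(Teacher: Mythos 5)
Your proof is correct and follows the same route as the paper's: verify Conditions~\eqref{conditionsonxandysequenceveraboutK0} and~\eqref{conditionsonxandysequenceveraboutJ} so that Proposition~\ref{propositioninductiontwistedsequencever} applies, and observe that $K^{0}_{x} \cdot K_{y,0+} = K^{0}_{x} \cdot J_{y}$ because $K^{0}_{y,+} \subseteq G^{0}(F)_{y,0+} \subseteq G^{0}(F)_{x,0+} \subseteq K^{0}_{x}$. The paper states the same two checks slightly more compactly by first noting $K^{0}_{x,+} = G^{0}(F)_{x,0+}$, but the content is identical.
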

\begin{proof}
According to the assumptions, we have 
\(
G^{0}(F)_{y, r_{0}}, G^{0}(F)_{x, r_{0}} \subseteq G^{0}(F)_{x, 0+} = K^{0}_{x, +} \subseteq K^{0}_{x} \subseteq K^{0}_{y}.
\)
Moreover, we have $K^{0}_{x} \cdot K_{y, 0+} = K^{0}_{x} \cdot (K^{0}_{y} \cap G^{0}(F)_{y, 0+}) \cdot J_{y} = K^{0}_{x} \cdot J_{y}$.
Thus, the claim follows from Proposition~\ref{propositioninductiontwistedsequencever}.
\end{proof}

\subsection{An extension of the non-twisted Heisenberg--Weil representation} \label{Section-untwisted-Heisenberg-Weil-extension}

Let $\HW(\Sigma) = \bigl((\overrightarrow{G}), \overrightarrow{r}, (x, \{\iota\}), K^{0}_{x}, \overrightarrow{\phi}\bigr)$ be a Heisenberg--Weil datum and $M^{0}$ be a Levi subgroup of $G^0$.
Let $M^{i}$ denote the centralizer of $A_{M^0}$ in $G^i$ for $0 \le i \le n$. 
According to \cite[2.4~Lemma~(a), (b)]{Kim-Yu}, $M^i$ is a Levi subgroup of $G^i$ and a tamely ramified twisted Levi subgroup of $M \coloneqq M^{n}$. Note that by construction $Z(M^0)/Z(M^i)$ is anisotropic for all $0 \le i \le n$.
We write $\overrightarrow{M}=\left(M^0 \subseteq M^1 \subseteq \ldots \subseteq M^{n}\right)$.
We fix a commutative diagram $\{ \iota \}$
\[
\xymatrix{
\cB(G^{0}, F) \ar[r] \ar@{}[dr]|\circlearrowleft & \cB(G^{1}, F) \ar[r] &
\cdots \ar@{}[d]|\circlearrowleft  \ar[r] & \cB(G^{n}, F)
\\
\cB(M^{0}, F) \ar[u] \ar[r] & \cB(M^{1}, F) \ar[u] \ar[r] &
\cdots \ar[r] & \cB(M^{n}, F) \ar[u]
}
\]
of admissible embeddings of Bruhat--Tits buildings and identify a point in $\cB(M^{0}, F)$ with its images via the embeddings $\{ \iota \}$.
We assume that $x \in \cB(M^{0}, F)$.
We define the open subgroup $K_{M^0}$ of $M^{0}(F)_{[x]_{M}}$ by $K_{M^0} = M^{0}(F) \cap K^{0}_{x}$.\index{notation-ky}{Kay M0@$K_{M^0}$}
The following lemma is proved in \cite[5.3 Lemma]{Kim-Yu} for a slightly less general notion of generic characters and assuming the additional hypothesis that $p$ is not a torsion prime for the dual absolute root datum of $G$. 

\begin{lemma}
\label{lemma:genericityforGimpliesgenericityforM}
The characters $\phi_{i} \restriction_{M^{i}(F)}$ are $M^{i+1}$-generic of depth $r_i$ relative to $x$ for all $0\le i\le n-1$.
Thus, the $5$-tuple
$\HW(\Sigma)_{M} = \bigl((\overrightarrow{M}), \overrightarrow{r}, (x, \{\iota\}), K_{M^0}, (\phi_{0} \restriction_{M^{0}(F)}, \ldots , \phi_{n-1} \restriction_{M^{n-1}(F)} )\bigr)$ is a Heisenberg--Weil datum.
\end{lemma}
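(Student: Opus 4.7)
The only non-trivial content is to verify that each $\phi_i \restriction_{M^i(F)}$ is $M^{i+1}$-generic of depth $r_i$ relative to $x$ in the sense of \cite[Definition~3.5.2]{Fintzen-IHES}; the remaining conditions in Definition~\ref{definition:Heisenberg-Weil} are then immediate (the chain $\overrightarrow{M}$ is a tower of tamely ramified twisted Levi subgroups by \cite[2.4~Lemma~(a),(b)]{Kim-Yu}, $K_{M^{0}}$ is open in $M^{0}(F)_{[x]_{M}}$ by construction, and $\overrightarrow{r}$, $(x, \{\iota\})$ carry over unchanged). Unpacking Fintzen's definition for $\phi_{i}$ produces an element $X_{i}^{*} \in \Lie^{*}(G^{i})^{G^{i}}(F)$ of depth $-r_{i}$ satisfying the Yu-type genericity condition (GE1) for the extension $G^{i+1}/G^{i}$, such that $\phi_{i}$ is trivial on $G^{i}(F)_{x, r_{i}+}$ and agrees with $\Psi \circ X_{i}^{*}$ on $G^{i}(F)_{x, r_{i}}/G^{i}(F)_{x, r_{i}+}$ via the Moy--Prasad isomorphism with the corresponding Lie algebra quotient.

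I will take as candidate $Y_{i}^{*}$ the pullback of $X_{i}^{*}$ along the inclusion $\Lie(M^{i}) \hookrightarrow \Lie(G^{i})$. Since $M^{i} \subseteq G^{i}$, the $G^{i}$-coadjoint invariance of $X_{i}^{*}$ forces the $M^{i}$-coadjoint invariance of $Y_{i}^{*}$, so $Y_{i}^{*} \in \Lie^{*}(M^{i})^{M^{i}}(F)$. Compatibility of Moy--Prasad filtrations with the inclusion $M^{i} \hookrightarrow G^{i}$ gives that $\phi_{i} \restriction_{M^{i}(F)}$ is trivial on $M^{i}(F)_{x, r_{i}+}$ and agrees with $\Psi \circ Y_{i}^{*}$ on the Moy--Prasad quotient $M^{i}(F)_{x, r_{i}}/M^{i}(F)_{x, r_{i}+}$.

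The main point is then to verify that $Y_{i}^{*}$ is $M^{i+1}$-generic of depth $-r_{i}$, which I carry out by restricting the genericity data root-by-root. Fixing a maximal torus $T$ of $M^{0}$ containing a maximal $F$-split torus that contains $A_{M^{0}}$, with tamely ramified Galois splitting field $E$ and such that $x \in \cA(M^{0}, T, E)$, the torus $T$ is simultaneously maximal in each $M^{j}$ and $G^{j}$; the identification $M^{j} = Z_{G^{j}}(A_{M^{0}})$ yields
\[
\Phi(M^{j}, T) = \{ \alpha \in \Phi(G^{j}, T) \mid \alpha \restriction_{A_{M^{0}}} = 0 \},
\]
and consequently
\[
\Phi(M^{i+1}, T) \smallsetminus \Phi(M^{i}, T) \subseteq \Phi(G^{i+1}, T) \smallsetminus \Phi(G^{i}, T).
\]
For any $\alpha$ in the left-hand side, $H_{\alpha} \in \Lie(T) \subseteq \Lie(M^{i})$, so $Y_{i}^{*}(H_{\alpha}) = X_{i}^{*}(H_{\alpha})$, and the genericity condition (GE1) for $X_{i}^{*}$ transfers verbatim (together with its Galois-equivariance) to $Y_{i}^{*}$. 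The expected main obstacle is the depth condition: a priori $Y_{i}^{*}$ has depth only $\ge -r_{i}$. To pin it down exactly, I will argue as in Kim--Yu \cite[5.3~Lemma]{Kim-Yu}: the $M^{i}$-invariant functional $Y_{i}^{*}$ is determined by its restriction to $\Lie(T)$, on which it coincides with the restriction of $X_{i}^{*}$, and the genericity (GE1) applied to any root $\alpha \in \Phi(M^{i+1}, T) \smallsetminus \Phi(M^{i}, T)$ (nonempty whenever $M^{i+1} \neq M^{i}$; otherwise there is nothing extra to check beyond the depth carried over from the central part) forces $\ord(Y_{i}^{*}(H_{\alpha})) = -r_{i}$, yielding depth exactly $-r_{i}$. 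This completes the $M^{i+1}$-genericity of $\phi_{i} \restriction_{M^{i}(F)}$ and hence the lemma.
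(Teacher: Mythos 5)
Your overall approach — restricting $X^*_i$ to $\Lie(M^i)$ and checking the genericity conditions root by root using the inclusion $\Phi(M^{i+1},T)\smallsetminus\Phi(M^i,T)\subseteq\Phi(G^{i+1},T)\smallsetminus\Phi(G^i,T)$ — is the same as the paper's, and your treatment of (GE1) matches it. But there are two gaps.

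First, you never verify condition \textbf{(GE2)} of \cite[Definition~3.5.2]{Fintzen-IHES}, which concerns the absolute Weyl group stabilizer of (the reduction of) the generic element; your parenthetical ``(together with its Galois-equivariance)'' is not that condition and does not address it. The paper handles (GE2) by observing that the absolute Weyl group of $M^i$ relative to $T$ is the intersection of those of $M^{i+1}$ and $G^i$. This omission matters: in Kim--Yu the analogous statement \cite[5.3~Lemma]{Kim-Yu} is the sole place their torsion-prime hypothesis on $p$ is used, precisely to guarantee that (GE1) implies (GE2); the present paper's point in reproving the lemma (see Remark~\ref{remarkabouttwsitedanduntwistedKimYuconstruction}) is to check (GE2) directly and drop that hypothesis. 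By citing \cite[5.3~Lemma]{Kim-Yu} as your model and then not addressing (GE2), you are implicitly reimporting the assumption the lemma is meant to remove.

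Second, your argument for the depth condition (GE0) when $M^{i+1}=M^i$ is a hand-wave (``nothing extra to check beyond the depth carried over from the central part''), but something does need to be shown: a priori the restriction $Y^*_i$ could have strictly larger depth than $-r_i$ if the non-triviality of $X^*_i$ on $\Lie(G^i)(F)_{x,r_i}/\Lie(G^i)(F)_{x,r_i+}$ were seen only outside $\Lie(M^i)$. The paper closes this cleanly and uniformly in $i$ by noting that restriction gives an isomorphism $\Lie^*(G^i)^{A_{M^0}}\isoarrow\Lie^*(M^i)$, and $X^*_i$ lies in $\Lie^*(G^i)^{A_{M^0}}$, so its non-triviality on $\Lie(G^i)(F)_{x,r_i}$ forces non-triviality already on $\Lie(M^i)(F)_{x,r_i}$. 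Your route via (GE1) works only when $\Phi(M^{i+1},T)\smallsetminus\Phi(M^i,T)\neq\emptyset$ and therefore does not cover the case you wave away, which is exactly the case the more general Heisenberg--Weil datum (allowing $M^i=M^{i+1}$) makes unavoidable.
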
 
\begin{proof}
Let $0 \le i \le n-1$.
Since the character $\phi_{i}$ is $G^{i+1}$-generic of depth $r_{i}$ relative to the point $x$, the character $\phi_{i}$ is trivial on $G^{i}(F)_{x, r_{i} +}$, and there exists an element $X^{*}_{i} \in \Lie^{*}(G^{i})^{G^{i}}(F)$ which is $G^{i+1}$-generic of depth $-r_{i}$ in the sense of \cite[Definition~3.5.2]{Fintzen-IHES} such that the restriction of $\phi_{i}$ to 
\(
G^{i}(F)_{x, r_{i}}/G^{i}(F)_{x, r_{i} +} \simeq \Lie(G^{i})(F)_{x, r_{i}} / \Lie(G^{i})(F)_{x, r_{i} +} 
\)
is given by $\Psi \circ X^{*}_{i}$.
Let $X^{*}_{M, i} \in \Lie^{*}(M^{i})^{M^{i}}(F)$ denote the restriction of $X^{*}_{i}$ to $\Lie(M^{i})$.
Since the restriction of $\phi_{i} \restriction_{M^{i}(F)}$ to $M^{i}(F)_{x, r_{i}}/M^{i}(F)_{x, r_{i} +} $ is given by $\Psi \circ X^{*}_{M, i}$, it suffices to show that $X^{*}_{M, i}$ is $M^{i+1}$-generic of depth $-r_{i}$.
Since $X^{*}_{i}$ has depth $-r_{i}$ at $x$, the restriction of $\psi \circ X^{*}_{i}$ to $\Lie(G^{i})_{x, r_{i}}$ is non-trivial.
Moreover, the restriction map from $\Lie^*(G^{i})$ to $\Lie^*(M^{i})$ yields, by definition of $M^i$, an isomorphism of the subspace $\Lie^*(G^{i})^{A_{M^{0}}} \subset \Lie^*(G^{i})$  with $\Lie^*(M^{i})$. Hence, since $X^*_i \in \Lie^*(G^{i})^{A_{M^{0}}} \subset \Lie^*(G^{i})$, the restriction of $\psi \circ X^{*}_{i}$ to $\Lie(M^{i})_{x, r_{i}}$ is also non-trivial. Thus, $X^{*}_{M, i}$ satisfies Condition {\bf (GE0)} of \cite[Definition~3.5.2]{Fintzen-IHES}.
Condition {\bf (GE1)} of \cite[Definition~3.5.2]{Fintzen-IHES} follows from the same condition for $X^*_i$ because $\Phi(M^{i+1}, T) \smallsetminus \Phi(M^{i}, T) \subseteq \Phi(G^{i+1}, T) \smallsetminus \Phi(G^{i}, T)$ for $T$ some maximal torus of $M^{i}$ that splits over a tame extension $E/F$.
Moreover, Condition {\bf (GE2)} of \cite[Definition~3.5.2]{Fintzen-IHES} follows from the genericity of $X^{*}_{i}$ and the fact that the absolute Weyl group of $M^{i}$ relative to a maximal torus $T$ of $M^{i}$ is the intersection of the absolute Weyl groups of $M^{i+1}$ and $G^{i}$ relative to $T$.

The last claim follows from the first claim and the fact that $M^{i}$ are tamely ramified twisted Levi subgroups of $M$ for all $0\le i\le n$.
\end{proof}

Let $\kappa^{\nt}_{M}$
\index{notation-ky}{kappaMnt@$\kappa_M^{\nt}$}%
denote the representation obtained from the Heisenberg--Weil datum $\HW(\Sigma)_{M}$ via the non-twisted Heisenberg--Weil construction.
Replacing the Heisenberg--Weil datum $\HW(\Sigma)$ with $\HW(\Sigma)_{M}$ in the definitions of $K_{x}$, $K_{x, 0+}$, $J^{i}_{x}$, and $J^{i}_{x, +}$ we define the open subgroups $K_{M}$ and $K_{M, 0+}$ of $M(F)$ and the compact, open subgroups $J^{i}_{M}$ and $J^{i}_{M, +}$ of $M^{i}(F)$ for $1 \le i \le n$.\index{notation-ky}{Kay M@$K_{M}$}\index{notation-ky}{Kay M 0+@$K_{M, 0+}$}

\begin{lemma}
\label{lemma:NG0M0xM0normalizescompactopensungroups}
The group $N_{G^{0}}(M^{0})(F)_{[x]_{M^{0}}}$ normalizes the groups $J^{i}_{M}$ and $J^{i}_{M, +}$ for $1 \le i \le n$. 
If the group $N_{G^{0}}(M^{0})(F)_{[x]_{M^{0}}}$ normalizes the group $K_{M^0}$, it also normalizes the groups $K_{M}$ and $K_{M, 0+}$.

\end{lemma}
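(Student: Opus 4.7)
The plan is to exploit the fact that for $r>0$ the Moy--Prasad subgroup $M^i(F)_{x,r}$ depends on $x$ only through its image in $\cB^{\red}(M^i,F)$, and that the stabilizer condition $[x]_{M^0}$ forces the same condition in each larger building $\cB^{\red}(M^i,F)$. First I would record two basic facts about the action of $n \in N_{G^{0}}(M^{0})(F)_{[x]_{M^{0}}}$. Since $n \in G^{0}(F) \subseteq G^{i}(F)$, conjugation by $n$ preserves $G^{i}$. Moreover, $n$ normalizes $A_{M^0}$, and hence normalizes both $M^{i} = Z_{G^{i}}(A_{M^0})$ and $M^{i-1} = Z_{G^{i-1}}(A_{M^0})$ for every $i$. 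In particular, for any maximal torus $T \subset M^{i-1} \subset M^{i}$, conjugation by $n$ sends the pair $(\Phi(M^{i-1},T),\Phi(M^{i},T))$ to $(\Phi(M^{i-1}, nTn^{-1}),\Phi(M^{i}, nTn^{-1}))$, and therefore preserves the concave functions $f_i$ and $f_i^+$.

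Next I would check that $[n\cdot x]_{M^{i}} = [x]_{M^{i}}$ in $\cB^{\red}(M^{i},F)$. As already observed in the proof of Lemma~\ref{lemma:genericityforGimpliesgenericityforM}, the quotient $Z(M^{0})/Z(M^{i})$ is anisotropic, so $A_{M^{i}} = A_{M^{0}}$; hence the commutative diagram $\{\iota\}$ descends to a well-defined embedding $\cB^{\red}(M^{0},F) \hookrightarrow \cB^{\red}(M^{i},F)$, and the hypothesis that $n$ fixes $[x]_{M^{0}}$ immediately gives $[n\cdot x]_{M^{i}}=[x]_{M^{i}}$. Combining this with the standard equivariance
\[
n\, M^{i}(F)_{x,f_i}\, n^{-1} \;=\; M^{i}(F)_{n\cdot x,\, f_i}
\]
(which holds since $f_i$ is preserved by $n$, as above, and since $J^{i}_{M}$ is independent of the choice of maximal torus) together with the fact that every value of $f_i$ lies in $\{r_{i-1},\, r_{i-1}/2\}$ and is strictly positive, gives $n J^{i}_{M} n^{-1} = M^{i}(F)_{n\cdot x, f_i} = M^{i}(F)_{x,f_i} = J^{i}_{M}$. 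The identical argument with $f_i^+$ handles $J^{i}_{M,+}$.

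For the second assertion, the same reasoning applied with $r=0+$ shows that $n$ normalizes $M^{0}(F)_{x,0+}$, hence $K_{M^0,+} = K_{M^0}\cap M^{0}(F)_{x,0+}$ is normalized by $n$ as soon as $K_{M^0}$ is. Using the factorizations $K_M = K_{M^0}\cdot J^{1}_{M}\cdots J^{n}_{M}$ and $K_{M,0+} = K_{M^0,+}\cdot J^{1}_{M}\cdots J^{n}_{M}$ together with the first part then yields the normality of $K_M$ and $K_{M,0+}$. The only point that requires any real care is checking that the conjugation equivariance uses the \emph{same} concave function on both sides; this rests on the observation in the first paragraph that $n$ preserves the pair $M^{i-1}\subset M^{i}$, so after passing to $nTn^{-1}$ one recovers exactly the definition of $J^{i}_{M}$ at the point $n\cdot x$.
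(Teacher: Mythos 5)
Your proof is correct and takes essentially the same route as the paper's: the paper condenses the argument to citing the definitions together with the observation that $N_{G^{0}}(M^{0})(F)_{[x]_{M^{0}}}=N_{G^{0}}(M^{0})(F)_{[x]_{M^{i}}}$ (because $Z(M^0)/Z(M^i)$ is anisotropic), which is precisely the $A_{M^0}=A_{M^i}$ argument you spell out. The additional details you supply — Moy--Prasad equivariance, preservation of the concave functions $f_i$, and positivity of the filtration depths — are exactly the bookkeeping the paper leaves implicit.
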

\begin{proof}
The first claim follows from the definitions of $N_{G^{0}}(M^{0})(F)_{[x]_{M^{0}}}$, $J^{i}_{M}$, and $J^{i}_{M, +}$, and the observation that $N_{G^{0}}(M^{0})(F)_{[x]_{M^{0}}}=N_{G^{0}}(M^{0})(F)_{[x]_{M^{i}}}$, because $Z(M^0)/Z(M^{i})$ is anisotropic.
The second claim follows from the first claim and the definition of $K_{M}$ and $K_{M, 0+}$.
\end{proof}

Let $0 \le i \le n-1$. Since $\phi_i$ is a character of $G^i(F) \supseteq G^0(F)$ and since $N_{G^{0}}(M^{0})(F)_{[x]_{M^{0}}}$ normalizes $(M^i, M^{i+1})(F)_{x, r_i+, \frac{r_i}{2}+}$, the character $\hat{\phi_{i}}|_{J^{i+1}_{M, +}}$ can be extended to a character of  $N_{G^{0}}(M^{0})(F)_{[x]_{M^{0}}} \cdot J^{i+1}_{M, +}$.
Hence the conjugation action of $N_{G^{0}}(M^{0})(F)_{[x]_{M^{0}}}$ on $J^{i+1}_{M}$ arising from Lemma~\ref{lemma:NG0M0xM0normalizescompactopensungroups} induces a group homomorphism
\begin{equation}
\label{grouphomfromNG0M0xM0tosymplecticgroups}
N_{G^{0}}(M^{0})(F)_{[x]_{M^{0}}} \longrightarrow \Sp \left(
J^{i+1}_{M}/J^{i+1}_{M, +}
\right).
\end{equation}
We define the representation $\widetilde \phi'_{i, M}$ of $N_{G^{0}}(M^{0})(F)_{[x]_{M^{0}}}$ by composing \eqref{grouphomfromNG0M0xM0tosymplecticgroups} with the Weil representation of $\Sp \left(
J^{i+1}_{M}/J^{i+1}_{M, +}
\right)$ associated with the central character $\iota_{p}^{-1}$ and taking the tensor product of the resulting representation with $\phi_{i} \restriction_{N_{G^{0}}(M^{0})(F)_{[x]_{M^{0}}}}$.

Now, we assume that the group $N_{G^{0}}(M^{0})(F)_{[x]_{M^{0}}}$ normalizes the group $K_{M^0}$.
This assumption is satisfied, for instance, if we take $K_{M^0} = M^0(F)_{x}$ or $K_{M^0} = M^0(F)_{x, 0}$.
According to Lemma~\ref{lemma:NG0M0xM0normalizescompactopensungroups}, $N_{G^{0}}(M^{0})(F)_{[x]_{M^{0}}} \cdot K_{M}$ is an open subgroup of $M(F)$.
\begin{proposition}
\label{proposition:extensionofuntwistedWeilHeisenbergrepresentation}
There exists a unique extension $\widetilde \kappa_{M}^{\nt}$ of $\kappa_{M}^{\nt}$ to $N_{G^{0}}(M^{0})(F)_{[x]_{M^{0}}} \cdot K_{M}$ such that the restriction of $\widetilde \kappa_{M}^{\nt}$ to $N_{G^{0}}(M^{0})(F)_{[x]_{M^{0}}}$ is given by the representation $\widetilde \phi'_{0, M} \otimes \widetilde \phi'_{1, M} \otimes \cdots \otimes \widetilde \phi'_{d, M}$.
\end{proposition}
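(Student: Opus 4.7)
The plan is to mimic the construction of $\kappa^{\nt}_M$ in Section~\ref{subsec:repfromHeisenbergWeil}, but with the group $K_{M^0}$ replaced by $N_{G^0}(M^0)(F)_{[x]_{M^0}}\cdot K_{M^0}$ throughout, and to check that the resulting representation restricts correctly both to $K_M$ (giving $\kappa^{\nt}_M$) and to $N_{G^0}(M^0)(F)_{[x]_{M^0}}$ (giving the tensor product of the $\widetilde\phi'_{i,M}$). Uniqueness will then be automatic: the groups $K_M$ and $N_{G^0}(M^0)(F)_{[x]_{M^0}}$ generate $N_{G^0}(M^0)(F)_{[x]_{M^0}}\cdot K_M$, so any two extensions that agree on both subgroups agree everywhere.

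First I would fix $i$ with $0\le i\le n-1$ and extend the individual representation $\phi'_{i,M}$ of $K^{i+1}_M$ to a representation of $N_{G^0}(M^0)(F)_{[x]_{M^0}}\cdot K^{i+1}_M$. The key input is Lemma~\ref{lemma:NG0M0xM0normalizescompactopensungroups}: since $N_{G^0}(M^0)(F)_{[x]_{M^0}}$ normalizes $J^{i+1}_M$ and $J^{i+1}_{M,+}$, and preserves $\widehat{\phi}_{i}\restriction_{J^{i+1}_{M,+}}$ (because $\phi_i$ is a character of $G^i(F)$ and the relevant Moy--Prasad pieces used to define $\widehat{\phi}_i$ are preserved, as already observed before~\eqref{grouphomfromNG0M0xM0tosymplecticgroups}), conjugation respects the symplectic pairing $\langle\,,\,\rangle_x$. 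Thus the Yu homomorphism
\[
f_{i,M}\ltimes j_{i,M}\colon K^i_M\ltimes J^{i+1}_M\longrightarrow \Sp(J^{i+1}_M/J^{i+1}_{M,+})\ltimes (J^{i+1}_M/J^{i+1}_{M,+})^{\#}
\]
extends to a group homomorphism defined on $(N_{G^0}(M^0)(F)_{[x]_{M^0}}\cdot K^i_M)\ltimes J^{i+1}_M$, where the $N_{G^0}(M^0)(F)_{[x]_{M^0}}$-factor maps into $\Sp(J^{i+1}_M/J^{i+1}_{M,+})$ via~\eqref{grouphomfromNG0M0xM0tosymplecticgroups}. Pulling back $\omega_x$ along this extended homomorphism, tensoring with the inflation of $\phi_i\restriction_{N_{G^0}(M^0)(F)_{[x]_{M^0}}\cdot K^i_M}$, and then arguing as in Section~\ref{subsec:untwistedconstruction} that the resulting representation is trivial on the antidiagonal $\{k\ltimes k^{-1}\}$ (using that $(N_{G^0}(M^0)(F)_{[x]_{M^0}}\cdot K^i_M)\cap J^{i+1}_M=K_{M,0+}^i\cap J^{i+1}_{M,+}$, since elements of $N_{G^0}(M^0)(F)_{[x]_{M^0}}$ lie in $M^0(F)\subset G^0(F)$), one obtains a well-defined representation $\widetilde\phi'_{i,M}$ of $N_{G^0}(M^0)(F)_{[x]_{M^0}}\cdot K^{i+1}_M$.

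Next I would inflate each $\widetilde\phi'_{i,M}$ to $N_{G^0}(M^0)(F)_{[x]_{M^0}}\cdot K_M$ via the analogous inflation maps used in the definition of $\kappa^{\nt}_M$, and define
\[
\widetilde\kappa^{\nt}_M=\inf(\widetilde\phi'_{0,M})\otimes\cdots\otimes\inf(\widetilde\phi'_{n-2,M})\otimes\widetilde\phi'_{n-1,M}.
\]
By construction the restriction of each $\widetilde\phi'_{i,M}$ to $K^{i+1}_M$ is $\phi'_{i,M}$, so $\widetilde\kappa^{\nt}_M\restriction_{K_M}=\kappa^{\nt}_M$. On the other hand the restriction of $\widetilde\phi'_{i,M}$ to $N_{G^0}(M^0)(F)_{[x]_{M^0}}$ is, by the extended homomorphism above, precisely the composition of~\eqref{grouphomfromNG0M0xM0tosymplecticgroups} with the Weil representation tensored with $\phi_i$, which agrees with the definition of $\widetilde\phi'_{i,M}$ given just before the proposition. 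Hence the restriction of $\widetilde\kappa^{\nt}_M$ to $N_{G^0}(M^0)(F)_{[x]_{M^0}}$ is the required tensor product $\widetilde\phi'_{0,M}\otimes\cdots\otimes\widetilde\phi'_{n-1,M}$.

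The main technical obstacle I anticipate is checking that the extended homomorphism on the $(i+1)$-th semidirect product genuinely factors through the quotient $N_{G^0}(M^0)(F)_{[x]_{M^0}}\cdot K^i_M\cdot J^{i+1}_M=N_{G^0}(M^0)(F)_{[x]_{M^0}}\cdot K^{i+1}_M$, i.e.\ verifying that the obstruction set $\{k\ltimes k^{-1}\}$ over the enlarged base lies in the kernel of the twisted Weil part. In the original construction this used $K'_{x,0+}\ltimes 1$ being in the kernel of $\widetilde\phi_{i,M}$ and the compatibility $K'_x\cap J_x=K'_{x,0+}\cap J_{x,+}$; in our setting the analogous identity $(N_{G^0}(M^0)(F)_{[x]_{M^0}}\cdot K^i_M)\cap J^{i+1}_M\subset M^0(F)\cap J^{i+1}_M=(K^i_M)\cap J^{i+1}_M$ holds because $N_{G^0}(M^0)(F)_{[x]_{M^0}}\subset M^0(F)$ and $J^{i+1}_M\cap M^0(F)$ already lies inside $K^i_M$, so the calculation carries over. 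Once this is in place, uniqueness follows from the generating argument mentioned at the outset.
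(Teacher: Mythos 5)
The overall architecture of your argument—extending each Yu homomorphism to the enlarged group, pulling back the Heisenberg--Weil representation, inflating, tensoring, and deducing uniqueness from the generation of $N_{G^{0}}(M^{0})(F)_{[x]_{M^{0}}}\cdot K_{M}$ by the two subgroups—is sound and is, in spirit, what the paper's terse ``follows from the definitions'' is implicitly invoking. However, there is a concrete error in the technical step you correctly flag as the main obstacle.

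You justify the inclusion
$\bigl(N_{G^{0}}(M^{0})(F)_{[x]_{M^{0}}}\cdot K^{i}_{M}\bigr)\cap J^{i+1}_{M}\subseteq M^{0}(F)\cap J^{i+1}_{M}$
by asserting that $N_{G^{0}}(M^{0})(F)_{[x]_{M^{0}}}\subset M^{0}(F)$. This is false: $N_{G^{0}}(M^{0})$ is the full normalizer in $G^{0}$ of a \emph{proper} Levi subgroup $M^{0}$ and is strictly larger than $M^{0}$ whenever $M^{0}\subsetneq G^{0}$ (think of block-permutation matrices normalizing a block-diagonal Levi). The inclusion of the intersection into $K^{i}_{M}\cap J^{i+1}_{M}$ is the real issue here, and it needs a different argument. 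One that does work: if $nk\in J^{i+1}_{M}$ with $n\in N_{G^{0}}(M^{0})(F)_{[x]_{M^{0}}}$ and $k\in K^{i}_{M}$, then $n=(nk)k^{-1}\in M^{i+1}(F)$ (since $J^{i+1}_{M}\subset M^{i+1}(F)$ and $K^{i}_{M}\subset M^{i}(F)\subset M^{i+1}(F)$); combined with $n\in G^{0}(F)$ and the identity $G^{0}\cap M^{i+1}=M^{0}$ (because $M^{i+1}$ is the centralizer of $A_{M^{0}}$ in $G^{i+1}$ and $G^{0}\subseteq G^{i+1}$), this forces $n\in M^{0}(F)$. You then still need to check that such an $n$ lies in $K^{i}_{M}$—say by observing $n\in M^{0}(F)\cap K^{i+1}_{M}\subseteq K_{M^{0}}\cdot M^{0}(F)_{x,0+}\subseteq K^{i}_{M}$—before you can assert that the antidiagonal lies in the kernel of the extended representation. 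Your claimed equality $M^{0}(F)\cap J^{i+1}_{M}=K^{i}_{M}\cap J^{i+1}_{M}$ should also be revisited, since $K^{i}_{M}$ is contained in $M^{i}(F)$, not in $M^{0}(F)$. Finally, it is worth saying explicitly that the paper takes a shorter route: the representations $\widetilde\phi'_{i,M}$ and the normalization facts needed for the gluing are already built into the definitions in Section~\ref{Section-untwisted-Heisenberg-Weil-extension}, so the paper's proof is just the observation that these pieces are compatible, rather than a re-derivation of the Heisenberg--Weil construction over the enlarged group.
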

\begin{proof}
The proposition follows from the definitions of $\kappa_{M}^{\nt}$ and $\widetilde \phi'_{i, M}$.
\end{proof}

The following lemma is an analogue of Lemma~\ref{lemma:kappaxisunitary} and will be used in the proof of Proposition~\ref{propproofofaxiomaboutKM0vsKM} below.
\begin{lemma}
\label{lemma:kappatildeisunitary}
Suppose that $\Coeff$ admits a nontrivial involution,
with respect to which the restrictions of the characters $\phi_{i}$
to $K^{i}_{x}$ are unitary
for all $0 \le i \le n-1$.
Then the representation $\widetilde \kappa_{M}^{\nt}$ is unitary.
\end{lemma}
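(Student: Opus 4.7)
The plan is to mimic the proof of Lemma~\ref{lemma:kappaxisunitary}, exhibiting a positive-definite hermitian form on the representation space $V$ of $\widetilde\kappa_M^{\nt}$ that is preserved by all of $N_{G^{0}}(M^{0})(F)_{[x]_{M^{0}}} \cdot K_M$. Since this group is generated by its two subgroups $K_M$ and $N_{G^{0}}(M^{0})(F)_{[x]_{M^{0}}}$, and the restrictions of $\widetilde\kappa_M^{\nt}$ to each are specified by Proposition~\ref{proposition:extensionofuntwistedWeilHeisenbergrepresentation}, it is enough to produce a single form invariant under both.

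First, I would apply Lemma~\ref{lemma:kappaxisunitary} to the Heisenberg--Weil datum $\HW(\Sigma)_M$, which is indeed a Heisenberg--Weil datum by Lemma~\ref{lemma:genericityforGimpliesgenericityforM}. The equality $\Phi(M^i, T) \cap \Phi(G^{i-1}, T) = \Phi(M^{i-1}, T)$ (which holds because $M^j$ is the centralizer of $A_{M^0}$ in $G^j$) implies that the concave functions defining $J^i_M$ and $J^i_x$ agree on $\Phi(M^i, T)$, so $J^i_M \subseteq J^i_x$ and hence $K^i_M \subseteq K^i_x$. Consequently, the unitarity hypothesis on $\phi_i\restriction_{K^i_x}$ transfers to unitarity of $(\phi_i\restriction_{M^i(F)})\restriction_{K^i_M}$, and Lemma~\ref{lemma:kappaxisunitary} yields a positive-definite $K_M$-invariant hermitian form $\langle\,,\,\rangle$ on $V$.

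Next, each $\widetilde\phi'_{i,M}$ is unitary by its very construction: it is the tensor product of (i) the pullback along the homomorphism~\eqref{grouphomfromNG0M0xM0tosymplecticgroups} of the Weil representation of the finite symplectic group $\Sp(J^{i+1}_M/J^{i+1}_{M, +})$, which factors through a representation of a finite group and is therefore unitary by averaging, with (ii) the character $\phi_i\restriction_{N_{G^{0}}(M^{0})(F)_{[x]_{M^{0}}}}$, which takes values in the unit circle under our hypothesis. Consequently $\bigotimes_i \widetilde\phi'_{i,M}$ preserves some positive-definite hermitian form on $V$.

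To combine the two into a single invariant form, I would use that by Lemma~\ref{irreducibilityofkapparestrictiontoJ} applied to $\HW(\Sigma)_M$, the restriction of $\widetilde\kappa_M^{\nt}$ to the pro-$p$ subgroup $J_M$ is irreducible, so by Schur's lemma the space of non-degenerate $J_M$-invariant sesquilinear forms on $V$ is at most one-dimensional. Since $N_{G^{0}}(M^{0})(F)_{[x]_{M^{0}}}$ normalizes $J_M$ (by Lemma~\ref{lemma:NG0M0xM0normalizescompactopensungroups}), for each $n \in N_{G^{0}}(M^{0})(F)_{[x]_{M^{0}}}$ the transported form $\langle \widetilde\kappa_M^{\nt}(n)\cdot, \widetilde\kappa_M^{\nt}(n)\cdot\rangle$ is still $J_M$-invariant and hence equals $c(n)\langle\,,\,\rangle$ for some character $c \colon N_{G^{0}}(M^{0})(F)_{[x]_{M^{0}}} \to \Coeff^\times$, which hermitian symmetry forces to take values in positive reals. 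Comparing with the positive-definite form preserved by $N_{G^{0}}(M^{0})(F)_{[x]_{M^{0}}}$ from the previous step (which is necessarily proportional to $\langle\,,\,\rangle$ by the same uniqueness argument) then forces $c \equiv 1$. The main obstacle is this last comparison step, which requires care in verifying that the $N_{G^{0}}(M^{0})(F)_{[x]_{M^{0}}}$-invariant positive-definite form is likewise $J_M$-invariant, and more broadly the delicate point of promoting the unitarity of $\phi_i$ from the compact subgroup $K^i_x$ to the possibly non-compact group $N_{G^{0}}(M^{0})(F)_{[x]_{M^{0}}}$.
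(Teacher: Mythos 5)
Your approach is genuinely different from the paper's, and the gap you flag at the end is real, not just a technicality you could ``take care'' of. You try to compare two forms---a $K_M$-invariant one coming from Lemma~\ref{lemma:kappaxisunitary}, and an $N_{G^{0}}(M^{0})(F)_{[x]_{M^{0}}}$-invariant one coming from unitarity of $\bigotimes_i \widetilde\phi'_{i,M}$---via Schur's lemma applied to the irreducible restriction to $J_M$. But Schur's lemma on $J_M$ only pins down the line of \emph{$J_M$-invariant} sesquilinear forms, and the $N_{G^{0}}(M^{0})(F)_{[x]_{M^{0}}}$-invariant form has no reason to be $J_M$-invariant a priori; so the claimed proportionality is not established, and the argument that $c \equiv 1$ does not close. (A secondary issue: in the mod-$\ell$ coefficient setting there is no notion of positive-definiteness, so your appeal to $c(n) \in \bR_{>0}$ is available only in characteristic zero.)

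The paper sidesteps this comparison entirely by a simple averaging argument. It first notes that the representations $\widetilde\phi'_{i,M}$ are unitary (same reasoning as in Lemma~\ref{lemma:phi'xisunitary}: the Weil part factors through a finite group, and the character factor is unitary), so $\widetilde\kappa_M^{\nt}\restriction_{N_{G^{0}}(M^{0})(F)_{[x]_{M^{0}}}}$ is unitary, say preserving a Hermitian form $H$. The key observation is then that $N_{G^{0}}(M^{0})(F)_{[x]_{M^{0}}} \cdot K_M = N_{G^{0}}(M^{0})(F)_{[x]_{M^{0}}} \cdot K_{M,0+}$, and $K_{M,0+}$ is a \emph{normal}, compact, pro-$p$ subgroup of this group (by Lemma~\ref{lemma:NG0M0xM0normalizescompactopensungroups}). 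Averaging $H$ over $K_{M,0+}$ (legitimate since $\ell \neq p$) produces a form that is $K_{M,0+}$-invariant, and---precisely because $K_{M,0+}$ is normal and $H$ was already $N_{G^{0}}(M^{0})(F)_{[x]_{M^{0}}}$-invariant---the averaged form remains $N_{G^{0}}(M^{0})(F)_{[x]_{M^{0}}}$-invariant. So it is invariant under $N_{G^{0}}(M^{0})(F)_{[x]_{M^{0}}} \cdot K_{M,0+} = N_{G^{0}}(M^{0})(F)_{[x]_{M^{0}}} \cdot K_M$, and one is done. This construction only uses one of the two forms you built, never has to compare forms, and works uniformly in the mod-$\ell$ setting. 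If you want to salvage your Schur-lemma route, the fix is essentially to carry out this same averaging on the $N$-invariant form first (which makes it $J_M$-invariant), at which point you have already proved the lemma and the $K_M$-invariant form and Schur's lemma are superfluous.
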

\begin{proof}
Using the same argument as in Lemma~\ref{lemma:phi'xisunitary}, we obtain that the representations $\widetilde \phi'_{i, M}$ of $N_{G^{0}}(M^{0})(F)_{[x]_{M^{0}}}$ are unitary for all $0 \le i \le n-1$.
Hence $\widetilde \kappa_{M}^{\nt}$ restricted to $N_{G^{0}}(M^{0})(F)_{[x]_{M^{0}}}$ is unitary. Note that $N_{G^{0}}(M^{0})(F)_{[x]_{M^{0}}} \cdot K_{M} = N_{G^{0}}(M^{0})(F)_{[x]_{M^{0}}} \cdot K_{M, 0+}$ and $K_{M, 0+}$ is a normal, compact, pro-$p$ subgroup of $N_{G^{0}}(M^{0})(F)_{[x]_{M^{0}}} \cdot K_{M, 0+}$. Thus, by integrating an $N_{G^{0}}(M^{0})(F)_{[x]_{M^{0}}}$-invariant Hermitian form over $K_{M, 0+}$, we obtain an $N_{G^{0}}(M^{0})(F)_{[x]_{M^{0}}} \cdot K_{M}$-invariant Hermitian form.
\end{proof}


\section{Hecke algebras for the types constructed by Kim and Yu}
\label{Section-KimYutypes}
In this section we will show that the types $(K_{x_0}, \rho_{x_0})$ constructed by Kim and Yu (\cite{Kim-Yu}), but twisted by a quadratic character introduced in \cite{FKS} and allowing more general coefficients $\Coeff$, see Section \ref{subsec:construction} for the definitions,
satisfy all of the axioms necessary for \cite[Theorems \ref{HAI-theoremstructureofhecke} and \ref{HAI-thm:isomorphismtodepthzero} ]{HAI} to hold
so that 
 we obtain an isomorphism between the Hecke algebra attached to $(K_{x_0}, \rho_{x_0})$ and a depth-zero Hecke algebra (see Theorem \ref{heckealgebraisomforKim-Yutype}) as well as an explicit description of those Hecke algebras as a semi-direct product of an affine Hecke algebra with a twisted group algebra (see Theorem \ref{theoremstructureofheckeforKimYu}). The results obtained along the way might be of independent interest.

\subsection{A twist of the construction by Kim and Yu}
\label{subsec:construction}
We begin by recalling the notion of a $G$-datum following Kim and Yu (\cite{Kim-Yu}), but adjusted to our more general coefficient field $\Coeff$. From those data we will afterwards construct compact, open subgroups and representations of some of those following the construction of Kim and Yu (\cite{Kim-Yu}), but including a twist by the quadratic character of \cite{FKS}.
Let $d \in \mathbb Z_{\geq 0}$ and let $\overrightarrow{G}=\left(G^0\subsetneq G^1 \subsetneq\ldots \subsetneq G^d=G\right)$ be a sequence of twisted Levi subgroups of $G$. 
\index{notation-ky}{G0@$G^0$}%
Let $M^0$ be a Levi subgroup of $G^0$.
\index{notation-ky}{M0@$M^0$}%
We denote by $M^{i}$ the centralizer of $A_{M^0}$ in $G^i$ for $0 \le i \le d$.
According to \cite[2.4~Lemma~(a), (b)]{Kim-Yu}, $M^i$ is a Levi subgroup of $G^i$ and a tamely ramified twisted Levi subgroup of $M \coloneqq M^d$.
\index{notation-ky}{M@$M$}%
Note that by construction $Z(M^0)/Z(M^i)$ is anisotropic for all $0 \le i \le d$.

\begin{definition}[{\cite[7.2]{Kim-Yu}}] \label{definitionofGdatum}
A \emph{$G$-datum}
is a $5$-tuple
$\bigl(
(\overrightarrow{G}, M^0), \overrightarrow{r}, (x_{0}, \{\iota\}), (K_{M^0}, \rho_{M^0}), \overrightarrow{\phi}
\bigr)$
satisfying the following:
\begin{description}
\item[\bf{D1}] 
$\overrightarrow{G}=\left(G^0\subsetneq G^1 \subsetneq\ldots \subsetneq G^d=G\right)$ is a sequence of twisted Levi subgroups of $G$ that split over a tamely ramified extension of $F$ for some $d  \in \mathbb Z_{\geq 0}$,
and $M^0$ is a Levi subgroup of $G^0$.
Let $M^0 \subseteq M^1 \subseteq \ldots \subseteq M^d$ be as constructed above.
\item[\bf{D2}]
$\overrightarrow{r}=\left(r_0, \ldots , r_d\right)$
is a sequence of real numbers satisfying
$0\leq r_0\leq r_1\leq \cdots \leq r_{d}$, where all of the inequalities
are strict except for the last one.
We also write $r_{-1} = 0$.
\item[\bf{D3}]
$x_{0}$ is a point of $\cB(M^0, F)$, and $\{\iota\}$ is a commutative diagram 
\[
\xymatrix{
\cB(G^{0}, F) \ar[r] \ar@{}[dr]|\circlearrowleft & \cB(G^{1}, F) \ar[r] &
\cdots \ar@{}[d]|\circlearrowleft  \ar[r] & \cB(G^{d}, F)
\\
\cB(M^{0}, F) \ar[u] \ar[r] & \cB(M^{1}, F) \ar[u] \ar[r] &
\cdots \ar[r] & \cB(M^{d}, F) \ar[u]
}
\]
of admissible embeddings of buildings that is $\overrightarrow{r}/2$-generic relative to $x_{0}$ in the sense of \cite[3.5~Definition]{Kim-Yu},
where $\overrightarrow{r}/2 = (0, \frac{r_{0}}{2}, \cdots, \frac{r_{d-1}}{2})$.
We identify a point in $\cB(M^{0}, F)$ with its images via the embeddings $\{ \iota \}$.
\item[\bf{D4}]
\index{notation-ky}{Kay M0@$K_{M^0}$}%
\index{notation-ky}{rhoM0@$\rho_{M^0}$}%
$K_{M^0}$
is a compact, open subgroup of $M^0(F)_{x_{0}}$ containing $M^0(F)_{x_{0}, 0}$, and $\rho_{M^0}$ is an irreducible smooth representation of $K_{M^0}$ such that $((G^0, M^0), (x_{0}, \iota:\cB(M^0,F) \longrightarrow \cB(G^0,F)), (K_{M^0}, \rho_{M^0}))$ is a depth-zero $G^0$-datum as in \cite[Definition~\ref{HAI-definition:depthzeroGdatum}]{HAI} (following \cite[7.1]{Kim-Yu}), where $\iota: \cB(M^0,F) \longrightarrow \cB(G^0,F)$ is the embedding from $\{ \iota \}$.
\item[\bf{D5}]
$\overrightarrow{\phi}=\left(\phi_0, \ldots , \phi_d\right)$ is a sequence of characters, where $\phi_i$ is a character of $G^i(F)$.
We assume that $\phi_{d} = 1$ if $r_{d-1} = r_{d}$ and otherwise $\phi_{d}$ is of depth $r_{d}$. 
We also assume that $\phi_i$ is $G^{i+1}$-generic of depth $r_i$ relative to $x_0$ 
 in the sense of \cite[Definition~3.5.2]{Fintzen-IHES} for all $0\le i\le d-1$. 
\end{description}
\end{definition}

From now on, we let
\index{notation-ky}{SZigma@$\Sigma$}%
$\Sigma = \bigl((\overrightarrow{G}, M^0), \overrightarrow{r}, (x_{0}, \{\iota\}), (K_{M^0}, \rho_{M^0}), \overrightarrow{\phi}\bigl)$ be a $G$-datum with $r_0>0$. The case $r_0=0$ corresponds to a depth-zero $G$-datum that was already treated in \cite[Section~\ref{HAI-sec:depth-zero}]{HAI} and the reduction-to-depth-zero results in this case are trivial. 
According to Lemma \ref{lemmaindependenceofthegenericity}, for each
\index{notation-ky}{A x0@$\cA_{x_{0}}$}%
$x \in \cA_{x_{0}} \coloneqq  x_{0} + \left(
X_{*}(A_{M}) \otimes_{\mathbb{Z}} \mathbb{R}
\right)$
such that the diagram $\{ \iota \}$ is $\overrightarrow{r}/2$-generic relative to $x$, the tuple
\index{notation-ky}{SZigmax@$\Sigma_x$}%
$\Sigma_{x} = \bigl((\overrightarrow{G}, M^0), \overrightarrow{r}, (x, \{\iota\}), (K_{M^0}, \rho_{M^0}), \overrightarrow{\phi}\bigl)$
is also a $G$-datum.
From this $G$-datum $\Sigma_{x}$, we will now construct pairs $(K_{x}^0, \rho_{x}^0)$, $(K_{x}, \rho_{x})$, and $(K_M, \rho_M)$ of compact subgroups and irreducible representations thereof following the construction of Kim and Yu (\cite[Section~7]{Kim-Yu}), which is based on \cite[Section~4]{Yu}, but twisted by the quadratic character from \cite[Section~4]{FKS} and allowing more general coefficients. If $\Coeff=\bC$, these will be types for finite products of Bernstein blocks for the groups $G^0$, $G$, and $M$, respectively. If $\Coeff=\bC$ and $K_{M^0}=M(F)_{x_0}$, then the resulting types 
are types for single Bernstein blocks.

The pair $(K_{x}^0, \rho_{x}^0)$ is the depth-zero pair attached to the depth-zero $G^0$-datum $ \bigl((G^0, M^0), \overrightarrow{r}, (x_{0}, \iota),$ $(K_{M^0}, \rho_{M^0})\bigl)$ in \cite[Section~\ref{HAI-subsec:constructionofdepthzero}]{HAI}, which we now briefly recall for the convenience of the reader.
For $x \in \cA_{x_{0}}$, we define  \spacingatend{}
\index{notation-ky}{Kay0 x@$K^{0}_{x}$}%
\index{notation-ky}{Kay0 x +@$K^{0}_{x, +}$}%
\begin{equation} \label{definitionofcompactopensubgroupsdepthzerocase}
K^{0}_{x} = K_{M^0} \cdot G^0(F)_{x, 0}
\quad
\text{ and }
\quad K^{0}_{x, +} = G^{0}(F)_{x, 0+} .
\end{equation}
If $\iota \colon \cB(M^0,F) \longrightarrow \cB(G^0,F)$ is 0-generic relative to $x$, then 
we define the irreducible smooth representation $\rho^{0}_{x}$
\index{notation-ky}{rho0x@$\rho^0_{x}$}%
of $K^{0}_{x} / K^{0}_{x, +}$ as the composition of $\rho_{M^{0}}$ with the inverse of the isomorphism $K_{M^{0}} / M^{0}(F)_{x, 0+} \isoarrow K^{0}_{x}/K^{0}_{x, +}$ that comes from the inclusion  $K_{M^{0}} \subseteq K^{0}_{x}$ .
We also regard $\rho^{0}_{x}$ as an irreducible smooth representation of $K^{0}_{x}$ that is trivial on $K^{0}_{x, +}$.

In order to construct the pairs $(K_{x}, \rho_{x})$ and $(K_{M}, \rho_{M})$,
we let $x \in \cA_{x_{0}}$ and define several compact, open subgroups of $G(F)$ and $M(F)$ as follows 
\label{page:moredefs}%
\index{notation-ky}{Kay x@$K_x$}%
\index{notation-ky}{Kay x 0+@$K_{x, 0+}$}%
\index{notation-ky}{Kay x +@$K_{x, +}$}%
\index{notation-ky}{Kay M@$K_{M}$}%
\index{notation-ky}{Kay M 0+@$K_{M, 0+}$}%
\begin{equation}
\label{definitionofcompactopensubgroupsKimYucase}
\renewcommand{\arraystretch}{1.2} 
\begin{array}{l c l}
K_{x}
&=&
K^{0}_{x} G^{1}(F)_{x,\frac{r_{0}}{2}} G^{2}(F)_{x, \frac{r_{1}}{2}}
		\cdots G^{d}(F)_{x, \frac{r_{d-1}}{2}}, \\
K_{x, 0+}
&=&
G^{0}(F)_{x, 0+} G^{1}(F)_{x,\frac{r_{0}}{2}} G^{2}(F)_{x, \frac{r_{1}}{2}}
		\cdots G^{d}(F)_{x, \frac{r_{d-1}}{2}}, \\
K_{x, +}
&=&
G^{0}(F)_{x, 0+} G^{1}(F)_{x, \frac{r_{0}}{2}+} G^{2}(F)_{x, \frac{r_{1}}{2}+}
		\cdots G^{d}(F)_{x, \frac{r_{d-1}}{2}+}, \\
K_{M}
&=&
K_{M^{0}} M^{1}(F)_{x,\frac{r_{0}}{2}} M^{2}(F)_{x, \frac{r_{1}}{2}}
		\cdots M^{d}(F)_{x, \frac{r_{d-1}}{2}}, \\
K_{M, 0+} 
&=&
M^{0}(F)_{x_{0}, 0+} M^{1}(F)_{x_{0},\frac{r_{0}}{2}} M^{2}(F)_{x_{0}, \frac{r_{1}}{2}}
		\cdots M^{d}(F)_{x_{0}, \frac{r_{d-1}}{2}}.
\end{array}
\end{equation}
We note that $K_{x} = K^{0}_{x} \cdot K_{x, 0+}$, $K_{M} = K_{M^{0}} \cdot K_{M, 0+}$, the groups $K_{M}$ and $K_{M, 0+}$ do not depend on the point $x \in \cA_{x_{0}}$, and all the above groups are the same as the groups introduced in Sections \ref{subsec:repfromHeisenbergWeil} and \ref{Section-untwisted-Heisenberg-Weil-extension} attached to the 
Heisenberg--Weil data
\index{notation-ky}{HWSigmax@$\HW(\Sigma)_{x}$}%
\[
\HW(\Sigma)_{x} = \!
\begin{cases} \!
	\bigl( 
	\left(
	G^0 \subsetneq G^1 \subsetneq \ldots \subsetneq G^d
	\right),
	(r_0, \ldots , r_{d-1}), (x, \{\iota\}), K^{0}_{x}, (\phi_0, \ldots , \phi_{d-1})
	\bigr) & \! \! \!  (r_{d-1} = r_{d}), \\
	\bigl(
	\left(
	G^0 \subsetneq G^1 \subsetneq \ldots \subsetneq G^d \subseteq G^{d+1} \coloneqq  G^{d}
	\right),
	(r_0, \ldots , r_{d}), (x, \{\iota\}), K^{0}_{x}, (\phi_0, \ldots , \phi_{d})
	\bigr) & \! \! \!  (r_{d-1} < r_{d}),
\end{cases}
\]
and $\HW(\Sigma)_{M} =$
\[
\index{notation-ky}{HWSigmaM@$\HW(\Sigma)_{M}$}%
\begin{cases}
	\bigl(
	\left(
	M^0 \subseteq M^1 \subseteq \ldots \subseteq M^d 
	\right),
	(r_0, \ldots , r_{d-1}), (x, \{\iota\}), K_{M^{0}}, (\phi_{0} \restriction_{M^{0}(F)}, \ldots , \phi_{d-1} \restriction_{M^{d-1}(F)} )
	\bigr) & \! \! (r_{d-1} = r_{d}), \\
	\bigl(
	\left(
	M^0 \subseteq  \ldots \subseteq M^d \subseteq M^{d+1} \coloneqq  M^{d}
	\right),
	(r_0, \ldots , r_{d}), (x, \{\iota\}), K_{M^{0}}, (\phi_{0} \restriction_{M^{0}(F)}, \ldots , \phi_{d} \restriction_{M^{d}(F)} )
	\bigr) & \! \!  (r_{d-1} < r_{d})
\end{cases}
\]
i.e.\ our notation in this section is consistent with the notation in the Section \ref{sec:twistedweil}.

\begin{remark}
	\label{remark:conventionsofGdatum}
	Since we follow the conventions of \cite{Yu} for the definition of a $G$-datum, we have to distinguish the above two cases when extracting the Heisenberg--Weil datum $\HW(\Sigma)_{x_0}$ from $\Sigma$, which will be used below to construct the representations $\rho_{x_0}$. On the other hand, if we followed the conventions of \cite{Fi-exhaustion, MR4357723} to define the $G$-datum, then the Heisenberg--Weil datum $\HW(\Sigma)_{x_0}$ would be obtained by simply removing the depth-zero piece from the $G$-datum $\Sigma$, see \cite[Remark~2.4]{MR4357723}. 
\end{remark}

From now on suppose that the diagram $\{ \iota \}$ is $\overrightarrow{r}/2$-generic relative to $x$, so that $\Sigma_x$ is a $G$-datum. 
\index{notation-ky}{kappantx@$\kappa^{\nt}_{x}$}%
\index{notation-ky}{kappaMnt@$\kappa_M^{\nt}$}%
We denote by $\kappa^{\nt}_{x}$ and $\kappa_{M}^{\nt}$ the representations of $K_{x}$ and $K_{M}$ defined from the Heisenberg--Weil data $\HW(\Sigma)_{x}$ and $\HW(\Sigma)_{M}$ via the non-twisted Heisenberg--Weil construction as in Notation~\ref{notationuntwistedconstruction}, respectively. In other words, $\kappa^{\nt}_{x}$, resp.,\ $\kappa_{M}^{\nt}$ is the irreducible smooth representation of $K_{x}$, resp.,\ $K_{M}$ defined via the theory of Heisenberg--Weil representations as in \cite[Section~7]{Kim-Yu}, \cite[Section~4]{Yu}, and allowing a more general coefficient field $\Coeff$ using the arguments in \cite[Section~2]{MR4460255}.

We define the irreducible representations
\index{notation-ky}{kappax@$\kappa_{x}$}%
\index{notation-ky}{kappaM@$\kappa_M$}%
\label{kappaM}
$\kappa_{x}$ of $K_{x}$ and $\kappa_{M}$ of $K_{M}$ by $\kappa^{\nt}_{x} \otimes \epsilon^{\overrightarrow{G}}_{x}$ and $\kappa_{M} = \kappa_{M}^{\nt} \otimes \bigl(
\epsilon^{\overrightarrow{G}}_{x} \restriction_{K_{M}}
\bigr)$, where $\epsilon^{\overrightarrow{G}}_{x}$ denotes the quadratic character of \cite[Section~4.1]{FKS}.
More precisely, $\epsilon^{\overrightarrow{G}}_{x}$ is the character of $K_{x}$ introduced in Notation~\ref{notationepsilonoverrightarrowG} constructed from the Heisenberg--Weil datum $\HW(\Sigma)_{x}$. 
In other words, the representation $\kappa_{x}$ is the representation defined from the Heisenberg--Weil datum $\HW(\Sigma)_{x}$ by the twisted Heisenberg--Weil construction as in Notation~\ref{notationuntwistedconstruction}.
\begin{remark}
Let $\epsilon^{\overrightarrow{M}}_{x}$ denote the character of $K_{M}$ introduced in Notation~\ref{notationepsilonoverrightarrowG} constructed from the Heisenberg--Weil datum $\HW(\Sigma)_{M}$ which is based on \cite[Section~4.1]{FKS}.
Then the restriction of the character $\epsilon^{\overrightarrow{G}}_{x}$ to $K_{M}$ does not necessarily agree with the character $\epsilon^{\overrightarrow{M}}_{x}$, see Remark~\ref{remarkexampleofepsilonGneqepsilonM} below for an example.
Thus, the representation $\kappa_{M}$ does not necessarily agree with the representation constructed from the Heisenberg--Weil datum $\HW(\Sigma)_{M}$
via the twisted Heisenberg--Weil construction.
\end{remark}
\begin{lemma}
\label{lemmaindependencyofkappaMntandkappaM}
The representations $\kappa_{M}^{\nt}$ and $\kappa_{M}$ do not depend on the point $x \in \cA_{x_{0}}$ such that the diagram $\{ \iota \}$ is $\overrightarrow{r}/2$-generic relative to $x$, and we have $\kappa^{\nt}_{x} \restriction_{K_{M}} = \kappa_{M}^{\nt}$ and $\kappa_{x} \restriction_{K_{M}} = \kappa_{M}$ for all such $x$. 
\end{lemma}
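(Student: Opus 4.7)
The plan is to observe first that the groups $K_M$, $K_{M,0+}$, $K_{M^0}$, and the subgroups $J^i_M$ and $J^i_{M,+}$ appearing in \eqref{definitionofcompactopensubgroupsKimYucase} are all defined from the fixed point $x_0 \in \cB(M^0, F)$ and hence do not depend on $x$. Consequently, the representation $\kappa_M^{\nt}$, which is the output of the non-twisted Heisenberg--Weil construction of Notation~\ref{notationuntwistedconstruction} applied to the Heisenberg--Weil datum $\HW(\Sigma)_M$, is manifestly independent of $x$.

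The main claim to establish is therefore $\kappa^{\nt}_x \restriction_{K_M} = \kappa_M^{\nt}$. Both sides are tensor products indexed by $1 \le i \le d$ (plus $i=d+1$ if $r_{d-1} < r_d$) of factors built from Heisenberg--Weil representations $\omega_x^i$ on the symplectic $\bF_p$-space $J^i_x / J^i_{x,+}$ and $\omega_M^i$ on $J^i_M / J^i_{M,+}$, respectively, together with compatible inflations of $\phi_{i-1}$ and $\phi_{i-1} \restriction_{M^{i-1}(F)}$. Working one level at a time, the $\overrightarrow{r}/2$-genericity of $\{\iota\}$ relative to $x$, together with the root-group decomposition of Moy--Prasad filtrations, yields that the natural inclusion $J^i_M \hookrightarrow J^i_x$ satisfies $J^i_M \cap J^i_{x,+} = J^i_{M,+}$ and induces an orthogonal decomposition of symplectic spaces
\[
J^i_x / J^i_{x,+} \;=\; V^i_{M,x} \oplus V^{i,\out}_x,
\]
where $V^i_{M,x}$ is the image of $J^i_M / J^i_{M,+}$ and $V^{i,\out}_x$ is assembled from the root spaces $U_\alpha$ with $\alpha \in \Phi(G^i, T) \smallsetminus \Phi(M^i, T)$. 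Because $M^i$ is the centralizer of $A_{M^0}$ in $G^i$, the group $K^i_{M} \coloneqq K_{M^0} \cdot J^{\le i}_M$ acts trivially on $V^{i,\out}_x$, so its image under $f_x \ltimes j_x$ lies in $\Sp(V^i_{M,x}) \ltimes (V^i_{M,x})^{\#}$. By the standard compatibility of Heisenberg--Weil representations with orthogonal decompositions of the underlying symplectic space (\cite[Theorem 2.4]{MR460477}, resp., \cite[Section 2.3]{MR4460255}), the restriction of $\omega_x^i$ to this subgroup is identified with $\omega_M^i$ via the canonical (hence $x$-independent) isomorphism $V^i_{M,x} \cong J^i_M / J^i_{M,+}$; Lemma~\ref{lemmacompatibilityofcharacters} ensures the symplectic pairings match. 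Combining this with the obvious compatibility of the inflation of $\phi_{i-1}$ yields $\phi'_{i-1, x} \restriction_{K^i_M} = \phi'_{i-1, M}$ and, tensoring over $i$, the desired $\kappa^{\nt}_x \restriction_{K_M} = \kappa_M^{\nt}$.

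For the twisted version, $\kappa_x \restriction_{K_M} = \kappa^{\nt}_x \restriction_{K_M} \otimes \epsilon^{\overrightarrow{G}}_x \restriction_{K_M} = \kappa_M^{\nt} \otimes \epsilon^{\overrightarrow{G}}_x \restriction_{K_M} = \kappa_M$ by the preceding paragraph and the definition of $\kappa_M$. It remains to show that $\kappa_M$ itself does not depend on $x$, for which it suffices to verify that $\epsilon^{\overrightarrow{G}}_x \restriction_{K_M}$ is $x$-independent. Its restriction to the pro-$p$ subgroup $K_{M, 0+}$ is trivial since it is a quadratic character of a pro-$p$ group and $p$ is odd, so only its restriction to $K_{M^0}$ is at issue; this restriction equals $\prod_i \epsilon^{G^i/G^{i-1}}_x \restriction_{K_{M^0}}$. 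Each factor $\epsilon^{G^i/G^{i-1}}_x$ is defined via the spinor norm on the quadratic space $V_x$ of Section~\ref{sec:extensionofthetwist}. The same $\overrightarrow{r}/2$-genericity argument as above shows that $V_x$ splits orthogonally into an $M$-part canonically isomorphic to the analogous quadratic space for $M^i \subseteq M^{i-1}$ (which is independent of $x$) and a complement on which elements of $K_{M^0} \subseteq M^0(F)$ act trivially (since $M^0$ centralizes $A_{M^0}$), so they contribute trivially to the spinor norm. Hence $\epsilon^{G^i/G^{i-1}}_x \restriction_{K_{M^0}}$ is independent of $x$, and the lemma follows.

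The principal technical step is the symplectic direct-sum decomposition together with the triviality of the action of $K^i_M$ on $V^{i,\out}_x$; although this mirrors Yu's compatibility of his construction with Levi subgroups in \cite[Section 4]{Yu}, the verification has to be carried out in the present generality (arbitrary coefficient field $\Coeff$ and the more general subgroup $K_{M^0}$), and care is required to keep track of the identifications needed for the $x$-independence statement.
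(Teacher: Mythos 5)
Your proof captures the right outline (use $\overrightarrow{r}/2$-genericity to compare the Heisenberg--Weil spaces for $G^i$ and $M^i$, then handle the quadratic twist separately), but two of the steps are not correctly executed, and both stem from not using the full strength of the genericity hypothesis.

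For the first part, the genericity of $\{\iota\}$ relative to $x$ does not merely give an orthogonal decomposition $J^i_x/J^i_{x,+} = V^i_{M,x} \oplus V^{i,\textup{out}}_x$; it gives $V^{i,\textup{out}}_x = 0$. This is precisely the content of the identity $J_{y}/J_{y,+} = (J_{y}\cap M(F))/(J_{y,+}\cap M(F))$ used in the proof of Corollary~\ref{corollaryoflemmathatfollowsfromFKS412}: no root of $A_{M^0}$ outside $M^i$ has a filtration jump at $x$ at depth $r_{i-1}/2$. Once $V^{i,\textup{out}}_x = 0$, there is nothing to decompose -- the symplectic space for the pair $(G^{i-1},G^i)$ at $x$ is literally the symplectic space for $(M^{i-1},M^i)$ at $x_0$, and $\kappa^{\nt}_x\restriction_{K_M} = \kappa^{\nt}_M$ is immediate from the definitions, which is all the paper invokes. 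Your intermediate step -- that the restriction of $\omega_x^i$ to $\Sp(V^i_{M,x})\ltimes(V^i_{M,x})^\#$ identifies with $\omega_M^i$ -- would actually be \emph{false} if $V^{i,\textup{out}}_x\neq0$: the restriction of a Heisenberg--Weil representation across an orthogonal decomposition $W_1\oplus W_2$ to $\Sp(W_1)\ltimes W_1^\#$ is $p^{\dim W_2/2}$ copies of $\omega_{W_1}$, not $\omega_{W_1}$ itself. The claim that $K^i_M$ acts trivially on $V^{i,\textup{out}}_x$ ``since $M^i$ centralizes $A_{M^0}$'' is also not justified: centralizing $A_{M^0}$ only gives that the $A_{M^0}$-weight decomposition is preserved, not that the action on those weight spaces is trivial.

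For the second part, your route differs genuinely from the paper's, but it has a real gap. The paper deduces $\epsilon^{\overrightarrow{G}}_x\restriction_{K_M} = \epsilon^{\overrightarrow{G}}_y\restriction_{K_M}$ from Corollary~\ref{corollaryoflemmathatfollowsfromFKS412}, which combines the invariance relation $\epsilon^{G/G'}_x\cdot\delta^x_y = \epsilon^{G/G'}_y\cdot\delta^y_x$ from \cite[Lemma~4.1.2]{FKS} with the vanishing $\delta^x_y=\delta^y_x=1$ that genericity forces. You instead propose to decompose the quadratic space $V_x$ of Section~\ref{subsection:constructionofon} into an ``$M$-part'' and a complement on which $K_{M^0}$ acts trivially. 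But $V_x$ is built from filtration jumps in the open interval $(-s_{O_{M'}}, s_{O_{M'}})$, not from jumps at $r/2$, so $r/2$-genericity gives you no control over the complementary summand. And, as above, elements of $K_{M^0}\subseteq M^0(F)$ do not act trivially on the weight spaces for roots outside $M$ just because $M^0$ centralizes $A_{M^0}$; they only preserve the weight decomposition. Without a replacement argument, the assertion that the complement contributes trivially to the spinor norm is unsupported, whereas the FKS invariance relation sidesteps any direct analysis of $V_x$.

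One minor remark: the claim that $\kappa^{\nt}_M$ is independent of $x$ holds for the simpler reason that the Heisenberg--Weil datum $\HW(\Sigma)_M$ is built at $x_0$ and the groups $J^i_M$, $J^i_{M,+}$ and $K_{M^0}$ do not move when $x$ ranges over $\cA_{x_0}$; you do note this, and it is the first sentence of the paper's proof.
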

\begin{proof}
The claim that $\kappa_{M}^{\nt}$ is independent of the points follows from the construction of $\kappa_{M}^{\nt}$.
Moreover, according to Corollary~\ref{corollaryoflemmathatfollowsfromFKS412}, we have $\epsilon^{\overrightarrow{G}}_{x} \restriction_{K_{M}} = \epsilon^{\overrightarrow{G}}_{y} \restriction_{K_{M}}$ for all $x, y \in \cA_{x_{0}}$ such that $\{ \iota \}$ is $\overrightarrow{r}/2$-generic relative to $x$ and $y$. 
Thus, the representation $\kappa_{M}$ is also independent of the choice of $x$.
The remaining claims follow from the definitions.
\end{proof}

Now, we define the irreducible representations $\rho^{\nt}_{x}$ and $\rho_{x}$ of $K_{x}$ and $\rho_{M}$ of $K_{M}$ by
\index{notation-ky}{rhontx@$\rho^{\nt}_{x}$}%
\index{notation-ky}{rhox@$\rho_{x}$}%
\index{notation-ky}{rhoM@$\rho_{M}$}%
\[
\rho^{\nt}_{x} = \inf \left(
\rho^{0}_{x}
\right) \otimes \kappa^{\nt}_{x},
\qquad
\qquad
\rho_{x} =  \inf \left(
\rho^{0}_{x}
\right) \otimes 
\kappa_{x},
\qquad
\text{and}
\qquad
\rho_{M} =  \inf\left(
\rho_{M^{0}}
\right) \otimes \kappa_{M},
\]
where $\inf \left(
\rho^{0}_{x}
\right)$ denotes the inflation of the representation $\rho^{0}_{x}$ to the group $K_{x}$ via the surjection
\[
K_{x} = K^{0}_{x} \cdot K_{x, 0+} \longrightarrow K^{0}_{x} \cdot K_{x, 0+} / K_{x, 0+} 
\simeq K^{0}_{x}/ \left(
K^{0}_{x} \cap K_{x, 0+}
\right)
= K^{0}_{x} / G^{0}(F)_{x, 0+},
\] and $\inf\left(
\rho_{M^{0}}
\right)$ denotes the inflation of $\rho_{M^{0}}$ to $K_{M}$ via the surjection
\[
K_{M} = K_{M^{0}} \cdot K_{M, 0+} 
\longrightarrow K_{M^{0}} \cdot K_{M, 0+} / K_{M, 0+} 
\simeq K_{M^{0}} / \left(
K_{M^{0}}  \cap K_{M, 0+}
\right)
\simeq 
K_{M^{0}} / M^{0}(F)_{x, 0+}.
\]

\begin{remark}
\label{remarkabouttwsitedanduntwistedKimYuconstruction}
In \cite{Kim-Yu}, Kim and Yu attached to a $\bC$-valued $G$-datum $\Sigma_{x}$ the non-twisted representation $\rho^{\nt}_{x}$. 
However, some of the results of \cite{Kim-Yu} rely on \cite[Proposition~14.1]{Yu} and \cite[Theorem~14.2]{Yu}, which were pointed out in \cite{MR4357723} to be false in general.
On the other hand, according to \cite[Corollary~4.1.11, Corollary~4.1.12]{FKS}, the twisted  representation $\rho_{x}$ satisfies the analogues of these propositions.
Thus, we can apply the results of \cite{Kim-Yu}, replacing their non-twisted construction with our twisted construction.
\\ %
Moreover, while Kim and Yu (\cite[\S2.3]{Kim-Yu}) assume that $p$ is not a torsion prime for the dual root datum of $G$, they only use this assumption for \cite[5.3~Lemma]{Kim-Yu}, which remains true without this assumption (see Lemma~\ref{lemma:genericityforGimpliesgenericityforM}).
\end{remark}

\begin{remark}
\label{remarkabouttheunitaricityofthecharacters}
Let $x \in \cA_{x_{0}}$ such that the diagram $\{ \iota \}$ is $\overrightarrow{r}/2$-generic relative to $x$.
For $0 \le i \le d$, let $K^{i}_{x}$ denote the compact, open subgroup of $G^{i}(F)$ defined as in Section~\ref{subsec:repfromHeisenbergWeil} from the Heisenberg--Weil datum $\HW(\Sigma)_{x}$.
Since the character $\phi_{i}$ is trivial on the open subgroup $G^{i}(F)_{x, r_{i}+}$ of $K^{i}_{x}$, the image of the restriction of $\phi_{i}$ to the compact subgroup $K^{i}_{x}$ is contained in the group $\mu(\Coeff)$ of roots of unity in $\Coeff^{\times}$.
Thus, we can take a character 
$
\phi'_{i} \colon G^{i}(F) \longrightarrow \mu(\Coeff)
$
that agrees with $\phi_{i}$ on $K^{i}_{x}$.
In particular, if $\Coeff$ admits a nontrivial involution,
we obtain that the characters $\phi'_{i}$ are unitary
(with respect to this involution)
for all $0 \le i \le d$. 
The construction of $\rho_{x}$ implies that the representations $\rho_{x}$ of $K_x$ constructed from $\Sigma_x$ and from the $G$-datum 
\(
\bigl((\overrightarrow{G}, M^0), \overrightarrow{r}, (x, \{\iota\}), (K_{M^0}, \rho_{M^0}), \left(\phi'_0, \ldots , \phi'_d\right)
\bigr)
\)
agree.
Hence, by replacing $\phi_{i}$ with $\phi'_{i}$ if necessary, we may and do assume that the characters $\phi_i$ are unitary (but this only matters for questions of unitarity and preservation of the anti-involution introduced in
\cite[Section~\ref{HAI-Anti-involution of the Hecke algebra}]{HAI},
so a reader not interested in this property can ignore this step).
\end{remark}

For $0 \le i \le d$ and $x \in \cA_{x_{0}}$ such that the diagram $\{ \iota \}$ is $\overrightarrow{r}/2$-generic relative to $x$, we write $\widehat{\phi}_{i, x}$ for the character of $K^{0}_{x} \cdot G^{i}(F)_{x, 0} \cdot G(F)_{x, r_{i}/2+}$ defined as in \cite[Section~4]{Yu} that extends $\phi_i\restriction{K^{0}_{x} \cdot G^{i}(F)_{x, 0}}$.
We write 
\(
\theta_{x} = \prod_{i=0}^{d} \widehat{\phi}_{i, x} \restriction_{K_{x, +}},
\)
analogous to Notation \ref{notation:theta_x}. Then we have the following basic observation that we will use later and which is \cite[Proposition~4.4]{Yu} in the case of supercuspidal representations. 

\begin{lemma}[{cf.\ \cite[Proposition~4.4]{Yu}}]
\label{lemmakapparestrictiontoK+KimYuver}
The restriction of the representation $\rho_{x}$ to the group $K_{x, +}$ is $\theta_{x}$-isotypic. 
\end{lemma}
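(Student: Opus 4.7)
The plan is to decompose $\rho_{x} = \inf(\rho^{0}_{x}) \otimes \kappa_{x}$ and analyze the restriction of each tensor factor to $K_{x,+}$ separately.

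For the depth-zero factor: the representation $\inf(\rho^{0}_{x})$ is by construction the inflation of $\rho^{0}_{x}$ along the surjection
\[
K_{x} = K^{0}_{x} \cdot K_{x, 0+} \twoheadrightarrow K^{0}_{x}/G^{0}(F)_{x, 0+},
\]
so it is trivial on $K_{x,0+}$. An inspection of the definitions on page~\pageref{page:moredefs} shows $K_{x,+} \subseteq K_{x,0+}$ (the only difference between the two is whether the $G^{i}$-part sits at level $r_{i-1}/2$ or $r_{i-1}/2{+}$), and hence $\inf(\rho^{0}_{x})$ is trivial on $K_{x,+}$.

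For the positive-depth factor: by the definition on page~\pageref{kappaM}, $\kappa_{x}$ is precisely the representation produced by the twisted Heisenberg--Weil construction of Section~\ref{subsec:repfromHeisenbergWeil} applied to the Heisenberg--Weil datum $\HW(\Sigma)_{x}$, and the group $K_{x,+}$ as defined in Section~\ref{Section-KimYutypes} coincides with the group $K_{x,+}$ associated to $\HW(\Sigma)_{x}$ as defined in Section~\ref{subsec:repfromHeisenbergWeil} (one checks $K^{0}_{x} \cap G^{0}(F)_{x,0+} = G^{0}(F)_{x,0+}$ since $G^{0}(F)_{x,0} \subseteq K^{0}_{x}$). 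Therefore Lemma~\ref{lemmakapparestrictiontoK+} applies directly and yields that $\kappa_{x}|_{K_{x,+}}$ is isotypic for the character $\prod_{i=0}^{n-1}\widehat{\phi}_{i,x}|_{K_{x,+}}$, where $n$ denotes the length of the sequence of twisted Levi subgroups appearing in $\HW(\Sigma)_{x}$.

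The final step is the bookkeeping to identify this character with our $\theta_{x}$: if $r_{d-1} < r_{d}$, then the Heisenberg--Weil datum $\HW(\Sigma)_{x}$ has length $n = d+1$ and incorporates all of $\phi_{0}, \dots, \phi_{d}$, so the character is $\prod_{i=0}^{d}\widehat{\phi}_{i,x}|_{K_{x,+}} = \theta_{x}$; if $r_{d-1} = r_{d}$, then $\HW(\Sigma)_{x}$ has length $n = d$ and involves only $\phi_{0}, \dots, \phi_{d-1}$, but since $\phi_{d} = 1$ in this case we still have $\prod_{i=0}^{d-1}\widehat{\phi}_{i,x}|_{K_{x,+}} = \prod_{i=0}^{d}\widehat{\phi}_{i,x}|_{K_{x,+}} = \theta_{x}$. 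No step presents a serious obstacle: this lemma is essentially a restatement of \cite[Proposition~4.4]{Yu} in our more general setting, once one has set up the dictionary between the Kim--Yu conventions of the current section and the Heisenberg--Weil datum conventions of Section~\ref{sec:twistedweil} (see Remark~\ref{remark:conventionsofGdatum}).
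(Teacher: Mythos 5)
Your proof takes the same route as the paper's: decompose $\rho_x$ as $\inf(\rho^0_x)\otimes\kappa_x$, observe that the inflated depth-zero factor is trivial on $K_{x,+}$ (since $K_{x,+}\subseteq K_{x,0+}$), and invoke Lemma~\ref{lemmakapparestrictiontoK+} for $\kappa_x$. The paper states this in one line, leaving the translation between the Section~\ref{sec:twistedweil} and Section~\ref{Section-KimYutypes} conventions implicit; your added bookkeeping reconciling the two cases $r_{d-1}=r_d$ and $r_{d-1}<r_d$ (and noting $\phi_d=1$ in the former) is correct and only makes explicit what the paper relies on via Remark~\ref{remark:conventionsofGdatum}.
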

\begin{proof}
	This follows from Lemma~\ref{lemmakapparestrictiontoK+}, and the fact that the representation $\inf \left(
	\rho^{0}_{x}
	\right)$ is trivial on $K_{x, +}$
\end{proof}

If $N_{G^{0}}(M^{0})(F)_{[x_{0}]_{M^{0}}}$ normalizes the group $K_{M^0}$, e.g., if $K_{M^{0}} = M^{0}(F)_{x_{0}}$ or $M^{0}(F)_{x_{0}, 0}$, then the objects $G^0$, $G$, $M^0$, $M$, $x_0$, $K_{M^0}$, $K_M, \rho_{M^0}, \rho_M$ and the families $\left\{(K^{0}_{x}, K^{0}_{x, +}, \rho^{0}_{x})\right\}$ and $\left\{(K_{x}, K_{x, +}, \rho_{x})\right\}$ for appropriate $x \in \cA_{x_0}$ are an example for the objects with the same names in
\cite[Section \ref{HAI-subsec:comparisonsetup}]{HAI}
and we will show in Section \ref{subsec:HeckealgebraisomforKimYu} that these objects satisfy all the desired axioms of
\cite[Section \ref{HAI-sec:comparison}]{HAI}
for the choice of
\index{notation-ky}{NrhoM@$N(\rho_{M^{0}})_{[x_{0}]_{M^{0}}}$}%
\[
\Nzeroheart = N(\rho_{M^{0}})_{[x_{0}]_{M^{0}}} \coloneqq  N_{G^{0}(F)}(\rho_{M^{0}}) \cap N_{G^{0}}(M^{0})(F)_{[x_{0}]_{M^{0}}}
\]
(see \cite[Notation~\ref{HAI-notationofNrhoM}]{HAI}).
Moreover, the collection of objects \label{KimYufamilies} $G$, $M$, $x_0$, $K_M$, $\rho_M$, $K_{x}$, $K_{x, +}$, $\rho_{x}$ and the collection of objects $G^0$, $M^0$, $x_0$, $K_{M^0}$, $\rho_{M^0}$, $K^{0}_{x}$, $K^{0}_{x, +}$, $\rho^{0}_{x}$ are also two examples for objects to which
\cite[Section \ref{HAI-Structure of a Hecke algebra}]{HAI}
can be applied for the group $\Nheart = N(\rho_{M^{0}})_{[x_{0}]_{M^{0}}}$ in both cases. We will show in Section \ref{subsec:HeckealgebraisomforKimYu} 
 that both collections of objects satisfy all the axioms of
\cite[Section \ref{HAI-Structure of a Hecke algebra}]{HAI}.

\subsection{Affine hyperplanes}
\label{subsec:affinehyperplanesKimYu}
In order to apply
Sections \ref{HAI-Structure of a Hecke algebra}
and \ref{HAI-sec:comparison} of \cite{HAI}
to the objects introduced in the previous subsection, Section \ref{subsec:construction}, we introduce an appropriate set of affine hyperplanes as in
\cite[Section \ref{HAI-subsec:hyperplanes}]{HAI} as follows.
Let $0 \le i \le d$.
We fix a maximal split torus $S^{i}$ of $M^{i}$ such that $x_{0} \in \cA(G^{i}, S^{i}, F)$.
For $a \in \nobreak \Phi_{\aff}(G^{i}, S^{i}) \smallsetminus \Phi_{\aff}(M^{i}, S^{i})$, we define the affine hyperplane $H_{a, r_{i-1}/2}$
\index{notation-ky}{Har@$H_{a, r_{i-1}/2}$}%
in $\cA(G^{i}, S^{i}, F)$ by
\[
H_{a, \frac{r_{i-1}}{2}} = 
\left\{
x \in \cA(G^{i}, S^{i}, F) \mid a(x) = \frac{r_{i-1}}{2}
\right\}.
\]
Since $a \not \in \Phi_{\aff}(M^{i}, S^{i})$, the intersection
$
\cA_{x_{0}} \cap H_{a, r_{i-1}/2} 
$
is an affine hyperplane in $\cA_{x_{0}} 
= x_{0} + \left( X_{*}(A_{M}) \otimes_{\mathbb{Z}} \mathbb{R} \right)
= x_{0} + \left( X_{*}(A_{M^0}) \otimes_{\mathbb{Z}} \mathbb{R} \right)
$.
We define the locally finite set $\mathfrak{H}^{i}_{S^{i}}$ of affine hyperplanes in $\cA_{x_{0}}$ by
\[
\mathfrak{H}^{i}_{S^{i}} = \left\{
\cA_{x_{0}} \cap H_{a, \frac{r_{i-1}}{2}} \mid a \in \Phi_{\aff}(G^{i}, S^{i}) \smallsetminus \Phi_{\aff}(M^{i}, S^{i})
\right\}.
\]
Note that $\mathfrak{H}^{0}_{S^{0}}$ is the set of affine hyperplanes that we introduced in the depth-zero setting in \cite[Section \ref{HAI-subsec:affinehyperplanes-depthzero}]{HAI} for the group $G^0$ with Levi subgroup $M^0$.

\begin{lemma} \label{lemma:independenceofS0} The set of affine functionals
	\(
	\left\{ a\restriction_{\cA_{x_0}} \mid a \in \Phi_{\aff}(G^{i}, S^{i}) \smallsetminus \Phi_{\aff}(M^{i}, S^{i})
	\right\}
	\) on $\cA_{x_0}$ and the set $\mathfrak{H}^{i}_{S^{i}}$
	do not depend on the choice of a maximal split torus $S^{i}$ of $M^i$.
\end{lemma}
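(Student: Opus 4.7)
The plan is to exhibit, for any two choices $S^{i}$ and $\tilde{S}^{i}$, an element $n \in M^{i}(F)$ which conjugates $S^{i}$ to $\tilde{S}^{i}$ and acts as the identity on the affine subspace $\cA_{x_{0}}$. Once such an $n$ is produced, the standard bijection of affine root systems induced by conjugation will transport $\Phi_{\aff}(G^{i}, S^{i}) \smallsetminus \Phi_{\aff}(M^{i}, S^{i})$ onto $\Phi_{\aff}(G^{i}, \tilde{S}^{i}) \smallsetminus \Phi_{\aff}(M^{i}, \tilde{S}^{i})$ in a way that is trivial on $\cA_{x_{0}}$, yielding both claims of the lemma simultaneously.

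First, I would observe that $\cA_{x_{0}}$ lies inside both apartments. Indeed, every maximal split torus of $M^{i}$ contains the maximal central split torus $A_{M^{i}}$, which in turn contains $A_{M^{0}}$ because $M^{i}$ centralizes $A_{M^{0}}$ by definition. Hence both $S^{i}$ and $\tilde{S}^{i}$ contain $A_{M^{0}}$, and so $x_{0} + (X_{*}(A_{M^{0}}) \otimes_{\bZ} \bR)$ is a subset of each of $\cA(G^{i}, S^{i}, F)$ and $\cA(G^{i}, \tilde{S}^{i}, F)$.

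Next, I would invoke the standard fact (consequence of the transitivity of $M^{i}(F)$ on pairs consisting of a maximal $F$-split torus of $M^{i}$ and a point in its apartment, see e.g.\ \cite{KalethaPrasad}) to produce $n \in M^{i}(F)_{x_{0}}$ with $n S^{i} n^{-1} = \tilde{S}^{i}$. The key point now is that $A_{M^{0}} \subseteq Z(M^{i})$, so $n$ centralizes $A_{M^{0}}$ pointwise; consequently $\Ad(n)$ acts as the identity on $X_{*}(A_{M^{0}}) \otimes_{\bZ} \bR$. Combined with $n \cdot x_{0} = x_{0}$, the induced affine map $n \colon \cA(G^{i}, S^{i}, F) \isoarrow \cA(G^{i}, \tilde{S}^{i}, F)$ restricts to the identity on $\cA_{x_{0}}$.

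Finally, for any $a \in \Phi_{\aff}(G^{i}, S^{i})$, put $\tilde{a}(z) = a(n^{-1} \cdot z)$; this defines $\tilde{a} \in \Phi_{\aff}(G^{i}, \tilde{S}^{i})$, and the assignment $a \mapsto \tilde{a}$ is a bijection sending $\Phi_{\aff}(M^{i}, S^{i})$ onto $\Phi_{\aff}(M^{i}, \tilde{S}^{i})$ (because $n \in M^{i}(F)$). Because $n$ acts trivially on $\cA_{x_{0}}$, we have $\tilde{a}\restriction_{\cA_{x_{0}}} = a\restriction_{\cA_{x_{0}}}$, giving at once equality of the two sets of restricted affine functionals and equality of the corresponding sets of affine hyperplanes. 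The only mildly delicate step is the transitivity statement used to produce $n$, but this is purely an appeal to Bruhat--Tits theory rather than a genuine difficulty; the rest of the argument is bookkeeping with the identification $\cA_{x_{0}} = x_{0} + X_{*}(A_{M^{0}}) \otimes_{\bZ} \bR$.
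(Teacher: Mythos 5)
Your argument is correct and is the natural one: produce $n\in M^i(F)_{x_0}$ conjugating one maximal split torus to the other, note that $n$ centralizes $A_{M^0}$ (hence acts trivially on $\cA_{x_0}$), and transport affine roots through conjugation by $n$. The paper's proof simply points to the proof of Lemma~\ref{HAI-lemma:independenceofS0} in the companion paper \cite{HAI} with a few symbol substitutions, and that proof runs along the same lines, so you have essentially reproduced the intended argument. One small caution on phrasing: the transitivity you need is that the stabilizer $M^i(F)_{x_0}$ acts transitively on the set of maximal $F$-split tori of $M^i$ whose apartments contain $x_0$ (a standard Bruhat--Tits fact), which is not literally "transitivity of $M^i(F)$ on pairs" since $M^i(F)$ does not act transitively on points of the building; the conclusion you draw, $n\in M^i(F)_{x_0}$ with $nS^in^{-1}=\tilde S^i$, is nonetheless the correct statement, so this is only a matter of stating the reference precisely.
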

\begin{proof}
	The proof is the same as the proof of \cite[Lemma~\ref{HAI-lemma:independenceofS0}]{HAI} replacing $G$, $M$, $S$ and $H_a$ in that proof by $G^i$, $M^i$, $S^i$ and $H_{a, \frac{r_{i-1}}{2}}$, and replacing the condition ``$=0$'' by ``$=\frac{r_{i-1}}{2}$''.
\end{proof}

We define the locally finite set $\mathfrak{H}$
\index{notation-ky}{H_@$\mathfrak{H}$}%
of affine hyperplanes in $\cA_{x_{0}}$ by
\[
\mathfrak{H} = \bigcup_{0 \le i \le d} \mathfrak{H}^{i}_{S^{i}}.
\]
The definition of $\mathfrak{H}$ implies that for $x \in \cA_{x_{0}}$, the diagram $\{ \iota \}$ is $\overrightarrow{r}/2$-generic relative to $x$ if and only if $x$ is not contained in any affine hyperplane $H \in \mathfrak{H}$, that is,
\index{notation-ky}{A gen@$\cA_{\gen}$}%
$x \in \cA_{\gen}= \cA_{x_{0}} \smallsetminus \left(
\bigcup_{H \in \mathfrak{H}} H\right)$.
In particular, we have $x_{0} \in \cA_{\gen}$.

For $x, y \in \cA_{\gen}$, as in  \cite[\S\ref{HAI-subsec:hyperplanes}]{HAI},
we define the subset $\mathfrak{H}_{x, y}$ of $\mathfrak{H}$ by
\[
\index{notation-ky}{H_xy@$\mathfrak{H}_{x, y}$}
\mathfrak{H}_{x, y} = \left\{
H \in \mathfrak{H} \mid \text{$x$ and $y$ are on opposite sides of $H$}
\right\}
\]
and we write \index{notation-ky}{d(,)@$d(\phantom{x},\phantom{y})$}
$
d(x, y) = \# \mathfrak{H}_{x, y}
$,
which is finite because $\mathfrak{H}$ is locally finite.

\begin{lemma}
\label{lemmaNzeroheartpreservesH}
The action of $N_{G^{0}}(M^{0})(F)_{[x_{0}]_{M^{0}}}$ on $\mathcal{A}_{x_{0}}$ preserves the set $\mathfrak{H}$.
\end{lemma}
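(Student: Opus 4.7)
The plan is to break the claim into three steps. First I would check that $N_{G^{0}}(M^{0})(F)_{[x_{0}]_{M^{0}}}$ really does act on $\cA_{x_{0}}$, second that it normalises each $M^{i}$, and third that the combination of these two facts together with Lemma~\ref{lemma:independenceofS0} sends each hyperplane of $\mathfrak{H}^{i}_{S^{i}}$ to another hyperplane of the same set.

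For the first step, let $n \in N_{G^{0}}(M^{0})(F)_{[x_{0}]_{M^{0}}}$. Conjugation by $n$ preserves $A_{M^{0}}=A_{M}$ and hence the real vector space $X_{*}(A_{M})\otimes_{\bZ}\bR$. Since $n$ fixes the image $[x_{0}]_{M^{0}}$ in $\cB^{\red}(M^{0},F)$, the vector $n\cdot x_{0}-x_{0}$ lies in the kernel of the projection $\cB(M^{0},F)\to \cB^{\red}(M^{0},F)$, which equals $X_{*}(A_{M^{0}})\otimes_{\bZ}\bR$. Therefore $n\cdot x_{0}\in \cA_{x_{0}}$, and combined with the fact that the conjugation action of $n$ stabilises $X_{*}(A_{M})\otimes_{\bZ}\bR$ this shows $n\cdot \cA_{x_{0}}=\cA_{x_{0}}$.

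For the second step, since $M^{i}$ is by definition the centraliser of $A_{M^{0}}$ in $G^{i}$ and $n\in G^{0}\subseteq G^{i}$ normalises $A_{M^{0}}$, it follows that $n M^{i} n^{-1}=M^{i}$. Consequently, if $S^{i}$ is any maximal split torus of $M^{i}$ with $x_{0}\in \cA(G^{i},S^{i},F)$, then $nS^{i}n^{-1}$ is again a maximal split torus of $M^{i}$ (still containing $A_{M^{0}}$, since $A_{M^{0}}$ is central in $M^{i}$ and normalised by $n$).

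For the third step, fix an element $\cA_{x_{0}}\cap H_{a,r_{i-1}/2}\in \mathfrak{H}^{i}_{S^{i}}$ with $a\in \Phi_{\aff}(G^{i},S^{i})\smallsetminus \Phi_{\aff}(M^{i},S^{i})$. Transport $a$ by $n$ to an affine functional $n\cdot a$ on $\cA(G^{i},nS^{i}n^{-1},F)$ defined by $(n\cdot a)(y)=a(n^{-1}y)$. Then $n\cdot H_{a,r_{i-1}/2}=H_{n\cdot a,r_{i-1}/2}$, and because $n$ normalises $M^{i}$ the functional $n\cdot a$ lies in $\Phi_{\aff}(G^{i},nS^{i}n^{-1})\smallsetminus \Phi_{\aff}(M^{i},nS^{i}n^{-1})$. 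Using that $n$ preserves $\cA_{x_{0}}$ from step one, we obtain
\[
n\cdot(\cA_{x_{0}}\cap H_{a,r_{i-1}/2})=\cA_{x_{0}}\cap H_{n\cdot a,r_{i-1}/2}\in \mathfrak{H}^{i}_{nS^{i}n^{-1}},
\]
and Lemma~\ref{lemma:independenceofS0} identifies $\mathfrak{H}^{i}_{nS^{i}n^{-1}}$ with $\mathfrak{H}^{i}_{S^{i}}\subseteq \mathfrak{H}$. Taking the union over $i$ proves the lemma. The only mildly delicate point, which I would flag as the main obstacle, is that $n$ need not stabilise the chosen torus $S^{i}$; this is precisely what forces us to invoke Lemma~\ref{lemma:independenceofS0} rather than argue within a single apartment.
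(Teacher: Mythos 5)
Your proof is correct. The paper itself only cites the analogous lemma in \cite{HAI}, so there is no in-line proof to compare against, but your three-step argument — showing that $N_{G^0}(M^0)(F)_{[x_0]_{M^0}}$ stabilizes $\cA_{x_0}$, that it normalizes each $M^i$ and transports $\mathfrak{H}^i_{S^i}$ to $\mathfrak{H}^i_{nS^in^{-1}}$, and then invoking Lemma~\ref{lemma:independenceofS0} to conclude — is exactly the standard one the authors intend, and you correctly flag that the torus $S^i$ need not be $n$-stable as the point that makes Lemma~\ref{lemma:independenceofS0} necessary.

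One very small point worth tightening: to apply Lemma~\ref{lemma:independenceofS0} to $nS^in^{-1}$ you should note that $x_0 \in \cA(G^i, nS^in^{-1}, F)$. This follows because $nx_0 \in \cA(G^i, nS^in^{-1}, F)$ and $x_0 - nx_0 \in X_*(A_{M^0})\otimes_{\bZ}\bR$, which lies in the direction of that apartment since $A_{M^0}$, being a split central torus of $M^i$, is contained in every maximal split torus of $M^i$ and in particular in $nS^in^{-1}$.
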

\begin{proof}
The proof is analogous to the proof of
\cite[Lemma~\ref{HAI-lemmaNzeroheartpreservesH}]{HAI}.
\end{proof}

\subsection{Hecke algebra isomorphisms for the types constructed by Kim--Yu} \label{subsec:HeckealgebraisomforKimYu}
From now on, we assume that the group $K_{M^0}$ is normalized by $N_{G^{0}}(M^{0})(F)_{[x_{0}]_{M^{0}}}$ in order to apply the results in \cite[Section~\ref{HAI-subsec:structureofheckefordepthzerotypes}]{HAI}, especially \cite[Proposition~\ref{HAI-propproofofaxiombijectionofdoublecoset}]{HAI}, which states that the support of the Hecke algebra attached to $(K_x^0, \rho_x^0)$ is given by $K^{0}_{x} \cdot N(\rho_{M^{0}})_{[x_{0}]_{M^{0}}} \cdot K^{0}_{x}$.
For instance, if we choose $K_{M^0} = M^0(F)_{x_{0}}$ or $M^0(F)_{x_{0}, 0}$, then this assumption is satisfied.
If $\Coeff=\bC$, then the case of $K_{M^{0}} = M^{0}(F)_{x_{0}}$ corresponds to types for single Bernstein blocks.

In this subsection, we will prove Theorem \ref{heckealgebraisomforKim-Yutype} that states that there exists a support-preserving algebra isomorphism
\(
\cH(G^{0}(F), \rho^{0}_{x_{0}}) \isoarrow \cH(G(F), \rho_{x_{0}})
\) 
by verifying all the required axioms from
Sections \ref{HAI-Structure of a Hecke algebra}
and \ref{HAI-sec:comparison} of \cite{HAI}
that allow us to apply
\cite[Theorem \ref{HAI-thm:isomorphismtodepthzero} and Corollary \ref{HAI-cor:starpreservation}]{HAI}. 
We recall that we have constructed in Section \ref{subsec:construction} the two families
\[\cK^{0} =
\left\{
(K^{0}_{x}, K^{0}_{x, +}, (\rho^{0}_{x}, V_{\rho^{0}_{x}}))
\right\}_{x \in \cA_{\gen}}
\quad
\text{ and }
\quad
\cK =
\left\{
(K_{x}, K_{x, +}, (\rho_{x}, V_{\rho_{x}}))
\right\}_{x \in \cA_{\gen}} 
\] 
of quasi-$G^{0}$-cover-candidates and quasi-$G$-cover-candidates, respectively, as defined at the beginning of
Section \ref{HAI-subsec:familyofcovers} of \cite{HAI}.
\begin{proposition}
\label{prop:K0satisfiesaxiomaboutHNheartandKandaxiombijectionofdoublecoset}
The family $\cK^{0}$ satisfies
Axioms~\ref{HAI-axiomaboutHNheartandK} and \ref{HAI-axiombijectionofdoublecoset} of \cite{HAI} for the group $\Nzeroheart = N(\rho_{M^{0}})_{[x_{0}]_{M^{0}}}$.
\end{proposition}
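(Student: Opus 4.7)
The plan is to observe that Proposition~\ref{prop:K0satisfiesaxiomaboutHNheartandKandaxiombijectionofdoublecoset} is essentially a translation statement: the family $\cK^{0}$ is by construction (see \eqref{definitionofcompactopensubgroupsdepthzerocase}) the depth-zero family attached to the depth-zero $G^{0}$-datum $\bigl((G^{0}, M^{0}), (x_{0}, \iota), (K_{M^{0}}, \rho_{M^{0}})\bigr)$ supplied by axiom \textbf{D4} of the $G$-datum $\Sigma$. Consequently, $\cK^{0}$ falls precisely within the set-up of \cite[\S\ref{HAI-sec:depth-zero}]{HAI}, and the two axioms to be verified are exactly the depth-zero axioms verified there.

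First I would match the data with the depth-zero set-up of \cite{HAI}. The running hypothesis that $N_{G^{0}}(M^{0})(F)_{[x_{0}]_{M^{0}}}$ normalizes $K_{M^{0}}$, imposed at the beginning of this subsection, is exactly the standing assumption under which \cite[\S\ref{HAI-subsec:structureofheckefordepthzerotypes}]{HAI} is written. The group $\Nzeroheart = N(\rho_{M^{0}})_{[x_{0}]_{M^{0}}}$ coincides, under the replacement of $(G, M)$ by $(G^{0}, M^{0})$, with the group $\Nheart$ of \cite[Notation~\ref{HAI-notationofNrhoM}]{HAI}. Moreover, the set of affine hyperplanes used in the depth-zero setting of \cite{HAI} is $\mathfrak{H}^{0}_{S^{0}}$ (cf.\ Lemma~\ref{lemma:independenceofS0}), and since $\mathfrak{H}^{0}_{S^{0}} \subset \mathfrak{H}$, we have the inclusion $\cA_{\gen} \subseteq \cA_{x_{0}} \smallsetminus \bigcup_{H \in \mathfrak{H}^{0}_{S^{0}}} H$; thus any point that is generic in our sense is also generic in the depth-zero sense of \cite{HAI}, so the depth-zero verification applies to every $x \in \cA_{\gen}$.

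With these identifications in place, Axiom~\ref{HAI-axiomaboutHNheartandK}, which controls the interaction of $\Nzeroheart$ with the compact open subgroups $K^{0}_{x}$ and their pro-$p$ radicals $K^{0}_{x,+}$, follows directly from its depth-zero verification in \cite[\S\ref{HAI-subsec:structureofheckefordepthzerotypes}]{HAI}. For Axiom~\ref{HAI-axiombijectionofdoublecoset}, which asserts that the support of the Hecke algebra $\cH(G^{0}(F), \rho^{0}_{x_{0}})$ is parameterized by $\Nzeroheart$-double cosets in the prescribed way, I would invoke \cite[Proposition~\ref{HAI-propproofofaxiombijectionofdoublecoset}]{HAI}, where this is proved under precisely the normalization hypothesis that we have imposed.

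I do not anticipate a real obstacle here: the proposition carries no new mathematical content beyond a careful unwinding of notation, and all the substantive work was already done in the depth-zero portion of \cite{HAI}. The only step requiring attention is the compatibility check between our global set of affine hyperplanes $\mathfrak{H}$ and the depth-zero set $\mathfrak{H}^{0}_{S^{0}}$, which is immediate from the definitions.
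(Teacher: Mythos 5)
Your approach is essentially the same as the paper's — reduce to the depth-zero verifications in \cite{HAI} via the inclusion $\mathfrak{H}^{0}_{S^{0}} \subset \mathfrak{H}$, hence $\cA_{\gen}$ is contained in the depth-zero generic set, so the depth-zero verifications of the axioms apply at every $x \in \cA_{\gen}$. However, there is a genuine gap in how you handle Axiom~\ref{HAI-axiomaboutHNheartandK}.

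That axiom is not only about the interaction of $\Nzeroheart$ with the subgroups $K^{0}_{x}$ and $K^{0}_{x,+}$; it also contains the clause (Axiom~\ref{HAI-axiomaboutHNheartandK}\eqref{HAI-axiomaboutHNheartandKNinvarianceofH}) that $\Nzeroheart$ preserves the set of affine hyperplanes $\mathfrak{H}$ chosen in \S\ref{subsec:affinehyperplanesKimYu}. This part does \emph{not} follow from the depth-zero verification in \cite{HAI}: that verification establishes $\Nzeroheart$-invariance of the depth-zero hyperplane set $\mathfrak{H}^{0}_{S^{0}}$, not of our larger set $\mathfrak{H}$, which also includes the hyperplanes coming from the positive-depth layers $\mathfrak{H}^{i}_{S^{i}}$ for $i \geq 1$. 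The inclusion $\mathfrak{H}^{0}_{S^{0}} \subset \mathfrak{H}$ and the resulting containment of generic sets — which you correctly note — are one-directional and say nothing about whether $\Nzeroheart$ maps the extra hyperplanes into $\mathfrak{H}$. The paper fills this gap separately by invoking Lemma~\ref{lemmaNzeroheartpreservesH}, which proves that $N_{G^{0}}(M^{0})(F)_{[x_{0}]_{M^{0}}}$ (hence $\Nzeroheart$) preserves all of $\mathfrak{H}$. Your proof should add this citation; the rest of the argument is fine.
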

\begin{proof}
 Since the set of affine hyperplanes
$\mathfrak{H}^{0}_{S^{0}} \subset \cA_{x_0}$
used to define the set of generic points 
in the depth-zero setting
in \cite[Section \ref{HAI-subsec:affinehyperplanes-depthzero}]{HAI}
is a subset of $\mathfrak{H}$,
our set $\cA_{\gen}$ of generic points is contained
in the set of generic points used in
\cite[Section \ref{HAI-subsec:affinehyperplanes-depthzero}]{HAI}.
	 Hence, the properties of Axioms~\ref{HAI-axiomaboutHNheartandK} and \ref{HAI-axiombijectionofdoublecoset} of \cite{HAI} other than Axiom~\ref{HAI-axiomaboutHNheartandK}~\eqref{HAI-axiomaboutHNheartandKNinvarianceofH} follow from \cite[Lemma~\ref{HAI-proofofaxiomaboutHNheartandK}]{HAI} and \cite[Proposition~\ref{HAI-propproofofaxiombijectionofdoublecoset}]{HAI}.
	 Axiom~\ref{HAI-axiomaboutHNheartandK}~\eqref{HAI-axiomaboutHNheartandKNinvarianceofH} follows from Lemma~\ref{lemmaNzeroheartpreservesH}.
\end{proof}

We will now prove that the family $\cK$ also satisfies Axiom \ref{HAI-axiomaboutHNheartandK} of \cite{HAI}.
\begin{lemma}
\label{proofofaxiomaboutHNheartandK}
\mbox{}
\begin{enumerate}[(1)] 
\item
For every $x \in \cA_{\gen}$, we have
\begin{enumerate}
	\item \(
	K_{n x} = n K_{x} n^{-1}
	\)
	and
	\(
	K_{n x, +} = n K_{x, +} n^{-1}
	\) for $n \in N(\rho_{M^{0}})_{[x_{0}]_{M^{0}}}$,
\item 
the pair $(K_{x}, \rho_{x})$ is a quasi-$G$-cover of $(K_{M}, \rho_{M})$,
\item
$K_{x} = K_{M} \cdot K_{x, +},$
\item
$K_{x, +} = \left(
K_{x, +} \cap U(F)
\right) \cdot \left(K_{x, +} \cap M(F)\right) \cdot \left(
K_{x, +} \cap \overline{U}(F)
\right)$ for all $U \in \cU(M)$.
\end{enumerate}
Moreover, the group $K_{x, +} \cap M(F)$ is independent of the point $x \in \cA_{\gen}$.
\item
For $x, y, z \in \cA_{\gen}$ such that 
\(
d(x, y) + d(y, z) = d(x, z),
\)
there exists $U \in \cU(M)$ such that
\[
K_{x} \cap U(F) \subseteq K_{y} \cap U(F) \subseteq K_{z} \cap U(F)
\quad
\text{ and }
\quad
K_{z} \cap \overline{U}(F)  \subseteq K_{y} \cap \overline{U}(F) \subseteq K_{x} \cap \overline{U}(F).
\]
\end{enumerate}
Thus, the family $\cK$ satisfies Axiom~\ref{HAI-axiomaboutHNheartandK} of \cite{HAI}
for the group $\Nheart = N(\rho_{M^{0}})_{[x_{0}]_{M^{0}}}$.
\end{lemma}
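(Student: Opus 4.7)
The plan is to reduce each claim to a combination of (i) the standard Iwahori decomposition of Moy--Prasad filtration subgroups at each level of the tower $G^0\subsetneq G^1\subsetneq\cdots\subsetneq G^d$ relative to the Levi $M^i$ of $G^i$ (which uses the genericity condition and results of Bruhat--Tits such as \cite[4.3~Proposition~(a)]{Kim-Yu}), and (ii) the corresponding depth-zero statement \cite[Lemma~\ref{HAI-proofofaxiomaboutHNheartandK}]{HAI} applied to the family $\cK^0$ (already verified in Proposition~\ref{prop:K0satisfiesaxiomaboutHNheartandKandaxiombijectionofdoublecoset}).

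For part (1)(a), the conjugation formula follows immediately from the defining expression $K_x=K_{M^0}\cdot G^0(F)_{x,0}\cdot G^1(F)_{x,r_0/2}\cdots G^d(F)_{x,r_{d-1}/2}$: by assumption $N_{G^0}(M^0)(F)_{[x_0]_{M^0}}$ normalizes $K_{M^0}$, it normalizes each $M^i$ and each $G^i$ (the latter since $M^i\subseteq G^i$ is the centralizer of $A_{M^0}$, which is preserved), and conjugation of a Moy--Prasad subgroup at $x$ by $n$ yields the corresponding subgroup at $nx$. For parts (1)(b)--(d), I would apply the Iwahori decomposition to each factor $G^i(F)_{x,r_{i-1}/2}$ along a parabolic of $G^i$ with Levi $M^i$ (using that the embedding is $r/2$-generic so that for root groups of $\alpha\notin\Phi(M^i,S^i)$ the defining value lies strictly above $r_{i-1}/2$). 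This decomposes $K_x$ as a product of the $M$-part and unipotent parts lying in $K_{x,+}$, giving simultaneously (1)(b), (1)(c), and (1)(d); moreover the equality $K_x\cap M(F)=K_M$ together with Lemma~\ref{lemmaindependencyofkappaMntandkappaM} (which states $\rho_x\restriction_{K_M}=\rho_M$) yields that $(K_x,\rho_x)$ is a quasi-$G$-cover of $(K_M,\rho_M)$. The ``moreover'' assertion then follows because the same genericity forces $K_{x,+}\cap M(F)=K_{M,0+}$, and $K_{M,0+}$ was defined at $x_0$ and does not depend on $x$.

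For part (2), the argument parallels its depth-zero analogue in \cite[Lemma~\ref{HAI-proofofaxiomaboutHNheartandK}]{HAI} applied to the richer hyperplane arrangement $\mathfrak{H}=\bigcup_i\mathfrak{H}^i_{S^i}$. The hypothesis $d(x,y)+d(y,z)=d(x,z)$ means $\mathfrak{H}_{x,z}=\mathfrak{H}_{x,y}\sqcup\mathfrak{H}_{y,z}$. By a standard genericity perturbation in $X_*(A_M)\otimes_\bZ\bR$, I would produce a linear functional $\lambda$ on $\cA_{x_0}$ that is nonzero on every direction of a hyperplane in $\mathfrak{H}_{x,z}$ and whose sign on each such hyperplane is consistent with the ordering $x\to z$; the restriction of $\lambda$ to the relative root system $\Phi(G,A_M)$ then determines a parabolic subgroup $P=MU$ with $U\in\cU(M)$. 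For any level $i$ and affine root $a\in\Phi_{\aff}(G^i,S^i)\smallsetminus\Phi_{\aff}(M^i,S^i)$ with underlying root $\alpha$ having $U_\alpha\subseteq U$, the inclusion $U_\alpha(F)_{x,r_{i-1}/2}\subseteq U_\alpha(F)_{y,r_{i-1}/2}\subseteq U_\alpha(F)_{z,r_{i-1}/2}$ holds by the chosen sign of $\lambda$; assembling these over all $i$ and all $\alpha$ in $U$, and similarly the reverse containment over $\overline{U}$, produces the desired chain of inclusions.

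The main obstacle I anticipate is part (2): the arrangement $\mathfrak{H}$ combines affine hyperplanes coming from different groups $G^i$ at different levels $r_{i-1}/2$, and one must verify that a single $U\in\cU(M)$ (i.e.\ a parabolic subgroup defined using only the roots of $A_M$ in $G$) simultaneously witnesses the correct ordering for hyperplanes arising from every level. This compatibility is what makes the reduction to the depth-zero geometry go through, and it hinges on the fact that the underlying linear directions of the hyperplanes in $\mathfrak{H}^i_{S^i}$ on $\cA_{x_0}$ are controlled by roots of $A_M$ regardless of $i$, so a single choice of sign character on $\Phi(G,A_M)$ suffices.
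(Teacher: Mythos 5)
Your approach for part (2) and the group-theoretic pieces of part (1) is essentially the paper's: the paper literally instructs the reader to re-run the depth-zero proof with the replacements $G\rightsquigarrow G^i$, $0\rightsquigarrow r_{i-1}/2$, $H_a\rightsquigarrow H_{a,r_{i-1}/2}$, and the generic-$\lambda$-then-parabolic argument you sketch is exactly what that depth-zero proof contains (the compatibility across levels $i$ you worry about is precisely the reason for Lemma \ref{lemma:independenceofS0} and the fact that every $\mathfrak{H}^i_{S^i}$ restricts to hyperplanes with directions in $\Phi(G,A_M)$, so the concern you raise at the end resolves as you suspect). For (1)(a), (1)(c), (1)(d) and the ``moreover'' your Iwahori-decomposition route via \cite[4.3~Proposition]{Kim-Yu} is the paper's route.

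The genuine gap is in (1)(b). The Iwahori decomposition of $K_x$ plus $K_x\cap M(F)=K_M$ plus $\rho_x\restriction_{K_M}=\rho_M$ (which is all you extract from Lemma \ref{lemmaindependencyofkappaMntandkappaM}) is strictly weaker than the quasi-$G$-cover property: one must still show that $\rho_x$ is trivial (more precisely, acts by a character trivial on the unipotent parts) on $K_x\cap U(F)$ and $K_x\cap\overline{U}(F)$, and this requires analyzing the restriction of the Heisenberg--Weil representation and the characters $\widehat{\phi}_{i,x}$ along the unipotent factors. The paper gets this wholesale by citing \cite[Theorem~7.5]{Kim-Yu} (via Remark~\ref{remarkabouttwsitedanduntwistedKimYuconstruction}, which explains that Kim--Yu's theorem applies to the twisted construction); your write-up doesn't invoke that theorem or rederive its content, so this step is incomplete as written. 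You also omit the final sentence of the proof: verifying the full Axiom~\ref{HAI-axiomaboutHNheartandK} requires two more properties beyond your items (1) and (2), and the paper obtains these from Lemma~\ref{lemmakapparestrictiontoK+KimYuver} (that $\rho_x\restriction_{K_{x,+}}$ is $\theta_x$-isotypic) and Lemma~\ref{lemmaNzeroheartpreservesH}; you'd need to say something analogous to close the ``Thus'' clause.
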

\begin{proof}
The first claim follows from the definitions, Lemma~\ref{lemmaindependencyofkappaMntandkappaM}, and \cite[4.3~Proposition, Theorem~7.5]{Kim-Yu}.
A proof for the second claim can be obtained by making several replacements
in the proof of the second claim of
Lemma~\ref{HAI-proofofaxiomaboutHNheartandK} of \cite{HAI}.
Specifically,
replace the symbol $G$ by $G^i$,
``$> 0 $'' and ``$<0$''
by ``$> \frac{r_{i-1}}{2}$''
and ``$< \frac{r_{i-1}}{2}$'',
and
$H_a$ by $H_{a, \frac{r_{i-1}}{2}}$ for some $0 \le i \le d$ such that there exists  $a \in \Phi_{\aff}(G^{i}, S^{i})$ whose gradient $Da \in \Phi(G^{i}, S^{i})$ occurs in the adjoint representation of $S^{i}$ on the Lie algebra of $U$ and
	\(
	a(x) > \tfrac{r_{i-1}}{2}
	\)
	and 
	\(
	a(y) < \tfrac{r_{i-1}}{2}
	\). 
The first two properties of Axiom~\ref{HAI-axiomaboutHNheartandK} of \cite{HAI}
follow from Lemma~\ref{lemmakapparestrictiontoK+KimYuver} and Lemma~\ref{lemmaNzeroheartpreservesH}, the remaining properties from the first two claims of this lemma.
\end{proof}

Next, we will prove that the quadruple $\left(
K_{M^{0}}, \rho_{M^{0}}, K_{M}, \rho_{M}
\right)$ together with the group $N(\rho_{M^{0}})_{[x_{0}]_{M^{0}}}$ satisfy Axiom~\ref{HAI-axiomaboutKM0vsKM} of \cite{HAI}.
To do so, we will define an explicit extension $\widetilde\kappa_{M}$ of $\kappa_{M}$ to the group 
\[
N_{G^{0}}(M^{0})(F)_{[x_0]_{M^{0}}} \cdot K_{M} \supseteq N(\rho_{M^{0}})_{[x_{0}]_{M^{0}}} \cdot K_{M},
\]
which might be of independent interest. 

Let $\widetilde \kappa_{M}^{\nt}$ denote the extension of $\kappa_{M}^{\nt}$ to the group $N_{G^{0}}(M^{0})(F)_{[x_0]_{M^{0}}} \cdot K_{M}$ defined in Proposition~\ref{proposition:extensionofuntwistedWeilHeisenbergrepresentation}.
For the reader who prefers to skip the details of Section~\ref{sec:twistedweil} on a first reading, here is a description of $\widetilde \kappa_{M}^{\nt}$:
The conjugation action of $N_{G^{0}}(M^{0})(F)_{[x_0]_{M^{0}}}$ on $M(F)$ induces symplectic automorphisms on the symplectic spaces appearing in Yu's construction of $\kappa_{M}^{\nt}$. Hence, we obtain a group homomorphism from $N_{G^{0}}(M^{0})(F)_{[x_0]_{M^{0}}}$ to the product of the associated symplectic groups. Then we can define the action of $N_{G^{0}}(M^{0})(F)_{[x_0]_{M^{0}}}$ by composing this homomorphism with the tensor product of the Weil representations, and take the tensor product of the resulting representation with $\prod_{0 \leq i \leq d} \phi_{i} \restriction_{N_{G^{0}}(M^{0})(F)_{[x_0]_{M^{0}}}}$.

\begin{definition} \label{definitionofkappatilde}
	We define the representation
\index{notation-ky}{kappaM_tilde@$\widetilde\kappa_M$}%
$\widetilde \kappa_{M}$ of $N_{G^{0}}(M^{0})(F)_{[x_0]_{M^{0}}} \cdot K_{M}$ by 
\[\widetilde \kappa_{M} = \widetilde \kappa_{M}^{\nt} \otimes \widetilde{\epsilon}^{\overrightarrow{G}}_{x_{0}}, 
\] where 
	$\widetilde{\epsilon}^{\overrightarrow{G}}_{x_{0}}$ denotes the extension of $\epsilon^{\overrightarrow{G}}_{x_{0}} \restriction_{K_{M}}$ to $N_{G^{0}}(M^{0})(F)_{[x_0]_{M^{0}}} \cdot K_{M}$ defined by $\widetilde{\epsilon}^{\overrightarrow{G}}_{x_{0}}\restriction_{K_{M, 0+}} = 1$ and
	\[
\widetilde{\epsilon}^{\overrightarrow{G}}_{x_{0}} \restriction_{N_{G^{0}}(M^{0})(F)_{[x_0]_{M^{0}}}} = \prod_{i=1}^{d} \widetilde{\epsilon}^{G^{i}/G^{i-1}}_{x_{0}}
\restriction_{N_{G^{0}}(M^{0})(F)_{[x_0]_{M^{0}}}} ,
\]
where $\widetilde{\epsilon}^{G^{i}/G^{i-1}}_{x_{0}}$
denotes the character introduced in
Definition \ref{definitiontildeepsilon}.
\end{definition}

\begin{proposition}
	\label{propproofofaxiomaboutKM0vsKM}
	The restriction of the representation $\widetilde \kappa_{M}$ to $K_M$ agrees with $\kappa_M$.
	
	Thus, the quadruple $\left(
	K_{M^{0}}, \rho_{M^{0}}, K_{M}, \rho_{M}
	\right)$ together with the group $\Nzeroheart \coloneqq N(\rho_{M^{0}})_{[x_{0}]_{M^{0}}}$ satisfy Axiom~\ref{HAI-axiomaboutKM0vsKM} of \cite{HAI}.\\
	Moreover, if $\Coeff$ admits a nontrivial involution,
then
 $\widetilde \kappa_{M}$
  is unitary.
\end{proposition}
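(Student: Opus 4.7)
The plan is to break the statement into three parts and reduce each to results already established earlier in the paper. The heart of the argument is that $\widetilde{\kappa}_M$ has been designed precisely so that both its non-twisted factor and its quadratic twist restrict correctly to $K_M$.

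For the first assertion, I will compute the restriction of each tensor factor of $\widetilde{\kappa}_M = \widetilde{\kappa}_M^{\nt} \otimes \widetilde{\epsilon}^{\overrightarrow{G}}_{x_{0}}$ separately. Proposition~\ref{proposition:extensionofuntwistedWeilHeisenbergrepresentation} gives $\widetilde{\kappa}_M^{\nt}\restriction_{K_M} = \kappa_M^{\nt}$ directly, so what remains is $\widetilde{\epsilon}^{\overrightarrow{G}}_{x_{0}}\restriction_{K_M} = \epsilon^{\overrightarrow{G}}_{x_{0}}\restriction_{K_M}$. Since $K_M = K_{M^0}\cdot K_{M,0+}$, I check this on each factor. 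On $K_{M,0+}$ both characters are trivial: $\widetilde{\epsilon}^{\overrightarrow{G}}_{x_{0}}$ by definition, and $\epsilon^{\overrightarrow{G}}_{x_{0}}$ because each $\epsilon^{G^i/G^{i-1}}_{x_{0}}$ is trivial on $G^{i-1}(F)_{x_0,0+}$, which contains $K_{M,0+} \cap G^{i-1}(F)$. On $K_{M^0}$, I observe that $K_{M^0} \subset M^0(F)_{x_0}$ lies in $N_{G^{i-1}}(M^{i-1})(F)_{[x_0]_{G^i}}$ for every $i$ (it stabilizes $x_0$ and hence $[x_0]_{G^i}$, and $M^0 \subset M^{i-1}$ normalizes $M^{i-1}$ since $M^0$ centralizes $A_{M^0}$), so Theorem~\ref{twistextension} applies to each factor $\widetilde{\epsilon}^{G^i/G^{i-1}}_{x_{0}}$ and gives agreement with $\epsilon^{G^i/G^{i-1}}_{x_{0}}$ on $K_{M^0}$. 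Taking the product over $i$ yields the desired equality.

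For the axiom itself, I will build the required extension of $\rho_M$ from its two explicit tensor factors. Since by definition of $N(\rho_{M^0})_{[x_0]_{M^0}}$ the representation $\rho_{M^0}$ already extends to some $\widetilde{\rho}_{M^0}$ on $N(\rho_{M^0})_{[x_0]_{M^0}} \cdot K_{M^0}$, inflating as in Section~\ref{subsec:construction} gives an extension of $\inf(\rho_{M^0})$ to $N(\rho_{M^0})_{[x_0]_{M^0}} \cdot K_M$. Combining this with $\widetilde{\kappa}_M$ restricted to the same group yields an extension $\widetilde{\rho}_M = \inf(\widetilde{\rho}_{M^0}) \otimes \widetilde{\kappa}_M$ of $\rho_M$, and by the first part this agrees with $\rho_M$ on $K_M$. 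The other compatibilities of Axiom~\ref{HAI-axiomaboutKM0vsKM} (the factorization $K_M = K_{M^0}\cdot K_{M,0+}$, the compatibility of restrictions such as $\rho_M\restriction_{K_{M^0} \cdot K_{M,0+}}$ containing $\inf(\rho_{M^0})$-isotypic components, and the normalization properties of the groups by $N(\rho_{M^0})_{[x_0]_{M^0}}$) follow immediately from Lemma~\ref{lemma:NG0M0xM0normalizescompactopensungroups}, Lemma~\ref{lemmaindependencyofkappaMntandkappaM}, and the explicit construction recalled in Section~\ref{subsec:construction}; each of these is a short unwinding of definitions.

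For the final unitarity claim, Lemma~\ref{lemma:kappatildeisunitary} yields unitarity of $\widetilde{\kappa}_M^{\nt}$ (using the unitarity of the $\phi_i$, which we may assume as in Remark~\ref{remarkabouttheunitaricityofthecharacters}), and $\widetilde{\epsilon}^{\overrightarrow{G}}_{x_{0}}$ is unitary because it takes values in $\mu_4$, as each $\widetilde{\epsilon}^{G^i/G^{i-1}}_{x_{0}}$ does by Definition~\ref{definitiontildeepsilon}. A tensor product of unitary representations is unitary, giving the result. The main conceptual obstacle in the entire argument is the first paragraph: without the extension theorem for the quadratic character (Theorem~\ref{twistextension}), there would be no way to compare $\widetilde{\epsilon}^{\overrightarrow{G}}_{x_{0}}\restriction_{K_{M^0}}$ with $\epsilon^{\overrightarrow{G}}_{x_{0}}\restriction_{K_{M^0}}$, and the whole construction of $\widetilde{\kappa}_M$ would not glue together, which is precisely why the careful work of Section~\ref{sec:extensionofthetwist} was needed.
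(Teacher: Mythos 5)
Your overall route is the same as the paper's: reduce the first claim to Proposition~\ref{proposition:extensionofuntwistedWeilHeisenbergrepresentation} for the non-twisted factor and Theorem~\ref{twistextension} for the quadratic factor, deduce the axiom from the definitions and the normalization lemmas, and get unitarity from Lemma~\ref{lemma:kappatildeisunitary} together with the $\mu_4$-valuedness of $\widetilde\epsilon^{\overrightarrow{G}}_{x_0}$. Your first and third paragraphs are correct and simply spell out the compressed references in the paper's proof; in particular, the factor-by-factor check on $K_{M^0}$ via $K_{M^0}\subseteq N_{G^{i-1}}(M^{i-1})(F)_{[x_0]_{G^i}}$ for each $i$ is exactly the content behind the paper's citation of Theorem~\ref{twistextension}.

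The one place you go astray is the step ``by definition of $N(\rho_{M^0})_{[x_0]_{M^0}}$ the representation $\rho_{M^0}$ already extends to some $\widetilde\rho_{M^0}$.'' The group $N(\rho_{M^0})_{[x_0]_{M^0}} = N_{G^0(F)}(\rho_{M^0})\cap N_{G^0}(M^0)(F)_{[x_0]_{M^0}}$ consists of elements that normalize $\rho_{M^0}$ \emph{up to isomorphism}; this does not by itself give an honest extension of $\rho_{M^0}$ to $N(\rho_{M^0})_{[x_0]_{M^0}}\cdot K_{M^0}$ — there is in general a cohomological obstruction. Fortunately this claim is also not needed: the relevant sub-axiom (as the paper's Section~\ref{sec:twistedweil} introduction and the shape of its proof make clear) asks only for an extension of the Heisenberg--Weil factor $\kappa_M$ to $N(\rho_{M^0})_{[x_0]_{M^0}}\cdot K_M$, not of $\rho_{M^0}$ or $\rho_M$. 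Your first paragraph already supplies that extension, namely $\widetilde\kappa_M$ restricted to the subgroup $N(\rho_{M^0})_{[x_0]_{M^0}}\cdot K_M\subseteq N_{G^0}(M^0)(F)_{[x_0]_{M^0}}\cdot K_M$, so the detour through $\widetilde\rho_{M^0}$ should simply be dropped and replaced by the observation (as in the paper) that the remaining properties of the axiom follow from the definitions, Lemma~\ref{lemma:NG0M0xM0normalizescompactopensungroups}, and the chain of inclusions $N_{G^0}(M^0)(F)_{[x_0]_{M^0}}\supseteq N(\rho_{M^0})_{[x_0]_{M^0}}\supseteq K_{M^0}$.
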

\begin{proof}
	The first claim follows from the definitions of $\kappa_{M}$ and $\widetilde \kappa_{M}$ and Theorem \ref{twistextension}.
	Then the claim about Axiom~\ref{HAI-axiomaboutKM0vsKM} of \cite{HAI}
being satisfied follows from the definitions, Lemma~\ref{lemma:NG0M0xM0normalizescompactopensungroups}, and $N_{G^{0}}(M^{0})(F)_{[x_{0}]_{M^{0}}}\supseteq N(\rho_{M^{0}})_{[x_{0}]_{M^{0}}} \supseteq \nobreak K_{M^{0}}$.
	
	Suppose that $\Coeff$ admits a nontrivial involution.
	According to Remark~\ref{remarkabouttheunitaricityofthecharacters}, we arranged for the characters $\phi_{i}$ to be unitary for all $0 \le i \le d$.
	Then the claim follows from Lemma~\ref{lemma:kappatildeisunitary}, the definition of $\widetilde \kappa_{M}$, and the fact that the characters $\widetilde{\epsilon}^{G^{i}/G^{i-1}}_{x_{0}}$ are unitary for $1 \le i \le d$.
\end{proof}

\begin{proposition}
\label{proofofaxiomaboutK0vsK}
For each $x \in \cA_{\gen}$,
we have
\[
I_{G(F)}(\rho_{x}) = K_{x} \cdot I_{G^{0}(F)}(\rho^{0}_{x}) \cdot K_{x}= K_{x} \cdot N(\rho_{M^{0}})_{[x_{0}]_{M^{0}}} \cdot K_{x} .
\] 
Thus, the families $\cK^{0}$ and $\cK$ satisfy
Axiom~\ref{HAI-axiomaboutK0vsK} of \cite{HAI},
and the family $\cK$ satisfies
Axiom~\ref{HAI-axiombijectionofdoublecoset} of \cite{HAI}
for $\Nheart=N(\rho_{M^{0}})_{[x_{0}]_{M^{0}}}$.
\end{proposition}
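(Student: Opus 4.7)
The plan is to deduce the first equality from the intertwining analysis of the types constructed by Kim and Yu, and the second equality from the depth-zero result already established in \cite{HAI}. More precisely, for the first equality the key input is the analog of Yu's intertwining theorem (cf.\ \cite[Theorem~9.4]{Yu}), which identifies the intertwining support of $\rho_x$ in $G(F)$ with $K_x \cdot I_{G^0(F)}(\rho_x^0) \cdot K_x$. As noted in Remark~\ref{remarkabouttwsitedanduntwistedKimYuconstruction}, Yu's original argument rests on \cite[Proposition~14.1 and Theorem~14.2]{Yu}, which fail in general, but for our twisted construction the required replacements are provided by \cite[Corollary~4.1.11, Corollary~4.1.12]{FKS}. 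Concretely, using the decomposition $\rho_x = \inf(\rho_x^0) \otimes \kappa_x$ and the fact that $\kappa_x$ restricted to $K_{x,+}$ is $\theta_x$-isotypic (Lemma~\ref{lemmakapparestrictiontoK+KimYuver}), an intertwining element $g \in G(F)$ must first intertwine $\theta_x$, and hence by the genericity of the characters $\phi_i$ in combination with the corrected Hakim--Murnaghan-style calculations underlying \cite[Corollary~4.1.11--4.1.12]{FKS}, it must lie in $K_x \cdot G^0(F) \cdot K_x$; one then identifies the $G^0(F)$-part of the intertwiner as lying in $I_{G^0(F)}(\rho_x^0)$.

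The second equality, $I_{G^0(F)}(\rho_x^0) \subseteq K_x^0 \cdot N(\rho_{M^0})_{[x_0]_{M^0}} \cdot K_x^0$ and the reverse inclusion, is exactly the content of Proposition~\ref{HAI-propproofofaxiombijectionofdoublecoset} of \cite{HAI} applied to the depth-zero $G^0$-datum extracted from $\Sigma$ (as in axiom~\textbf{D4}), noting that Proposition~\ref{prop:K0satisfiesaxiomaboutHNheartandKandaxiombijectionofdoublecoset} already verified the hypotheses for $\cK^0$ to satisfy Axiom~\ref{HAI-axiombijectionofdoublecoset} of \cite{HAI}. Combining the two equalities yields the identification of $I_{G(F)}(\rho_x)$ with $K_x \cdot N(\rho_{M^0})_{[x_0]_{M^0}} \cdot K_x$.

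It remains to extract the axiom statements. Axiom~\ref{HAI-axiomaboutK0vsK} of \cite{HAI} for the pair of families $(\cK^0, \cK)$ asserts exactly that $I_{G(F)}(\rho_x) = K_x \cdot I_{G^0(F)}(\rho_x^0) \cdot K_x$, which is the first equality just proved. Axiom~\ref{HAI-axiombijectionofdoublecoset} of \cite{HAI} for $\cK$ with $\Nheart = N(\rho_{M^0})_{[x_0]_{M^0}}$ is the second equality combined with the bijection-of-double-cosets statement, which follows formally from the first equality together with Proposition~\ref{prop:K0satisfiesaxiomaboutHNheartandKandaxiombijectionofdoublecoset} applied to $\cK^0$ (so that representatives in $\Nzeroheart$ biject with $K_x^0$-double cosets in the support, and then the first equality lifts this bijection to $K_x$-double cosets).

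The main obstacle is verifying the first equality in the generality required here, namely for arbitrary $x \in \cA_{\gen}$ (not just $x = x_0$ in a supercuspidal-type setting as in \cite{Yu}), for our twisted representation $\rho_x$ rather than Yu's $\rho_x^{\nt}$, and with $\Coeff$-coefficients. The twist makes the argument work where the original one breaks, via \cite[Corollary~4.1.11--4.1.12]{FKS}; the extension to arbitrary $x \in \cA_{\gen}$ and to general Levi $M^0 \subsetneq G^0$ requires that the genericity conditions used in the Hakim--Murnaghan-type computations are preserved under translation within $\cA_{\gen}$, which follows from Lemma~\ref{lemmaindependenceofthegenericity} and the definition of $\mathfrak{H}$; and the $\Coeff$-coefficient generalization is handled by working with the mod-$\ell$ Heisenberg--Weil representation constructed in \cite[Section~2.3]{MR4460255}, which satisfies the same formal intertwining properties as its complex counterpart.
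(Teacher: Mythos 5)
Your proposal is correct and follows essentially the same route as the paper's proof: the paper cites \cite[Theorem~8.1]{Kim-Yu} directly for the first equality (this is exactly the generalization of \cite[Theorem~9.4]{Yu} to the Kim--Yu, non-supercuspidal, setting that you appeal to) together with Remark~\ref{remarkabouttwsitedanduntwistedKimYuconstruction} pointing to the twist corrections in \cite[Corollaries~4.1.11--4.1.12]{FKS}, and cites \cite[Proposition~\ref{HAI-propproofofaxiombijectionofdoublecoset}]{HAI} for the second equality, exactly as you do. Your sketch of the underlying intertwining argument and the remarks about extending to arbitrary $x \in \cA_{\gen}$ and to $\Coeff$-coefficients accurately fill in the details that the paper's terse citation-based proof leaves implicit.
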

\begin{proof}
The first equality follows from \cite[Theorem~8.1]{Kim-Yu} and Remark~\ref{remarkabouttwsitedanduntwistedKimYuconstruction}, which points out that the results of Kim and Yu apply to the twisted construction that we use in this section, and not to the non-twisted one. The second equality follows from \cite[Proposition \ref{HAI-propproofofaxiombijectionofdoublecoset}]{HAI}.
Using the definition of $K_{x}$ and $\rho_{x}$ in this section, we see that Axiom~\ref{HAI-axiomaboutK0vsK} of \cite{HAI} is satisfied.
\end{proof}

Now we introduce some groups that will be used to show that the families $\cK^{0}$ and $\cK$ satisfy
Axiom~\ref{HAI-axiomextensionoftheinductionofkappa} of \cite{HAI}.

\begin{notation}
\label{notationK0xyKxy0+kappaxy}
Let $x, y \in \cA_{\gen}$ with $d(x, y) = 1$.
We denote by
$H_{x,y} \in \mathfrak{H}$ the unique hyperplane that satisfies $\mathfrak{H}_{x, y} = \{ H_{x,y} \}$
and define the compact, open subgroups
\index{notation-ky}{Kay0 x y@$K^{0}_{x, y}$}%
$K^{0}_{x, y}$ of $G^{0}(F)$ and
\index{notation-ky}{Kay x y 0plus@$K_{x, y; 0+}$}%
$K_{x, y; 0+}$ of $G(F)$ by $K^{0}_{x, y} = K^{0}_{h}$ and $K_{x, y; 0+} = K_{h, 0+}$,
where $h \in H_{x,y}$ is the unique point for which $h = x + t \cdot (y - x)$ for some $0 < t < 1$, and $K^{0}_{h}$ and $K_{h, 0+}$ were defined in \eqref{definitionofcompactopensubgroupsdepthzerocase} and \eqref{definitionofcompactopensubgroupsKimYucase}.
We also define the irreducible smooth representation
\index{notation-ky}{kappaxy@$\kappa_{x, y}$}%
$\kappa_{x, y}$ of
$K_{x, y} \coloneqq  K^{0}_{x, y} \cdot K_{x, y; 0+} = K_{h}$ by $\kappa_{x, y} = \kappa_{h}$,
where $\kappa_{h}$ denotes the representation obtained from the Heisenberg--Weil datum 
\[
\HW(\Sigma)_{h} =\!
\begin{cases} \!
\bigl(
\left(
G^0 \subsetneq G^1 \subsetneq \ldots \subsetneq G^d
\right),
(r_0, \ldots , r_{d-1}), (h, \{\iota\}), K^{0}_{h}, (\phi_0, \ldots , \phi_{d-1})
\bigr) & \! \! (r_{d-1} = r_{d}), \\
\bigl(
\left(
G^0 \subsetneq G^1 \subsetneq \ldots \subsetneq G^d \subseteq G^{d+1} \coloneqq  G^{d}
\right),
(r_0, \ldots , r_{d}), (h, \{\iota\}), K^{0}_{h}, (\phi_0, \ldots , \phi_{d})
\bigr) & \! \! (r_{d-1} < r_{d})
\end{cases}
\]
via the twisted Heisenberg--Weil construction, see Notation \ref{notationuntwistedconstruction}.
\end{notation} 

\begin{lemma}
\label{proofofaxiomextensionoftheinductionofkappa}
Let $x, y \in \cA_{\gen}$ such that $d(x, y) = 1$.
Then the triple $(K^{0}_{x, y}, K_{x, y; 0+}, \kappa_{x, y})$ in Notation~\ref{notationK0xyKxy0+kappaxy} satisfies the following properties:
\begin{enumerate}[(1)]
\item
$K^{0}_{x, y}$ contains $K^{0}_{x}$ and $K^{0}_{y}$.
\item
$K_{x, y; 0+}$ is normalized by the group $K^{0}_{x, y}$, and we have
\[
K_{x, 0+} \subset \left(
G^{0}(F) \cap K_{x, 0+}
\right)
\cdot K_{x, y; 0+}
\quad
\text{ and }
\quad
K_{y, 0+} \subset \left(
G^{0}(F) \cap K_{y, 0+}
\right)
\cdot K_{x, y; 0+}.
\]
\item
The group $G^{0}(F) \cap K_{x, y; 0+}$ is contained in the kernels of $\rho^{0}_{x}$ and $\rho^{0}_{y}$.
\item
\label{claim:extensionoftheinductionofkappakappasxirreducibleinKimYusetting}
The restriction of $\kappa_{x, y}$ to $K_{x, y; 0+}$ is irreducible.
\item
\label{claim:extensionoftheinductionofkappacompatibilitywiththecompactindinKimYusetting}
We have isomorphisms
\[
\kappa_{x, y} \restriction_{K^{0}_{x} \cdot K_{x, y; 0+}} \isoarrow \ind_{K_{x}}^{K^{0}_{x} \cdot K_{x, y; 0+}} (\kappa_{x})
\quad
\text{ and }
\quad
\kappa_{x, y} \restriction_{K^{0}_{y} \cdot K_{x, y; 0+}} \isoarrow \ind_{K_{y}}^{K^{0}_{y} \cdot K_{x, y; 0+}} (\kappa_{y}).
\]
\end{enumerate}
Thus, the families $\cK^{0}$ and $\cK$ satisfy
Axiom~\ref{HAI-axiomextensionoftheinductionofkappa} of \cite{HAI}.
\end{lemma}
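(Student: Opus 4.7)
The plan is to verify the five numbered properties and then observe that their conjunction, together with the parallel depth-zero verification for $\cK^0$, yields Axiom~\ref{HAI-axiomextensionoftheinductionofkappa} of \cite{HAI}. The unifying input driving the proof is the defining property of $h$: it lies on the unique hyperplane $H_{x,y}\in\mathfrak{H}$ separating $x$ from $y$, and is strictly on the same side as both $x$ and $y$ of every other hyperplane of $\mathfrak{H}$. Accordingly, the Moy--Prasad filtration subgroups at $h$ either coincide with, or are directly comparable to, those at $x$ and $y$, differing only along the root subgroup underlying $H_{x,y}$.

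For (1), I would split on whether $H_{x,y}\in\mathfrak{H}^0_{S^0}$ or $H_{x,y}\in\mathfrak{H}^i_{S^i}$ for some $i\ge 1$: in the former case, $h$ lies in the closure of the facets of the $G^0$-apartment containing $x$ and $y$, so $G^0(F)_{x,0},\,G^0(F)_{y,0}\subseteq G^0(F)_{h,0}$; in the latter, $x,h,y$ all lie in the same $G^0$-facet and the three parahorics coincide. Either way $K^0_x,K^0_y\subseteq K^0_h=K^0_{x,y}$. For (2), the normalization of $K_{x,y;0+}=K_{h,0+}$ by $K^0_h$ is standard from \cite[4.3~Proposition]{Kim-Yu}; the inclusion $K_{x,0+}\subseteq (G^0(F)\cap K_{x,0+})\cdot K_{x,y;0+}$ reduces, via the product decomposition $K_{z,0+}=G^0(F)_{z,0+}\cdot\prod_{j=1}^d G^j(F)_{z,r_{j-1}/2}$ and a root-group decomposition in a maximal split torus of $G^0$ whose apartment contains both $x$ and $h$, to showing $U_\alpha(F)_{x,r_{j-1}/2}\subseteq U_\alpha(F)_{h,r_{j-1}/2}$ for each $\alpha\in\Phi(G^j,T)\smallsetminus\Phi(G^{j-1},T)$ and $1\le j\le d$, which holds because $h$ lies in the closed half-space of $x$ with respect to the wall $\{a=r_{j-1}/2\}$ whenever such a wall is defined. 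For (3), an Iwahori-type decomposition shows $G^0(F)\cap K_{x,y;0+}=G^0(F)_{h,0+}$, and $\rho^0_x$ is trivial on this group either trivially (when $h$ and $x$ lie in the same $G^0$-facet, so $G^0(F)_{h,0+}=G^0(F)_{x,0+}$) or by the depth-zero cover property established in \cite[Section~\ref{HAI-sec:depth-zero}]{HAI}; the same argument with the roles of $x$ and $y$ exchanged handles the $\rho^0_y$ statement.

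Claim (4) is immediate from Lemma~\ref{irreducibilityofkapparestrictiontoJ} applied to the Heisenberg--Weil datum $\HW(\Sigma)_h$ underlying $\kappa_{x,y}=\kappa_h$, since $K_{h,0+}=K_{x,y;0+}$.

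The substantive content is claim (5), which I would deduce from Corollary~\ref{corollaryofpropositioninductiontwistedsequencever} applied to $\HW(\Sigma)_h$, identifying the corollary's $x$, $y$, $K^0_x$, $K^0_y$ with our $x$, $h$, $K^0_x$, $K^0_h$ respectively. The hypothesis $G^0(F)_{h,0+}\subseteq G^0(F)_{x,0+}\subseteq K^0_x\subseteq K^0_h$ follows from (1) and the depth-zero filtration analysis, and the sandwich inclusions
\[
G^i(F)_{h,\tfrac{r_{i-1}}{2}+}\subseteq G^i(F)_{x,\tfrac{r_{i-1}}{2}+}\subseteq G^i(F)_{x,\tfrac{r_{i-1}}{2}}\subseteq G^i(F)_{h,\tfrac{r_{i-1}}{2}}
\]
for $1\le i\le d$ follow from the single-hyperplane property: at a wall $\{a=r_{i-1}/2\}$ on which $h$ lies, the closed filtration group at $h$ contains that at $x$, while the open filtration at $h$ is contained in that at $x$; at walls not containing $h$, the two filtrations coincide. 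Corollary~\ref{corollaryofpropositioninductiontwistedsequencever} then yields the first isomorphism of (5), and the symmetric application with $y$ in place of $x$ yields the second. The main obstacle of the whole proof is claim (5), but the heavy lifting has already been carried out in Corollary~\ref{corollaryofpropositioninductiontwistedsequencever}, whose proof depends crucially on the quadratic twist from \cite[Section~4]{FKS} being built into the construction of $\kappa_x$.
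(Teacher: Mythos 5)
Your proposal is correct and follows the same line of argument as the paper: establish the sandwich inclusions $G^{i}(F)_{h, \frac{r_{i-1}}{2} +} \subseteq G^{i}(F)_{z, \frac{r_{i-1}}{2} +} \subseteq G^{i}(F)_{z, \frac{r_{i-1}}{2}} \subseteq G^{i}(F)_{h, \frac{r_{i-1}}{2}}$ for $z \in \{x,y\}$ and all $0 \le i \le d$ from the single-hyperplane condition $d(x,y)=1$, derive (1)--(3) from these inclusions (the paper does this uniformly via the $i=0$ case rather than your case-split on whether $H_{x,y}\in\mathfrak{H}^0_{S^0}$, but that is a cosmetic difference), invoke Lemma~\ref{irreducibilityofkapparestrictiontoJ} for (4), and apply Corollary~\ref{corollaryofpropositioninductiontwistedsequencever} with $(x,h)$ and $(y,h)$ for (5).
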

\begin{proof}
\addtocounter{equation}{-1}
\begin{subequations}
Since $d(x, y) = 1$, the definition of $\mathfrak{H}$ implies that we have
\begin{equation}
\label{inclusionsbetweenGihandGiz}
G^{i}(F)_{h, \frac{r_{i-1}}{2} +} \subseteq G^{i}(F)_{z, \frac{r_{i-1}}{2} +} \subseteq G^{i}(F)_{z, \frac{r_{i-1}}{2}} \subseteq G^{i}(F)_{h, \frac{r_{i-1}}{2}}
\end{equation}
for all $0 \le i \le d$ and $z \in \{x, y\}$, where $h \in H_{x,y}$ is the unique point for which $h = x + t \cdot (y - x)$ for some $0 < t < 1$,
as in Notation \ref{notationK0xyKxy0+kappaxy}.
The first three claims of the lemma follow from \eqref{inclusionsbetweenGihandGiz}.
Claim \eqref{claim:extensionoftheinductionofkappakappasxirreducibleinKimYusetting} follows from Lemma~\ref{irreducibilityofkapparestrictiontoJ}.
Claim \eqref{claim:extensionoftheinductionofkappacompatibilitywiththecompactindinKimYusetting} follows from \eqref{inclusionsbetweenGihandGiz} and Corollary~\ref{corollaryofpropositioninductiontwistedsequencever}.
\end{subequations}
\end{proof}

Recall from Definition \ref{HAI-definitionKrelevant} of \cite{HAI} that a hyperplane $H \in \mathfrak{H}$ is called $\cK$-relevant, resp.,\ $\cK^0$-relevant if there exists $x, y \in \cA_{\gen}$ such that 
$
\mathfrak{H}_{x, y} = \left\{
H
\right\}$
and
\(
\Theta_{x \mid y} \circ \Theta_{y \mid x} \notin \Coeff \cdot \id_{\ind_{K_{x}}^{G(F)}(\rho_{x})} 
\), resp.,\ $\Theta^0_{x \mid y} \circ \Theta^0_{y \mid x} \notin \Coeff \cdot \id_{\ind_{K^0_{x}}^{G^0(F)}(\rho^0_{x})}$, where the intertwining operators $\Theta_{x \mid y}$, $\Theta_{y \mid x}$, $\Theta^0_{x \mid y}$, and $\Theta^0_{y \mid x}$ are defined in \cite[\S\S\ref{HAI-subsec:intertwiningop},\ref{HAI-subsec:relevance}]{HAI}.
Following Definition \ref{HAI-definitionKrelevant}, we denote by $\mathfrak{H}_{\Krel}$, resp.,\ $\mathfrak{H}_{\Kzrel}$
the set of hyperplanes that are $\cK$-relevant, resp.,\ $\cK^0$-relevant.
\begin{lemma}
\label{lemmareloodcontainedindepthzero}
We have $
\mathfrak{H}_{\Krel}
= \nobreak \mathfrak{H}_{\Kzrel}
\subset \nobreak \mathfrak{H}^{0}_{S^{0}}
$, where $S^0$ is any maximal split torus of $M^{0}$ such that $x_{0} \in \cA(G^{0}, S^{0}, F)$.
\end{lemma}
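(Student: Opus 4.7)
The plan is to prove the two containments $\mathfrak{H}_{\Krel} \subseteq \mathfrak{H}^{0}_{S^{0}}$ and $\mathfrak{H}_{\Kzrel} \subseteq \mathfrak{H}^{0}_{S^{0}}$ simultaneously, and then separately establish the equality $\mathfrak{H}_{\Krel} = \mathfrak{H}_{\Kzrel}$, by a case analysis on which subset $\mathfrak{H}^{i}_{S^{i}}$ a given hyperplane belongs to. Note that by construction the sets $\mathfrak{H}^{0}_{S^{0}}, \mathfrak{H}^{1}_{S^{1}}, \ldots, \mathfrak{H}^{d}_{S^{d}}$ are pairwise disjoint (because a hyperplane $H_{a, r_{i-1}/2}$ for $i \ge 1$ passes through level $r_{i-1}/2 > 0$ on every apartment, while $\mathfrak{H}^{0}_{S^{0}}$ consists of hyperplanes through level $0$). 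Throughout I fix $H \in \mathfrak{H}$ and take $x, y \in \cA_{\gen}$ with $\mathfrak{H}_{x, y} = \{H\}$, so that $d(x,y) = 1$; let $h \in H$ be as in Notation~\ref{notationK0xyKxy0+kappaxy}.

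\emph{Case 1: $H \in \mathfrak{H}^{i}_{S^{i}}$ for some $i \ge 1$, so in particular $H \notin \mathfrak{H}^{0}_{S^{0}}$.} Since crossing $H$ does not cross any depth-zero hyperplane, we have $G^{0}(F)_{x, 0} = G^{0}(F)_{y, 0} = G^{0}(F)_{h, 0}$, hence $K^{0}_{x} = K^{0}_{y} = K^{0}_{x,y}$ and consequently $\rho^{0}_{x} = \rho^{0}_{y} = \rho^{0}_{x,y}$ as representations of the same group. This immediately yields $H \notin \mathfrak{H}_{\Kzrel}$ since the depth-zero intertwining operators $\Theta^{0}_{x \mid y}$ and $\Theta^{0}_{y \mid x}$ are the identity up to scalars. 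For $\cK$-relevance, Lemma~\ref{proofofaxiomextensionoftheinductionofkappa}(\ref{claim:extensionoftheinductionofkappacompatibilitywiththecompactindinKimYusetting}) combined with $K^{0}_{x} = K^{0}_{y}$ gives $\ind_{K_{x}}^{K_{x,y}}(\kappa_{x}) \simeq \kappa_{x, y} \simeq \ind_{K_{y}}^{K_{x,y}}(\kappa_{y})$. Tensoring with the common $\inf(\rho^{0}_{x,y})$ yields $\ind_{K_{x}}^{K_{x,y}}(\rho_{x}) \simeq \ind_{K_{y}}^{K_{x,y}}(\rho_{y})$. By the characterization of the intertwining operators $\Theta_{x \mid y}, \Theta_{y \mid x}$ in \cite[\S\ref{HAI-subsec:intertwiningop},\ref{HAI-subsec:relevance}]{HAI}, this forces $\Theta_{x \mid y} \circ \Theta_{y \mid x} \in \Coeff \cdot \id$, so $H \notin \mathfrak{H}_{\Krel}$.

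\emph{Case 2: $H \in \mathfrak{H}^{0}_{S^{0}}$.} Here crossing $H$ does not cross any hyperplane in $\mathfrak{H}^{i}_{S^{i}}$ for $i \ge 1$, so the positive-depth Moy--Prasad groups $G^{i}(F)_{x, r_{i-1}/2}$ for $i \ge 1$ are unchanged: $K_{x, 0+} \cdot K^{0}_{x,y} = K_{y, 0+} \cdot K^{0}_{x,y} = K_{x,y}$. Lemma~\ref{proofofaxiomextensionoftheinductionofkappa}(\ref{claim:extensionoftheinductionofkappacompatibilitywiththecompactindinKimYusetting}) then provides canonical isomorphisms $\ind_{K_{x}}^{K_{x,y}}(\kappa_{x}) \simeq \kappa_{x,y} \simeq \ind_{K_{y}}^{K_{x,y}}(\kappa_{y})$ for the positive-depth part. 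Under the tensor decomposition $\rho_{\bullet} = \inf(\rho^{0}_{\bullet}) \otimes \kappa_{\bullet}$, the intertwining operators $\Theta_{x \mid y}$ and $\Theta_{y \mid x}$ therefore decompose as the tensor product of the depth-zero intertwining operators $\Theta^{0}_{x \mid y}, \Theta^{0}_{y \mid x}$ (acting on the depth-zero factor) with the canonical identifications above (acting on the $\kappa$-factor). Consequently, $\Theta_{x \mid y} \circ \Theta_{y \mid x}$ is a scalar iff $\Theta^{0}_{x \mid y} \circ \Theta^{0}_{y \mid x}$ is a scalar, i.e., $H \in \mathfrak{H}_{\Krel}$ iff $H \in \mathfrak{H}_{\Kzrel}$.

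The main obstacle will be the precise bookkeeping in Case~2: identifying the intertwining operator $\Theta_{y \mid x}$ for $\cK$ with a tensor product whose depth-zero factor is $\Theta^{0}_{y \mid x}$ and whose positive-depth factor is an isomorphism requires carefully tracking the Heisenberg--Weil part of the construction through the restriction–induction procedure defining these operators in \cite{HAI}, and in particular verifying that the isomorphism from Lemma~\ref{proofofaxiomextensionoftheinductionofkappa}(\ref{claim:extensionoftheinductionofkappacompatibilitywiththecompactindinKimYusetting}) is the one used to build the intertwining operator (so that no extra non-scalar factor is introduced).
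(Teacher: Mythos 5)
Your proof takes a genuinely different route from the paper's, and the difference matters. The paper establishes the equality $\mathfrak{H}_{\Krel}=\mathfrak{H}_{\Kzrel}$ by citing Corollary~\ref{HAI-corollaryKrel=K0rel} of \cite{HAI}, whose hypotheses have already been verified earlier in the section (Lemma~\ref{proofofaxiomaboutHNheartandK}, Propositions~\ref{propproofofaxiomaboutKM0vsKM} and \ref{proofofaxiomaboutK0vsK}, Lemma~\ref{proofofaxiomextensionoftheinductionofkappa}); only the inclusion $\mathfrak{H}_{\Kzrel}\subset\mathfrak{H}^{0}_{S^{0}}$ is proved by hand, and that argument is exactly your Case~1 computation for $\mathfrak{H}_{\Kzrel}$ (for $H\notin\mathfrak{H}^{0}_{S^{0}}$ the depth-zero data do not change, so $(K^{0}_{x},\rho^{0}_{x})=(K^{0}_{y},\rho^{0}_{y})$ and $\Theta^{0}_{y\mid x}=\mathrm{id}$). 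You instead try to establish the equality from scratch by a case analysis on whether $H\in\mathfrak{H}^{0}_{S^{0}}$, i.e.\ you are essentially re-deriving the companion paper's corollary rather than invoking it.

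This leaves two concrete gaps. First, your Case~2 — showing that for $H\in\mathfrak{H}^{0}_{S^{0}}$ the $\cK$-relevance and $\cK^{0}$-relevance of $H$ are equivalent — is the entire technical content of Corollary~\ref{HAI-corollaryKrel=K0rel}, and you explicitly flag it as ``the main obstacle'' without carrying it out. That identification of $\Theta_{y\mid x}$ as a tensor product of the depth-zero operator with a ``canonical'' positive-depth identification is precisely what needs proof, and it is not a light bookkeeping matter (it is why the companion paper devotes a corollary to it). Second, even in Case~1 your argument for $H\notin\mathfrak{H}_{\Krel}$ is not complete: the existence of an isomorphism $\ind_{K_{x}}^{K_{x,y}}(\rho_{x})\simeq\ind_{K_{y}}^{K_{x,y}}(\rho_{y})$ does not by itself show that the \emph{specific} operators $\Theta_{x\mid y}$, $\Theta_{y\mid x}$ of \cite{HAI} compose to a scalar. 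You need to trace through their actual definitions (which involve restriction to $K_{x,+}\cap K_{y,+}$, not just induction isomorphisms) to conclude scalarity — or, as the paper does, deduce it from the equality $\mathfrak{H}_{\Krel}=\mathfrak{H}_{\Kzrel}$ together with the already-proved fact $H\notin\mathfrak{H}_{\Kzrel}$. Finally, the pairwise-disjointness assertion about the $\mathfrak{H}^{i}_{S^{i}}$ is stated without justification and is not actually needed: your dichotomy is simply $H\in\mathfrak{H}^{0}_{S^{0}}$ versus $H\notin\mathfrak{H}^{0}_{S^{0}}$, which is well-defined regardless.
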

\begin{proof}
The equality follows from Corollary~\ref{HAI-corollaryKrel=K0rel} of \cite{HAI}, whose assumptions are satisfied by Lemma~\ref{proofofaxiomaboutHNheartandK}, Proposition~\ref{propproofofaxiomaboutKM0vsKM}, Proposition~\ref{proofofaxiomaboutK0vsK}, and Lemma~\ref{proofofaxiomextensionoftheinductionofkappa}.
We will prove the inclusion.
Suppose that $H \in \mathfrak{H} \smallsetminus \mathfrak{H}^{0}_{S^{0}}$.
Then, for all $x, y \in \cA_{\gen}$ with $\mathfrak{H}_{x,y} = \{H\}$, the definitions of the family $\cK^{0}$ and the intertwining operator
$\Theta^{0}_{y \mid x}$
imply that we have $(K^{0}_{x}, \rho^{0}_{x}) = (K^{0}_{y}, \rho^{0}_{y})$ and
$
\Theta^{0}_{y \mid x} = \id_{\ind_{K^{0}_{x}}^{G^{0}(F)} (\rho^{0}_{x})}
$.
Hence, the affine hyperplane $H$ is not $\cK^{0}$-relevant, that is, $H \not \in \mathfrak{H}_{\Kzrel}$.
\end{proof}

We set $\Nzeroheart=N(\rho_{M^{0}})_{[x_{0}]_{M^{0}}}$ and recall from
\cite[Definition \ref{HAI-definitionofWheart}]{HAI} that 
\[
\Wzeroheart
\coloneqq
\Nzeroheart / \bigl( \Nzeroheart  \cap K_{M^0} \bigr) 
=
N(\rho_{M^{0}})_{[x_{0}]_{M^{0}}} / K_{M^0}
\]
and from \cite[Section \ref{HAI-subsection:indexinggroup}]{HAI}
that
\(
W_{\Kzrel} \coloneqq \langle s_{H} \mid H \in \mathfrak{H}_{\Kzrel} \rangle
\)
with set of simple reflections $S_{\Kzrel}$,
\index{notation-ky}{S K0 rel@$S_{\Kzrel}$}%
see \cite[Notation \ref{HAI-notationsimplereflections}]{HAI}.
Here, for a hyperplane $H \in \mathfrak{H}$,
we let $s_H$ denote the corresponding reflection.
Similarly, given a reflection $s$ of $\cA_{x_0}$,
we let $H_s$ denote the hyperplane fixed by $s$.
\begin{proposition}
\label{prop:axiomexistenceofRgrpandaxiomaboutdimensionofend}
The group $\Wzeroheart$ satisfies Axiom~\ref{HAI-axiomexistenceofRgrp} of \cite{HAI} with a normal subgroup $\Waffz$\index{notation-ky}{W aff0@$\Waffz$} of $\Wzeroheart$, and the family $\cK^{0}$ satisfies Axiom~\ref{HAI-axiomaboutdimensionofend} of \cite{HAI} with the group $K'_{x,s}=K^{0}_{x, s x}$ for each $s \in S_{\Kzrel}$ and $x \in \cA_{\gen}$ such that $\mathfrak{H}_{x, s x} = \{ H_{s} \}$.
\end{proposition}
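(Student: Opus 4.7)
The plan is to reduce this proposition entirely to the depth-zero setting that has already been handled in \cite{HAI}. The family $\cK^{0}$ depends only on the depth-zero $G^{0}$-datum $\bigl((G^0, M^0), (x_0, \iota:\cB(M^0,F) \to \cB(G^0,F)), (K_{M^0}, \rho_{M^0})\bigr)$ extracted from $\Sigma$, as recalled in \eqref{definitionofcompactopensubgroupsdepthzerocase} and in the discussion after it. In particular, the construction of $\cK^0$ here coincides with the construction of the depth-zero family in \cite[Section~\ref{HAI-sec:depth-zero}]{HAI}, and $\mathfrak{H}^{0}_{S^{0}}$ is the corresponding set of affine hyperplanes used there.

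First, I would note that by Lemma~\ref{lemmareloodcontainedindepthzero} we have $\mathfrak{H}_{\Kzrel} \subset \mathfrak{H}^{0}_{S^{0}}$, so the $\cK^{0}$-relevant hyperplanes arising from the present $G$-datum are exactly the relevant hyperplanes for the depth-zero $G^{0}$-datum above. Consequently, the group $W_{\Kzrel}$ and its set of simple reflections $S_{\Kzrel}$, as well as the groups $K^{0}_{x, s x}$ for $s \in S_{\Kzrel}$ and $x \in \cA_{\gen}$ with $\mathfrak{H}_{x, s x} = \{ H_{s} \}$, all agree with the corresponding objects in the depth-zero setting of \cite[Section~\ref{HAI-sec:depth-zero}]{HAI} (the points $x, y \in \cA_{\gen}$ needed here are also generic in the depth-zero sense because $\cA_{\gen}$ is contained in the set of generic points used in \cite[Section~\ref{HAI-subsec:affinehyperplanes-depthzero}]{HAI}, as already used in the proof of Proposition~\ref{prop:K0satisfiesaxiomaboutHNheartandKandaxiombijectionofdoublecoset}).

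Next, I would invoke the analogous proposition established in \cite[Section~\ref{HAI-sec:depth-zero}]{HAI} for the depth-zero $G^{0}$-datum: there, it is shown that $\Wzeroheart$ contains a normal subgroup $\Waffz$ satisfying Axiom~\ref{HAI-axiomexistenceofRgrp} of \cite{HAI}, and that $\cK^{0}$ satisfies Axiom~\ref{HAI-axiomaboutdimensionofend} of \cite{HAI} with the specified choice $K'_{x,s} = K^{0}_{x, sx}$. The hypotheses of those depth-zero results were already verified, namely Axioms~\ref{HAI-axiomaboutHNheartandK} and \ref{HAI-axiombijectionofdoublecoset} of \cite{HAI} for $\cK^0$, via Proposition~\ref{prop:K0satisfiesaxiomaboutHNheartandKandaxiombijectionofdoublecoset}.

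There is essentially no obstacle here: since all the objects in the statement depend only on the depth-zero data, both assertions transfer directly from \cite{HAI}. The only minor point of care is to check that the identification of hyperplanes, simple reflections, and the groups $K^{0}_{x,sx}$ with their depth-zero counterparts is compatible, which follows from Lemmas~\ref{lemma:independenceofS0} and \ref{lemmareloodcontainedindepthzero} together with the definition of $\cK^0$.
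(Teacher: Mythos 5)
Your proposal is correct and takes essentially the same route as the paper: observe that $\cA_{\gen}$ is contained in the set of generic points from the depth-zero setting of \cite{HAI} (because $\mathfrak{H}^{0}_{S^{0}} \subset \mathfrak{H}$), combine this with Lemma~\ref{lemmareloodcontainedindepthzero}, and cite the corresponding depth-zero proposition of \cite{HAI} (namely \cite[Proposition~\ref{HAI-proofofaxiomexistenceofRgrpandaxiomsHisinKxy}]{HAI}). One small caution: you assert that the $\cK^{0}$-relevant hyperplanes here are \emph{exactly} the relevant hyperplanes for the depth-zero $G^{0}$-datum, but the paper only uses the inclusion $\mathfrak{H}_{\Kzrel} \subset \mathfrak{H}^{0}_{S^{0}}$ together with $\cA_{\gen}$ being a subset of the depth-zero generic locus; the equality is neither claimed nor needed, and proving it outright would require a separate argument that relevance is unaffected by shrinking the generic set.
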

\begin{proof}
Since the set of affine hyperplanes
$\mathfrak{H}^{0}_{S^{0}} \subset \cA_{x_0}$
used to define the set of generic points 
in the depth-zero setting
in \cite[Section \ref{HAI-subsec:affinehyperplanes-depthzero}]{HAI}
is a subset of $\mathfrak{H}$,
our set $\cA_{\gen}$ of generic points is contained
in the set of generic points used in
\cite[Section \ref{HAI-subsec:affinehyperplanes-depthzero}]{HAI}.
Therefore,
the proposition follows from \cite[Proposition~\ref{HAI-proofofaxiomexistenceofRgrpandaxiomsHisinKxy}]{HAI}
and Lemma~\ref{lemmareloodcontainedindepthzero}.
\end{proof}

While we are now already in position to conclude the main result of this subsection, Theorem \ref{heckealgebraisomforKim-Yutype},
let us first note a corollary of Proposition~\ref{prop:axiomexistenceofRgrpandaxiomaboutdimensionofend} based on
\cite[Section \ref{HAI-subsec:Heckeisom}]{HAI}.

\begin{corollary}\label{corofproofofaxiomexistenceofRgrpandaxiomsHisinKxyaxiomaboutdimensionofend}
	The family $\cK$ satisfies
	Axiom~\ref{HAI-axiomaboutdimensionofend} of \cite{HAI} with $\Nheart=N(\rho_{M^{0}})_{[x_{0}]_{M^{0}}}$ and the group $K'_{x,s}=K_{x, s x}$, see Notation \ref{notationK0xyKxy0+kappaxy}, for each $s \in S_{\Kzrel}$ and $x \in \cA_{\gen}$ such that $\mathfrak{H}_{x, s x} = \{ H_{s} \}$.
\end{corollary}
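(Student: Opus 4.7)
The plan is to deduce the corollary from Proposition~\ref{prop:axiomexistenceofRgrpandaxiomaboutdimensionofend} by invoking the general transfer machinery of \cite[Section~\ref{HAI-subsec:Heckeisom}]{HAI}. All of the hypotheses needed to relate the families $\cK^{0}$ and $\cK$ have already been verified in the preceding results of this subsection: Axiom~\ref{HAI-axiomaboutHNheartandK} (Proposition~\ref{prop:K0satisfiesaxiomaboutHNheartandKandaxiombijectionofdoublecoset} and Lemma~\ref{proofofaxiomaboutHNheartandK}), Axiom~\ref{HAI-axiomaboutKM0vsKM} (Proposition~\ref{propproofofaxiomaboutKM0vsKM}), Axiom~\ref{HAI-axiomaboutK0vsK} and Axiom~\ref{HAI-axiombijectionofdoublecoset} (Proposition~\ref{proofofaxiomaboutK0vsK} and Proposition~\ref{prop:K0satisfiesaxiomaboutHNheartandKandaxiombijectionofdoublecoset}), and Axiom~\ref{HAI-axiomextensionoftheinductionofkappa} (Lemma~\ref{proofofaxiomextensionoftheinductionofkappa}). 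These set up the comparison framework of \cite[Section~\ref{HAI-sec:comparison}]{HAI} between the depth-zero family $\cK^{0}$ and the family $\cK$.

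Given this framework, the relevant transfer lemma in \cite[Section~\ref{HAI-subsec:Heckeisom}]{HAI} provides, for each $s \in S_{\Kzrel}$ and each $x \in \cA_{\gen}$ with $\mathfrak{H}_{x,sx}=\{H_s\}$, a compatibility identifying the endomorphism algebra $\End_{G(F)}\bigl(\ind_{K_{x,sx}}^{G(F)}(\rho_{x,sx})\bigr)$ relevant to Axiom~\ref{HAI-axiomaboutdimensionofend} for $\cK$ with the corresponding endomorphism algebra $\End_{G^{0}(F)}\bigl(\ind_{K^{0}_{x,sx}}^{G^{0}(F)}(\rho^{0}_{x,sx})\bigr)$ for $\cK^{0}$. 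Since Proposition~\ref{prop:axiomexistenceofRgrpandaxiomaboutdimensionofend} establishes Axiom~\ref{HAI-axiomaboutdimensionofend} for $\cK^{0}$ with $K^{0}_{x,sx}$, the required dimension statement transfers directly to $\cK$ with $K_{x,sx}$.

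The key compatibility underlying this transfer is the identity $K_{x,sx}=K^{0}_{x,sx}\cdot K_{x,sx;0+}$ from Notation~\ref{notationK0xyKxy0+kappaxy}, together with the induction isomorphisms of Lemma~\ref{proofofaxiomextensionoftheinductionofkappa}\eqref{claim:extensionoftheinductionofkappacompatibilitywiththecompactindinKimYusetting}, which realise $\kappa_{x,sx}$ as the compact induction of $\kappa_x$ and $\kappa_{sx}$. This ensures that the candidate intertwining operators for $\cK$ at the wall $H_s$ are precisely the images of the corresponding operators for $\cK^{0}$ under the comparison isomorphism. The heart of the argument is therefore already contained in \cite[Section~\ref{HAI-subsec:Heckeisom}]{HAI}; in the present setting it suffices to check that all its hypotheses hold, which we have done, and then to unwind the definitions to match $K'_{x,s}=K_{x,sx}$ on the $\cK$-side with $K'_{x,s}=K^{0}_{x,sx}$ on the $\cK^{0}$-side. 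No new obstacle arises at this step.
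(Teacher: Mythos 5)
Your proposal is correct and takes essentially the same route as the paper: both deduce Axiom~\ref{HAI-axiomaboutdimensionofend} for $\cK$ from its validity for $\cK^{0}$ (Proposition~\ref{prop:axiomexistenceofRgrpandaxiomaboutdimensionofend}) via the transfer machinery of \cite[Section~\ref{HAI-subsec:Heckeisom}]{HAI}, after checking the comparison axioms already established in this subsection. The paper pins this down by citing the specific \cite[Lemma~\ref{HAI-lemmaaxiomstrongerthanaboutdimensionofendimpliesaxiomaboutdimensionofend}]{HAI} rather than describing the mechanism, but the underlying argument and the list of verified hypotheses are the same.
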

\begin{proof}
	The corollary follows from
\cite[Lemma \ref{HAI-lemmaaxiomstrongerthanaboutdimensionofendimpliesaxiomaboutdimensionofend}]{HAI}.
Note that we can apply the lemma because the family $\cK^0$ satisfies Axioms~\ref{HAI-axiomaboutHNheartandK} and \ref{HAI-axiombijectionofdoublecoset} of \cite{HAI}
	by Proposition~\ref{prop:K0satisfiesaxiomaboutHNheartandKandaxiombijectionofdoublecoset},
	the family $\cK$ satisfies Axiom~\ref{HAI-axiomaboutHNheartandK} of \cite{HAI} by 	 
	Lemma~\ref{proofofaxiomaboutHNheartandK},
Axioms \ref{HAI-axiomaboutKM0vsKM}, \ref{HAI-axiomaboutK0vsK}, and \ref{HAI-axiomextensionoftheinductionofkappa} of \cite{HAI} are satisfied by 	 
	Proposition~\ref{propproofofaxiomaboutKM0vsKM}, 
	Proposition~\ref{proofofaxiomaboutK0vsK}, 
	and 
	Lemma~\ref{proofofaxiomextensionoftheinductionofkappa}, 
	and Axioms \ref{HAI-axiomexistenceofRgrp} and \ref{HAI-axiomaboutdimensionofend} of \cite{HAI} for $\cK^0$ hold by Proposition~\ref{prop:axiomexistenceofRgrpandaxiomaboutdimensionofend}.
	\end{proof}

\begin{theorem}
	\label{heckealgebraisomforKim-Yutype} 
	There exists a support-preserving algebra isomorphism
	\[
	\Isom \colon \cH(G^{0}(F), \rho^{0}_{x_{0}}) \isoarrow \cH(G(F), \rho_{x_{0}}).
	\]
	If $\Coeff$ admits a nontrivial involution,
then there exists such an isomorphism that preserves the corresponding anti-involutions
on both sides defined in
\cite[Section~\ref{HAI-Anti-involution of the Hecke algebra}]{HAI}.
\end{theorem}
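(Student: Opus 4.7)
The plan is to invoke the general Hecke algebra isomorphism \cite[Theorem~\ref{HAI-thm:isomorphismtodepthzero}]{HAI} applied to the two families $\cK^{0}$ and $\cK$ together with the indexing group $\Nzeroheart = N(\rho_{M^{0}})_{[x_{0}]_{M^{0}}}$. All the work needed to apply that theorem has already been done in the preceding subsections: the bulk of the proof will consist in collecting the verifications of the relevant axioms and citing the cited theorem.

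More precisely, I would proceed as follows. First, I would note that Axiom~\ref{HAI-axiomaboutHNheartandK} for $\cK^{0}$ is established in Proposition~\ref{prop:K0satisfiesaxiomaboutHNheartandKandaxiombijectionofdoublecoset}, and for $\cK$ in Lemma~\ref{proofofaxiomaboutHNheartandK}. Axiom~\ref{HAI-axiombijectionofdoublecoset} for $\cK^{0}$ is part of Proposition~\ref{prop:K0satisfiesaxiomaboutHNheartandKandaxiombijectionofdoublecoset}, and for $\cK$ it is part of Proposition~\ref{proofofaxiomaboutK0vsK}. Axiom~\ref{HAI-axiomaboutKM0vsKM} is given by Proposition~\ref{propproofofaxiomaboutKM0vsKM}, using the explicit extension $\widetilde{\kappa}_{M}$ of $\kappa_{M}$ via the extended quadratic character $\widetilde{\epsilon}^{\overrightarrow{G}}_{x_{0}}$ from Definition~\ref{definitionofkappatilde}. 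Axiom~\ref{HAI-axiomaboutK0vsK} follows from Proposition~\ref{proofofaxiomaboutK0vsK}, and Axiom~\ref{HAI-axiomextensionoftheinductionofkappa} for both families follows from Lemma~\ref{proofofaxiomextensionoftheinductionofkappa}, whose main non-trivial input is the compact-induction compatibility of the twisted Heisenberg--Weil representations (Corollary~\ref{corollaryofpropositioninductiontwistedsequencever}). Finally, Axioms~\ref{HAI-axiomexistenceofRgrp} and \ref{HAI-axiomaboutdimensionofend} for $\cK^{0}$ are Proposition~\ref{prop:axiomexistenceofRgrpandaxiomaboutdimensionofend}, and Axiom~\ref{HAI-axiomaboutdimensionofend} for $\cK$ is Corollary~\ref{corofproofofaxiomexistenceofRgrpandaxiomsHisinKxyaxiomaboutdimensionofend}.

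With all these axioms in place, \cite[Theorem~\ref{HAI-thm:isomorphismtodepthzero}]{HAI} produces a support-preserving algebra isomorphism
\[
\Isom \colon \cH(G^{0}(F), \rho^{0}_{x_{0}}) \isoarrow \cH(G(F), \rho_{x_{0}}),
\]
which is the first claim. For the second claim, I would invoke \cite[Corollary~\ref{HAI-cor:starpreservation}]{HAI}, which asserts that when $\Coeff$ admits a non-trivial involution and the extension $\widetilde{\kappa}_{M}$ of $\kappa_{M}$ to $\Nzeroheart \cdot K_{M}$ is unitary, the isomorphism can be chosen to intertwine the anti-involutions on both Hecke algebras. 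Unitarity of $\widetilde{\kappa}_{M}$ is precisely the last assertion of Proposition~\ref{propproofofaxiomaboutKM0vsKM}, which in turn relies on the observation (Remark~\ref{remarkabouttheunitaricityofthecharacters}) that, without loss of generality, the generic characters $\phi_{i}$ may be taken unitary, combined with Lemma~\ref{lemma:kappatildeisunitary} and the obvious unitarity of the fourth-root-of-unity valued extended quadratic character $\widetilde{\epsilon}^{\overrightarrow{G}}_{x_{0}}$.

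There is no real obstacle at this stage of the paper: the proof is essentially a one-line citation once the axiom verifications are available. The substance of the argument lies entirely in the preceding sections, and in particular in the construction of the extension $\widetilde{\epsilon}^{G/G'}_{x_{0}}$ in Section~\ref{sec:extensionofthetwist} (needed for Axiom~\ref{HAI-axiomaboutKM0vsKM}) and in the compact-induction compatibility of $\kappa_{x}$ in Section~\ref{sec:twistedweil} (needed for Axiom~\ref{HAI-axiomextensionoftheinductionofkappa}).
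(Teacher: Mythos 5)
Your proposal is correct and follows exactly the same route as the paper: collect the axiom verifications from Propositions~\ref{prop:K0satisfiesaxiomaboutHNheartandKandaxiombijectionofdoublecoset}, \ref{prop:axiomexistenceofRgrpandaxiomaboutdimensionofend}, \ref{propproofofaxiomaboutKM0vsKM}, \ref{proofofaxiomaboutK0vsK}, and Lemmas~\ref{proofofaxiomaboutHNheartandK}, \ref{proofofaxiomextensionoftheinductionofkappa}, then invoke \cite[Theorem~\ref{HAI-thm:isomorphismtodepthzero}]{HAI} for the isomorphism and \cite[Corollary~\ref{HAI-cor:starpreservation}]{HAI} for the $*$-preservation. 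The only cosmetic difference is that you also cite Corollary~\ref{corofproofofaxiomexistenceofRgrpandaxiomsHisinKxyaxiomaboutdimensionofend}, which the paper reserves for the structural Theorem~\ref{theoremstructureofheckeforKimYu} rather than the isomorphism statement.
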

\begin{proof}
	The statement follows from
	\cite[Theorem \ref{HAI-thm:isomorphismtodepthzero} and Corollary \ref{HAI-cor:starpreservation}]{HAI},
	whose assumptions are satisfied by 
	 Proposition~\ref{prop:K0satisfiesaxiomaboutHNheartandKandaxiombijectionofdoublecoset},  
	  Lemma~\ref{proofofaxiomaboutHNheartandK}, 
	 Proposition~\ref{prop:axiomexistenceofRgrpandaxiomaboutdimensionofend}, 
	 Proposition~\ref{propproofofaxiomaboutKM0vsKM}, 
	 Proposition~\ref{proofofaxiomaboutK0vsK}, 
	  and 
	 Lemma~\ref{proofofaxiomextensionoftheinductionofkappa}. 
\end{proof}

The isomorphism $\Isom$ in the above theorem is described more explicitly in
\cite[Theorem \ref{HAI-thm:explicitisom}]{HAI}.

\subsection{The structure of Hecke algebras attached to the types constructed by Kim--Yu}
\label{subsec:structureKimYuHeckealgebra}

Since we have shown that all the axioms of
Sections \ref{HAI-Structure of a Hecke algebra} and
\ref{HAI-sec:comparison} of \cite{HAI}
are satisfied in the setting of the present section, we also obtain that the Hecke algebras attached to $(K_x^0, \rho_x^0)$ and $(K_{x_0}, \rho_{x_0})$ are isomorphic to a semi-direct product of a twisted group algebra with an affine Weyl group.

\begin{theorem}
	\label{theoremstructureofheckeforKimYu}
	We have isomorphisms of $\Coeff$-algebras
	\[
 \cH(G(F), \rho_{x_{0}}) \simeq 	\cH(G^{0}(F), \rho^{0}_{x_{0}}) \simeq  \Coeff[\Wzeroz, \muTzero] \ltimes \cH_\Coeff(\Waffz, q),
	\] 
where
\begin{itemize}
	\item 
$\Wzeroz$ denotes the subgroup of length-zero elements of $\Wzeroheart$ 
defined in
\cite[Notation~\ref{HAI-notation:Wzero}]{HAI},
\item 
$\muTzero$ denotes the restriction to $\Wzeroz \times \Wzeroz$ of the $2$-cocycle introduced in
\cite[Notation~\ref{HAI-notationofthetwococycle}]{HAI}
for a choice of a family $\cT^0$ satisfying the properties of
\cite[Choice~\ref{HAI-choice:tw}]{HAI} for the pair $(K_{M^0}, \rho_{M^0})$
and the collections of operators $\{\Phi^0_w\}$ and $\{\Phi^0_t\}$ defined on page \pageref{HAI-phi0w-page} of \cite{HAI},
\item $q$ denotes the parameter function $s \mapsto q_{s}$ appearing in
\cite[Choice~\ref{HAI-choice:tw}\eqref{HAI-conditionofthechoicequadraticrelations}]{HAI},
\item $\Coeff[\Wzeroz, \muTzero]$ denotes the twisted group algebra
recalled in Notation \ref{HAI-notn:algebras}(\ref{HAI-item:twisted-group-algebra})
of \cite{HAI},
and
\item $\cH_\Coeff(\Waffz, q)$ denotes the affine Hecke algebra with $\Coeff$-coefficients associated to the affine Weyl group $\Waffz$ with set of generators $S_{\Kzrel}$ and the parameter function $q$
recalled in Notation \ref{HAI-notn:algebras}(\ref{HAI-item:affine-hecke-algebra})
 of \cite{HAI}.
\end{itemize}
If $\Coeff$ admits a nontrivial involution,
then we can choose $\cT^0$ as in \cite[Choice~\ref{HAI-choice:star}]{HAI},
and the above isomorphisms can be chosen to preserve the anti-involutions
on each algebra defined in
\cite[Section~\ref{HAI-Anti-involution of the Hecke algebra}]{HAI}.
\end{theorem}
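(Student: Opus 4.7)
The plan is to read the theorem as a chain of two isomorphisms and handle them separately. The first isomorphism $\cH(G(F),\rho_{x_0}) \simeq \cH(G^0(F),\rho^0_{x_0})$ is precisely the content of Theorem~\ref{heckealgebraisomforKim-Yutype}, which has already been proved in the previous subsection via the axiomatic machinery of \cite{HAI}; there is nothing to do for this arrow beyond invoking it.

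For the second isomorphism $\cH(G^0(F),\rho^0_{x_0}) \simeq \Coeff[\Wzeroz,\muTzero] \ltimes \cH_\Coeff(\Waffz,q)$, I would apply \cite[Theorem~\ref{HAI-theoremstructureofheckefordepthzero}]{HAI}, which is the general structure theorem for Hecke algebras attached to depth-zero types in \cite{HAI}. To apply this theorem I need to verify the axiomatic hypotheses of \cite[Section~\ref{HAI-Structure of a Hecke algebra}]{HAI} for the depth-zero family $\cK^0$ together with the group $\Nzeroheart = N(\rho_{M^0})_{[x_0]_{M^0}}$. Fortunately, everything needed is already in place: Axioms~\ref{HAI-axiomaboutHNheartandK} and \ref{HAI-axiombijectionofdoublecoset} come from Proposition~\ref{prop:K0satisfiesaxiomaboutHNheartandKandaxiombijectionofdoublecoset}; Axiom~\ref{HAI-axiomextensionoftheinductionofkappa} (for $\cK^0$) is part of Lemma~\ref{proofofaxiomextensionoftheinductionofkappa}; and Axioms~\ref{HAI-axiomexistenceofRgrp} and \ref{HAI-axiomaboutdimensionofend} for $\cK^0$ are Proposition~\ref{prop:axiomexistenceofRgrpandaxiomaboutdimensionofend}. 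Feeding these into \cite[Theorem~\ref{HAI-theoremstructureofheckefordepthzero}]{HAI} yields the claimed semidirect product description, with the specific data $\Wzeroz$, $\muTzero$, $\Waffz$, and $q$ arising from the choices of $\cT^0$, $\{\Phi^0_w\}$ and $\{\Phi^0_t\}$ described in the statement.

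For the final assertion, suppose $\Coeff$ carries a nontrivial involution. Then the anti-involution-preserving version of Theorem~\ref{heckealgebraisomforKim-Yutype} provides the first isomorphism compatibly with the anti-involutions from \cite[Section~\ref{HAI-Anti-involution of the Hecke algebra}]{HAI}; the relevant unitarity input has already been arranged in Remark~\ref{remarkabouttheunitaricityofthecharacters} and Proposition~\ref{propproofofaxiomaboutKM0vsKM}, which guarantees that $\widetilde\kappa_M$ is unitary. For the second isomorphism, one chooses the family $\cT^0$ according to \cite[Choice~\ref{HAI-choice:star}]{HAI}; the corresponding refinement of \cite[Theorem~\ref{HAI-theoremstructureofheckefordepthzero}]{HAI} then produces an isomorphism onto $\Coeff[\Wzeroz,\muTzero] \ltimes \cH_\Coeff(\Waffz,q)$ that intertwines the anti-involutions on both sides.

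In summary, the proof is essentially a bookkeeping exercise: no further geometric or representation-theoretic input is needed, since all the axioms have been checked in the preceding subsections, and the two isomorphisms are immediate applications of \cite[Theorem~\ref{HAI-thm:isomorphismtodepthzero}]{HAI} (equivalently Theorem~\ref{heckealgebraisomforKim-Yutype}) and \cite[Theorem~\ref{HAI-theoremstructureofheckefordepthzero}]{HAI}, respectively. The only point requiring minor care, and hence the closest thing to an obstacle, is to ensure that the same choice of $\cT^0$ underlies both isomorphisms so that the anti-involution compatibility composes correctly; this is handled by picking $\cT^0$ once and using it throughout.
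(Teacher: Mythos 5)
Your proof reaches the same conclusion but by a genuinely different decomposition. The paper's actual proof does not invoke Theorem~\ref{heckealgebraisomforKim-Yutype} at all: instead it applies the general abstract structure theorem of \cite{HAI} (Theorem~\ref{HAI-theoremstructureofhecke}, together with Proposition~\ref{HAI-propstarpreservationabstractheckevsourhecke} for the anti-involution) to \emph{both} families $\cK$ and $\cK^0$ independently, producing two isomorphisms onto the same abstract algebra $\Coeff[\Wzeroz,\muTzero]\ltimes\cH_\Coeff(\Waffz,q)$ and obtaining the middle isomorphism as the composite. This is why the paper's proof cites both the $\cK^0$-side inputs (Propositions~\ref{prop:K0satisfiesaxiomaboutHNheartandKandaxiombijectionofdoublecoset}, \ref{prop:axiomexistenceofRgrpandaxiomaboutdimensionofend}) and the $\cK$-side inputs (Lemma~\ref{proofofaxiomaboutHNheartandK}, Corollary~\ref{corofproofofaxiomexistenceofRgrpandaxiomsHisinKxyaxiomaboutdimensionofend}, and Proposition~\ref{proofofaxiomaboutK0vsK} for both). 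Your route, by contrast, recycles Theorem~\ref{heckealgebraisomforKim-Yutype} for the first arrow and then applies the structure theorem to $\cK^0$ alone; this buys you a slightly leaner set of inputs and keeps the logical dependency structure cleaner (the structure theorem need only be invoked once). The paper's route buys an independent derivation of the middle isomorphism that does not rely on the comparison machinery of \cite[Section~\ref{HAI-sec:comparison}]{HAI}. Both are correct.

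One small conflation worth flagging: you cite \cite[Theorem~\ref{HAI-theoremstructureofheckefordepthzero}]{HAI} (the depth-zero structure theorem, whose axiomatic inputs are verified \emph{inside} \cite{HAI}) and then also proceed to verify the axioms of \cite[Section~\ref{HAI-Structure of a Hecke algebra}]{HAI} for $\cK^0$ in this paper's setting, which is redundant if you are really invoking the depth-zero theorem. If you take the former route, the real work is not re-verifying axioms but rather checking that the data $\Wzeroz$, $\Waffz$, $q$, $\muTzero$ coming from \cite{HAI}'s depth-zero hyperplane set $\mathfrak{H}^{0}_{S^{0}}$ agree with the ones in the present statement (which are built from the larger set $\mathfrak{H}$); that is exactly the content of Lemma~\ref{lemmareloodcontainedindepthzero}, which you should cite explicitly. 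If instead you mean to apply the general Theorem~\ref{HAI-theoremstructureofhecke} of \cite{HAI} to $\cK^0$ (matching what you actually verify), say so; then your argument is correct as written.
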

\begin{proof}
The statement follows from
\cite[Theorem~\ref{HAI-theoremstructureofhecke} and Proposition~\ref{HAI-propstarpreservationabstractheckevsourhecke}]{HAI}, whose assumptions are satisfied by 
	 Proposition~\ref{prop:K0satisfiesaxiomaboutHNheartandKandaxiombijectionofdoublecoset}, 
	 Lemma~\ref{proofofaxiomaboutHNheartandK}, 
	 Proposition~\ref{proofofaxiomaboutK0vsK},  
	 Proposition~\ref{prop:axiomexistenceofRgrpandaxiomaboutdimensionofend}, 
	 and
	 Corollary~\ref{corofproofofaxiomexistenceofRgrpandaxiomsHisinKxyaxiomaboutdimensionofend}. 
\end{proof}

\subsection{Application: Reduction to depth zero}
\label{subsec:reduction}

In this subsection we assume that $\Coeff = \bC$.
(We only do so because the literature on types currently makes this assumption.)
By \cite[Theorem~7.5]{Kim-Yu} and \cite{MR4357723}
the pair $(K_{x_{0}}, \rho_{x_{0}})$ is an $\fS(\Sigma)$-type for 
a finite subset $\fS(\Sigma)$ of the inertial equivalence classes $\IEC(G)$ for $G$
and the pair $(K^{0}_{x_{0}}, \rho^{0}_{x_{0}})$ is an $\fSz(\Sigma)$-type for
a finite subset $\fSz(\Sigma)$ of the inertial equivalence classes $\IEC(G^0)$ for $G^0$.
We denote by $\Rep^{\fS(\Sigma)}(G(F))$ and $\Rep^{\fSz(\Sigma)}(G^0(F))$ the corresponding union of Bernstein blocks that were recalled in \cite[Section \ref{HAI-subsec:application}]{HAI}.

\begin{corollary}
\label{cor:equiv-blocksKimYucase} 
We have an equivalence of categories $\Rep^{\fS(\Sigma)}(G(F)) \isoarrow \Rep^{\fSz(\Sigma)}(G^0(F))$. 
\end{corollary}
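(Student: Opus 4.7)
The plan is to combine the Hecke algebra isomorphism of Theorem~\ref{heckealgebraisomforKim-Yutype} with the theory of types of Bushnell--Kutzko. Specifically, since $(K_{x_{0}}, \rho_{x_{0}})$ is an $\fS(\Sigma)$-type and $(K^{0}_{x_{0}}, \rho^{0}_{x_{0}})$ is an $\fSz(\Sigma)$-type, \cite[Theorem~4.3]{BK-types} (recalled in the introduction of the present paper) yields equivalences of categories
\[
\Rep^{\fS(\Sigma)}(G(F)) \simeq \Mod \cH(G(F),\rho_{x_{0}})
\qquad\text{and}\qquad
\Rep^{\fSz(\Sigma)}(G^{0}(F)) \simeq \Mod \cH(G^{0}(F),\rho^{0}_{x_{0}}),
\]
given by the functors $\pi \mapsto \Hom_{K_{x_{0}}}(\rho_{x_{0}},\pi)$ and $\pi \mapsto \Hom_{K^{0}_{x_{0}}}(\rho^{0}_{x_{0}},\pi)$, respectively.

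First I would recall from \cite[Theorem~7.5]{Kim-Yu} (as corrected in \cite{MR4357723}) that $(K_{x_{0}}, \rho_{x_{0}})$ is indeed an $\fS(\Sigma)$-type; as noted in Remark~\ref{remarkabouttwsitedanduntwistedKimYuconstruction}, the quadratic twist by $\epsilon^{\overrightarrow{G}}_{x_{0}}$ does not affect this property since it can be absorbed into a different choice of generic characters. Similarly, $(K^{0}_{x_{0}}, \rho^{0}_{x_{0}})$ is an $\fSz(\Sigma)$-type by the depth-zero case (see \cite[Theorem~\ref{HAI-theoremstructureofheckefordepthzero}]{HAI} and the references therein).

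Then I would invoke Theorem~\ref{heckealgebraisomforKim-Yutype}, which provides an explicit $\bC$-algebra isomorphism
\[
\Isom \colon \cH(G^{0}(F), \rho^{0}_{x_{0}}) \isoarrow \cH(G(F), \rho_{x_{0}}).
\]
Pull-back along $\Isom$ defines an equivalence $\Mod \cH(G(F),\rho_{x_{0}}) \isoarrow \Mod \cH(G^{0}(F),\rho^{0}_{x_{0}})$. Composing the three equivalences yields the desired equivalence of categories $\Rep^{\fS(\Sigma)}(G(F)) \isoarrow \Rep^{\fSz(\Sigma)}(G^{0}(F))$. There is no serious obstacle here: all the substantive work has already been done in establishing Theorem~\ref{heckealgebraisomforKim-Yutype}, so this corollary is a formal consequence of the theory of types together with the Hecke algebra isomorphism.
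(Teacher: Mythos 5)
Your proposal is correct and follows essentially the same route as the paper, which simply invokes a packaged theorem from the companion paper (\cite[Theorem~\ref{HAI-thm:equiv-blocks}]{HAI}) encapsulating exactly the reasoning you spell out: both pairs are types, so each block is equivalent to modules over its Hecke algebra, and the support-preserving isomorphism of Theorem~\ref{heckealgebraisomforKim-Yutype} transports modules between them. One small imprecision in your parenthetical: the quadratic twist is not absorbed into a different choice of generic characters, but rather into the depth-zero datum $\rho_{M^0}$ (cf.\ the proof of Proposition~\ref{prop:twist-necessary}); in any case, the fact that the twisted $(K_{x_0},\rho_{x_0})$ is an $\fS(\Sigma)$-type is already established in the paper at the start of Section~\ref{subsec:reduction}, so your corollary does not hinge on this remark.
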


\begin{proof}
Apply \cite[Theorem~\ref{HAI-thm:equiv-blocks}]{HAI} to the pairs
$(K^{0}_{x_0}, \rho^{0}_{x_0})$ and $(K_{x_{0}}, \rho_{x_{0}})$.
\end{proof}

Combining Corollary~\ref{cor:equiv-blocksKimYucase} with the exhaustion result in \cite{Fi-exhaustion}, we obtain that if $p$ is large enough, every Bernstein block is equivalent to a depth-zero block:

\begin{theorem}
\label{thm:reduction}
We assume that $p$ does not divide the order of the absolute Weyl group of $G$.
Then for every inertial equivalence class $\fs \in \IEC(G)$,
there exists a tamely ramified twisted Levi subgroup $G^{0}$ of $G$ and $\fsz \in \IEC(G^{0})$ such that the full subcategory $\Rep^\fsz(G^0(F))$ consists of depth-zero representations, and we have an equivalence of categories $\Rep^{\fs}(G(F)) \isoarrow \Rep^{\fsz}(G^0(F))$. 
\end{theorem}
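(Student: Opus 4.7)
The plan is to combine the exhaustion result of \cite{Fi-exhaustion} with Corollary~\ref{cor:equiv-blocksKimYucase}. First, by \cite{Fi-exhaustion}, under the assumption that $p \nmid \abs{W}$, every Bernstein block of $G(F)$ contains a type constructed by Kim and Yu. More precisely, given $\fs \in \IEC(G)$, there exists a $G$-datum $\Sigma = \bigl((\overrightarrow{G}, M^0), \overrightarrow{r}, (x_0, \{\iota\}), (K_{M^0}, \rho_{M^0}), \overrightarrow{\phi}\bigr)$ such that, with an appropriate choice of $K_{M^0}$ (e.g., $K_{M^0} = M^0(F)_{x_0}$, so that the pair $(K^0_{x_0}, \rho^0_{x_0})$ is an $\fsz$-type for a single inertial equivalence class $\fsz \in \IEC(G^0)$), the pair $(K_{x_0}, \rho_{x_0})$ constructed in Section~\ref{subsec:construction} is an $\fs$-type for $G(F)$, i.e., $\fS(\Sigma) = \{\fs\}$ and $\fSz(\Sigma) = \{\fsz\}$. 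Note that the replacement of the non-twisted Kim--Yu construction by its twist by $\epsilon^{\overrightarrow{G}}_{x_0}$ does not affect whether the resulting pair is a type, since $\epsilon^{\overrightarrow{G}}_{x_0}$ is a character of $K_{x_0}$, so twisting $\rho_{x_0}$ by it only alters the representation, not the Bernstein components that it covers.

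Second, since $(K^0_{x_0}, \rho^0_{x_0})$ is by construction a depth-zero $G^0$-type (the representation $\rho^0_{x_0}$ is inflated from a cuspidal representation of the reductive quotient of a parahoric subgroup via $\rho_{M^0}$, cf.\ Definition~\ref{definitionofGdatum}~\textbf{D4}), every irreducible object of $\Rep^{\fsz}(G^0(F))$ contains $\rho^0_{x_0}$ upon restriction to $K^0_{x_0}$ and is therefore of depth zero. Thus $\Rep^{\fsz}(G^0(F))$ consists of depth-zero representations.

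Third, applying Corollary~\ref{cor:equiv-blocksKimYucase} to $\Sigma$ and using $\fS(\Sigma) = \{\fs\}$, $\fSz(\Sigma) = \{\fsz\}$, we obtain the desired equivalence of categories
\[
\Rep^{\fs}(G(F)) \isoarrow \Rep^{\fsz}(G^0(F)).
\]
The twisted Levi subgroup $G^0 \subset G$ is tamely ramified by axiom \textbf{D1} of the $G$-datum.

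The only subtle point is to verify that the exhaustion output of \cite{Fi-exhaustion} can be chosen so that $(K_{x_0}, \rho_{x_0})$ is a type for a single Bernstein block $\fs$ (rather than a finite union), which is achieved precisely by the choice $K_{M^0} = M^0(F)_{x_0}$; this is standard and is used throughout the discussion of types for Bernstein blocks following \cite{BK-types}.
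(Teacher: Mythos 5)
Your proposal follows exactly the same route as the paper's proof: invoke \cite[Theorem~7.12]{Fi-exhaustion} to produce a $G$-datum $\Sigma$ with $\{\fs\} = \fS(\Sigma)$ (and $K_{M^0} = M^0(F)_{x_0}$ so that one gets a single block), then apply Corollary~\ref{cor:equiv-blocksKimYucase}. The extra bookkeeping you supply (tameness of $G^0$ from \textbf{D1}; $\Rep^\fsz(G^0(F))$ consisting of depth-zero representations via \textbf{D4}) is fine.

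One small quibble: your parenthetical assertion that twisting $\rho_{x_0}$ by $\epsilon^{\overrightarrow{G}}_{x_0}$ ``only alters the representation, not the Bernstein components that it covers'' is not correct as a general principle --- a character of a compact open subgroup that does not come from a character of $G(F)$ can very well move a type to a different Bernstein component, and indeed Appendix~\ref{subsec:quadratictwistisnecessary} exploits exactly this phenomenon. The correct way to pass from the non-twisted exhaustion statement of \cite{Fi-exhaustion} to the twisted construction used here is to absorb $\epsilon^{\overrightarrow{G}}_{x_0}\restriction_{K_{M^0}}$ into a modification of the depth-zero datum $\rho_{M^0}$ (precisely the manipulation carried out in the proof of Proposition~\ref{prop:twist-necessary}). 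The paper's own proof leaves this implicit as well, so this does not change the verdict: correct, and the same argument.
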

\begin{proof}
According to \cite[Theorem~7.12]{Fi-exhaustion} and the assumptions, for any $\fs \in \IEC(G)$, there exists a $G$-datum $\Sigma$ such that $\{\fs\} = \fS(\Sigma)$.
Then the theorem follows from
Corollary~\ref{cor:equiv-blocksKimYucase}.
\end{proof}

\begin{remark}
According to \cite[Theorem \ref{HAI-thm:plancherel}]{HAI},
when restricted to irreducible objects,
the equivalences of categories in
Corollary~\ref{cor:equiv-blocksKimYucase}
and Theorem \ref{thm:reduction}
preserve temperedness, and
preserve the Plancherel measure on the tempered dual up to an explicit constant
factor.
\end{remark}


\appendix
\section{Appendix}
\subsection{Decomposition of a symplectic space over $\protect\bF_p$:
	generalization of an argument of Yu}
\label{appendixYufortwopoints}
In this appendix, we generalize the arguments in \cite[Section~12 and 13]{Yu} to prove Lemma~\ref{lemmatotallyisotropic}.
We use the same notation as in Section~\ref{subsection: relative case}, i.e.\ $G'$ denotes a twisted Levi subgroup of a connected reductive group $G$ defined over $F$ that splits over a tamely ramified field extension of $F$, $r$ is a positive real number, $x, y \in \cB(G', F)$, and we write
$
\Vyx
= \left(
J_{x} \cap J_{y}
\right) \cdot J_{y, +}/J_{y, +}
$
and
$
\Vyxp  =
\left(
J_{x, +} \cap J_{y}
\right) \cdot J_{y, +}/J_{y, +}
$.
We explain another description of the spaces $\Vyx$ and $\Vyxp$.
Let $T$ be a maximal torus of $G'$ such that the splitting field $E$ of $T$ is tamely ramified over $F$ and such that $x, y \in \cA(G', T, E)$.
According to the discussion in the beginning of \cite[Section~2]{Yu} and the fact that any two points of $\cB(G', F)$ are contained in an apartment of $\cB(G', F)$, such a torus exists.
We define
\[
\begin{cases}
\Phi_{1} &= \{
\alpha \in \Phi(G, T) \mid \alpha(y - x) < 0
\}, \\
\Phi_{2} &= \{
\alpha \in \Phi(G, T) \mid \alpha(y - x) = 0
\} \cup \{0\}, \\
\Phi_{3} &= \{
\alpha \in \Phi(G, T) \mid \alpha(y - x) > 0
\}.
\end{cases}
\]
For $i = 1, 2, 3$, we define
$
\Phi'_{i} = \Phi_{i} \cap \left(
\Phi(G', T) \cup \{0\}
\right)
$
and
$
\Phi''_{i} = \Phi_{i} \smallsetminus \Phi'_{i}
$.
We define the function
\[
f_{i} \colon \Phi(G, T) \cup \{0\} \longrightarrow \widetilde{\mathbb{R}}
\quad 
\text{ by }
\quad
f_{i}(\alpha) =
\begin{cases}
r & (\alpha \in \Phi'_{i}), \\
\frac{r}{2} & (\alpha \in \Phi''_{i}), \\
\infty & (\text{otherwise}).
\end{cases}
\]
As explained in \cite[Section~13]{Yu}, the functions $f_{1}$, $f_{2}$, and $f_{3}$ are concave.
For $i \in \{1, 2, 3\}$, we denote the subgroup
\(
 G(F)_{y,f_i}
\)
by $J_{y,i}$, and the image of $J_{y, i}$ in $J_{y}/J_{y, +}$ by $V_{i}$.
According to (the same arguments used to prove)
\cite[Lemma~13.6]{Yu} (applied to the more general case where $x$ and $y$ are not necessarily in the same $G'(F)$-orbit), the spaces $V_{1}$ and $V_{3}$ are totally isotropic subspaces of $J_{y}/J_{y, +}$ and orthogonal to $V_{2}$ with respect to $\langle \phantom{x},\phantom{x} \rangle_{y}$, and we have \spacingatend{}
\[
\begin{cases}
J_{y}/J_{y, +} &= V_{1} \oplus V_{2} \oplus V_{3}, \\
\Vyx &= V_{1} \oplus V_{2}, \\
\Vyxp  &= V_{1}.
\end{cases}
\]
	\begin{subequations}
		For a subspace $W$ of $J_{y}/J_{y, +}$, let $W^{\perp}$ denote the orthogonal complement of $W$ in $J_{y}/J_{y, +}$ with respect to $\langle \phantom{x},\phantom{x} \rangle_{y}$.
		Then we have \spacingatend{deal at the end with vertical spacing, also improve numbering of equations}
		\begin{align}
		\label{orthogonalxompinclusion}
		\left(
		\Vyxp  
		\right)^{\perp}
		= V_{1}^{\perp} 
		\subset V_{1} \oplus V_{2} = \Vyx.
		\end{align}
		On the other hand, we claim that 
		\begin{equation}
		\label{equation:dimV1=dimV3}
		\dim_{\mathbb{F}_{p}} (V_{1}) + \dim_{\mathbb{F}_{p}}(V_{1} \oplus V_{2}) = \dim_{\mathbb{F}_{p}} (J_{y}/J_{y, +}).
		\end{equation}
		This can be proven by the arguments in the proof of \cite[Lemma~12.8]{Yu}.
		More precisely, 
		since $J_{y}/J_{y, +} = V_{1} \oplus V_{2} \oplus V_{3}$, it suffices to show that $\dim_{\mathbb{F}_{p}} (V_{1}) = \dim_{\mathbb{F}_{p}} (V_{3})$. Since $V_1 \oplus V_2 \oplus V_3$ is non-degenerate and $V_{2}^{\perp} \supset V_{1} \oplus V_{3}$, we obtain that $V_2$ is non-degenerate and $V_{2}^{\perp} = V_{1} \oplus V_{3}$.
		Since $V_{1}$ and $V_{3}$ are totally isotropic subspaces of the non-degenerate space $V_{2}^{\perp} = V_{1} \oplus V_{3}$, we have $\dim_{\mathbb{F}_{p}} (V_{1}), \dim_{\mathbb{F}_{p}} (V_{3}) \le \frac{1}{2}\dim_{\mathbb{F}_{p}} (V_{1} \oplus V_{3})$.
		Thus, we obtain that $\dim_{\mathbb{F}_{p}} (V_{1}) = \dim_{\mathbb{F}_{p}} (V_{3}) = \frac{1}{2}\dim_{\mathbb{F}_{p}} (V_{1} \oplus V_{3})$.
		
		Equation \eqref{equation:dimV1=dimV3} implies that
		$
		\dim_{\mathbb{F}_{p}}(V_{1}^{\perp}) = \dim_{\mathbb{F}_{p}}(V_{1} \oplus V_{2})
		$.
		Combining this with \eqref{orthogonalxompinclusion}, we obtain $V_1^\perp=V_1 \oplus V_2$ and see that Lemma \ref{lemmatotallyisotropic} holds true.
	\end{subequations}

\subsection{The quadratic twist is necessary}
\label{subsec:quadratictwistisnecessary}

In this appendix, we give an example to show that our main theorem, Theorem~\ref{heckealgebraisomforKim-Yutype}, would not be true in general if we replaced $\rho_{x_0}$ by $\rho^{\nt}_{x_0}$, i.e., if we omitted the quadratic twist in the construction of $\rho_{x_0}$.
Assume $\ell=0$ and recall that $p \neq 2$.
Let $G = \Symp_{4}$ over $F$ corresponding to the symplectic pairing given by
\[
J = 
\begin{pmatrix}
	0 & 0 & 0 & 1 \\
	0 & 0 & -1 & 0 \\
	0 & 1 & 0 & 0 \\
	-1 & 0 & 0 & 0
\end{pmatrix}.
\]
Let $T$ be the maximal torus of $G$ defined as
\[
T = \left\{
\begin{pmatrix}
	t_{1} & 0 & 0 & 0 \\
	0 & t_{2} & 0 & 0 \\
	0 & 0 & t_{2}^{-1} & 0 \\
	0 & 0 & 0 & t_{1}^{-1}
\end{pmatrix}
\right\}.
\]
Let $o \in \cA(G, T, F)$ be the point such that
$
G(F)_{o, 0} = \Symp_{4}(\cO_{F})
$.
We identify $\cA(G, T, F)$ with $\mathbb{R}^{2}$ via the bijection
\(
\mathbb{R}^{2} \simeq \cA(G, T, F)
\)
defined by
$
(x_{1}, x_{2}) \mapsto o + x_{1} \alpha_{1}^{\vee} + x_{2} \alpha_{2}^{\vee}
$,
where $\alpha_{1}^{\vee}$ and $\alpha_{2}^{\vee}$ are cocharacters of $T$ defined as
\[
\alpha_{1}^{\vee}(t) = 
\begin{pmatrix}
	t & 0 & 0 & 0 \\
	0 & 1 & 0 & 0 \\
	0 & 0 & 1 & 0 \\
	0 & 0 & 0 & t^{-1}
\end{pmatrix}
\qquad
\text{and}
\qquad
\alpha_{2}^{\vee}(t) = 
\begin{pmatrix}
	1 & 0 & 0 & 0 \\
	0 & t & 0 & 0 \\
	0 & 0 & t^{-1} & 0 \\
	0 & 0 & 0 & 1
\end{pmatrix},
\]
respectively.
We define points $x_{0}, x'_{0} \in \cA(G, T, F) = \mathbb{R}^{2}$ as $x_{0} = \left(- \frac{1}{4}, \frac{1}{16}\right)$ and $x'_{0} = \left(- \frac{1}{4}, \frac{1}{4}\right)$.

\label{page:overlineF}
We fix a uniformizer $\pi_{F}$ of $F$, and let $\varpi_{F}$ be an element of $\overline{F}$ such that $\varpi_{F}^{2} = \pi_{F}$.
We also fix an element $\sqrt{2 \varpi_{F}}$ in $\overline{F}$ such that ${\sqrt{2 \varpi_{F}}}^{2} = 2 \varpi_{F}$.
Let $E = F(\sqrt{2 \varpi_{F}})$.
We define an element $X \in \Lie^{*}(G)(F)$ by
\[
\Lie(G)(F) \ni (A_{i, j}) \stackrel{X}{\longmapsto} \pi_{F}^{-1} \cdot A_{1, 4} + A_{4, 1}.
\]
We define a twisted Levi subgroup $G^{0}$ of $G$ as the centralizer of $X$ in $G$.
Then we have
\[
G^{0}
=
\left\{
\begin{pmatrix}
	a & 0_{1 \times 2} & b \\
	0_{2 \times 1} & \SL_{2} & 0_{2 \times 1} \\
	b \pi_{F}^{-1} & 0_{1 \times 2} & a
\end{pmatrix} \;\middle|\; a^{2} - b^{2} \pi_{F}^{-1} = 1
\right\} 
\simeq
U(1) \times \SL_{2}.
\]
We also define a Levi subgroup $M^{0}$ of $G^{0}$ as
\[
M^{0} = \left\{
\begin{pmatrix}
	a & 0 & 0 & b \\
	0 & t & 0 & 0 \\
	0 & 0 & t^{-1} & 0 \\
	b \pi_{F}^{-1} & 0 & 0 & a
\end{pmatrix} \;\middle|\; a^{2} - b^{2} \pi_{F}^{-1} = 1
\right\}.
\]
We note that $M^{0}$ is a maximal torus of $G^{0}$ and $G$.
Moreover, we have
$
M^{0} = g T g^{-1}
$,
where
\[
g = \begin{pmatrix}
	\frac{
		\varpi_{F}
	}{
		\sqrt{2 \varpi_{F}}
	} & 0 & 0 & \frac{
		- \varpi_{F}
	}{
		\sqrt{2 \varpi_{F}}
	} \\
	0 & 1 & 0 & 0 \\
	0 & 0 & 1 & 0 \\
	\frac{
		1
	}{
		\sqrt{2 \varpi_{F}}
	} & 0 & 0 & \frac{
		1
	}{
		\sqrt{2 \varpi_{F}}
	}
\end{pmatrix}.
\]
Let $M$ denote the centralizer of $A_{M^{0}}$ in $G$.
Then we have
\[
A_{M^{0}} = 
\left\{
\begin{pmatrix}
	1 & 0 & 0 & 0 \\
	0 & t & 0 & 0 \\
	0 & 0 & t^{-1} & 0 \\
	0 & 0 & 0 & 1
\end{pmatrix}
\right\}
\]
and
\[
M = \left\{
\begin{pmatrix}
	a & 0 & 0 & b \\
	0 & t & 0 & 0 \\
	0 & 0 & t^{-1} & 0 \\
	c & 0 & 0 & d
\end{pmatrix}  \;\middle|\;
\begin{pmatrix}
	a & b \\
	c & d
\end{pmatrix} \in \SL_{2}
\right\}.
\]
We define characters $\alpha_{1}$ and $\alpha_{2}$ of $T$ as
\[
\alpha_{1} \left(
\begin{pmatrix}
	t_{1} & 0 & 0 & 0 \\
	0 & t_{2} & 0 & 0 \\
	0 & 0 & t_{2}^{-1} & 0 \\
	0 & 0 & 0 & t_{1}^{-1}
\end{pmatrix}
\right) = t_{1}^{2}
\qquad
\text{and}
\qquad
\alpha_{2} \left(
\begin{pmatrix}
	t_{1} & 0 & 0 & 0 \\
	0 & t_{2} & 0 & 0 \\
	0 & 0 & t_{2}^{-1} & 0 \\
	0 & 0 & 0 & t_{1}^{-1}
\end{pmatrix}
\right) = t_{2}^{2}.
\]
Then we have
\[
\Phi(G, T) = \left\{
\pm \alpha_{1}, \pm \alpha_{2}, \pm \frac{\alpha_{1} + \alpha_{2}}{2}, \pm \frac{\alpha_{1} - \alpha_{2}}{2}
\right\}
\qquad
\text{and}
\qquad
\Phi(M, T) = \left\{
\pm \alpha_{1}
\right\}.
\]

We fix a commutative diagram $\{ \iota \}$
\[
\xymatrix{
	\cB(G^{0}, F) \ar[r] & \cB(G, F)
	\\
	\cB(M^{0}, F) \ar[u] \ar[r] & \cB(M, F) \ar[u]
}
\]
of admissible embeddings of buildings and identify a point in $\cB(M^{0}, F)$ with its images via the embeddings $\{ \iota \}$.
\begin{lemma}
	\label{lemmaXisgeneric}
	The element $X$ is $G$-generic of depth $- \frac{1}{2}$ in the sense of \cite[Definition~3.5.2]{Fintzen-IHES} for the pair $G^{0} \subset G$, and the points $x_{0}$ and $x'_{0}$ are contained in (the image of) $\cB(M^{0}, F)$.
\end{lemma}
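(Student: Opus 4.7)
The lemma combines two independent claims: (i) the element $X$ is $G$-generic of depth $-1/2$ in the sense of \cite[Definition~3.5.2]{Fintzen-IHES} for the pair $G^{0} \subset G$, and (ii) the points $x_{0}$ and $x_{0}'$ lie in the image of an admissible embedding $\cB(M^{0}, F) \hookrightarrow \cB(G, F)$. I would treat these separately.

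For (i), I would verify the three conditions (GE0), (GE1), (GE2) by direct computation. The element $X$ pairs nontrivially only with the $E_{1,4}$ and $E_{4,1}$ matrix entries, which span the root spaces for the long roots $\alpha_{1}$ and $-\alpha_{1}$ in $\Phi(G, T)$. For (GE0), a direct Moy--Prasad calculation at $x_{0} = (-1/4, 1/16)$ or $x_{0}' = (-1/4, 1/4)$ gives that $X \in \Lie^{*}(G)(F)_{x_0, -1/2} \smallsetminus \Lie^{*}(G)(F)_{x_0, (-1/2)+}$, using crucially that $\alpha_{1}(x_{0}) = \alpha_1(x_0') = -1/2$; so $X$ has depth exactly $-1/2$. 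For (GE1), I would use $M^{0}$ itself as a maximal torus of $G^{0}$; since $M^{0} = gTg^{-1}$ with $g \in G(E)$, conjugation by $g$ transports $\Phi(G, T)$ to $\Phi(G, M^{0})$ over $E$ and identifies $\Phi(G^{0}, M^{0})$ with the $\SL_{2}$-roots $\{\pm\alpha_{2}\}$. I would then check that for each $\alpha \in \Phi(G, M^{0}) \smallsetminus \Phi(G^{0}, M^{0}) = \{\pm\alpha_{1}, \pm(\alpha_{1} \pm \alpha_{2})/2\}$, the pairing of $X$ with the coroot $H_{\alpha}$ has the correct valuation (namely $-1/2$ in the normalization over $E$, with the appropriate ramification factor). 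For (GE2), the relevant Weyl group condition reduces to a direct inspection for the subsystem inclusion corresponding to the type $A_{1}$ inside $C_{2}$.

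For (ii), I would observe that $M^{0} \cong U(1) \times \bG_{m}$ has maximal $F$-split subtorus $A_{M^{0}}$ of rank one, so $\cB(M^{0}, F)$ is one-dimensional and its image in $\cB(G, F) = \bR^{2}$ is an affine line parallel to the $\alpha_{2}^{\vee}$-direction. The remaining task is to identify the first coordinate of this line. Since $M^{0} = gTg^{-1}$ with $g \in G(E)$, the apartment $\cA(G, M^{0}, E)$ is identified with $\cA(G, T, E)$ via the action of $g$, and $\cB(M^{0}, F)$ is the $\Gal(E/F)$-fixed subset. I would compute explicitly how $g$ translates the origin $o \in \cA(G, T, F)$ and determine the Galois-fixed set. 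The entries of $g$ involve $\sqrt{2\varpi_{F}}$, which has valuation $1/4$ in $E$; after the cocycle calculation for the Galois twist, this produces a shift of $-1/4$ in the $\alpha_{1}^{\vee}$-direction, so the Galois-fixed line is $\{x_{1} = -1/4\}$, which contains both $x_{0}$ and $x_{0}'$.

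The main obstacle will be the explicit identification of the Galois-fixed line inside $\cA(G, M^{0}, E)$. This requires tracking how the cocycle defined by $g$ (specifically, by its $\sqrt{2\varpi_{F}}$-entries) acts via the inertia group $I_{F}$ on the apartment, and confirming that the resulting basepoint has first coordinate exactly $-1/4$. Once this cocycle computation is in place, the conclusion that $x_{0}$ and $x_{0}'$ lie in the image is immediate, since both have first coordinate $-1/4$ and the image of $\cB(M^{0}, F)$ is all of $\{x_{1} = -1/4\}$.
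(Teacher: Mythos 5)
Your proposal is correct in spirit and would yield a proof, but the paper's route is shorter and sidesteps some of what you describe.  For part (i), the paper does not re-derive the genericity of $X$; it simply cites the same argument as \cite[Lemma~4.1]{MR4357723}, and your plan to verify (GE0), (GE1), (GE2) by hand is essentially a re-proof of that cited lemma.  This is fine, but be aware that the set of roots you list outside $\Phi(G^{0}, M^{0})$ should be all six of $\{\pm\beta_{1},\pm\tfrac{\beta_{1}+\beta_{2}}{2},\pm\tfrac{\beta_{1}-\beta_{2}}{2}\}$, and the valuations $\ord(X(H_{\alpha}))$ must all equal $-r$; your sketch glosses over the fact that $H_{(\beta_1\pm\beta_2)/2}$ projects nontrivially to the $U(1)$-direction, which is where the factor $\sqrt{2\varpi_F}$ and the ramification degree $2$ enter, so the computation is a bit more delicate than just "look at $E_{1,4}$ and $E_{4,1}$."

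For part (ii), the paper avoids your proposed cocycle computation entirely.  Instead of determining the entire Galois-fixed line $\{x_{1}=-\tfrac14\}$, it uses a single-point trick: the point $x'_{0}=(-\tfrac14,\tfrac14)$ is visibly in $\cA(G,T,F)\subset\cB(G,F)$ (so its Galois-fixedness is automatic, no cocycle needed), and the key identity $x'_{0}=g\left(0,\tfrac14\right)$ from \cite[Lemma~4.1]{MR4357723} shows that $x'_{0}\in\cA(G,gTg^{-1},E)=\cB(M^{0},E)$, whence $x'_{0}\in\cB(G,F)\cap\cB(M^{0},E)=\cB(M^{0},F)$.  Then $x_{0}=x'_{0}-\tfrac{3}{16}\alpha_{2}^{\vee}$ lies in $x'_{0}+X_{*}(A_{M^{0}})\otimes\bR\subset\cB(M^{0},F)$.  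Your approach of tracking how the $\sqrt{2\varpi_{F}}$-entries of $g$ twist the $I_{F}$-action would also work and would produce the whole line, but you have correctly flagged this as the "main obstacle," and indeed it is the step your proposal does not actually carry out.  The paper's version replaces that obstacle with the much easier observation that a rational point of $\cA(G,T,F)$ is a priori in $\cB(G,F)$, reducing the burden to the one explicit identity $x'_{0}=g(0,\tfrac14)$.
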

\begin{proof}
	The first claim follows from the same argument as the proof of \cite[Lemma~4.1]{MR4357723}.
	The proof of \cite[Lemma~4.1]{MR4357723} also implies that
	\[
	x'_{0} = g \left(
	0, \tfrac{1}{4}
	\right) \in \cA(G, {g T g^{-1}}, E) = \cB(M^{0}, E).
	\]
	Thus, we conclude that
	$
	x'_{0} \in \cB(G, F) \cap \cB(M^{0}, E) = \cB(M^{0}, F)
	$.
	Then we also have
	\[
	x_{0} = x'_{0} - \tfrac{3}{16}\alpha_{2}^{\vee}
	\in x'_{0} + \left(
	X_{*}(A_{M^{0}}) \otimes_{\mathbb{Z}} \mathbb{R}
	\right) 
	\subset \cB(M^{0}, F).
	\qedhere
	\]
\end{proof}
\begin{remark}
	Since $x'_{0} = g \left(
	0, \frac{1}{4}
	\right)$ and the element $g \in M(E)$ acts trivially on $X_{*}(A_{M^{0}}) \otimes_{\mathbb{Z}} \mathbb{R}$, we also obtain that $x_{0} = g \left(
	0, \frac{1}{16}
	\right)$.
\end{remark}
We define a character $\phi$ of $\Lie(G^{0})(F)_{x_{0}, \frac{1}{2}}$ by
$
\phi(Y) = \Psi(X(Y))
$.
Since $X$ is $G$-generic of depth $- \frac{1}{2}$, the character $\phi$ is trivial on $\Lie(G^{0})(F)_{x_{0}, \frac{1}{2}+}$.
Hence, by using the Moy--Prasad isomorphism
\[
G^{0}(F)_{x_{0}, \frac{1}{2}} / G^{0}(F)_{x_{0}, \frac{1}{2}+} \simeq \Lie(G^{0})(F)_{x_{0}, \frac{1}{2}} / \Lie(G^{0})(F)_{x_{0}, \frac{1}{2}+},
\]
we can also regard $\phi$ as a character of $G^{0}(F)_{x_{0}, \frac{1}{2}}$ that is trivial on $G^{0}(F)_{x_{0}, \frac{1}{2}+}$.
Moreover, the definition of $X$ implies that the character $\phi$ is trivial on the subgroup
\[
\left\{
\begin{pmatrix}
	1 & 0_{1 \times 2} & 0 \\
	0_{2 \times 1} & \SL_{2}(F) & 0_{2 \times 1} \\
	0 & 0_{1 \times 2} & 1
\end{pmatrix}
\right\} \cap G^{0}(F)_{x_{0}, \frac{1}{2}} 
\]
of $G^{0}(F)_{x_{0}, \frac{1}{2}}$.
Since $U(1)$ is abelian, we can extend $\phi$ to a character of 
$
G^{0}(F) \simeq \left(
U(1) \times \SL_{2}
\right)(F)
$
that is trivial on the subgroup
\[
\left\{
\begin{pmatrix}
	1 & 0_{1 \times 2} & 0 \\
	0_{2 \times 1} & \SL_{2}(F) & 0_{2 \times 1} \\
	0 & 0_{1 \times 2} & 1
\end{pmatrix}
\right\}
\]
of $G^{0}(F)$.
We use the same notation $\phi$ for this extension.
Since $X$ is $G$-generic of depth $- \frac{1}{2}$, the character $\phi$ is $G$-generic relative to $x_{0}$ of depth $r \coloneqq  \frac{1}{2}$ in the sense of \cite[Definition~3.5.2]{Fintzen-IHES}.
\begin{lemma}
	\label{lemmagenericityoftheembedding}
	The diagram of embeddings $\{\iota\}$ is $\left(
	0, \frac{r}{2}
	\right) = \left(
	0, \frac{1}{4}
	\right)$-generic relative to $x_{0}$ in the sense of \cite[3.5~Definition]{Kim-Yu}.
\end{lemma}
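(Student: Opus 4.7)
The plan is to verify the two parts of the genericity condition from \cite[Definition~3.5]{Kim-Yu} separately. Since $\cB(M^0, F) \hookrightarrow \cB(G^0, F)$ is an admissible embedding of the building of a Levi subgroup into the building of the ambient group, it is automatically $0$-generic relative to $x_{0}$. The nontrivial part is the $r/2 = 1/4$-genericity of $\cB(M, F) \hookrightarrow \cB(G, F)$ relative to $x_{0}$, which by the criterion of Kim--Yu translates to a finite set of inequalities on the values of affine roots at $x_{0}$.

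My first step will be to transfer the computation to the split apartment $\cA(G, T, E) \cong \mathbb{R}^{2}$ using the element $g \in M(E)$ introduced in the appendix. Since $M^0 = g T g^{-1}$, conjugation by $g$ induces an isomorphism $\cA(G, T, E) \isoarrow \cA(G, M^0, E)$ under which, by the remark following Lemma~\ref{lemmaXisgeneric}, the point $x_{0}$ corresponds to $(0, \tfrac{1}{16})$. Under the same identification, $\Phi(G, M^0)$ corresponds to $\Phi(G, T)$ and $\Phi(M, M^0)$ corresponds to $\Phi(M, T) = \{\pm\alpha_{1}\}$, so the set of roots whose affine values at $x_{0}$ must be controlled is
\[
\Phi(G, T) \smallsetminus \Phi(M, T) = \bigl\{\pm \alpha_{2},\ \pm\tfrac{\alpha_{1}+\alpha_{2}}{2},\ \pm\tfrac{\alpha_{1}-\alpha_{2}}{2}\bigr\}.
\]

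The second step will be to evaluate each of these six roots at $(0, \tfrac{1}{16})$ using the pairings $\langle \alpha_{i}, \alpha_{j}^{\vee}\rangle = 2\delta_{ij}$ given in the appendix. This yields the values $\pm \tfrac{1}{8}$ for $\pm\alpha_{2}$ and $\pm\tfrac{1}{16}$ for $\pm(\alpha_{1}\pm\alpha_{2})/2$. Since none of $\pm \tfrac{1}{8}$ or $\pm\tfrac{1}{16}$ equals $r/2 = \tfrac{1}{4}$ (nor any other integer multiple of the relevant ramification integers lying in the interval $(0, r/2]$), the desired $r/2$-genericity will follow.

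The main obstacle will be to correctly account for the ramification of the roots in $\Phi(G, M^0) \smallsetminus \Phi(M, M^0)$: because $M^0$ splits only over the tamely ramified quadratic extension $E = F(\sqrt{2\varpi_{F}})$, some of these roots become ramified symmetric with ramification index $e_{\beta} = 2$, so the corresponding affine hyperplanes at depth $s$ sit at half-integer translates. Verifying that the values $\pm \tfrac{1}{8},\pm \tfrac{1}{16}$ avoid all such hyperplanes up to depth $r/2 = 1/4$ is a short case-by-case check, analogous to the calculations carried out in \cite[Lemma~4.1]{MR4357723}, and constitutes the technical heart of the proof.
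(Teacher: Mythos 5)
Your outline reaches the right conclusion, but the detour through the element $g$ is unnecessary and introduces a point requiring justification, whereas the paper computes directly with $x_0 = (-\tfrac14, \tfrac1{16}) \in \cA(G, T, F)$. Specifically, the paper shows that
$\langle\alpha, -\tfrac14\alpha_1^\vee + \tfrac1{16}\alpha_2^\vee\rangle \notin \tfrac14\mathbb{Z}$
for all $\alpha \in \Phi(G, T) \smallsetminus \Phi(M, T)$, giving the values $\pm\tfrac18, \mp\tfrac3{16}, \mp\tfrac5{16}$, and immediately concludes $U_\alpha(E)_{x_0, t} = U_\alpha(E)_{x_0, t+}$ for $t \in \{0, \tfrac14\}$. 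There is no need to pass to $(0, \tfrac1{16})$ via conjugation; if you do, you must say explicitly that the $G(E)$-equivariance of Moy--Prasad filtrations, namely $U_{g\alpha}(E)_{gx, t} = g\,U_\alpha(E)_{x, t}\,g^{-1}$, transports the genericity condition at $x_0$ (for roots of $M^0$) to the genericity condition at $(0, \tfrac1{16})$ (for roots of $T$). Your resulting values $\pm\tfrac18, \pm\tfrac1{16}$ are indeed not in $\tfrac14\mathbb{Z}$, so the argument would close, but the paper's version avoids this entirely.

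There are also two factual slips in your discussion of ramification. First, $E = F(\sqrt{2\varpi_F})$ is not a quadratic extension of $F$: since $\ord(\sqrt{2\varpi_F}) = \tfrac14$, the extension $E/F$ is totally ramified of degree $4$, and this $4$ (not $2$) is what makes $\tfrac14\mathbb{Z}$ the relevant lattice of jump values for $U_\alpha(E)_{o, \cdot}$ at the hyperspecial point $o$. Second, the torus $M^0$ actually splits over the smaller field $F(\varpi_F)$ of degree $2$; the enlargement to $E$ is needed because $g \in M(E) \smallsetminus M(F(\varpi_F))$, not because of the splitting of $M^0$. Your ``main obstacle'' paragraph, which talks about roots being ramified symmetric with $e_\beta = 2$, conflates these two integers; the genericity check carried out in the paper only ever needs $\tfrac14\mathbb{Z}$, and the fine structure of the root ramification is irrelevant here (it plays a role only later, in the proof of Lemma~\ref{epsilonatalpha2vee}).
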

\begin{proof}
	We will prove that for all $\alpha \in \Phi(G, T) \smallsetminus \Phi(M, T)$ and $t \in \{0, 1/4\}$, we have $U_{\alpha}(E)_{x_{0},t} = U_{\alpha}(E)_{x_{0}, t+}$.
	Noting that 
	\[
	U_{\alpha}(E)_{x_{0},t} = U_{\alpha}(E)_{o, t - \left \langle
		\alpha, - \tfrac{1}{4} \alpha_{1}^{\vee} + \tfrac{1}{16} \alpha_{2}^{\vee}
		\right \rangle},
	\]
	it suffices to show that
	$
	\left \langle
	\alpha, - \tfrac{1}{4} \alpha_{1}^{\vee} + \tfrac{1}{16} \alpha_{2}^{\vee}
	\right \rangle \not \in \tfrac{1}{4} \mathbb{Z}
	$ for all $\alpha \in \Phi(G, T) \smallsetminus \Phi(M, T)$.
	This follows from the calculations 
	\[
	\begin{cases}
		\left \langle
		\pm \alpha_{2}, - \frac{1}{4} \alpha_{1}^{\vee} + \frac{1}{16} \alpha_{2}^{\vee}
		\right \rangle &= \pm \frac{1}{8}, \\
		\left \langle
		\pm \frac{\alpha_{1} + \alpha_{2}}{2}, - \frac{1}{4} \alpha_{1}^{\vee} + \frac{1}{16} \alpha_{2}^{\vee}
		\right \rangle &= \mp \frac{3}{16}, \\
		\left \langle
		\pm \frac{\alpha_{1} - \alpha_{2}}{2}, - \frac{1}{4} \alpha_{1}^{\vee} + \frac{1}{16} \alpha_{2}^{\vee}
		\right \rangle &= \mp \frac{5}{16}.
	\end{cases}
	\qedhere
	\]
\end{proof}
We define a compact, open subgroup $K^{0}_{x_{0}}$ of $G^{0}(F)$ as $K^{0}_{x_{0}} = G^{0}(F)_{x_{0}, 0}$, and let $\rho^{0}_{x_{0}}$ denote the trivial representation of $K^{0}_{x_{0}}$.
We also write $\rho_{M^{0}}$
for the trivial representation of $M^{0}(F)_{x_{0}, 0}$.
Let $(K_{x_{0}}, \rho^{\nt}_{x_{0}})$ denote the pair constructed from the $G$-datum
\[
\bigl(
( G^{0} \subset G, M^0),
( \tfrac{1}{2}, \tfrac{1}{2}),
(x_{0}, \{\iota\}),
( M^0(F)_{x_{0}, 0}, \rho_{M^0}),
(\phi, 1)
\bigr)
\]
in Section \ref{subsec:construction}.
\begin{proposition}
\label{prop:twist-necessary}
	There is no support-preserving algebra isomorphism 
	\[
	\cH\left(
	G^{0}(F), \rho^{0}_{x_{0}}
	\right) \isoarrow \cH\left(
	G(F), \rho^{\nt}_{x_{0}}
	\right).
	\]
\end{proposition}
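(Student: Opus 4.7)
The plan is to use Theorem \ref{heckealgebraisomforKim-Yutype} to reduce the problem to a comparison on the $G$-side, then produce an explicit cocycle obstruction coming from the non-triviality of $\epsilon^{G/G^{0}}_{x_{0}}$ on a chosen generator of the affine Weyl group. More precisely, since Theorem \ref{heckealgebraisomforKim-Yutype} already provides a support-preserving algebra isomorphism $\cH(G^{0}(F), \rho^{0}_{x_{0}}) \isoarrow \cH(G(F), \rho_{x_{0}})$, it suffices to show that no support-preserving algebra isomorphism $\Psi \colon \cH(G(F), \rho_{x_{0}}) \isoarrow \cH(G(F), \rho^{\nt}_{x_{0}})$ exists. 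The first step is to verify that the two algebras share the same support $K_{x_{0}} \cdot N(\rho_{M^{0}})_{[x_{0}]_{M^{0}}} \cdot K_{x_{0}}$: this follows from Proposition \ref{proofofaxiomaboutK0vsK} together with the observation that twisting by the quadratic character $\epsilon^{G/G^{0}}_{x_{0}}$ (which is trivial on $K_{x_{0}, 0+}$) does not alter the intertwining set. In this example $M^{0}$ is a maximal torus, $\rho_{M^{0}}$ is trivial, and $N(\rho_{M^{0}})_{[x_{0}]_{M^{0}}}/K_{M^{0}}$ is the extended affine Weyl group of type $\tilde A_{1}$ coming from the $\SL_{2}$-factor of $G^{0}$.

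Next I would analyze what it means for $\Psi$ to be a support-preserving algebra homomorphism. Any such $\Psi$ must send the canonical basis element $T_{n}$ attached to each double-coset representative $n \in N(\rho_{M^{0}})_{[x_{0}]_{M^{0}}}$ to a nonzero scalar multiple $c(n) T_{n}^{\nt}$. Using the identity $\rho_{x_{0}} = \rho^{\nt}_{x_{0}} \otimes \epsilon^{G/G^{0}}_{x_{0}}$ from Section \ref{subsec:construction}, a direct comparison of the convolution products on both sides shows that the multiplicativity of $\Psi$ forces the scalars $c(n)$ to satisfy a cocycle-like relation in which the defect from strict multiplicativity is precisely measured by $\epsilon^{G/G^{0}}_{x_{0}}$ on the relevant products of double-coset representatives. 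Equivalently, the existence of $\Psi$ reduces to the vanishing of a cohomology class of $\epsilon^{G/G^{0}}_{x_{0}}$ viewed as a function on $N(\rho_{M^{0}})_{[x_{0}]_{M^{0}}}/K_{M^{0}}$ modulo the characters that are already realized by the ambient Hecke algebra structure.

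The main obstacle is therefore to compute $\epsilon^{G/G^{0}}_{x_{0}}$ on a carefully chosen lift $n_{s}$ of the simple reflection $s$ in the affine Weyl group and to exhibit a nontrivial value. I would select $n_{s}$ to represent the affine reflection of $\tilde A_{1}$ that lies in $N_{G^{0}}(M^{0})(F)_{[x_{0}]_{M^{0}}}$, and then unwind the three-factor product $\epsilon^{G/G^{0}}_{x_{0}} = {_{G^{0}} \epsilon_{\sym, \ram}} \cdot {_{G^{0}} \epsilon^{\sym, \ram}_{s}} \cdot {_{G^{0}} \epsilon_{0}}$ from \cite[Section~5]{FKS}, evaluating each factor on $n_{s}$. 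The decisive contribution should come from the ramified-symmetric piece, because $G^{0} = U(1) \times \SL_{2}$ with a \emph{ramified} $U(1)$ produces ramified-symmetric roots of $(G, Z(G^{0}))$; the relevant quadratic spaces on which the spinor norm is computed are nontrivial, and the example has been engineered (via the specific choice of $X$, of $x_{0} = (-1/4, 1/16)$, and of the ramified uniformizer $\varpi_{F} = \sqrt{\pi_{F}}$ appearing in the construction) so that the spinor-norm computation returns $-1$. The calculation would proceed by reading off the root-space decomposition of $\Lie(\Symp_{4})$ under $Z(G^{0})$, applying the construction of the quadratic form in Section \ref{subsection:character2} and \ref{subsection:character13}, and computing the action of $n_{s}$ explicitly in coordinates.

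Once the value $\epsilon^{G/G^{0}}_{x_{0}}(n_{s}) = -1$ is established, the cocycle condition from the second paragraph cannot be satisfied: no consistent choice of scalars $c(n)$ exists that is compatible with both the braid relation for $s$ in $\tilde A_{1}$ and the nontrivial sign contributed by $n_{s}$. This yields the desired contradiction, proving that no support-preserving algebra isomorphism between $\cH(G^{0}(F), \rho^{0}_{x_{0}})$ and $\cH(G(F), \rho^{\nt}_{x_{0}})$ can exist.
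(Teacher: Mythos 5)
Your reduction via Theorem~\ref{heckealgebraisomforKim-Yutype} is a good first instinct, but the paper exploits that theorem a second time in a way you miss: since $\rho^{\nt}_{x_0}$ itself equals the \emph{twisted} construction applied to the $G$-datum whose depth-zero piece is $\epsilon^{G/G^0}_{x_0}\restriction_{M^0(F)_{x_0,0}}$ in place of the trivial character, Theorem~\ref{heckealgebraisomforKim-Yutype} applied to \emph{that} datum gives $\cH(G^0(F),\epsilon^{G/G^0}_{x_0}) \isoarrow \cH(G(F),\rho^{\nt}_{x_0})$. So the problem collapses entirely to the depth-zero side: one must rule out a support-preserving isomorphism $\cH(G^0(F),\rho^0_{x_0}) \isoarrow \cH(G^0(F),\epsilon^{G/G^0}_{x_0})$, where both algebras are Iwahori-type algebras for $G^0 \simeq U(1)\times\SL_2$. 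By keeping both algebras on the $G$-side you are stuck with the full Heisenberg--Weil machinery, which obscures what is actually different.

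More seriously, the ``cocycle obstruction'' in your second and third paragraphs is not a real obstruction. A value $\epsilon^{G/G^0}_{x_0}(n_s) = -1$ on a lift $n_s$ of a simple reflection proves nothing: any support-preserving isomorphism is free to rescale the generator $T_{n_s}$ by an arbitrary nonzero constant $c(n_s)$, so a character value on a single double-coset representative can always be absorbed. (Moreover the Coxeter group here is $\widetilde{A}_1$, the infinite dihedral group, which has no braid relations for the $c(n)$'s to violate.) The obstruction must instead come from a relation whose structure constants change, and that is precisely what the paper computes: the \emph{quadratic relation} of the generator. Writing out the convolution $(\varphi*\varphi)(s)$ as a sum over the $q_F-1$ representatives $u(x)$, $x\in\ff^\times$, of $K^0_{x_0}/(K^0_{x_0}\cap sK^0_{x_0}s^{-1})$, one finds that each summand picks up the factor $\epsilon^{G/G^0}_{x_0}(\alpha_2^\vee(-x^{-1}))=\sgn_{\ff}(-x^{-1})$, which is the Legendre symbol on $\ff^\times$. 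For the trivial type the sum is $(q_F-1)\varphi(s)^2\neq 0$; for the twisted type it is $\varphi(s)^2\sum_{x\in\ff^\times}\sgn_\ff(x)=0$. Thus the two quadratic relations are genuinely different polynomials, which no support-preserving isomorphism can reconcile. Note that the decisive evaluation of $\epsilon^{G/G^0}_{x_0}$ is therefore on $\alpha_2^\vee(\cO_F^\times) \subset M^0(F)_{x_0,0} = K_{M^0}$ (Lemma~\ref{epsilonatalpha2vee}) --- the compact torus, not a reflection lift --- and the point is that the character is nontrivial on $K_{M^0}$ itself, not that it is nontrivial on some normalizer element.
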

\begin{proof}
	Since $(K_{x_{0}}, \rho^{\nt}_{x_{0}})$ agrees with the pair $(K_{x_{0}}, \rho_{x_{0}})$ obtained by replacing the above $G$-datum by the following twisted one
	\[
	\bigl(
	( G^{0} \subset G, M^0),
	( \tfrac{1}{2}, \tfrac{1}{2} ),
	(x_{0}, \{\iota\}),
	\bigl(
	M^0(F)_{x_{0}, 0}, \epsilon^{G/G^{0}}_{x_{0}}\restriction_{M^0(F)_{x_{0}, 0}}
	\bigr),
	(\phi, 1)
	\bigr)
	\]
	in the construction in Section \ref{subsec:construction}, 
	according to Theorem~\ref{heckealgebraisomforKim-Yutype}, we have a support-preserving algebra isomorphism
	\[
	\cH( G^{0}(F), \epsilon^{G/G^{0}}_{x_{0}})
	\isoarrow
	\cH( G(F), \rho^{\nt}_{x_{0}}).
	\]
	Hence, it suffices to show that there is no support-preserving algebra isomorphism 
	\[
	\cH ( G^{0}(F), \rho^{0}_{x_{0}})
	\isoarrow
	\cH( G^{0}(F), \epsilon^{G/G^{0}}_{x_{0}}).
	\]
	
	Let 
	\[
	s = 
	\begin{pmatrix}
		1 & 0 & 0 & 0 \\
		0 & 0 & 1 & 0 \\
		0 & -1 & 0 & 0 \\
		0 & 0 & 0 & 1
	\end{pmatrix}.
	\]
	Then we have
	\[
	K^{0}_{x_{0}}= \left\{
	\begin{pmatrix}
		a & 0 & 0 & b \\
		0 & \cO_{F}^{\times} & \cO_{F} & 0 \\
		0 & \fp_{F} & \cO_{F}^{\times} & 0 \\
		b \pi_{F}^{-1} & 0 & 0 & a
	\end{pmatrix}
	\right\}
	\cap G^{0}(F)
	\quad
	\text{and}
	\quad
	s K^{0}_{x_{0}} s^{-1} = \left\{
	\begin{pmatrix}
		a & 0 & 0 & b \\
		0 & \cO_{F}^{\times} & \fp_{F}& 0 \\
		0 & \cO_{F} & \cO_{F}^{\times} & 0 \\
		b \pi_{F}^{-1} & 0 & 0 & a
	\end{pmatrix}
	\right\}
	\cap G^{0}(F).
	\]
	Hence, we can take a set of representatives for 
	\(
	K^{0}_{x_{0}}/ \left(
	K^{0}_{x_{0}}\cap {s K^{0}_{x_{0}} s^{-1}}
	\right)
	\)
	as
	\(
	\left\{
	u(x) \mid x \in \ff 
	\right\}
	\),
	where
	\[
	u(x) = \begin{pmatrix}
		1 & 0 & 0 & 0 \\
		0 & 1 & x & 0 \\
		0 & 0 & 1 & 0 \\
		0 & 0 & 0 & 1 
	\end{pmatrix}.
	\]
	
	Suppose that there is a support-preserving algebra isomorphism 
	$
	\cH( G^{0}(F), \rho^{0}_{x_{0}})
	\isoarrow
	\cH( G^{0}(F), \epsilon^{G/G^{0}}_{x_{0}})
	$.
	Then we can take
	$\varphi_{1} \in \cH ( G^{0}(F), \rho^{0}_{x_{0}})$
	and
	$\varphi_{2} \in \cH( G^{0}(F), \epsilon^{G/G^{0}}_{x_{0}} )$,
	both supported in $K^0_{x_0} s K^0_{x_0}$,
	such that $\varphi_{1}$ and $\varphi_{2}$ satisfy the same quadratic relation.
	Let $\varphi = \varphi_{1}$ or $\varphi_{2}$.
	We can calculate the convolution product $( \varphi * \varphi )(s)$ as
	\begin{eqnarray*}
	\left(
	\varphi * \varphi
	\right)(s) 
	= \sum_{h \in K^{0}_{x_{0}} s K^{0}_{x_{0}} / K^{0}_{x_{0}}} \varphi(h) \cdot \varphi(h^{-1}s) 
	&=& \sum_{k \in K^{0}_{x_{0}} / \left(
		K^{0}_{x_{0}} \cap {s K^{0}_{x_{0}} s^{-1}}
		\right)
	} \varphi(ks) \cdot \varphi(s^{-1} k^{-1} s) \\
	&=& \sum_{x \in \ff}  \varphi(u(x) s) \cdot \varphi(s^{-1} u(-x) s).
	\end{eqnarray*} 
	For $x \in \cO_{F}$, we have
	\begin{align*}
		s^{-1} u(-x) s 
		= \begin{pmatrix}
			1 & 0 & 0 & 0 \\
			0 & 1 & 0 & 0 \\
			0 & x & 1 & 0 \\
			0 & 0 & 0 & 1 
		\end{pmatrix}.
	\end{align*}
	A standard calculation implies that $s^{-1} u(x)^{-1} s \in K^{0}_{x_{0}} s K^{0}_{x_{0}}$ if and only if $x \in \cO_{F}^{\times}$, and in this case, we have
	\[
	s^{-1} u(x)^{-1} s = \begin{pmatrix}
		1 & 0 & 0 & 0 \\
		0 & -x^{-1} & -1 & 0 \\
		0 & 0 & -x & 0 \\
		0 & 0 & 0 & 1 
	\end{pmatrix}
	\cdot
	s
	\cdot
	\begin{pmatrix}
		1 & 0 & 0 & 0 \\
		0 & 1 & x^{-1} & 0 \\
		0 & 0 & 1 & 0 \\
		0 & 0 & 0 & 1 
	\end{pmatrix} 
	= \alpha_{2}^{\vee}(- x^{-1}) \cdot u(x) \cdot s \cdot u(x^{-1}).
	\]
	Since $\epsilon^{G/G^{0}}_{x_{0}}$ is a sign character, it is trivial on the pro-$p$-group
	\(
	\left\{
	u(x) \mid x \in \cO_{F}
	\right\} \simeq \cO_{F}
	\).
	Hence, according to Lemma~\ref{epsilonatalpha2vee} below, we have
	\begin{align*}
		\varphi_{2}\left(
		\alpha_{2}^{\vee}(- x^{-1}) \cdot u(x) \cdot s \cdot u(x^{-1})
		\right) &= 
		\epsilon^{G/G^{0}}_{x_{0}} \left(
		\alpha_{2}^{\vee}(- x^{-1}) \cdot u(x)
		\right) \cdot \varphi_{2}(s) \cdot \epsilon^{G/G^{0}}_{x_{0}} \left(
		u(x^{-1})
		\right) \\
		&= \sgn_{\ff}(- x^{-1} \bmod \fp_{F}) \cdot \varphi_{2}(s)
	\end{align*}
	for all $x \in \cO_{F}^{\times}$.
	Thus, we obtain that
	\begin{align*}
		\left(
		\varphi * \varphi
		\right)(s) 
		&= \sum_{x \in \ff^{\times}} \varphi(u(x) s) \cdot \varphi(s^{-1} u(-x) s) \\
		&= \sum_{x \in \ff^{\times}} \varphi(u(x) s) \cdot \varphi \left(
		\alpha_{2}^{\vee}(- x^{-1}) \cdot u(x) \cdot s \cdot u(x^{-1})
		\right) \\
		&=
		\begin{cases}
			\sum_{x \in \ff^{\times}} \varphi_{1}(s) \cdot \varphi_{1}(s) & (\varphi = \varphi_{1}), \\
			\sum_{x \in \ff^{\times}} \varphi_{2}(s) \cdot \sgn_{\ff}(- x^{-1}) \cdot \varphi_{2}(s) & (\varphi = \varphi_{2})
		\end{cases} \\
		&= 
		\begin{cases}
			\varphi_{1}(s)^{2} \sum_{x \in \ff^{\times}} 1 & (\varphi = \varphi_{1}), \\
			\varphi_{2}(s)^{2} \sum_{x \in \ff  ^{\times}} \sgn_{\ff  }(x) & (\varphi = \varphi_{2})
		\end{cases} \\
		&=
		\begin{cases}
			(q_{F} - 1) \cdot \varphi_{1}(s)^{2} & (\varphi = \varphi_{1}), \\
			0 & (\varphi = \varphi_{2}).
		\end{cases}
	\end{align*}
	In particular, we obtain that 
	\[
	\left(
	\varphi_{1} * \varphi_{1}
	\right)(s) \neq 0
	\qquad
	\text{and}
	\qquad
	\left(
	\varphi_{2} * \varphi_{2}
	\right)(s) = 0,
	\]
	which contradicts the fact that $\varphi_{1}$ and $\varphi_{2}$ satisfy the same quadratic relation.
\end{proof}

Now we take care of the last piece of unfinished business from the proof of
Proposition \ref{prop:twist-necessary}.

\begin{lemma}
	\label{epsilonatalpha2vee}
	Let $t \in \cO_{F}^{\times}$.
	Then we have
	\[
	\epsilon^{G/G^{0}}_{x_{0}}(\alpha_{2}^{\vee}(t)) = \sgn_{\ff  }(t \bmod \fp_{F}).
	\]
\end{lemma}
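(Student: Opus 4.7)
The plan is to leverage the decomposition $\epsilon^{G/G^0}_{x_0} = {_{G^0}\epsilon_{\sym,\ram}} \cdot {_{G^0}\epsilon^{\sym,\ram}_s} \cdot {_{G^0}\epsilon_0}$ from \cite[Definition~5.5.8]{FKS} (recalled in \S\ref{subsec:character-adjoint}) and to compute each factor on $\alpha_2^\vee(t)$. First I would observe that $Z(G^0)^\circ = U(1)$ is anisotropic for the ramified quadratic extension $E/F$, and that the Galois action on $X^*(U(1)_{\overline F}) = \bZ\chi$ sends the generator $\chi$ to $-\chi$. Consequently every nonzero weight in $\Phi(G, Z(G^0)) = \{\pm\chi, \pm 2\chi\}$ is symmetric and ramified, so the index set $\mathfrak{S}$ from \S\ref{subsection:character2} is empty, the quadratic space $V_s^+ \oplus V_s^-$ is zero, and $_{G^0}\epsilon^{\sym,\ram}_s$ is trivial on $\alpha_2^\vee(t)$.

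By Lemma~\ref{rewritethetwopiecesofepsion}, it then suffices to compute the spinor norm of the adjoint action of $\alpha_2^\vee(t)$ on $(V_{x_0}, \varphi)$ and apply $\sgn_\ff$. Taking $M' = M^0$ and identifying $X^*(M^0_E) \simeq X^*(T)$ via conjugation by $g$ (so that $\sigma$ sends $\alpha_1 \mapsto -\alpha_1$ and fixes $\alpha_2$), I would enumerate the $I_F$-orbits in $\Phi'_{M^0}$ as
\[
O_1 = \{\alpha_1, -\alpha_1\}, \quad O_2 = \{(\alpha_1+\alpha_2)/2,\, -(\alpha_1-\alpha_2)/2\}, \quad O_3 = -O_2.
\]
Using $x_0 = (-1/4, 1/16) \in \cA(G, T, F)$ and the fact that $\Lie(G)_\alpha(E^{\unr})$ has Moy--Prasad jumps at $t = \alpha(x_0 - o) + m/2$ for $m \in \bZ$ (since $E/F$ is ramified of degree two), a direct calculation shows that the only contribution from each orbit in the range $(-1/4, 1/4)$ occurs at $t=0$, $t=-3/16$, and $t=3/16$ respectively, so that $V_{x_0} = V_{(x_0, O_1, 0)} \oplus V_{(x_0, O_2, -3/16)} \oplus V_{(x_0, O_3, 3/16)}$. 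Since $\Gal(E^{\unr}/F^{\unr})$ acts trivially on the residue field and swaps the two root-space summands within each orbit, each summand of $V_{x_0}$ is one-dimensional over $\ff$; by Property~\ref{propertyofvarphi}, $\varphi$ makes $V_{(x_0, O_1, 0)}$ a non-degenerate one-dimensional quadratic space and pairs $V_{(x_0, O_2, -3/16)}$ with $V_{(x_0, O_3, 3/16)}$ into a hyperbolic plane.

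Finally, since $\alpha_2^\vee(t)$ acts on $\Lie(G)_\beta$ by $\beta(\alpha_2^\vee(t))$ for $\alpha_2^\vee(t) = \mathrm{diag}(1,t,t^{-1},1)$, it acts trivially on $V_{(x_0, O_1, 0)}$ (as $\alpha_1(\alpha_2^\vee(t))=1$), by the scalar $t$ on $V_{(x_0, O_2, -3/16)}$, and by $t^{-1}$ on $V_{(x_0, O_3, 3/16)}$. The spinor norm on the one-dimensional piece is trivial, while writing $\mathrm{diag}(t, t^{-1})$ on the hyperbolic plane as the product of the reflections in $e+f$ and $e + tf$ (for a hyperbolic basis $\{e,f\}$) shows its spinor norm equals $t$ modulo $(\ff^\times)^2$. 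Composing with $\sgn_\ff$ yields $\sgn_\ff(t \bmod \fp_F)$, as desired.

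The main obstacle will be the careful Galois-descent bookkeeping that forces each $V_{(x_0, O_i, \cdot)}$ to be one-dimensional, rather than two-dimensional, over $\ff$: this is precisely what makes the hyperbolic-plane contribution to the spinor norm equal to $t$ rather than $t^2$ (the latter being a square, and hence trivial). Verifying this requires tracking the interaction between the $I_F$-orbit structure on $\Phi'_{M^0}$, the swap action of $\sigma \in \Gal(E^{\unr}/F^{\unr})$ on paired root spaces, and the triviality of this action on the residue field $\overline\ff$ that comes from the totally ramified nature of $E^{\unr}/F^{\unr}$.
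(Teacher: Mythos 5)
Your proposal is correct in its conclusion and adopts a genuinely different route from the paper. The paper's own proof stays close to the original formulation of the quadratic character: it identifies the asymmetric, unramified symmetric, and ramified symmetric subsets of $\Phi(G/G^0, M^0)$, verifies the non-jump condition $\frac{r}{2} \notin \ord_{x_0}(\beta)$ for the asymmetric $\beta$, and then reads off the value $\sgn_{\ff}(\beta(\alpha_2^\vee(t)))$ directly from \cite[Definition~3.1, Theorem~3.4]{FKS}. You instead use the spinor-norm reinterpretation recalled in Section~\ref{subsec:character-adjoint}--\ref{subsection:character13} (i.e.\ Lemma~\ref{rewritethetwopiecesofepsion}) together with the triviality of $_{G^0}\epsilon_s^{\sym,\ram}$, and then carry out a hands-on spinor-norm computation on the explicit quadratic space $V_{x_0}$. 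Your route is more in the spirit of the machinery the paper develops elsewhere and makes the hyperbolic-plane contribution of $\diag(t, t^{-1})$ transparent, whereas the paper's route is shorter because it never has to unpack $V_{x_0}$.

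One caveat. In your computation of the jump locations you evaluate $\alpha(x_0 - o)$ with $x_0 = (-\tfrac14, \tfrac1{16})$ and $o$ the hyperspecial point for $T$, arriving at $-\tfrac3{16}$ for the orbit $O_2$. But the spaces $V_{(x_0, O_{M'}, t)}$ are built from $M^0$-weight spaces, and the correct offset is $\beta\bigl(x_0 - g(o)\bigr)$ with $g(o)$ the transported base point. Equivalently, writing $x_0 = g\cdot(0, \tfrac1{16})$ and using that $\langle \beta, \cdot \rangle = \langle g^{-1}\beta, g^{-1}(\cdot) \rangle$, one gets $\langle \tfrac{\beta_1+\beta_2}{2},\; \tfrac1{16}\alpha_2^\vee \rangle = \tfrac1{16}$ — this is exactly the pairing computed in the paper's own proof of the lemma, which yields $\pm\tfrac1{16}$, not $\mp\tfrac3{16}$. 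As written, your $V_{(x_0, O_2, -3/16)}$ is actually zero in the sense of the FKS definition. Fortunately this slip is harmless for the conclusion: the correct decomposition $V_{x_0} = V_{(x_0, O_1, 0)} \oplus V_{(x_0, O_2, 1/16)} \oplus V_{(x_0, O_3, -1/16)}$ has the same shape (a one-dimensional anisotropic line plus a hyperbolic plane), the action of $\alpha_2^\vee(t)$ is still $\diag(1, t, t^{-1})$, and the spinor norm is still $t$ modulo squares. But the jump bookkeeping should be done at $g^{-1}x_0 = (0,\tfrac1{16})$, not at $x_0 = (-\tfrac14,\tfrac1{16})$; this is exactly the distinction the paper draws between the genericity computation in Lemma~\ref{lemmagenericityoftheembedding} (which does use the $T$-coordinates) and the root evaluation in Lemma~\ref{epsilonatalpha2vee}.
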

\begin{proof}
	We define characters $\beta_{1}$ and $\beta_{2}$ of $M^{0}$ as
	\[
	\beta_{1} \left(
	\begin{pmatrix}
		a & 0 & 0 & b \\
		0 & t & 0 & 0 \\
		0 & 0 & t^{-1} & 0 \\
		b \pi_{F}^{-1} & 0 & 0 & a
	\end{pmatrix}
	\right) = (a + b \varpi_{F}^{-1})^{2}
	\qquad
	\text{and}
	\qquad
	\beta_{2} \left(
	\begin{pmatrix}
		a & 0 & 0 & b \\
		0 & t & 0 & 0 \\
		0 & 0 & t^{-1} & 0 \\
		b \pi_{F}^{-1} & 0 & 0 & a
	\end{pmatrix}
	\right) = t^{2}.
	\]
	Then we have
	\[
	\Phi(G, M^{0}, E) = \left\{
	\pm \beta_{1}, \pm \beta_{2}, \pm \frac{\beta_{1} + \beta_{2}}{2}, \pm \frac{\beta_{1} - \beta_{2}}{2}
	\right\}
	\qquad
	\text{and}
	\qquad
	\Phi(G^{0}, M^{0}, E) = \left\{
	\pm \beta_{2}
	\right\}.
	\]
	We define
	$
	\Phi(G/G^{0}, M^{0})  = \Phi(G, M^{0}, E) \smallsetminus \Phi(G^{0}, M^{0}, E)
	$,
	and we denote by $\Phi(G/G^{0}, M^{0})_{\asym}$, $\Phi(G/G^{0}, M^{0})_{\sym, \unr}$, and $\Phi(G/G^{0}, M^{0})_{\sym, \ram}$ the set of roots in $\Phi(G/G^{0}, M^{0})$ that are asymmetric, unramified symmetric, and ramified symmetric in the sense of \cite[Section~2]{FKS}, respectively.
	Then we have
	\[
	\Phi(G/G^{0}, M^{0})_{\asym} = \left\{
	\pm \frac{\beta_{1} + \beta_{2}}{2}, \pm \frac{\beta_{1} - \beta_{2}}{2}
	\right\},
	\]
	\[
	\Phi(G/G^{0}, M^{0})_{\sym, \unr} = \emptyset,
	\]
	and
	\[
	\Phi(G/G^{0}, M^{0})_{\sym, \ram} = \left\{
	\pm \beta_{1}
	\right\}.
	\]
	Since $x_{0} = g \left(
	0, \frac{1}{16}
	\right)$ and 
	$
	\beta_{i}(g t g^{-1}) = \alpha_{i} (t)
	$
	for $i= 1,2$ and for all $t \in T$, we obtain that
	\begin{align*}
		\left \langle
		\pm \tfrac{\beta_{1} + \beta_{2}}{2}, - \tfrac{1}{4} \alpha_{1}^{\vee} + \tfrac{1}{16} \alpha_{2}^{\vee}
		\right \rangle 
		&=  \left \langle
		\pm \tfrac{\beta_{1} + \beta_{2}}{2}, g \cdot \tfrac{1}{16} \alpha_{2}^{\vee}
		\right \rangle \\
		&= \left \langle
		\pm \tfrac{\alpha_{1} + \alpha_{2}}{2}, \tfrac{1}{16} \alpha_{2}^{\vee}
		\right \rangle \\
		&= \pm \tfrac{1}{16} \\
		&\not \in \tfrac{1}{4}\mathbb{Z}.
	\end{align*}
	Similarly, we have
	$
	\left \langle
	\pm \tfrac{\beta_{1} - \beta_{2}}{2}, - \tfrac{1}{4} \alpha_{1}^{\vee} + \tfrac{1}{8} \alpha_{2}^{\vee}
	\right \rangle 
	= \mp \tfrac{1}{16} \not \in \tfrac{1}{4}\mathbb{Z}
	$.
	Thus, we conclude that
	\(
	\frac{r}{2} =
	\frac{1}{4} \not \in \ord_{x_{0}}(\beta)
	\)
	for all $\beta \in \Phi(G/G^{0}, M^{0})_{\asym}$, where $\ord_{x_{0}}(\beta)$ denotes the set defined in \cite[Section~3]{FKS}.
	We also note that we have $\beta_{1}(\alpha_{2}^{\vee}(t)) = 1$ and the restriction of any element in $\Phi(G/G^{0}, M^{0})_{\asym}$ to the center $Z(G^{0}) / \{\pm 1\}$ of $G^{0} / \{\pm1\}$ is ramified symmetric.
	Then according to \cite[Definition~3.1, Theorem~3.4]{FKS}, we have
	\[
	\epsilon^{G/G^{0}}_{x_{0}}(\alpha_{2}^{\vee}(t)) = \sgn_{\ff  }(\beta(\alpha_{2}^{\vee}(t)) \bmod \fp_{F})
	= \sgn_{\ff  }(t \bmod \fp_{F}),
	\]
	where $\beta$ denotes any element in $\Phi(G/G^{0}, M^{0})_{\asym}$.
\end{proof}
\begin{remark}
	\label{remarkexampleofepsilonGneqepsilonM}
	Since the image of $\alpha_{2}^{\vee}$ is contained in the center of $M$, we have
	$
	\epsilon^{M/M^{0}}_{x_{0}}(\alpha_{2}^{\vee}(t)) = 1
	$
	for all $t \in \cO_{F}^{\times}$.
	In particular, we obtain that
	$
	\epsilon^{G/G^{0}}_{x_{0}} \restriction_{M^{0}(F)_{x_{0}, 0}} \neq \epsilon^{M/M^{0}}_{x_{0}}
	$.
\end{remark}

\printindex{notation-ky}{Selected notation}


\addcontentsline{toc}{section}{References}
\bibliographystyle{halpha-abbrv}
\bibliography{ourbib}

\end{document}